\documentclass[a4paper,12pt]{amsart}
\usepackage{amssymb,amsfonts,amsthm}
\usepackage{amstext}
\usepackage{comment}
\usepackage{amsmath}
\usepackage{hyperref}  

\usepackage{color}
\usepackage[latin1]{inputenc}
\usepackage[active]{srcltx}
\usepackage{mathrsfs} 

\usepackage{tensor}
\usepackage{color}
\usepackage{mathptmx}

\theoremstyle{plain}

\newtheorem{theorem}{Theorem}[section]
\newtheorem{lem}[theorem]{Lemma}
\newtheorem{defn}[theorem]{Definition}
\newtheorem{cor}[theorem]{Corollary}

\usepackage{chngcntr} 

\usepackage{mathrsfs} 

\usepackage{comment} 

\usepackage{hyperref}  
\setcounter{tocdepth}{2}

\begin{document}
	
	\title{Generalization of the HSIC and distance covariance using  positive definite independent kernels}
	
	\author{J. C. Guella}
	\email{jcguella@ime.unicamp.br}
	\address{Institute of Mathematics, Statistics and Scientific Computing, University of Campinas}
	\begin{abstract} Hilbert-Schmidt  independence criterion and distance covariance are methods to describe independence of random variables using either the Kronecker product of positive definite kernels  or  the Kronecker product  of  conditionally negative definite kernels. In this paper we generalize both methods by providing an independence criteria using a new concept,  of positive definite independent kernels. We provide a characterization of the radial kernels that are positive definite independent on all  Euclidean spaces  and we present several examples.
	\end{abstract}
	
     \keywords{ Independence test; HSIC; Distance Covariance; Positive definite Kernels; Metrics in the space of couplings}
     \subjclass[2020]{30L05; 43A35; 62H20; 44A10 }

	\maketitle

	\tableofcontents
	
\section{\textbf{Introduction}}

A method to compare if two random variables $X$ and $Y$ are independent, is to make use of the well known relation between positive definite kernels and conditionally negative definite kernels with metrics on the space of probabilities, which is provided by the kernel mean embedding.	For that, we pick $\gamma: X \times X \to \mathbb{R}$ and   $\varsigma: Y \times Y \to \mathbb{R}$ continuous positive definite (or conditionally negative definite) kernels, then for a Radon regular probability $\pi$   in $X\times Y$  with marginals $\pi_{1}$ and $\pi_{2}$, and with good integrability properties with respect to the kernels  $\varsigma_{1}$, $\varsigma_{2}$ (described in Corollary \ref{integmarginkroeprodcor}), the following double integral  
$$
\int_{X\times Y} \int_{X\times Y} \gamma(x,x^{\prime}) \varsigma(y,y^{\prime})d[\pi - \pi_{1}\otimes \pi_{2}](x,y)d[\pi - \pi_{1}\otimes \pi_{2}](x^{\prime}, y^{\prime}),
$$	
is well defined and is nonnegative. For some kernels the double integral is zero if and only if $\pi = \pi_{1}\otimes \pi_{2}$. For instance, if $\varsigma_{1}$ and $\varsigma_{2}$ are Gaussian kernels on Euclidean spaces,  in this case both kernels are positive definite. Another example is when $\varsigma_{1}$ and $\varsigma_{2}$ are the standard metric on Hilbert  spaces,  in this case both kernels are conditionally negative  definite, \cite{lyons2013}. 

The function HSIC (Hilbert Schmidt independence criterion) \cite{sejdinovic2013equivalence, gretton2005measuring, gretton2008kernel}, takes a probability $\pi$ to the previous double integral. The distance covariance \cite{szekely2009brownian} \cite{lyons2013, lyons2018errata, lyons2021second}
,  is essentially the same  function, but is defined as the sum of $4$ functions that only depends on $\pi$. We present more details about the distance covariance and the generalization that we propose in Section \ref{Distance covariance}.      

The fact that HSIC or distance covariance is defined only on a Kronecker product of kernels is very restrictive, as it does not allow more intricate connections between the points of $X$ and the points of $Y$. Below we present  an example of a kernel, which is not a Kronecker product,  that is able to discern if a probability on a product space is the product of its marginals. For this, consider Hilbert spaces $\mathcal{H}$ and $\mathcal{H}^{\prime}$ and  the following kernel defined in   $\mathcal{H}\times \mathcal{H}^{\prime}$
$$
\mathfrak{I}((x,y),(x^{\prime}, y^{\prime})):=(\|x-x^{\prime}\|^{2} + \|y-y^{\prime}\|^{2})^{3/2}.
$$
By the integral relation 
$$
t^{3/2}=\frac{3}{4\pi^{1/2}}\int_{[0, \infty)}(e^{-rt}-1+rt)\frac{1}{r^{5/2}}dr, \quad t \geq 0
$$ 
if $\pi$ is a Radon regular probability on $\mathcal{H}\times \mathcal{H}^{\prime}$ (that satisfies the integrability assumption in Lemma \ref{integmargincompl2}) and $\Delta_{\pi}:= \pi-\pi_{1}\otimes \pi_{2}$, we have that
\begin{equation}\label{ex1}
	\begin{split}
		&\int_{\mathcal{H}\times \mathcal{H}^{\prime}}\int_{\mathcal{H}\times \mathcal{H}^{\prime}}(\|x-x^{\prime}\|^{2} + \|y-y^{\prime}\|^{2})^{3/2} d\Delta_{\pi}(x,y) d\Delta_{\pi}(x^{\prime},y^{\prime})\\
		& =\frac{3}{4\pi^{1/2}}\int_{[0, \infty)} \left [\int_{\mathcal{H}\times \mathcal{H}^{\prime}}\int_{\mathcal{H}\times \mathcal{H}^{\prime}}e^{-(\|x-x^{\prime}\|^{2} + \|y-y^{\prime}\|^{2})r} d\Delta_{\pi}(x,y) d\Delta_{\pi}(x^{\prime},y^{\prime}) \right ]\frac{1}{r^{5/2}}dr.
	\end{split}
\end{equation}

It is a well known fact that the Gaussian kernel on any Euclidean space is Integrally strictly positive definite, and  recently proved that it also holds on any Hilbert space in \cite{gaussinfi}. So the double integral in Equation \ref{ex1} is not only a nonnegative number, but it is zero if and only if $\pi= \pi_{1}\otimes \pi_{2}$. More generally, we can replace $\pi_{1}\otimes \pi_{2}$ by a another probability $\pi^{\prime}$, with the same marginals as $\pi$, and obtain the same conclusion.

Our  objective in this paper is to describe which symmetric kernels $\mathfrak{I}: (X \times Y) \times (X \times Y) \to \mathbb{R} $ are able to discern Radon regular probabilities $\pi, \pi^{\prime}$ defined in $X\times Y$, that have the same marginals, using the double integral
\begin{equation}\label{objective}
	\int_{X\times Y} \int_{X\times Y} \mathfrak{I}((x,y),(x^{\prime}, y^{\prime}))d[\pi - \pi^{\prime}](x,y)d[\pi - \pi^{\prime}](x^{\prime},y^{\prime}),
\end{equation}
under the restriction that the integrals are well defined. The concept that will allow us to understand this problem is of a  positive definite independent kernel (PDI). In Section \ref{positive definite independent kernels} we describe its basic properties and the relation between this new concept with positive definite and conditionally negative definite kernels. Under additional requirements, we obtain results that describes which Radon regular probabilities we can compare on the general case  and  that the square root of Equation \ref{objective} defines a pseudometric on the space of probabilities in $X\times Y$ with fixed marginals. When it defines a metric for every pair of marginals that integrates $\mathfrak{I}$, we say that $\mathfrak{I}$ is an PDI-Characteristic kernel.

In Corollary \ref{integmarginkroeprodcor}  we reprove the well known fact that the standard HSIC/distance covariance is well defined if and only if each conditionally negative definite kernel is able to discern distinct probabilities (CND-Characteristic by our terminology) by a different method than \cite{sejdinovic2013equivalence},  without the need of the Hilbert Schmidt  operator for which the name HSIC comes from.

In Section \ref{Bernstein  functions of two variables}
we move to a characterization problem, where the focus is to describe the set of continuous functions $f: [0, \infty) \times [0, \infty) \to \mathbb{R}$, for which the kernel 
$$
f( \|x-x^{\prime}\|,\|y - y^{\prime}\| ), \quad  x,x^{\prime} \in  \mathbb{R}^{d}, \quad x^{\prime}, y^{\prime} \in \mathbb{R}^{d^{\prime}}
$$
is positive definite independent   for every  $d, d^{\prime }\in \mathbb{N}$,  in a similar way that Schoenberg characterized the positive definite  case in \cite{schoenbradial}, and the conditionally negative definite version in \cite{neuschoen}. The proof is not constructive, instead we rely on the fact that the representation for the conditionally negative definite case is unique (Theorem \ref{reprcondneg}), and some techniques based in \cite{jeanrad} concerning measure valued positive definite radial  kernels on Euclidean spaces.  

In Section \ref{Bernstein  functions of two variables} we also present a  characterization for the   continuous functions $f: [0, \infty)  \to \mathbb{R}$, for which the kernel 
$$
f( \|x-x^{\prime}\|^{2}+\|y - y^{\prime}\|^{2} ), \quad  x,x^{\prime} \in  \mathbb{R}^{d}, \quad x^{\prime}, y^{\prime} \in \mathbb{R}^{d^{\prime}}
$$
is positive definite independent   for every  $d, d^{\prime }\in \mathbb{N}$, by relating  those functions with  completely monotone functions of order $2$, being $f(t)= t^{a}$, $a \in (1,2)$, an example. 

In Section \ref{Examples of PDI-Characteristic kernels} we provide several examples of PDI-Characteristic kernels (even on non-Hilbert spaces)  based on the set on the functions obtained in  Section \ref{Bernstein  functions of two variables}, the relations of conditionally negative definite kernels and norms on Hilbert spaces, and the already mentioned fact that the Gaussian kernel is integrally strictly positive definite in any Hilbert space.

In Section  \ref{Definitions} we formally describe the most important definitions and results that we use. In Section  \ref{Proofs} we present the proofs of the results in Sections \ref{positive definite independent kernels}, \ref{Bernstein  functions of two variables}, \ref{Examples of PDI-Characteristic kernels}, \ref{Distance covariance}  and a few technical results.

\section{\textbf{Definitions}}\label{Definitions}	

We recall that a  nonnegative measure $\lambda$ on a  Hausdorff space $X$ is    Radon regular (which we simply refer as Radon) when it is a Borel measure such that is finite on every compact set of $X$ and
\begin{enumerate}
	\item[(i)](Inner regular)$\lambda(E)= \sup\{\lambda(K), \ \ K \text{ is compact }, K\subset E\} $ for every Borel set $E$.
	\item[(ii)](Outer regular) $\lambda(E)= \inf\{\lambda(U), \ \ U \text{ is open }, E\subset U\} $ for every Borel set $E$.
\end{enumerate}

We then said that a real valued measure $\lambda$ of bounded variation is Radon if its variation is a Radon measure. The vector space of such measures is denoted by $\mathfrak{M}(X)$. Recall that every Borel measure of finite variation (in particular, probability measures) on a separable complete metric space is necessarily Radon. 

An semi-inner product on a real  vector space $V$  is a bilinear  real valued function $( \cdot, \cdot)_{V} $ defined on  $V\times V$ such that $(u,u)_{V} \geq 0$ for every $u \in V$. When this inequality is an equality only for $u=0$, we say that  $( \cdot, \cdot)_{V}$ is an inner-product. Similarly, an pseudometric on a set $X$ is a symmetric function $d: X\times X \to [0, \infty)$, such that $d(x,x)=0$ that satisfies the triangle inequality. If $d(x,y)=0$ only when $x=y$, $d$ is a metric on $X$. 

-\textbf{Positive definite kernels}


A symmetric kernel $K: X \times X \to \mathbb{R}$ is called \textbf{Positive Definite (PD)} if for every finite quantity of distinct points $x_{1}, \ldots, x_{n} \in X$ and scalars $c_{1}, \ldots, c_{n} \in \mathbb{R}$, we have that
$$
\sum_{i, j =1}^{n}c_{i}c_{j} K(x_{i}, x_{j}) \geq 0.
$$
The  \textbf{Reproducing Kernel Hilbert Space (RKHS)} of a positive definite kernel $K: X \times X \to \mathbb{R}$ is the Hilbert space $\mathcal{H}_{K} \subset \mathcal{F}(X, \mathbb{R})$, and it satisfies \cite{Steinwart}
\begin{enumerate}
	\item[$(i)$]  $x \in X \to K_{y}(x):= K(x,y) \in \mathcal{H}_{K}$;
	\item[$(ii)$] $\langle K_{x}, K_{y}\rangle = K(x,y) $
	\item[$(iii)$] $\overline{ span\{ K_{y}, \quad y \in X\} }= \mathcal{H}_{K}$.
\end{enumerate} In particular, if $X$ is a Hausdorff  space and $K$ is continuous it holds that $\mathcal{H}_{K} \subset C(X)$.

The following widely known result (usually called  Kernel Mean Embedding) describes how it is possible to define a semi-inner product structure on a subspace of $\mathfrak{M}(X)$ using a continuous positive definite kernel.

\begin{lem}\label{initialextmmddominio}
	If $K: X \times X \to \mathbb{R}$ is a continuous positive definite kernel and  $\mu \in \mathfrak{M}(X)$  with $\sqrt{K(x,x)} \in L^{1}(|\mu|)$ ($\mu \in \mathfrak{M}_{\sqrt{K}}(X)$), then 
	$$
	z \in X \to K_{\mu}(z):=\int_{X} K(x,z)d\mu(x) \in \mathbb{R}
	$$	
	is an element of $\mathcal{H}_{K}$, and if $\eta$ is another measure  with the same conditions as $\mu$, we have that
	$$
	\langle K_{\eta}, K_{\mu}\rangle_{\mathcal{H}_{K}}= \int_{X} \int_{X}K(x,y)d\eta(x)d\mu(y).
	$$
	In particular, $(\eta, \mu) \in \mathfrak{M}_{\sqrt{K}}(X) \times \mathfrak{M}_{\sqrt{K}}(X) \to \langle K_{\eta}, K_{\mu}\rangle_{\mathcal{H}_{K}} $ is an semi-inner product.
\end{lem}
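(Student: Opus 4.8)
The plan is to establish the three RKHS properties for the function $K_\mu$ and then compute the inner product, reducing everything to the finite-dimensional positive definiteness of $K$ together with a density/limiting argument. First I would show that $K_\mu$ is a well-defined real-valued function on $X$: for each fixed $z$, the reproducing property gives $|K(x,z)| = |\langle K_x, K_z\rangle| \le \sqrt{K(x,x)}\sqrt{K(z,z)}$ by Cauchy--Schwarz, so the hypothesis $\sqrt{K(x,x)} \in L^1(|\mu|)$ guarantees $x \mapsto K(x,z)$ is $|\mu|$-integrable and the integral defining $K_\mu(z)$ is finite.

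Next I would show $K_\mu \in \mathcal{H}_K$. The natural approach is to exhibit $K_\mu$ as a limit in $\mathcal{H}_K$ of finite linear combinations $\sum_i c_i K_{x_i}$, i.e. of Bochner-type Riemann sums of the $\mathcal{H}_K$-valued ``integrand'' $x \mapsto K_x$. Concretely, I would first treat the case where $|\mu|$ is compactly supported and finite: using continuity of $x \mapsto K_x$ as a map into $\mathcal{H}_K$ (which follows from $\|K_x - K_{x'}\|^2 = K(x,x) - 2K(x,x') + K(x',x')$ and continuity of $K$), the $\mathcal{H}_K$-valued Bochner integral $\int_X K_x \, d\mu(x)$ exists; evaluating against $K_z$ and using the reproducing property identifies this Bochner integral with the function $K_\mu$. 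For general $\mu \in \mathfrak{M}_{\sqrt{K}}(X)$ I would exhaust $X$ by compact sets $X_n$ and pass to the limit, checking that $\{K_{\mu|_{X_n}}\}$ is Cauchy in $\mathcal{H}_K$; the Cauchy estimate $\|K_{\mu|_{X_n}} - K_{\mu|_{X_m}}\|_{\mathcal{H}_K}^2 = \int\int K(x,y)\, d\mu'(x)\,d\mu'(y)$ for the tail measure $\mu' = \mu|_{X_n \setminus X_m}$ is controlled by $\left(\int_{X \setminus X_m} \sqrt{K(x,x)}\, d|\mu|(x)\right)^2$ via Cauchy--Schwarz under the integral sign, which tends to $0$. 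The limit function agrees with $K_\mu$ by dominated convergence (pointwise) using the integrable dominating function $\sqrt{K(z,z)}\sqrt{K(x,x)}$.

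With $K_\mu \in \mathcal{H}_K$ established, the inner product formula follows immediately from the reproducing property applied twice: $\langle K_\eta, K_\mu\rangle_{\mathcal{H}_K} = \int_X \langle K_\eta, K_y\rangle\, d\mu(y) = \int_X K_\eta(y)\, d\mu(y) = \int_X \int_X K(x,y)\, d\eta(x)\, d\mu(y)$, where pulling the continuous linear functional $\langle K_\eta, \cdot\rangle$ through the Bochner integral is justified by the construction above (or again by a Cauchy--Schwarz-plus-dominated-convergence argument). Finally, bilinearity and symmetry of $(\eta,\mu) \mapsto \langle K_\eta, K_\mu\rangle_{\mathcal{H}_K}$ are inherited from the Hilbert inner product, and nonnegativity of the diagonal is $\langle K_\mu, K_\mu\rangle_{\mathcal{H}_K} = \|K_\mu\|_{\mathcal{H}_K}^2 \ge 0$, so it is a semi-inner product on $\mathfrak{M}_{\sqrt{K}}(X)$.

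The main obstacle I expect is the interchange of integral and the $\mathcal{H}_K$-valued / linear-functional operations on a \emph{non-finite, non-compactly-supported} measure on a merely Hausdorff space: one must be careful that $x \mapsto K_x$ is Bochner integrable (separability of the relevant range, strong measurability) and that the truncation-and-limit argument is valid. Working with the variation $|\mu|$ and the Radon regularity hypothesis, together with the fact that $K_x$ ranges in the separable subspace generated by $\{K_x : x \in \operatorname{supp}|\mu|\}$ when $|\mu|$ is $\sigma$-finite with separable support, should handle the measure-theoretic subtleties; everything else is routine Cauchy--Schwarz bookkeeping.
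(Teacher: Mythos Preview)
The paper does not actually give a proof of this lemma: it is stated in Section~\ref{Definitions} as a ``widely known result (usually called Kernel Mean Embedding)'' with a reference to \cite{Steinwart}, and no argument appears in the Proofs section. So there is nothing to compare against on the paper's side.

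Your outline is the standard one and is correct. The Cauchy--Schwarz bound $|K(x,z)|\le \sqrt{K(x,x)}\sqrt{K(z,z)}$ gives well-definedness of $K_{\mu}(z)$; the Bochner-integral construction (compactly supported case first, then exhaustion using Radon regularity and the $L^{1}(|\mu|)$ control on $\sqrt{K(x,x)}$) yields $K_{\mu}\in\mathcal{H}_{K}$; and the inner-product identity follows by pulling the bounded linear functional $f\mapsto\langle K_{\eta},f\rangle$ through the Bochner integral. The measure-theoretic caveat you flag---strong measurability of $x\mapsto K_{x}$ on a general Hausdorff space---is the only genuine technicality, and your suggested fix (work on $\operatorname{supp}|\mu|$, which carries a Radon measure, and use that the closed linear span of $\{K_{x}:x\in\operatorname{supp}|\mu|\}$ is the relevant target space) is exactly how one handles it; alternatively one can bypass Bochner integration entirely by verifying directly that $K_{\mu}$ satisfies the characterization of $\mathcal{H}_{K}$-membership via the functional $h\mapsto\int h\,d\mu$ being bounded on $\mathcal{H}_{K}$, which needs only the same Cauchy--Schwarz estimate.
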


Note that if $K$ is bounded, then $\mathfrak{M}_{\sqrt{K}}(X)=\mathfrak{M}(X)$. The kernel is \textbf{Integrally Strictly Positive Definite (ISPD)}, if $K$ is bounded and the semi-inner product in Lemma \ref{initialextmmddominio} is an inner product. If $K$ is bounded and the semi-inner product is an inner product on the subspace $\mu(X)=0$, we say that $K$ is \textbf{Characteristic}. The interesting aspect of a Characteristic kernel $K$ is that if $P, Q \in \mathfrak{M}(X)$, then
$$
D_{K}(P,Q):=\sqrt{\int_{X} \int_{X}k(x,y)d[P-Q](x)d[P-Q](y) }= \|K_{P} - K_{Q}\|
$$
is a metric on the space of probabilities. The psedometric $D_{K}$ is usually called the \textbf{Maximun Mean Discrepancy (MMD)}.

-\textbf{Conditionally negative definite kernels}

A symmetric kernel $\gamma: X \times X \to \mathbb{R}$ is called \textbf{Conditionally Negative Definite (CND)} if for every finite quantity of distinct points $x_{1}, \ldots, x_{n} \in X$ and scalars $c_{1}, \ldots, c_{n} \in \mathbb{R}$, with the restriction that $\sum_{i=1}^{n}c_{i}=0$, we have that
$$
\sum_{i, j =1}^{n}c_{i}c_{j} \gamma(x_{i}, x_{j}) \leq 0.
$$

The concept of CND kernels is intrinsically related to PD kernels, as a  symmetric kernel $\gamma: X\times X\to \mathbb{R}$ is CND if and only if  for any (or equivalently, for every) $w \in X$ the kernel
\begin{equation}\label{Kgamma}
	K_{\gamma}^{w}(x,y):=\gamma(x,w) + \gamma(w, y) - \gamma(x,y) - \gamma(w,w)
\end{equation}
is positive definite. With this result is possible to explain the relation between CND kernels and Hilbert spaces as  if   $\gamma: X \times X \to \mathbb{R}$ is CND it can  be written as 
\begin{equation}\label{condequa}
	\gamma(x,y)= \|h(x)- h(y)\|_{\mathcal{H}}^{2} + f(x) + f(y)
\end{equation}
where $\mathcal{H}$ is a real Hilbert space, $h: X \to \mathcal{H}$  and $f : X \to \mathbb{R}$. Note that $f(x)= \gamma(x,x)/2$. Another famous relation is that a  symmetric kernel $\gamma: X\times X\to \mathbb{R}$ is CND if and only if  for every $r>0$  the kernel
\begin{equation}\label{schoenmetriccond}
	(x,y) \in X\times X \to  e^{-r\gamma(x,y)}
\end{equation}
is PD.  Those  classical results are crucial for the development of the subject and can be found in Chapter $3$ at \cite{berg0}.

The concept of CND kernel is more general then the concept of an PD kernel, as if $K$ is PD then $-K$ is CND. It is also possible to define semi-inner products on subspaces of $\mathfrak{M}(X)$ using CND kernels, which is described in the next  Lemma.

\begin{lem}\label{estimativa} Let   $\gamma: X \times X \to \mathbb{R}$ be a continuous CND kernel such that $\gamma(x,x)$ is a bounded function and   $\mu \in \mathfrak{M}(X)$. The following assertions are equivalent
	\begin{enumerate}
		\item[$(i)$] $\gamma \in L^{1}(|\mu|\times |\mu|)$;
		\item[$(ii)$] The function $x \in X \to \gamma(x,z) \in L^{1}(|\mu|)$ for some $z \in X$;
		\item[$(iii)$] The function $x \in X \to \gamma(x,z) \in L^{1}(|\mu|)$ for every $z \in X$.
	\end{enumerate}
	Further,  the set of measures that satisfies these relations is a vector space. In particular, consider the vector space
	$$
	\mathfrak{M}_{1}(X; \gamma):= \{ \eta \in \mathfrak{M}(X), \quad   \gamma(x,y)\in L^{1}(|\eta|\times |\eta|) \text{ and } \eta(X)=0 \},
	$$
	then the function
	$$
	(\mu, \nu ) \in  \mathfrak{M}_{1}(X; \gamma) \times \mathfrak{M}_{1}(X; \gamma) \to I(\mu, \nu)_{\gamma}:=\int_{X} \int_{X} -\gamma(x,y)d\mu(x)d\nu(y)
	$$
	defines an semi-inner product on  $\mathfrak{M}_{1}(X; \gamma)$.\end{lem}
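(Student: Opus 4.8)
The plan is to reduce the statement to the continuous positive definite kernel $K:=K_\gamma^w$ attached to $\gamma$ through \eqref{Kgamma} for an arbitrarily fixed $w\in X$, via the two identities
$$
\gamma(x,y)=\gamma(x,w)+\gamma(w,y)-\gamma(w,w)-K(x,y),\qquad K(x,x)=2\gamma(x,w)-\gamma(x,x)-\gamma(w,w).
$$
First I would establish the estimates that the rest of the proof rests on. Because $\gamma(x,x)$ is bounded and $|\mu|$ is finite, the second identity shows that $x\mapsto K(x,x)$ and $x\mapsto 2\gamma(x,w)$ differ by an integrable function, so $\gamma(\cdot,w)\in L^1(|\mu|)$ precisely when $K$ is integrable on the diagonal against $|\mu|$; in that case $\sqrt{K(x,x)}\le 1+K(x,x)\in L^1(|\mu|)$, i.e.\ $\mu\in\mathfrak{M}_{\sqrt{K}}(X)$, and the Cauchy--Schwarz inequality for positive definite kernels, $|K(x,y)|\le\sqrt{K(x,x)}\sqrt{K(y,y)}$, then gives $\int\int|K|\,d|\mu|\,d|\mu|\le\big(\int\sqrt{K(x,x)}\,d|\mu|\big)^2<\infty$, hence $K\in L^1(|\mu|\times|\mu|)$; the same computation gives $K\in L^1(|\mu|\times|\nu|)$ whenever $\mu,\nu$ are both of this type.

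For the equivalences I would run the cycle $(i)\Rightarrow(ii)\Rightarrow(iii)\Rightarrow(i)$. The implication $(i)\Rightarrow(ii)$ is Fubini--Tonelli applied to $|\gamma|$ against $|\mu|\times|\mu|$: finiteness of the double integral forces $\gamma(\cdot,z)\in L^1(|\mu|)$ for $|\mu|$-a.e.\ $z$, hence for some $z$ (the case $\mu=0$ being trivial). For $(ii)\Rightarrow(iii)$, apply the estimates above with $w=z_0$, where $z_0$ is the point from $(ii)$: then $\mu\in\mathfrak{M}_{\sqrt{K_\gamma^{z_0}}}(X)$, and the decomposition of $\gamma$ at $z_0$ together with Cauchy--Schwarz bounds $x\mapsto\gamma(x,z)$, for every $z$, by an $L^1(|\mu|)$ function plus bounded terms. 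For $(iii)\Rightarrow(i)$, substitute the decomposition of $\gamma$ at $w$ into the double integral: the terms $\gamma(x,w)$, $\gamma(w,y)$ and the constant are in $L^1(|\mu|\times|\mu|)$ since $|\mu|$ is finite and $\gamma(\cdot,w)\in L^1(|\mu|)$, while $K\in L^1(|\mu|\times|\mu|)$ by the estimate above. The vector-space assertion is then immediate from $|a\eta_1+b\eta_2|\le|a|\,|\eta_1|+|b|\,|\eta_2|$, which preserves condition $(ii)$, and $\mathfrak{M}_1(X;\gamma)$ is the intersection of that vector space with the hyperplane $\{\eta(X)=0\}$.

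For the last assertion I would take $\mu,\nu\in\mathfrak{M}_1(X;\gamma)$, so by the equivalences $\mu,\nu\in\mathfrak{M}_{\sqrt{K}}(X)$ and $\gamma\in L^1(|\mu|\times|\nu|)$, with all four summands of the decomposition of $\gamma$ individually integrable; hence $I(\mu,\nu)_\gamma$ is well defined and the decomposition may be integrated term by term. Integrating $-\gamma(x,y)=-\gamma(x,w)-\gamma(w,y)+\gamma(w,w)+K(x,y)$ against $d\mu(x)\,d\nu(y)$ and using $\mu(X)=\nu(X)=0$ annihilates the three additively split terms, leaving
$$
I(\mu,\nu)_\gamma=\int_X\int_X K(x,y)\,d\mu(x)\,d\nu(y)=\langle K_\mu,K_\nu\rangle_{\mathcal{H}_{K}}
$$
by Lemma \ref{initialextmmddominio}. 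Since $\eta\mapsto K_\eta$ is linear on $\mathfrak{M}_1(X;\gamma)\subset\mathfrak{M}_{\sqrt{K}}(X)$, the form $I(\cdot,\cdot)_\gamma$ is the pullback of the inner product of $\mathcal{H}_{K}$ under this map; in particular it is bilinear, symmetric and satisfies $I(\mu,\mu)_\gamma=\|K_\mu\|_{\mathcal{H}_{K}}^2\ge0$, which is exactly a semi-inner product.

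I expect the main obstacle to be the first paragraph: obtaining clean control of the off-diagonal values of $K_\gamma^w$ by its diagonal through Cauchy--Schwarz, and the bookkeeping that lets one pass between $\gamma$ and $K_\gamma^w$ while tracking the total-variation measures. Once this is set up, the integrability equivalences are routine and the positivity $I(\mu,\mu)_\gamma\ge0$ is essentially inherited from Lemma \ref{initialextmmddominio} through the reduction $I(\mu,\mu)_\gamma=\|K_\mu\|^2$.
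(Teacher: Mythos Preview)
Your argument is correct. Note, however, that the paper does not supply its own proof of this lemma: it simply records the statement and refers to \cite{energybernstein} for a proof. There is therefore nothing to compare against directly in this paper.

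That said, your approach is entirely in the spirit of how the paper handles the analogous PDI statements later on (Lemma~\ref{PDItoPD}, Theorem~\ref{principalmetric}): pass from $\gamma$ to the associated positive definite kernel $K_\gamma^w$, control the off-diagonal by the diagonal via the Cauchy--Schwarz inequality $|K(x,y)|\le\sqrt{K(x,x)K(y,y)}$, and then read off the semi-inner product as the pullback of the RKHS inner product through the kernel mean embedding of Lemma~\ref{initialextmmddominio}. The only stylistic difference is that in its own proofs the paper tends to phrase the key inequality in ``square-root'' form coming from the pseudometric $D_\gamma$ (cf.\ Equation~\eqref{4rootsinequality} and Equation~\eqref{descond}), which is of course equivalent to your Cauchy--Schwarz bound on $K_\gamma^w$. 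Your bookkeeping is clean; in particular the step $(ii)\Rightarrow(iii)$, where you choose $w=z_0$ and bound $\gamma(\cdot,z)$ by $\gamma(\cdot,z_0)$ plus $\sqrt{K_\gamma^{z_0}(z,z)}\sqrt{K_\gamma^{z_0}(\cdot,\cdot)}$ plus constants, and the check that $\gamma\in L^1(|\mu|\times|\nu|)$ before identifying $I(\mu,\nu)_\gamma=\langle K_\mu,K_\nu\rangle_{\mathcal H_K}$, are exactly what is needed.
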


When the semi inner product on the previous Lemma is an inner product,  we say that the kernel  $\gamma$ is   \textbf{CND-Characteristic}. The interesting aspect of a CND-Characteristic kernel $\gamma$ is that 
$$
E_{\gamma}(P,Q):=\sqrt{-\int_{X} \int_{X}\gamma(x,y)d[P-Q](x)d[P-Q](y) }
$$
is a metric on the space of probabilities that satisfies any of the 3 equivalent conditions in the first part of Lemma \ref{estimativa}.  The pseudometric $E_{\gamma}$ is usually called the \textbf{Energy distance}. A proof of Lemma \ref{estimativa} can be found in  Section $3$ in \cite{energybernstein}.

The characterization of  the continuous CND radial kernels in all Euclidean spaces was proved in \cite{neuschoen}, and is the following:  
\begin{theorem}\label{reprcondneg} Let $\psi :[0, \infty)\to \mathbb{R}$ be a continuous function. The following conditions are equivalent
	\begin{enumerate}
		\item[$(i)$] The kernel
		$$
		(x,y) \in \mathbb{R}^{d}\times \mathbb{R}^{d} \to \psi(\|x-y\|^{2}) \in \mathbb{R}
		$$
		is CND for every $d \in \mathbb{N}$. 
		\item[$(ii)$] The function $\psi$ can be represented as
		$$
		\psi(t)=\psi(0)+ \int_{[0,\infty)}(1-e^{-rt})\frac{1+r}{r}d\sigma(r),
		$$
		for all $t \geq 0$, where  $\sigma$ is a nonnegative  measure on $\mathfrak{M}([0,\infty))$. The  representation is  unique.
		\item[$(iii)$] The function $\psi \in C^{\infty}(0,\infty))$ and $\psi^{(1)}$ is completely monotone, that is, $(-1)^{n}\psi^{(n+1)}(t) \geq 0$, for every $n\in \mathbb{Z}_{+}$ and $t>0$.
	\end{enumerate}
\end{theorem}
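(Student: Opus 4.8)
\emph{Proof strategy.} The statement is a classical theorem of Schoenberg type; I would establish it as the cycle $(ii)\Rightarrow(i)\Rightarrow(iii)\Rightarrow(ii)$, obtaining uniqueness inside the last implication, and relying on two external facts: Schoenberg's identification of radial positive definite functions on all Euclidean spaces with completely monotone functions (\cite{schoenbradial}), and the Bernstein--Widder theorem together with the elementary theory of Bernstein functions (see \cite{berg0}). The genuinely non-elementary point is the implication $(i)\Rightarrow(iii)$; everything else is bookkeeping, the most delicate part of which is the handling of a possible atom of the representing measure at $r=0$ and the resulting reparametrizations.

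\emph{$(ii)\Rightarrow(i)$.} This is a direct verification. Fixing $d$, distinct points $x_1,\dots,x_n\in\mathbb{R}^d$ and scalars $c_i$ with $\sum_i c_i=0$, I would substitute the representation into $\sum_{i,j}c_ic_j\psi(\|x_i-x_j\|^2)$: the term $\psi(0)$ contributes $\psi(0)(\sum_i c_i)^2=0$, and for each $r>0$ the constant $1$ inside $1-e^{-r\|x_i-x_j\|^2}$ contributes a multiple of $(\sum_i c_i)^2=0$. The atom of $\sigma$ at $r=0$ (where the integrand is read as its limit $t$) produces $\sigma(\{0\})\sum_{i,j}c_ic_j\|x_i-x_j\|^2=-2\sigma(\{0\})\|\sum_i c_i x_i\|^2\le 0$, and the remaining term equals $-\int_{(0,\infty)}\big(\sum_{i,j}c_ic_j e^{-r\|x_i-x_j\|^2}\big)\tfrac{1+r}{r}\,d\sigma(r)$, which is $\le 0$ because each inner sum is nonnegative (the Gaussian kernel is PD on every $\mathbb{R}^d$); Tonelli is legitimate since $\sigma$ is finite and the integrand bounded. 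As only $\sum c_i=0$ is used, this is exactly CND-ness for every $d$.

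\emph{$(i)\Rightarrow(iii)$.} By the classical equivalence recalled in \eqref{schoenmetriccond}, the kernel $\psi(\|x-y\|^2)$ is CND on $\mathbb{R}^d$ for every $d$ if and only if, for every $r>0$, the kernel $e^{-r\psi(\|x-y\|^2)}$ is PD on $\mathbb{R}^d$ for every $d$; by Schoenberg's characterization this is equivalent to $t\mapsto e^{-r\psi(t)}$ being completely monotone on $(0,\infty)$ for every $r>0$. Writing $g:=\psi-\psi(0)$, so that $e^{-rg}=e^{r\psi(0)}e^{-r\psi}$ is completely monotone (hence non-increasing with value $1$ at $0$, forcing $g\ge 0$), the condition ``$e^{-rg}$ completely monotone for all $r>0$'' is the defining property of a Bernstein function. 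The structure theory of Bernstein functions then gives $g\in C^\infty(0,\infty)$ with $g'$ completely monotone and $g(t)=bt+\int_{(0,\infty)}(1-e^{-rt})\,d\mu(r)$, $b\ge 0$, $\int_{(0,\infty)}\min(1,r)\,d\mu(r)<\infty$ (the additive constant is $0$ because $g(0^+)=g(0)=0$ by continuity). In particular $\psi'=g'$ is completely monotone, which is $(iii)$.

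\emph{$(iii)\Rightarrow(ii)$ and uniqueness.} If $\psi'$ is completely monotone, the Bernstein--Widder theorem yields a nonnegative $\nu\in\mathfrak{M}([0,\infty))$ with $\psi'(t)=\int_{[0,\infty)}e^{-rt}\,d\nu(r)$; since $\psi$ is continuous at $0$ and $\psi'\ge 0$, the fundamental theorem of calculus and Tonelli give
\[
\psi(t)=\psi(0)+\nu(\{0\})\,t+\int_{(0,\infty)}\frac{1-e^{-rt}}{r}\,d\nu(r),
\]
and finiteness of the right-hand side forces $\nu((0,1])<\infty$ and $\int_{(1,\infty)}r^{-1}\,d\nu(r)<\infty$, so that $\sigma$ defined by $\sigma(\{0\})=\nu(\{0\})$ and $d\sigma(r)=(1+r)^{-1}\,d\nu(r)$ on $(0,\infty)$ is finite and reproduces the representation in $(ii)$. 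For uniqueness, differentiating $(ii)$ under the integral sign (valid on $t\ge\delta>0$ since $\sigma$ is finite) gives $\psi'(t)=\sigma(\{0\})+\int_{(0,\infty)}(1+r)e^{-rt}\,d\sigma(r)$; letting $t\to\infty$ recovers $\sigma(\{0\})$, after which $\psi'(t)-\sigma(\{0\})$ is the Laplace transform of the finite measure $(1+r)\,d\sigma(r)$, hence $\sigma$ is uniquely determined by injectivity of the Laplace transform. The one place I expect to need genuine care is $(i)\Rightarrow(iii)$: it is exactly there that Schoenberg's theorem and the Bernstein-function dictionary are indispensable, whereas the remaining steps are routine once these inputs are granted.
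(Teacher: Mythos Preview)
The paper does not contain a proof of Theorem~\ref{reprcondneg}: it is stated in Section~\ref{Definitions} as a classical result of Schoenberg, with a citation to \cite{neuschoen}, and is then used as a black box throughout (notably for the uniqueness argument in Theorem~\ref{basicradialuni} and Theorem~\ref{basicradial}). So there is no in-paper proof to compare against.

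Your argument is a correct and standard reconstruction of the classical result. The cycle $(ii)\Rightarrow(i)\Rightarrow(iii)\Rightarrow(ii)$ is the natural one: $(ii)\Rightarrow(i)$ is the easy direct computation you give; $(i)\Rightarrow(iii)$ correctly combines the exponential CND/PD correspondence (Equation~\ref{schoenmetriccond}) with Schoenberg's radial PD characterization to conclude that $t\mapsto e^{-r\psi(t)}$ is completely monotone for every $r>0$, and then invokes the Bernstein-function dictionary to obtain $\psi'\in CM$; and $(iii)\Rightarrow(ii)$ integrates the Bernstein--Widder representation of $\psi'$ and reparametrizes to the measure $\sigma$. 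Your handling of the atom at $r=0$, the finiteness of $\sigma$, and uniqueness via injectivity of the Laplace transform are all sound. One cosmetic remark: in $(i)\Rightarrow(iii)$ you appeal to ``the structure theory of Bernstein functions'' to get $g'\in CM$ and the L\'evy--Khinchin representation in one breath; strictly speaking only the implication ``$e^{-rg}$ CM for all $r>0$ $\Rightarrow$ $g'$ CM'' is needed there, since the integral representation is recovered separately in $(iii)\Rightarrow(ii)$. This is harmless but worth keeping straight so that the argument is visibly non-circular.
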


A continuous function $\psi:[0, \infty)\to \mathbb{R}$ that satisfies the relation $(iii)$ in  Theorem \ref{reprcondneg}  is called a \textbf{Bernstein function} (we do not need to assume that Bernstein functions are nonnegative), and the same theorem provides a representation for it. For more information on Bernstein functions see \cite{bers}. The value of the function $(1-e^{-rt})(1+r)/r$ at  $r=0$ is defined as the limit of $r \to 0$, that is, it value is $t$. Usually, the integral on the set $[0, \infty)$ is separated in the integral at $\{0\}$ plus the integral on the set $(0, \infty)$, we do not present it in this way as   the notation and terminology of the proofs in Section \ref{Bernstein  functions of two variables} are considerably simplified by using this simple modification.    

In \cite{gaussinfi} it was proved the following examples of ISPD kernels using Equation \ref{schoenmetriccond}.

\begin{theorem}\label{gengaussian} Let $\gamma: X \times X \to \mathbb{R}$ be a continuous CND kernel. Then the kernel 
	$$
	(x,x^{\prime}) \in X \times X \to e^{-\gamma(x,x^{\prime})} \in \mathbb{R}
	$$
	is ISPD if and only if there  exists  $\inf_{x \in X} \gamma(x,x)$ and the following relation holds
	\begin{equation}\label{metrizable}	
		\{(x,x^{\prime}) \in X\times X, \quad 2\gamma(x,y)= \gamma(x,x) + \gamma(y,y) \}= \{(x,x), \quad x \in X\}.
	\end{equation}	
\end{theorem}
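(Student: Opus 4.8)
The plan is to reduce the problem to the Gaussian kernel on a Hilbert space via the Schoenberg-type factorization of a CND kernel, and then invoke the fact recalled in the introduction (from \cite{gaussinfi}) that this kernel is ISPD on every Hilbert space. First I would record the elementary structure. Since $\gamma$ is CND, $e^{-\gamma}$ is PD by \eqref{schoenmetriccond}, and it is continuous because $\gamma$ is. Writing $\gamma$ as in \eqref{condequa}, so that $\gamma(x,y)=\|h(x)-h(y)\|_{\mathcal H}^{2}+f(x)+f(y)$ with $f(x)=\gamma(x,x)/2$ and $h\colon X\to\mathcal H$, one has the pointwise identity
\[
e^{-\gamma(x,y)}=e^{-\|h(x)-h(y)\|_{\mathcal H}^{2}}\,e^{-f(x)}\,e^{-f(y)} .
\]
From it I extract three facts: (a) $e^{-\gamma}$ is bounded if and only if $\inf_{x}\gamma(x,x)>-\infty$, since $\gamma(x,y)\ge f(x)+f(y)\ge\inf_{x}\gamma(x,x)$ gives one direction and evaluation on the diagonal gives the other; (b) the set appearing in \eqref{metrizable} is exactly $\{(x,y):h(x)=h(y)\}$, so \eqref{metrizable} holds precisely when $h$ is injective; (c) continuity of $\gamma$ forces $h$ to be continuous, because $\|h(x)-h(x')\|^{2}=\gamma(x,x')-\tfrac12\gamma(x,x)-\tfrac12\gamma(x',x')\to0$ as $x'\to x$.

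For the ``only if'' direction I argue by contraposition. If $\inf_{x}\gamma(x,x)=-\infty$, then $e^{-\gamma}$ is unbounded by (a), hence not ISPD. If instead \eqref{metrizable} fails, choose $x_{0}\neq y_{0}$ with $2\gamma(x_{0},y_{0})=\gamma(x_{0},x_{0})+\gamma(y_{0},y_{0})$, equivalently $h(x_{0})=h(y_{0})$ by (b). Then the $2\times 2$ matrix $\bigl(e^{-\gamma(z_{i},z_{j})}\bigr)_{i,j=1}^{2}$, with $z_{1}=x_{0}$, $z_{2}=y_{0}$, equals $vv^{\top}$ where $v=(e^{-f(x_{0})},e^{-f(y_{0})})^{\top}$; picking any $c\neq 0$ with $c\perp v$ and setting $\mu=c_{1}\delta_{z_{1}}+c_{2}\delta_{z_{2}}\neq 0$ yields $\int\!\int e^{-\gamma}\,d\mu\,d\mu=c^{\top}vv^{\top}c=(v^{\top}c)^{2}=0$, so the semi-inner product of Lemma \ref{initialextmmddominio} is not definite and $e^{-\gamma}$ is not ISPD.

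For the ``if'' direction, assume $\inf_{x}\gamma(x,x)>-\infty$ and $h$ injective. By (a), $e^{-\gamma}$ is bounded, so $\mathfrak M_{\sqrt{e^{-\gamma}}}(X)=\mathfrak M(X)$; let $\mu\in\mathfrak M(X)$ with $\int\!\int e^{-\gamma}\,d\mu\,d\mu=0$. Since $e^{-f}=e^{-\gamma(\cdot,\cdot)/2}$ is bounded and strictly positive, the measure $\nu$ with $d\nu=e^{-f}\,d\mu$ lies in $\mathfrak M(X)$, and the factorization gives $\int\!\int e^{-\|h(x)-h(y)\|^{2}}\,d\nu(x)\,d\nu(y)=0$. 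Pushing $\nu$ forward along $h$: since $|\nu|$ is concentrated on a $\sigma$-compact set, its image $\tilde\nu$ is a Radon measure supported in the separable closed subspace $\mathcal H_{0}\subseteq\mathcal H$ spanned by the image of that set, and change of variables gives $\int_{\mathcal H_{0}}\!\int_{\mathcal H_{0}}e^{-\|u-v\|^{2}}\,d\tilde\nu(u)\,d\tilde\nu(v)=0$. Because the Gaussian kernel is ISPD on the Hilbert space $\mathcal H_{0}$ (\cite{gaussinfi}), $\tilde\nu=0$. To conclude, write $|\nu|$ as concentrated on an increasing sequence of compacts $K_{n}$; each $h|_{K_{n}}$ is a homeomorphism onto $h(K_{n})$, so for Borel $E\subseteq X$ the set $h(E\cap K_{n})$ is Borel in $\mathcal H_{0}$ and, by injectivity of $h$, $h^{-1}(h(E\cap K_{n}))=E\cap K_{n}$; hence $\nu(E\cap K_{n})=\tilde\nu(h(E\cap K_{n}))=0$ for every $n$, so $\nu(E)=0$. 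Thus $\nu=0$, and since $d|\nu|=e^{-f}\,d|\mu|$ with $e^{-f}>0$, also $\mu=0$; therefore $e^{-\gamma}$ is ISPD.

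The step I expect to be the main obstacle is the last one: propagating the vanishing of the pushed-forward measure $\tilde\nu$ back to $\nu$ through the continuous injective map $h$, together with the care needed to realize the pushforward as a genuine Radon measure on a separable space so that the infinite-dimensional Gaussian ISPD theorem is applicable. Condition \eqref{metrizable} enters precisely as the injectivity of $h$ that makes this transfer possible; the remaining ingredients — the product factorization, the boundedness equivalence, and the rank-one computation in the ``only if'' part — are routine.
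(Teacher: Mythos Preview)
The paper does not actually prove Theorem~\ref{gengaussian}; it is stated in Section~\ref{Definitions} as a result quoted from \cite{gaussinfi}, and no proof appears in Section~\ref{Proofs}. So there is no in-paper argument to compare yours against.

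That said, your argument is correct and is the natural route. You reduce the general CND case to the special case of the Gaussian kernel $e^{-\|u-v\|^{2}}$ on a Hilbert space, which the paper also attributes to \cite{gaussinfi} in the Introduction; so you are not invoking the theorem itself, only its Hilbert-space instance. The factorization $e^{-\gamma(x,y)}=e^{-\|h(x)-h(y)\|^{2}}e^{-f(x)}e^{-f(y)}$ coming from \eqref{condequa} carries the whole weight: it gives the boundedness equivalence in (a), identifies \eqref{metrizable} with injectivity of $h$ in (b), and lets you pass to the pushforward $\tilde\nu=h_{*}(e^{-f}\,d\mu)$ on a separable closed subspace of $\mathcal H$. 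Your only-if direction via the rank-one $2\times2$ matrix is correct, and the measure-theoretic step you flagged --- pulling $\tilde\nu=0$ back to $\nu=0$ through the continuous injection $h$ using inner regularity, $\sigma$-compact support, and the homeomorphism $h|_{K_n}$ --- is handled properly; global injectivity of $h$ indeed yields $h^{-1}(h(E\cap K_n))=E\cap K_n$, so each $h(E\cap K_n)$ is Borel and has $\tilde\nu$-measure zero. This reduction is almost certainly the same strategy as in \cite{gaussinfi}, since once \eqref{condequa} is written down the general statement has no extra content beyond the Hilbert-space Gaussian.
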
	
A CND kernel $\gamma$ that satisfies Equation \ref{metrizable}  is called \textbf{metrizable}, as the function 
$$
D_{\gamma}(x, x^{\prime}):= \sqrt{2\gamma(x,y) - \gamma(x,x) - \gamma(y,y)}
$$
defines a metric on $X$ if and only if the relation on Equation \ref{metrizable} is satisfied (equivalently, the function $h$ in Equation \ref{condequa} is injective).

Note that if $\psi:[0, \infty) \to \mathbb{R}$ is a Bernstein function and $\gamma: X \times X \to [0, \infty)$ is a CND kernel, then the kernel
\begin{equation}\label{bern+cnd}
	(x,y) \in X \times X \to \psi(\gamma(x,y)) \in \mathbb{R}
\end{equation}
is CND.	As a consequence of Theorem \ref{gengaussian}, Theorem $3.3$ in  \cite{energybernstein}  is proved that this kernel is CND-Characteristic  if and only if $\gamma$ is metrizable and either $\sigma((0, \infty))>0$ or $\sigma(\{0\})>0$ and $\gamma$ is an CND-Characteristic kernel.

More information about the use of PD and CND kernels and metrics in the space of probabilities can be found at \cite{MethodsofDistances2013,bergforst,berlinet2011reproducing}

-\textbf{Conditionally positive definite radial kernels of order $\ell$}

In \cite{guo}   it is proved a generalization of Theorem \ref{reprcondneg}, as  it describes, for any $\ell \in \mathbb{Z}_{+}$,  the set of  continuous functions  $\psi:[0, \infty) \to \mathbb{R}$, that  satisfies 
$$ 
\sum_{i,j=1}^{n}c_{i}c_{j} \psi(\|x_{i}-x_{j}\|^{2})\geq 0  
$$
for every points $x_{1}, \ldots, x_{n} \in \mathbb{R}^{d}$ and scalars $c_{1}, \ldots, c_{n}$, under no restriction on the dimension $d$, but such that  
$$ 
\sum_{i=1}^{n}c_{i}p(x_{i})=0, \quad p \text{ is a polynomial of degree less then  } \ell.
$$
If the function $\psi$ satisfies this requirement we say that the kernel $\psi(\|x-y\|^{2})$ is \textbf{Conditionally Positive Definite of Order $\ell$ ($CPD_{\ell}$)}, see Chapter 8 in \cite{wendland}. Note that the case $\ell =1$ we are dealing with constants (polynomials of degree zero), hence $-\psi(\|x-y\|^{2})$ is an  CND kernel. The case $\ell =0$ is of  PD kernels.  	

\begin{theorem}\label{compleelltimes}  The following conditions are equivalent for a continuous functions $\psi: [0, \infty) \to \mathbb{R}$ 
	\begin{enumerate}
		\item[$(i)$] The kernel
		$$
		(x,y) \in \mathbb{R}^{d}\times \mathbb{R}^{d} \to \psi(\|x-y\|^{2}) \in \mathbb{R}
		$$
		is $CPD_{\ell}$  for every $d \in \mathbb{N}$. 
		\item[$(ii)$] The function $\psi$ can be represented as
		$$	
		\psi(t)= \int_{(0,\infty)} (e^{-tr} - e_{\ell}(r)\omega_{\ell}(rt))\frac{1+r^{\ell}}{r^{\ell}} d\sigma(r) + \sum_{k=0}^{\ell}a_{k}t^{k}
		$$
		where $\sigma$ is a nonnegative  measure on $\mathfrak{M}((0,\infty))$, with
		$$
		\omega_{\ell} (s):= \sum_{l=0}^{\ell-1}(-1)^{l}\frac{s^{l}}{l!}, \quad e_{\ell}(s):= e^{-s}\sum_{l=0}^{\ell-1}\frac{s^{l}}{l!}
		$$
		and $a_{k} \in \mathbb{R}$, $(-1)^{\ell}a_{\ell} \geq 0$. The representation is unique.
		\item[$(iii)$] The function $\psi \in C^{\infty}(0,\infty))$ and the function  $(-1)^{\ell}\psi^{(\ell)}$ is a completely monotone, that is, $(-1)^{n+\ell}\psi^{(n+\ell)}(t) \geq 0$, for every $n\in \mathbb{Z}_{+}$ and $t>0$.
	\end{enumerate}
\end{theorem}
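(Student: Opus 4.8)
The plan is to run the chain $(iii)\Rightarrow(ii)\Rightarrow(i)$ together with $(ii)\Rightarrow(iii)$ and the hard implication $(i)\Rightarrow(ii)$; the first three and the uniqueness are pure real analysis, and only $(i)\Rightarrow(ii)$ is substantial. For $(iii)\Rightarrow(ii)$ I would apply the Bernstein--Widder theorem to the completely monotone function $(-1)^{\ell}\psi^{(\ell)}$ on $(0,\infty)$: it equals $\int_{[0,\infty)}e^{-tr}\,d\mu(r)$ for a unique nonnegative Borel measure $\mu$. The atom at the origin, after $\ell$ integrations, yields the monomial $a_{\ell}t^{\ell}$ with $(-1)^{\ell}a_{\ell}=\mu(\{0\})/\ell!\ge0$; integrating the part of $\mu$ on $(0,\infty)$ $\ell$ times, each time choosing the antiderivative whose Taylor polynomial of order $\ell-1$ at $0$ vanishes, reproduces exactly the truncation $e_{\ell}(r)\omega_{\ell}(rt)$, with the constants of integration absorbed into $a_{0},\dots,a_{\ell-1}$, and one checks that continuity of $\psi$ at $t=0$ is equivalent to $\int_{[1,\infty)}r^{-\ell}\,d\mu<\infty$, i.e.\ to finiteness of $\sigma:=(1+r^{\ell})^{-1}\mu|_{(0,\infty)}$, so that indeed $\sigma\in\mathfrak{M}((0,\infty))$. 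Conversely $(ii)\Rightarrow(iii)$ is immediate: differentiating the representation $\ell$ times annihilates $\omega_{\ell}$ and the terms $a_{k}t^{k}$ with $k<\ell$, leaving $(-1)^{\ell}\psi^{(\ell)}(t)=\int_{(0,\infty)}e^{-tr}(1+r^{\ell})\,d\sigma(r)+(-1)^{\ell}a_{\ell}\ell!$, a Laplace transform of a nonnegative measure, hence completely monotone; this identity also gives uniqueness, since it pins down the Bernstein--Widder measure $(1+r^{\ell})\sigma+(-1)^{\ell}\ell!\,a_{\ell}\delta_{0}$ of $(-1)^\ell\psi^{(\ell)}$, after which $a_{0},\dots,a_{\ell-1}$ are forced by the Taylor expansion of $\psi$ at $0$.

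For $(ii)\Rightarrow(i)$ the task reduces to showing the building blocks are $CPD_{\ell}$ on every $\mathbb{R}^{d}$ and that $\sum_{i,j}c_{i}c_{j}$ may be interchanged with $\int d\sigma$. The Gaussian kernel $e^{-r\|x-y\|^{2}}$ is positive definite, hence $CPD_{\ell}$ for all $\ell$. For a monomial $t^{m}$ with $m\le\ell$, expanding $(\|x-y\|^{2})^{m}=(\|x\|^{2}-2\langle x,y\rangle+\|y\|^{2})^{m}$ multinomially and then using $\langle x,y\rangle^{b}=\sum_{|\beta|=b}\binom{b}{\beta}x^{\beta}y^{\beta}$ writes it as a finite sum of products $[\,\|x\|^{2a}x^{\beta}\,][\,\|y\|^{2c}y^{\beta}\,]$ with $(2a+|\beta|)+(2c+|\beta|)=2m$; when $m<\ell$ at least one of the two factors has degree $<\ell$, so each corresponding double sum $\big(\sum_{i}c_{i}\|x_{i}\|^{2a}x_{i}^{\beta}\big)\big(\sum_{j}c_{j}\|x_{j}\|^{2c}x_{j}^{\beta}\big)$ vanishes under the moment constraint of $CPD_{\ell}$, which disposes of $\omega_{\ell}(r\|x-y\|^{2})$ and of the $a_{k}t^{k}$ with $k<\ell$; when $m=\ell$ only the diagonal terms $a=c$, $|\beta|=\ell-2a$ survive, and there the double sum equals $(-1)^{\ell}2^{\ell-2a}\big(\sum_{i}c_{i}\|x_{i}\|^{2a}x_{i}^{\beta}\big)^{2}$, so $(x,y)\mapsto(-1)^{\ell}(\|x-y\|^{2})^{\ell}$ is $CPD_{\ell}$ and the sign condition $(-1)^{\ell}a_{\ell}\ge0$ is exactly what makes $a_{\ell}t^{\ell}$ contribute nonnegatively. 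Since the integrand of $(ii)$ is bounded in $r$ (the truncation makes it $O(1)$ as $r\to0$, and it is bounded as $r\to\infty$ on the finite set of values $\|x_{i}-x_{j}\|^{2}$) and $\sigma$ is finite, Fubini justifies the interchange and the three observations combine to give the quadratic form nonnegative.

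The heart is $(i)\Rightarrow(ii)$, which I would attack by Schoenberg's dimension-shifting method. For each fixed $d$, $f_{d}(x):=\psi(\|x\|^{2})$ is a continuous conditionally positive definite function of order $\ell$ on $\mathbb{R}^{d}$, so by the Bochner-type representation for such functions (Chapter~8 of \cite{wendland}) it can be written as $\int_{\mathbb{R}^{d}\setminus\{0\}}\big(\cos\langle x,\xi\rangle-\sum_{|\alpha|<\ell}\tfrac{(i\langle x,\xi\rangle)^{|\alpha|}}{|\alpha|!}\big)\,d\tau_{d}(\xi)+P_{d}(x)$ with $\tau_{d}\ge0$ and $P_{d}$ a polynomial; averaging over the orthogonal group, $\tau_{d}$ may be taken rotation invariant and $P_{d}$ a polynomial in $\|x\|^{2}$ of degree at most $\ell$ whose leading coefficient obeys the sign constraint of $(ii)$. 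Encoding $\tau_{d}$ by its radial part on $(0,\infty)$ and the $d$-dimensional spherical average $\Omega_{d}$ of $e^{i\langle x,\xi\rangle}$, rescaling $r\mapsto r/\sqrt d$, and letting $d\to\infty$ --- the point where one crucially uses that the \emph{same} $\psi$ works in \emph{every} dimension --- the classical limit $\Omega_{d}(u\sqrt d)\to e^{-u^{2}/2}$ (uniform on compacta) together with a tightness/Helly selection turns the rescaled measures into a finite limit measure whose contribution is a Gaussian mixture $\int e^{-tr}\,d\sigma(r)$; collecting the polynomial parts and relabelling gives $(ii)$, uniqueness being already in hand from $(ii)\Rightarrow(iii)$. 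I expect the main obstacle to be exactly this limiting step: obtaining the conditionally positive definite Bochner representation in the correct radial form (with the truncating polynomial of the right degree), controlling $\tau_{d}$ uniformly in $d$ so that the rescaled measures neither collapse to the origin nor escape to infinity, and matching up the truncations in the limit --- the analytic cores of the techniques in \cite{jeanrad}. An alternative one might try is to reduce to Theorem~\ref{reprcondneg} by showing that $(i)$ at order $\ell$ forces $-\psi'(\|x-y\|^{2})$ to satisfy $(i)$ at order $\ell-1$ and then inducting down, but establishing that reduction directly at the level of kernels appears no easier than the dimensional limit.
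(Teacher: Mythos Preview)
The paper does not actually prove Theorem~\ref{compleelltimes}: it is stated in the Definitions section as a known result, attributed to \cite{guo} (``In \cite{guo} it is proved a generalization of Theorem~\ref{reprcondneg}\ldots''), and no proof appears in Section~\ref{Proofs}. So there is nothing in the paper to compare your proposal against; you are essentially re-deriving the cited literature result.

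That said, your outline is the standard one and is broadly sound. The chain $(ii)\Leftrightarrow(iii)$ via Bernstein--Widder plus $\ell$-fold integration, the block-by-block verification of $(ii)\Rightarrow(i)$ (Gaussian is PD; $(-1)^{\ell}t^{\ell}$ is $CPD_{\ell}$ by the multinomial expansion you give; lower monomials are annihilated by the moment constraints), and the Schoenberg dimension-limit for $(i)\Rightarrow(ii)$ are exactly the ingredients of the Guo/Micchelli/Wendland treatment. One detail to tighten in $(iii)\Rightarrow(ii)$: the specific truncation $e_{\ell}(r)\omega_{\ell}(rt)$ in the statement is \emph{not} what you get by choosing, at each integration, the antiderivative with vanishing Taylor jet at $0$ --- that choice produces $\omega_{\ell}(rt)$ without the factor $e_{\ell}(r)$, which is the representation in Lemma~\ref{compleelltimessimples} and requires the extra hypothesis $\psi\in C^{\ell-1}([0,\infty))$. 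Under mere continuity of $\psi$ at $0$ the derivatives $\psi^{(k)}(0)$ for $1\le k\le\ell-1$ need not exist, so you cannot anchor the antiderivatives there; the factor $e_{\ell}(r)$ is precisely what makes each coefficient of $t^{k}$ in the integrand integrable against $\sigma$ near $r=\infty$ without any moment hypothesis on $\sigma$. You should instead anchor the integrations at $t=1$ (or any interior point) and absorb the resulting constants, then check that the particular choice $e_{\ell}(r)$ is the one that makes $\sigma$ finite. Apart from this bookkeeping your plan is correct.
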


A function that satisfies the equivalence on Theorem \ref{compleelltimes}  is called a \textbf{completely monotone function of order $\ell$} ($CM_{\ell}$) . For instance, the functions
\begin{enumerate}
	\item[$i)$] $(-1)^{\ell}t^{a} $;
	\item[$ii)$] $(-1)^{\ell}t^{\ell-1}\log (t)$;
	\item[$iii)$] $(-1)^{\ell}( c+t )^{a}$;
	\item[$iv)$] $e^{-rt}$, 
\end{enumerate}
are elements of $CM_{\ell}$, for $\ell-1 < a \leq \ell$  and  $c>0$.

On \cite{energybernstein}, we used functions on $CM_{\ell}$ to define metrics on a subspace of the  space of probabilities that have the same vector mean ($\ell=2$), the same vector mean and same covariance matrix $(\ell=3)$, and so on. It was also presented how those ideas can be generalized using CND kernels. 

In this text we only use the case $\ell=2$, as in Corolllary \ref{2times} we connect those functions with a certain type of radial PDI kernels. In order to obtain a better description of which probabilities with the same marginals we can compare,  we often include the hypothesis that $f\in CM_{2} \cap C^{1}([0, \infty))$. We do so because we have a simpler representation of those functions compared to Theorem \ref{compleelltimes}.

\begin{lem}\label{compleelltimessimples}  The following conditions are equivalent for a continuous functions $\psi: [0, \infty) \to \mathbb{R}$ 
	\begin{enumerate}
		\item[$(i)$] The function $\psi$ can be represented as
		$$	
		\psi(t)= \int_{(0,\infty)} (e^{-tr} - \omega_{\ell}(rt))\frac{1+r}{r^{\ell}} d\sigma(r) + \sum_{k=0}^{\ell}a_{k}t^{k}
		$$
		where $\sigma$ is a nonnegative  measure on $\mathfrak{M}((0,\infty))$, with
		$$
		\omega_{\ell} (s):= \sum_{l=0}^{\ell-1}(-1)^{l}\frac{s^{l}}{l!}, \quad e_{\ell}(s):= e^{-s}\sum_{l=0}^{\ell-1}\frac{s^{l}}{l!}
		$$
		and $a_{k} \in \mathbb{R}$, $(-1)^{\ell}a_{\ell} \geq 0$. The representation is unique.
		\item[$(ii)$] The function $\psi \in C^{\infty}(0,\infty)) \cap C^{\ell-1}([0, \infty))$ and the function  $(-1)^{\ell}\psi^{(\ell)}$ is a completely monotone, that is, $(-1)^{n+\ell}\psi^{(n+\ell)}(t) \geq 0$, for every $n\in \mathbb{Z}_{+}$ and $t>0$.
	\end{enumerate}

\end{lem}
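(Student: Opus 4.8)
The plan is to prove $(ii)\Rightarrow(i)$ by applying the classical Bernstein (Hausdorff--Bernstein--Widder) theorem to $\psi^{(\ell)}$ and integrating $\ell$ times, to prove $(i)\Rightarrow(ii)$ by differentiating under the integral sign, and to obtain the uniqueness in $(i)$ from the uniqueness in Bernstein's theorem together with a Taylor expansion at $0$. Observe first that condition $(ii)$ is exactly condition $(iii)$ of Theorem~\ref{compleelltimes} together with the extra regularity $\psi\in C^{\ell-1}([0,\infty))$; in particular every $\psi$ satisfying $(ii)$ is $CM_\ell$, and the content of the lemma is that the extra hypothesis at the origin upgrades the representation of Theorem~\ref{compleelltimes} to the simpler one in $(i)$.

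\emph{Proof of $(ii)\Rightarrow(i)$.} Set $g:=(-1)^{\ell}\psi^{(\ell)}$, a nonnegative completely monotone function on $(0,\infty)$; by Bernstein's theorem (see \cite{bers}) there is a unique nonnegative Radon measure $\mu$ on $[0,\infty)$ with $g(t)=\int_{[0,\infty)}e^{-tr}\,d\mu(r)$ for $t>0$. The first step is to extract from $\psi\in C^{\ell-1}([0,\infty))$ that $\psi^{(\ell)}\in L^{1}_{\mathrm{loc}}([0,\infty))$: since $\psi^{(\ell)}$ has constant sign $(-1)^{\ell}$ on $(0,\infty)$ and $\int_{a}^{1}\psi^{(\ell)}=\psi^{(\ell-1)}(1)-\psi^{(\ell-1)}(a)$ stays bounded as $a\to0^{+}$ (because $\psi^{(\ell-1)}$ extends continuously to $0$), the integral $\int_{0}^{1}|\psi^{(\ell)}|$ is finite. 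By Tonelli this reads $\int_{[0,\infty)}\tfrac{1-e^{-r}}{r}\,d\mu(r)<\infty$, and since $\tfrac{1-e^{-r}}{r}$ and $\tfrac1{1+r}$ are comparable on $(0,\infty)$, the measure $d\sigma(r):=\tfrac1{1+r}\,d\mu|_{(0,\infty)}(r)$ is a finite nonnegative measure on $(0,\infty)$. Now use Taylor's formula with integral remainder at $0$ (legitimate since $\psi\in C^{\ell-1}([0,\infty))\cap C^{\infty}((0,\infty))$ and $\psi^{(\ell)}\in L^{1}_{\mathrm{loc}}$), insert $\psi^{(\ell)}(s)=(-1)^{\ell}\int e^{-sr}\,d\mu(r)$, interchange the integrals by Tonelli, and apply the elementary identity (itself a Taylor remainder for $e^{-x}$)
\[
\frac{(-1)^{\ell}}{(\ell-1)!}\int_{0}^{t}(t-s)^{\ell-1}e^{-sr}\,ds=\frac{1}{r^{\ell}}\bigl(e^{-tr}-\omega_{\ell}(rt)\bigr)\qquad(r>0),
\]
whose value at $r=0$ is $(-1)^{\ell}t^{\ell}/\ell!$. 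Separating the atom of $\mu$ at $0$ and writing $\tfrac1{r^{\ell}}\,d\mu=\tfrac{1+r}{r^{\ell}}\,d\sigma$ on $(0,\infty)$ produces precisely the formula in $(i)$, with $a_{k}=\psi^{(k)}(0)/k!$ for $0\le k\le\ell-1$ and $a_{\ell}=(-1)^{\ell}\mu(\{0\})/\ell!$ (so $(-1)^{\ell}a_{\ell}\ge0$).

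\emph{Proof of $(i)\Rightarrow(ii)$ and uniqueness.} When $\psi$ is given by the formula in $(i)$ with $\sigma$ finite, for each fixed $t$ the integrand $(e^{-tr}-\omega_{\ell}(rt))\tfrac{1+r}{r^{\ell}}$ is bounded in $r$ on $(0,\infty)$ (it is $O(1)$ as $r\to0$, since $e^{-tr}-\omega_{\ell}(rt)=O(r^{\ell})$, and $O(1)$ as $r\to\infty$, since $e^{-tr}-\omega_{\ell}(rt)=O(r^{\ell-1})$), so the integral converges; the analogous compact-uniform bounds for the derivatives $\partial_{t}^{j}\bigl(e^{-tr}-\omega_{\ell}(rt)\bigr)$ (each being $(-r)^{j}e^{-tr}$ minus its Taylor polynomial of degree $\ell-1-j$) justify differentiating under the integral, giving $\psi\in C^{\infty}((0,\infty))$ with $\psi^{(j)}$ continuous up to $0$ for $j\le\ell-1$, i.e.\ $\psi\in C^{\ell-1}([0,\infty))$. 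Moreover $\psi^{(\ell)}(t)=(-1)^{\ell}\int_{(0,\infty)}e^{-tr}(1+r)\,d\sigma(r)+\ell!\,a_{\ell}$, so $(-1)^{\ell}\psi^{(\ell)}$ is the Laplace transform of the nonnegative Radon measure $(1+r)\,\sigma|_{(0,\infty)}+(-1)^{\ell}\ell!\,a_{\ell}\,\delta_{0}$ on $[0,\infty)$, hence completely monotone; this is $(ii)$. For uniqueness, given any representation as in $(i)$, differentiating $k\le\ell-1$ times at $0$ (the integral term and its first $\ell-1$ derivatives vanish there) forces $a_{k}=\psi^{(k)}(0)/k!$, while the uniqueness in Bernstein's theorem applied to $(-1)^{\ell}\psi^{(\ell)}$ determines the measure $(1+r)\,\sigma|_{(0,\infty)}+(-1)^{\ell}\ell!\,a_{\ell}\,\delta_{0}$, whose atom at $0$ fixes $a_{\ell}$ and whose restriction to $(0,\infty)$, divided by $1+r$, fixes $\sigma$.

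I expect the main obstacle to be the first step of $(ii)\Rightarrow(i)$: converting the qualitative hypothesis $\psi\in C^{\ell-1}([0,\infty))$ into the quantitative facts that $\psi^{(\ell)}$ is integrable near the origin and that the resulting $\sigma$ has finite total mass, and then running Taylor's formula in the regime where $\psi$ is only $C^{\ell-1}$ (so $\psi^{(\ell)}$ may blow up at $0$), where the integral-remainder form must be justified through local integrability of $\psi^{(\ell)}$ rather than through continuity.
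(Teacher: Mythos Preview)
Your proof is correct and self-contained. The paper itself does not supply a proof of this lemma; immediately after the statement it only writes ``A proof of this fact can be found in Section~4 at \cite{energybernstein}, where this representation was used for similar purposes.'' So there is no in-paper argument to compare against.

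Your route---apply Bernstein's theorem to $(-1)^{\ell}\psi^{(\ell)}$, use $\psi\in C^{\ell-1}([0,\infty))$ to force $\psi^{(\ell)}\in L^{1}_{\mathrm{loc}}([0,\infty))$ (hence the finiteness of $\sigma$), and recover $\psi$ via the integral-remainder Taylor formula---is exactly the natural one, and every step is justified. The identity you use for the inner $s$-integral is indeed the Taylor remainder of $e^{-tr}$, and the separation of the atom $\mu(\{0\})$ correctly produces the $a_{\ell}t^{\ell}$ term with the sign constraint. For $(i)\Rightarrow(ii)$ your uniform bounds are consistent with the paper's own inequality~\eqref{ineqcm2c1}, which gives $|E_{\ell}(rt)\tfrac{1+r}{r^{\ell}}|\le M_{2}(1+t^{\ell})$ and similarly for lower-order derivatives, so differentiation under the integral up to order $\ell-1$ on $[0,\infty)$ and to order $\ell$ on $(0,\infty)$ is legitimate. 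The uniqueness argument via Bernstein's uniqueness plus evaluation of the first $\ell-1$ derivatives at $0$ is clean.

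One small remark on presentation: in the $(i)\Rightarrow(ii)$ direction you might make explicit that the $j$-th $t$-derivative of the integrand equals $(-1)^{j}(e^{-tr}-\omega_{\ell-j}(rt))\tfrac{1+r}{r^{\ell-j}}$ for $0\le j\le \ell-1$, so the same two-sided bound~\eqref{ineqcm2c1} applies with $\ell$ replaced by $\ell-j$; this makes the ``compact-uniform bounds'' claim completely transparent.
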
 	

A proof of this fact can be found in Section $4$ at \cite{energybernstein}, where this representation was used for similar purposes. As proved in Section $6$ in \cite{energybernstein}, for any $\ell \in \mathbb{N}$, the function $E_{\ell}(s)= (-1)^{\ell}(e^{-s} -\omega_{\ell}(s))$  is nonnegative, increasing, convex  and there are $M_{1}, M_{2} > 0$ for which
\begin{equation}\label{ineqcm2c1}
	M_{1}\min(t^{\ell}, t^{\ell-1}) \leq E_{\ell}(rt)\frac{1+r}{r^{\ell}} \leq M_{2}(1 + t^{\ell}), \quad r,t \geq 0.
\end{equation}
In particular, for any $s \geq 1$ we have that  $E_{\ell}(rs) \leq M_{2}s^{\ell}E_{\ell}(r)/M_{1}$, for every $t \geq 0$. These properties are essential for the equivalence relations in Lemma \ref{integmargincompl2}.

A generalization of Lemma \ref{estimativa}  to functions in $CM_{\ell} \cap C^{\ell -1}([0, \infty))$ is possible  and can be found in Section $4$ of \cite{energybernstein}.

\begin{lem}\label{estimativa2} Let $\gamma : X \times X \to [0, \infty)$ be a continuous CND kernel such that $\gamma(x,x)$ is a bounded function, $\mu \in \mathfrak{M}(X)$ and  $\psi \in CM_{\ell} \cap C^{\ell -1}([0, \infty))$. The following assertions are equivalent
	\begin{enumerate}
		\item[$(i)$] $\psi(\gamma) \in L^{1}(|\mu|\times |\mu|)$;
		\item[$(ii)$] The function $x \to \psi(\gamma(x,z)) \in L^{1}(|\mu|)$ for some $z \in X$;
		\item[$(iii)$] The function $x \to \psi(\gamma(x,z)) \in L^{1}(|\mu|)$ for every  $z \in X$.	
	\end{enumerate}	
	Further, the set of measures that satisfies these relations is a vector space.	
\end{lem}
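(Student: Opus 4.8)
The plan is to follow the template of the proof of Lemma~\ref{estimativa} (the case $\ell=1$), the genuinely new input being quantitative control of the nonlinearity $\psi$. The implication $(i)\Rightarrow(ii)$ is Tonelli's theorem: if $\mu\neq 0$ and $\psi(\gamma)\in L^{1}(|\mu|\times|\mu|)$, then $x\mapsto\psi(\gamma(x,z))$ lies in $L^{1}(|\mu|)$ for $|\mu|$-a.e.\ $z$, hence for at least one $z$; and if $\mu=0$ everything is trivial. The work is to prove two estimates. First, (A), a base-point comparison for $\gamma$: writing $\gamma(x,y)=\|h(x)-h(y)\|^{2}+f(x)+f(y)$ as in Equation~\ref{condequa}, with $f=\gamma(\cdot,\cdot)/2\ge 0$ bounded, the triangle inequality in the Hilbert space together with $(a+b)^{2}\le 2a^{2}+2b^{2}$ gives $\gamma(x,y)\le 2\gamma(x,w)+2\gamma(w,y)+c_{\gamma}$ for all $x,y,w\in X$, where $c_{\gamma}=\sup_{x}\gamma(x,x)$ (one can even take $c_{\gamma}=0$ using positive definiteness of $K^{w}_{\gamma}$ in Equation~\ref{Kgamma} and Cauchy--Schwarz). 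Second, (B), an affine-rescaling estimate for $\psi$: for every $\lambda\ge 1$ and $c\ge 0$ there is a constant $C_{\lambda,c}$ with $|\psi(\lambda t+c)|\le C_{\lambda,c}\,(|\psi(t)|+1)$ for all $t\ge 0$.

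Granting (A) and (B), the remaining implications go as in the $\ell=1$ case. We may assume $\psi$ is unbounded on $[0,\infty)$ (otherwise $\psi(\gamma)$ is bounded and all three statements hold trivially, since $|\mu|$ and $|\mu|\times|\mu|$ are finite). Since $(-1)^{\ell}\psi^{(\ell)}$ is completely monotone, $\psi^{(\ell-1)},\dots,\psi^{(0)}$ are each eventually monotone, so $\psi(t)\to\pm\infty$ and $|\psi|$ is increasing beyond some threshold $T$. For $(ii)\Rightarrow(iii)$: given $\psi(\gamma(\cdot,z_{0}))\in L^{1}(|\mu|)$ and an arbitrary $z_{1}$, write $\gamma(x,z_{1})\le 2\gamma(x,z_{0})+c$ (with $c$ a constant, by (A)); on $\{\gamma(\cdot,z_{0})\le R\}$ the function $\psi(\gamma(\cdot,z_{1}))$ is bounded by continuity, and on the complement, for $R$ large, monotonicity and (B) give $|\psi(\gamma(x,z_{1}))|\le|\psi(2\gamma(x,z_{0})+c)|\le C(|\psi(\gamma(x,z_{0}))|+1)$; both pieces are $|\mu|$-integrable. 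The same two estimates yield the symmetric bound $|\psi(\gamma(x,y))|\le C(|\psi(\gamma(x,z_{0}))|+|\psi(\gamma(y,z_{0}))|+1)$ — using $\gamma(x,y)\le 4\max(\gamma(x,z_{0}),\gamma(y,z_{0}))+c_{\gamma}$, monotonicity, and (B) — and integrating it against $|\mu|\times|\mu|$ (finite) gives $(iii)\Rightarrow(i)$. Finally, since $|\mu_{1}+\mu_{2}|\le|\mu_{1}|+|\mu_{2}|$, condition $(ii)$ — hence, by the equivalence, any of the three — passes to sums, and clearly to scalar multiples, so the relevant set of measures is a vector space.

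I expect (B) to be the main obstacle. The crude bound $|\psi(t)|\le C(1+t^{\ell})$ afforded by the inequality~\ref{ineqcm2c1} and the representation of Lemma~\ref{compleelltimessimples} is not enough, because the completely monotone part of order $\ell$ and the polynomial part $\sum_{k<\ell}a_{k}t^{k}$ in that representation may partly cancel, so the true growth rate of $\psi$ can be strictly slower than $t^{\ell}$. The proof I would give proceeds by differentiating the representation (using $E_{\ell}^{(k)}=E_{\ell-k}$) and then integrating back from $0$ (legitimate since $\psi\in C^{\ell-1}([0,\infty))$): at the $j$-th stage $\psi^{(\ell-j)}$ equals, up to a polynomial of degree $\le j$ whose top coefficient may vanish, an explicit expression of the form $\pm\int_{(0,\infty)}(1-e^{-rt})\tfrac{1+r}{r^{j}}\,d\sigma(r)$ or $\pm\int_{(0,\infty)}e^{-rt}\tfrac{1+r}{r^{j}}\,d\sigma(r)$. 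Tracking which term dominates as $t\to\infty$, one finds that $|\psi(t)|$ is, for $t$ large, comparable to one function $\nu$ from the finite list $\{t^{k}:0\le k\le\ell\}\cup\{\int(1-e^{-rt})\tfrac{1+r}{r^{j}}d\sigma:1\le j\le\ell\}\cup\{\int e^{-rt}\tfrac{1+r}{r^{\ell}}d\sigma\}$ (the last case being excluded here since $\psi$ is unbounded, but relevant in general). Each such $\nu$ satisfies $\nu(\lambda t)\le C\lambda^{\ell}\nu(t)$ for $\lambda\ge 1$: for the monomials this is immediate; for the first integral type one uses $1-e^{-\lambda s}\le 2\lambda(1-e^{-s})$; for the second, $e^{-\lambda s}\le e^{-s}$; equivalently these are all instances of the estimate $E_{\ell}(rs)\le (M_{2}/M_{1})s^{\ell}E_{\ell}(r)$ for $s\ge 1$ recorded after~\ref{ineqcm2c1}. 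Hence $|\psi(\lambda t)|\le C'\lambda^{\ell}|\psi(t)|$ for $t$ past a threshold, and (B) follows once the shift $c$ (via $\lambda t+c\le(\lambda+1)t$ for $t\ge c$) and the bounded range of $t$ (via continuity of $\psi$) are absorbed into the constant.
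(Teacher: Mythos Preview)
The paper does not actually include a proof of Lemma~\ref{estimativa2}: it is quoted from Section~4 of \cite{energybernstein}. So there is nothing in the present text to compare your argument against line by line. That said, your reduction to the two estimates (A) and (B) is the right architecture, and the implications $(i)\Rightarrow(ii)\Rightarrow(iii)\Rightarrow(i)$ you derive from them, together with the vector-space conclusion, are correct. In particular, the base-point inequality (A) is exactly the content of the CND structure (cf.\ the estimate used repeatedly in the paper, e.g.\ Equation~\ref{descond}), and once (B) is available the rest follows as you describe, with the small caveat that in the step $|\psi(\gamma(x,z_{1}))|\le|\psi(2\gamma(x,z_{0})+c)|$ you should add the bounded correction $\sup_{[0,T]}|\psi|$ to cover the case $\gamma(x,z_{1})<T$.

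The substantive content is your sketch of (B). Your case analysis is correct: from the representation in Lemma~\ref{compleelltimessimples} one writes $(-1)^{\ell}\psi(t)=\Psi(t)+Q(t)$ with $\Psi(t)=\int E_{\ell}(rt)\tfrac{1+r}{r^{\ell}}d\sigma+|a_{\ell}|t^{\ell}\ge 0$ and $Q$ a polynomial of degree $\le\ell-1$; $\Psi$ is increasing and satisfies $\Psi(\lambda t)\le C\lambda^{\ell}\Psi(t)$ by the scaling of $E_{\ell}$ recorded after~\ref{ineqcm2c1}. If $\Psi(t)/t^{\ell-1}\to\infty$ one has $|\psi|\sim\Psi$ and (B) follows; otherwise one iterates, peeling off the top surviving monomial at each stage exactly as you indicate, and at the last stage the remainder is either bounded or of the form $\pm\int(1-e^{-rt})\tfrac{1+r}{r^{j}}d\sigma$, which again obeys the doubling bound. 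For $\ell=2$ this is a short three-case check; for general $\ell$ it is a finite but genuine induction. The paper singles out precisely the scaling of $E_{\ell}$ as the ``essential'' ingredient for the closely related Lemma~\ref{integmargincompl2}, so it is reasonable to expect the argument in \cite{energybernstein} to be organized around the same estimate; your route is very likely close in spirit to the cited proof.
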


\section{\textbf{Positive definite independent kernels}}\label{positive definite independent kernels}

\begin{defn}\label{PDI}Let $X$ and $Y$ be non empty sets. We say that a symmetric kernel $\mathfrak{I}: (X\times Y) \times (X\times Y) \to \mathbb{R}$ is a  \textbf{Positive Definite Independent Kernel (PDI)}  if for every finite quantity of distinct points $x_{1}, \ldots, x_{n} \in X$, $y_{1}, \ldots , y_{m} \in Y$  and real scalars $c_{i, k}$, with the restrictions
	$$
	\sum_{i=1}^{n}c_{i,k}=0 , \quad \sum_{l=1}^{m}c_{j, l}=0, 
	$$
	for every $1\leq k \leq m$, $1\leq j \leq n$, it satisfies
	$$
	\sum_{i,j=1}^{n}\sum_{k,l=1}^{m}c_{i, k}c_{j, l}\mathfrak{I}((x_{i},y_{k}),(x_{j}, y_{l})) \geq 0.
	$$
	The kernel $\mathfrak{I}$ is called \textbf{Strictly Positive Definite Independent Kernel} (SPDI), if the previous inequality is an equality only when  all scalars $c_{i, k}$ are zero.
\end{defn}

This definition is inspired by Equation \ref{objective} on the case that $\pi, \pi^{\prime}$ are discrete measures, because there exists  points $x_{1}, \ldots, x_{n} \in X$, $y_{1}, \ldots , y_{m} \in Y$,  real scalars $c_{i, k}$ that satisfies the necessary restrictions of Definition \ref{PDI} and $M \in \mathbb{R}$ for which
$$
\pi- \pi^{\prime}= M\left ( \sum_{i=1}^{n} \sum_{k=1}^{m}c_{i,k}\delta_{(x_{i}, y_{k})} \right ).
$$

The most important example of an PDI kernel is the fact that the Kronecker product of CND kernels is PDI.  Indeed, let $\gamma: X \times X\to \mathbb{R}$ and $\varsigma: Y \times Y \to \mathbb{R}$ be nonzero  CND kernels and consider its  Kronecker product
$$
(\gamma\otimes \varsigma)((x,y)(x^{\prime}, y^{\prime})):= \gamma(x,x^{\prime})\varsigma(y,y^{\prime}).
$$
Let $x_{1}, \ldots, x_{n} \in X$, $y_{1}, \ldots , y_{m} \in Y$  and real scalars $c_{i, k}$ with the necessary restrictions, then by Equation \ref{condequa} we have that
\begin{align*}
	\sum_{i,j=1}^{n}\sum_{k,l=1}^{m}&c_{i, k}c_{j, l}(\gamma\otimes \varsigma)((x_{i},y_{k}),(x_{j},y_{l}))=\sum_{i,j=1}^{n}\sum_{k,l=1}^{m}c_{i, k}c_{j, l}\gamma(x_{i},x_{j})\varsigma(y_{k},y_{l})\\
	&=\sum_{i,j=1}^{n}\gamma(x_{i},x_{j}) \left [\sum_{k,l=1}^{m}c_{i, k}c_{j, l}(\|h_{\varsigma}(y_{k}) - h_{\varsigma}(y_{l})\|^{2} + \varsigma(y_{k},y_{k})/2 +\varsigma(y_{l},y_{l}))/2\right ]\\
	&= \sum_{i,j=1}^{n}\gamma(x_{i},x_{j}) \left [-2\sum_{k,l=1}^{m}c_{i, k}c_{j, l} \langle h_{\varsigma}(y_{k}), h_{\varsigma}(y_{l})\rangle \right ]\\
	&= -2\sum_{k,l=1}^{m}\langle h_{\varsigma}(y_{k}), h_{\varsigma}(y_{l})\rangle \left [\sum_{i,j=1}^{n} c_{i, k}c_{j, l}\gamma(x_{i},x_{j}) \right ]\\
	&= 4\sum_{i,j=1}^{n}\sum_{k,l=1}^{m}c_{i, k}c_{j, l}\langle h_{\gamma}(x_{i}), h_{\gamma}(x_{j})\rangle\langle h_{\varsigma}(y_{k}), h_{\varsigma}(y_{l})\rangle \geq 0.
\end{align*}

Although this is a simple example, for some families of PDI kernels that satisfies some simmetry relation (like radiality on both coordinates, that we present on   Section \ref{Bernstein  functions of two variables}) are a convex combination (on an integration sense) of kernels of this type. Also, note that the Kronecker product of positive definite kernels is an PDI kernel.

If  $\mathfrak{I}: (X\times Y) \times (X\times Y) \to \mathbb{R}$ is an PDI kernel and $y_{1}, \ldots, y_{m} \in Y$, $e_{1}, \ldots , e_{m} \in \mathbb{R}$ satisfies $\sum_{k=1}^{m}e_{k}=0$, then the kernel
\begin{equation}\label{CNDprojgeral}
	(x,x^{\prime}) \in X\times X \to -\sum_{k,l=1}^{m}e_{k}e_{j}\mathfrak{I}((x,y_{k}),(x^{\prime}, y_{l})), \quad 
\end{equation}
is CND on $X$ (a similar result replacing $X$ with $Y$ is also possible). Indeed, just note that if $\sum_{i=1}^{n}d_{i}=0$, then $c_{i,k}:=d_{i}e_{k}$ satisfies the necessary restrictions in the definition of PDI kernel.

The next result connects the definition of PDI kernels with positive definite kernels.

\begin{lem}\label{PDItoPD} Let $\mathfrak{I}: (X\times Y) \times (X\times Y) \to \mathbb{R}$  be a symmetric kernel  and $x_{0} \in X$, $y_{0} \in Y$.  The kernel $K^{\mathfrak{I}  }: (X\times Y) \times (X\times Y) \to \mathbb{R}$ defined as 
	\begin{align*}
		K^{\mathfrak{I}  }& ((x,y),(x^{\prime}, y^{\prime})):=\\
		& \left [ \right . \mathfrak{I}((x,y),(x^{\prime}, y^{\prime})) - \mathfrak{I}((x_{0},y),(x^{\prime}, y^{\prime}))-\mathfrak{I}((x,y_{0}),(x^{\prime}, y^{\prime}))+\mathfrak{I}((x_{0},y_{0}),(x^{\prime}, y^{\prime}))\\
		&-\mathfrak{I}((x,y),(x_{0}, y^{\prime}))+\mathfrak{I}((x_{0},y),(x_{0}, y^{\prime}))+\mathfrak{I}((x,y_{0}),(x_{0}, y^{\prime}))-\mathfrak{I}((x_{0},y_{0}),(x_{0}, y^{\prime}))\\
		&-\mathfrak{I}((x,y),(x^{\prime}, y_{0}))+\mathfrak{I}((x_{0},y),(x^{\prime}, y_{0}))+\mathfrak{I}((x,y_{0}),(x^{\prime}, y_{0}))-\mathfrak{I}((x_{0},y_{0}),(x^{\prime}, y_{0}))\\
		&+\mathfrak{I}((x,y),(x_{0}, y_{0}))-\mathfrak{I}((x_{0},y),(x_{0}, y_{0}))-\mathfrak{I}((x,y_{0}),(x_{0}, y_{0}))+\mathfrak{I}((x_{0},y_{0}),(x_{0}, y_{0}))\left . \right ]
	\end{align*}
	is PD if and only if $\mathfrak{I}$ is PDI.
\end{lem}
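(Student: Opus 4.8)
The plan is to recognize $K^{\mathfrak{I}}$ as a ``double differencing'' of $\mathfrak{I}$: it is exactly what one obtains by applying to the first coordinate pair $(x,y)$ the operation that replaces the evaluation at $(x,y)$ by the combination corresponding to the measure $(\delta_x-\delta_{x_0})\otimes(\delta_y-\delta_{y_0})$, and then doing the same to the second coordinate pair $(x',y')$. Concretely, I would first establish the identity
\begin{equation*}
	K^{\mathfrak{I}}((x,y),(x',y')) = \sum_{a,b\in\{0,1\}}\sum_{a',b'\in\{0,1\}} (-1)^{a+b+a'+b'}\,\mathfrak{I}\bigl((x_a,y_b),(x_{a'},y_{b'})\bigr),
\end{equation*}
where $x_1:=x$, $y_1:=y$, $x'_1:=x'$, $y'_1:=y'$ and $x_0,y_0$ the fixed points (with the obvious reading $x'_0=x_0$, etc.). Matching this against the sixteen terms in the statement is the routine bookkeeping step; the signs are $(-1)$ to the number of coordinates set to their ``new'' value.

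For the ``only if'' direction, suppose $\mathfrak{I}$ is PDI. Fix distinct points $x_1,\dots,x_n\in X$ and $y_1,\dots,y_m\in Y$ and scalars $d_{i,k}$ (no restriction). I want to show $\sum_{i,j,k,l} d_{i,k}d_{j,l}K^{\mathfrak{I}}((x_i,y_k),(x_j,y_l))\ge 0$. Substituting the sum-of-sixteen-terms expression, I would introduce the augmented point sets $X':=\{x_0,x_1,\dots,x_n\}$ and $Y':=\{y_0,y_1,\dots,y_m\}$ (discarding repetitions if $x_0$ or $y_0$ already appears — a minor case distinction to handle separately) and define new coefficients $c_{i,k}$ indexed over $X'\times Y'$ by ``spreading'' each $d_{i,k}$ across the four points $(x_i,y_k),(x_0,y_k),(x_i,y_0),(x_0,y_0)$ with signs, i.e. $c_{i,k}$ at $(x_i,y_k)$ gets $d_{i,k}$, at $(x_0,y_k)$ gets $-d_{i,k}$, and so on, summing contributions. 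The key point is that for each fixed column index the resulting coefficients sum to zero (the $+x_i$ and $-x_0$ contributions cancel), and likewise for each fixed row index; hence $\{c_{i,k}\}$ satisfies the PDI admissibility constraints, and the PDI inequality applied to $\mathfrak{I}$ yields precisely $\sum d_{i,k}d_{j,l}K^{\mathfrak{I}}(\cdots)\ge 0$, i.e. $K^{\mathfrak{I}}$ is PD.

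For the converse, suppose $K^{\mathfrak{I}}$ is PD and take points and coefficients $c_{i,k}$ satisfying the PDI restrictions $\sum_i c_{i,k}=0$ for each $k$ and $\sum_k c_{j,l}=0$ for each $j$. Here I would exploit the ``telescoping'' fact that, under these zero-sum constraints, inserting the extra point $x_0$ or $y_0$ costs nothing: $\sum_{i,k} c_{i,k}\mathfrak{I}((x_0,y_k),\cdot)=0$ and the analogous sums vanish, so that $\sum c_{i,k}c_{j,l}\mathfrak{I}((x_i,y_k),(x_j,y_l)) = \sum c_{i,k}c_{j,l}K^{\mathfrak{I}}((x_i,y_k),(x_j,y_l))$; then positive definiteness of $K^{\mathfrak{I}}$ gives nonnegativity. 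The main obstacle — really the only delicate point — is the careful signed bookkeeping in the first direction: verifying that the ``spread'' coefficients genuinely satisfy both families of zero-sum constraints and that the quadruple sum reorganizes correctly, together with cleanly handling the degenerate cases where $x_0\in\{x_1,\dots,x_n\}$ or $y_0\in\{y_1,\dots,y_m\}$ (where one simply merges coefficients). Symmetry of $K^{\mathfrak{I}}$, needed for it to be a legitimate candidate PD kernel, follows immediately from symmetry of $\mathfrak{I}$ and the symmetry of the sixteen-term formula under swapping the two coordinate pairs.
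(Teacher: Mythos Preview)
Your proposal is correct and follows essentially the same route as the paper: in both arguments one augments an arbitrary family of coefficients $d_{i,k}$ by the ``border'' values $c_{0,k}=-\sum_i d_{i,k}$, $c_{i,0}=-\sum_k d_{i,k}$, $c_{0,0}=\sum_{i,k}d_{i,k}$ to produce a PDI-admissible family, and in the other direction one uses the zero-sum constraints to kill fifteen of the sixteen terms. Your compact formula $K^{\mathfrak{I}}=\sum_{a,b,a',b'\in\{0,1\}}(-1)^{a+b+a'+b'}\mathfrak{I}((x_a,y_b),(x_{a'},y_{b'}))$ is a tidy way to package the bookkeeping that the paper carries out term by term; note only that you have swapped the labels ``if'' and ``only if'' relative to the statement as written.
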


\begin{lem}\label{PDIsimpli} Let $\mathfrak{I}: (X\times Y) \times (X\times Y) \to \mathbb{R}$  be a symmetric kernel.  The kernel $\mathfrak{I}^{\prime}: (X\times Y) \times (X\times Y) \to \mathbb{R}$ given by
	\begin{align*}
		\mathfrak{I}^{\prime} &((x,y),(x^{\prime}, y^{\prime})):= \mathfrak{I}((x,y),(x^{\prime}, y^{\prime}))\\
		&- \mathfrak{I}((x,y),(x, y^{\prime}))/2 - \mathfrak{I}((x^{\prime},y),(x^{\prime}, y^{\prime}))/2 - \mathfrak{I}((x,y),(x^{\prime}, y))/2 - \mathfrak{I}((x,y^{\prime}),(x^{\prime}, y^{\prime}))/2\\
		& + \mathfrak{I}((x,y),(x, y))/4+ \mathfrak{I}((x,y^{\prime}),(x, y^{\prime}))/4   + \mathfrak{I}((x^{\prime},y),(x^{\prime}, y))/4+ \mathfrak{I}((x^{\prime},y^{\prime}),(x^{\prime}, y^{\prime}))/4
	\end{align*}
	is PDI if and only if $\mathfrak{I}$ is PDI.
\end{lem}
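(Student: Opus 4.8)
The plan is to show that for every admissible configuration---distinct $x_{1},\dots,x_{n}\in X$, distinct $y_{1},\dots,y_{m}\in Y$, and real scalars $c_{i,k}$ with $\sum_{i=1}^{n}c_{i,k}=0$ for each $k$ and $\sum_{l=1}^{m}c_{j,l}=0$ for each $j$---the two quadratic forms built from $\mathfrak{I}$ and from $\mathfrak{I}^{\prime}$ coincide:
$$
\sum_{i,j=1}^{n}\sum_{k,l=1}^{m}c_{i,k}c_{j,l}\,\mathfrak{I}^{\prime}((x_{i},y_{k}),(x_{j},y_{l}))=\sum_{i,j=1}^{n}\sum_{k,l=1}^{m}c_{i,k}c_{j,l}\,\mathfrak{I}((x_{i},y_{k}),(x_{j},y_{l})).
$$
Once this identity is in hand the equivalence is immediate, because the right-hand side is nonnegative for all admissible configurations precisely when $\mathfrak{I}$ is PDI (the identity would in fact also give that $\mathfrak{I}^{\prime}$ is SPDI iff $\mathfrak{I}$ is). Before that I would check the harmless point that $\mathfrak{I}^{\prime}$ is symmetric whenever $\mathfrak{I}$ is: under the interchange $(x,y)\leftrightarrow(x^{\prime},y^{\prime})$, symmetry of $\mathfrak{I}$ permutes the four terms carrying the coefficient $-1/2$ among themselves (the one fixing $x$ with the one fixing $x^{\prime}$, the one fixing $y$ with the one fixing $y^{\prime}$) and likewise permutes the four diagonal terms carrying $+1/4$.

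To prove the identity, observe that $\mathfrak{I}-\mathfrak{I}^{\prime}$ is, by definition, a sum of eight terms, so it suffices to show that each of them contributes $0$ after being weighted by $c_{i,k}c_{j,l}$ and summed over $i,j,k,l$. Substituting $x=x_{i},\,y=y_{k},\,x^{\prime}=x_{j},\,y^{\prime}=y_{l}$, each of the four terms with coefficient $-1/2$ fails to depend on exactly one of the four summation indices: $\mathfrak{I}((x,y),(x,y^{\prime}))$ becomes $\mathfrak{I}((x_{i},y_{k}),(x_{i},y_{l}))$, independent of $j$; $\mathfrak{I}((x^{\prime},y),(x^{\prime},y^{\prime}))$ becomes $\mathfrak{I}((x_{j},y_{k}),(x_{j},y_{l}))$, independent of $i$; $\mathfrak{I}((x,y),(x^{\prime},y))$ becomes $\mathfrak{I}((x_{i},y_{k}),(x_{j},y_{k}))$, independent of $l$; and $\mathfrak{I}((x,y^{\prime}),(x^{\prime},y^{\prime}))$ becomes $\mathfrak{I}((x_{i},y_{l}),(x_{j},y_{l}))$, independent of $k$. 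In each case, performing the sum over the absent index first collapses the weighted sum against a factor of the form $\sum_{i}c_{i,k}$ or $\sum_{l}c_{j,l}$, which vanishes by hypothesis. Each of the four diagonal terms with coefficient $+1/4$ becomes, after substitution, of the form $\mathfrak{I}((x_{\ast},y_{\ast}),(x_{\ast},y_{\ast}))$ and is thus independent of two of the four indices; summing over those two indices produces either a product of two vanishing sums of the above type or, grouped the other way, a single sum $\sum_{i,k}c_{i,k}=\sum_{k}\big(\sum_{i}c_{i,k}\big)=0$, so the contribution is again $0$. Adding the eight vanishing contributions yields the claimed identity.

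The argument is essentially bookkeeping, and I do not foresee a genuine obstacle; the only points requiring a little care are (i) verifying that $\mathfrak{I}^{\prime}$ is still symmetric, as above, and (ii) noting that evaluating $\mathfrak{I}$ at arguments with coinciding first or second coordinates---which occurs both in the definition of $\mathfrak{I}^{\prime}$ and in the terms analysed above---is legitimate since $\mathfrak{I}$ is defined on all of $(X\times Y)\times(X\times Y)$, so the distinctness hypothesis in Definition \ref{PDI} (which constrains only the chosen points $x_{1},\dots,x_{n}$ and $y_{1},\dots,y_{m}$, not the kernel arguments) is not violated. With the identity established, $\mathfrak{I}^{\prime}$ satisfies the defining inequality of a PDI kernel for every admissible configuration if and only if $\mathfrak{I}$ does, which is the assertion.
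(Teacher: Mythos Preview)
Your proof is correct and follows essentially the same approach as the paper: both establish the identity of the two quadratic forms by observing that each of the eight correction terms in $\mathfrak{I}-\mathfrak{I}^{\prime}$ vanishes under the zero-sum constraints on the scalars $c_{i,k}$. The paper states this identity in one line without spelling out the term-by-term analysis, so your version is simply a more detailed execution of the same idea (with the added, harmless, verification that $\mathfrak{I}^{\prime}$ is symmetric).
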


Because of Lemma \ref{PDIsimpli}, we may suppose that the  \textbf{Projection Kernels} of an PDI kernel $\mathfrak{I}$ are zero, that is
\begin{equation}\label{eqmargkernels0}
	\mathfrak{I}((x,y),(x,y^{\prime}))=0 \text{ and } \mathfrak{I}((x,y),(x^{\prime},y))=0
\end{equation}
for every $x,x^{\prime} \in X$ and $y,y^{\prime} \in Y$. This condition will be more important compared to demanding that an CND kernel $\gamma$ satisfies $\gamma(x,x)=0$, because it greatly simplifies the  integrability conditions on Lemma \ref{integmargin} and Theorem \ref{integmargincor}. In some examples in Section \ref{Bernstein  functions of two variables} and Section \ref{Examples of PDI-Characteristic kernels} will be convenient to withdraw this condition.

Because of the previous Lemmas we can obtain a geometrical interpretation of PDI kernels by using the properties of RKHS.

\begin{theorem}\label{PDIgeometryrkhs}  Let $\mathfrak{I}: (X\times Y) \times (X\times Y) \to \mathbb{R}$  be an PDI kernel whose projection kernels are zero, fixed elements $x_{0} \in X$, $y_{0}\in X$ and the  positive definite kernel $K^{\mathfrak{I}}: (X\times Y) \times (X\times Y) \to \mathbb{R}$ defined  on Lemma \ref{PDItoPD}. The following equality is satisfied:
	$$
	2\mathfrak{I}((x,y)(x^{\prime}, y^{\prime})) + 2\mathfrak{I}((x,y^{\prime})(x^{\prime}, y)) = \| K^{\mathfrak{I}  }_{x,y} + K^{\mathfrak{I}  }_{x^{\prime},y^{\prime}} -K^{\mathfrak{I}  }_{x,y^{\prime}}-K^{\mathfrak{I}  }_{x^{\prime},y} \|_{\mathcal{H}_{K^{\mathfrak{I}  }}} ^{2}. 
	$$
\end{theorem}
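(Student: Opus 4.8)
The plan is to reduce the claimed identity to the reproducing property of $\mathcal{H}_{K^{\mathfrak{I}}}$ together with a careful bookkeeping of the $16$ terms defining $K^{\mathfrak{I}}$. By Lemma \ref{PDItoPD}, $K^{\mathfrak{I}}$ is a positive definite kernel, so it has an RKHS and $\langle K^{\mathfrak{I}}_{(a,b)}, K^{\mathfrak{I}}_{(c,d)}\rangle = K^{\mathfrak{I}}((a,b),(c,d))$. Expanding the right-hand side by bilinearity gives
\begin{align*}
\| K^{\mathfrak{I}}_{x,y} + K^{\mathfrak{I}}_{x^{\prime},y^{\prime}} - K^{\mathfrak{I}}_{x,y^{\prime}} - K^{\mathfrak{I}}_{x^{\prime},y} \|^{2}
&= K^{\mathfrak{I}}((x,y),(x,y)) + K^{\mathfrak{I}}((x^{\prime},y^{\prime}),(x^{\prime},y^{\prime})) \\
&\quad + K^{\mathfrak{I}}((x,y^{\prime}),(x,y^{\prime})) + K^{\mathfrak{I}}((x^{\prime},y),(x^{\prime},y)) \\
&\quad + 2K^{\mathfrak{I}}((x,y),(x^{\prime},y^{\prime})) + 2K^{\mathfrak{I}}((x,y^{\prime}),(x^{\prime},y)) \\
&\quad - 2K^{\mathfrak{I}}((x,y),(x,y^{\prime})) - 2K^{\mathfrak{I}}((x,y),(x^{\prime},y)) \\
&\quad - 2K^{\mathfrak{I}}((x^{\prime},y^{\prime}),(x,y^{\prime})) - 2K^{\mathfrak{I}}((x^{\prime},y^{\prime}),(x^{\prime},y)).
\end{align*}
So it suffices to show this combination of ten values of $K^{\mathfrak{I}}$ equals $2\mathfrak{I}((x,y),(x^{\prime},y^{\prime})) + 2\mathfrak{I}((x,y^{\prime}),(x^{\prime},y))$.

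The next step is to evaluate each of the ten terms $K^{\mathfrak{I}}((\cdot,\cdot),(\cdot,\cdot))$ using the explicit $16$-term formula from Lemma \ref{PDItoPD}, and then to use the hypothesis that the projection kernels of $\mathfrak{I}$ vanish, i.e. $\mathfrak{I}((a,b),(a,b^{\prime})) = 0$ and $\mathfrak{I}((a,b),(a^{\prime},b)) = 0$ for all arguments. This vanishing kills a large proportion of the $16$ terms in each evaluation: for instance in $K^{\mathfrak{I}}((x,y),(x,y^{\prime}))$ every summand of the form $\mathfrak{I}((\cdot,y),(x,\cdot))$ or $\mathfrak{I}((x,\cdot),(\cdot,y^{\prime}))$ that shares the first coordinate collapses, and terms like $\mathfrak{I}((x_0,y),(x_0,y^{\prime}))$ with a repeated first coordinate also vanish. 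The bulk of the work is the honest but routine expansion of these ten $16$-term sums and the cancellation: I expect all terms involving $x_0$ or $y_0$ to cancel among themselves (as they must, since the final answer does not involve the base points), leaving only $\mathfrak{I}((x,y),(x^{\prime},y^{\prime}))$, $\mathfrak{I}((x,y^{\prime}),(x^{\prime},y))$, $\mathfrak{I}((x,y),(x^{\prime},y))$, $\mathfrak{I}((x,y),(x,y^{\prime}))$, etc., of which only the first two survive the projection-kernel hypothesis, each appearing with total coefficient $2$.

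The main obstacle — really the only obstacle — is the combinatorial discipline of tracking signs and coefficients across roughly $160$ individual $\mathfrak{I}$-evaluations without error. A clean way to organize this is to observe that $K^{\mathfrak{I}}((a,b),(c,d))$ is an alternating (inclusion–exclusion) sum over the four choices $\{a,x_0\} \times \{b,y_0\}$ applied to the first argument only, with the sign $(-1)^{\#\{\text{substitutions made}\}}$; then the ten-term combination on the right becomes a double alternating sum over first-argument substitutions $\{\cdot,x_0\}\times\{\cdot,y_0\}$ and the "$+ + - -$" pattern in the four points $(x,y),(x^{\prime},y^{\prime}),(x,y^{\prime}),(x^{\prime},y)$. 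One checks that the substitution sum telescopes: the $x_0$- and $y_0$-substituted contributions pair up and cancel because the "$+ + - -$" coefficients over the four points, when one coordinate is frozen to $x_0$ (resp. $y_0$), sum to zero. After this telescoping only the no-substitution term $\mathfrak{I}((a,b),(c,d))$ remains for each of the four points, giving $\mathfrak{I}((x,y),(x,y)) + \mathfrak{I}((x^{\prime},y^{\prime}),(x^{\prime},y^{\prime})) + \mathfrak{I}((x,y^{\prime}),(x,y^{\prime})) + \mathfrak{I}((x^{\prime},y),(x^{\prime},y)) + 2\mathfrak{I}((x,y),(x^{\prime},y^{\prime})) + 2\mathfrak{I}((x,y^{\prime}),(x^{\prime},y)) - 2\mathfrak{I}((x,y),(x,y^{\prime})) - \cdots$, and the projection-kernel hypothesis \eqref{eqmargkernels0} annihilates all the diagonal-type and shared-coordinate terms, leaving exactly $2\mathfrak{I}((x,y),(x^{\prime},y^{\prime})) + 2\mathfrak{I}((x,y^{\prime}),(x^{\prime},y))$, as desired. $\hfill\qed$
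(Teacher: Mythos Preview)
Your overall strategy is the same as the paper's---expand the norm via the reproducing property into ten values of $K^{\mathfrak{I}}$, plug in the sixteen-term definition from Lemma~\ref{PDItoPD}, and cancel---and the paper carries this out by brute force, writing out each of the ten inner products explicitly and summing.

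Your telescoping idea is cleaner than the brute force, but you mis-state the structure of $K^{\mathfrak{I}}$: it is an alternating sum over substitutions $\{a,x_{0}\}\times\{b,y_{0}\}\times\{c,x_{0}\}\times\{d,y_{0}\}$ in \emph{both} arguments (hence sixteen terms), not only in the first argument. With that correction your argument goes through unchanged: the signed combination $\delta_{(x,y)}+\delta_{(x',y')}-\delta_{(x,y')}-\delta_{(x',y)}$ has zero marginals in each coordinate, so for any slot in which a base-point substitution is made, the remaining dependence on the corresponding coordinate of $p$ (or $q$) disappears and the $++--$ sum over the four points vanishes. This kills every term involving $x_{0}$ or $y_{0}$ in the double sum $\sum_{p,q}\epsilon_{p}\epsilon_{q}K^{\mathfrak{I}}(p,q)$ and leaves exactly $\sum_{p,q}\epsilon_{p}\epsilon_{q}\mathfrak{I}(p,q)$; the projection-kernel hypothesis then annihilates all pairs sharing a coordinate and the surviving four terms give $2\mathfrak{I}((x,y),(x',y'))+2\mathfrak{I}((x,y'),(x',y))$, as you say. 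So the only fix needed is the description of $K^{\mathfrak{I}}$; the rest is correct and arguably more transparent than the paper's line-by-line expansion.
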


In  particular, if the kernel $\mathfrak{I}$ on Theorem \ref{PDIgeometryrkhs} also satisfies the symmetry relation
\begin{equation}\label{2symmetry}
	\mathfrak{I}((x,y),(x^{\prime}, y^\prime))= \mathfrak{I}((x,y^{\prime}),(x^{\prime}, y)), \quad x,x^{\prime} \in X, y,y^{\prime} \in Y
\end{equation}
then  $4\mathfrak{I}((x,y),(x^{\prime}, y^\prime))= \|K^{\mathfrak{I}  }_{x,y} + K^{\mathfrak{I}  }_{x^{\prime},y^{\prime}} -K^{\mathfrak{I}  }_{x,y^{\prime}}-K^{\mathfrak{I}  }_{x^{\prime},y}\|_{\mathcal{H}_{K^{\mathfrak{I}  }}}^{2}$.    

A symmetric kernel that also satisfies the symmetry on Equation \ref{2symmetry} will be called  \textbf{2-Symmetric}. By the relation in Equation \ref{CNDprojgeral}, if  $\mathfrak{I}$ is an PDI kernel whose projection kernels are zero   and is 2-symmetric, then the kernels
\begin{equation}\label{eqcndker}
	\mathfrak{I}_{x,x^{\prime}}(y,y^{\prime}):=\mathfrak{I}((x,y),(x^{\prime}, y^{\prime})), \quad  \mathfrak{I}_{y,y^{\prime}}(x,x^{\prime}):=\mathfrak{I}((x,y),(x^{\prime}, y^{\prime}))
\end{equation}
are CND on $Y$ and $X$ respectively, which by Equation \ref{condequa} implies that  the kernel $\mathfrak{I}$ is a nonnegative function.

A question that comes to mind is if there exists a relation that connects PDI kernels with  PD/CND kernels in a similar way as  Equation \ref{schoenmetriccond}. Below we present a brief argument that such relation does not exist. 

\begin{lem} \label{badPDItoPD} Let $\mathfrak{I}: (X\times Y)\times (X\times Y) \to \mathbb{R} $ be an PDI kernel that is  $2$-symmetric  and whose projection kernels are zero and also a function $f: [0, \infty) \to \mathbb{R}$. The kernel
	$$
	f(\mathfrak{I}((x,y),(x^{\prime}, y^{\prime}))), \quad x,x^{\prime} \in X, \quad y,y^{\prime} \in Y
	$$
	is positive definite if and only if this is a constant kernel.
\end{lem}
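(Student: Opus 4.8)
\textit{Proof plan.}
The plan is to reduce the entire statement to the positive semidefiniteness of one explicit $4\times 4$ matrix. First I would record two consequences of the standing hypotheses, to be used without further comment: since $\mathfrak{I}$ is $2$-symmetric with vanishing projection kernels, it is (by the discussion following Equation \eqref{eqcndker}) a nonnegative function and $\mathfrak{I}((x,y),(x,y))=0$ for all $(x,y)$, so $f\circ\mathfrak{I}$ is well defined and all of its diagonal values equal $f(0)$. I would then dispose of the degenerate case in which $X$ or $Y$ is a singleton: there every pair of points shares a coordinate, so $\mathfrak{I}\equiv 0$ and $f\circ\mathfrak{I}\equiv f(0)$ is already a constant kernel. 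So assume $X$ and $Y$ each contain at least two points.

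For the nontrivial direction, suppose $K:=f(\mathfrak{I}(\cdot,\cdot))$ is positive definite. Fix distinct $x,x'\in X$ and distinct $y,y'\in Y$, put $a:=\mathfrak{I}((x,y),(x',y'))$, and consider the four distinct points $(x,y),(x,y'),(x',y),(x',y')$. Any two of them sharing their first or their second coordinate are sent to $0$ by $\mathfrak{I}$ (projection kernels zero), while $\mathfrak{I}((x,y'),(x',y))=\mathfrak{I}((x,y),(x',y'))=a$ by the $2$-symmetry \eqref{2symmetry}. Hence, in the listed order, the Gram matrix of $K$ on these four points is
\[
M=f(0)\,J+\bigl(f(a)-f(0)\bigr)P=
\begin{pmatrix}
f(0) & f(0) & f(0) & f(a)\\
f(0) & f(0) & f(a) & f(0)\\
f(0) & f(a) & f(0) & f(0)\\
f(a) & f(0) & f(0) & f(0)
\end{pmatrix},
\]
where $J$ is the all-ones matrix and $P$ the permutation matrix that reverses the order of the coordinates. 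Since $J$ annihilates every vector of zero coordinate sum, while $P$ fixes $v=(1,-1,-1,1)^{\top}$ and negates $w=(1,-1,1,-1)^{\top}$, the vectors $v$ and $w$ are eigenvectors of $M$ with eigenvalues $f(a)-f(0)$ and $f(0)-f(a)$ respectively. Positive semidefiniteness of $M$ forces both to be nonnegative, i.e. $f(a)=f(0)$.

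As $x\neq x'$ and $y\neq y'$ were arbitrary, this gives $f(\mathfrak{I}((x,y),(x',y')))=f(0)$ whenever the two points differ in both coordinates, and the same holds trivially (with $\mathfrak{I}=0$) when they share a coordinate; therefore $K\equiv f(0)$ is a constant kernel. The converse is the trivial remark that a constant kernel is positive definite as soon as its unique value $f(0)$ is nonnegative. I do not expect a genuine obstacle here: the only point requiring care is the bookkeeping of which entries of $M$ vanish, and that is exactly where the two hypotheses — projection kernels zero and $2$-symmetry — are spent; everything afterwards is elementary linear algebra on the fixed matrix above.
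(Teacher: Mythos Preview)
Your proof is correct and essentially identical to the paper's own argument: both form the $4\times 4$ Gram matrix at the points $(x,y),(x,y'),(x',y),(x',y')$ and test it against the two zero-sum vectors $(\pm 1,\mp 1,\mp 1,\pm 1)$ to force $f(a)=f(0)$. Your explicit decomposition $M=f(0)J+(f(a)-f(0))P$ and your treatment of the degenerate singleton case and of the converse direction are minor elaborations not present in the paper, but the core idea is the same.
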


Indeed, let $x\neq x^{\prime}$ and $y \neq y^{\prime}$, then if the kernel is positive definite the interpolation matrix  at the points $(x,y), (x,y^{\prime}), (x^{\prime}, y), (x^{\prime},y^{\prime})$ is 
$$
A:=\begin{pmatrix}
	f(0) & f(0) & f(0) &  f(c)\\
	f(0) & f(0) & f(c) & f(0)\\
	f(0) &  f(c) & f(0)& f(0)\\
	f(c) & f(0) &  f(0)& f(0) 
\end{pmatrix}
$$
and must be positive semidefinite, where $c = \mathfrak{I}((x,y),(x^{\prime}, y^{\prime}))$. However, for $v_{1}= (-1,1,-1,1)$ and $v_{2}= (-1,1,1,-1)$ we have that $4(f(0) - f(c )) = \langle A v_{1}, v_{1}\rangle  \geq 0 $ and $4(f(c) - f(0 )) = \langle A v_{2}, v_{2}\rangle  \geq 0 $, then $f(0) =f(c)$.



For the rest of this Section we will be concerned on how  PDI kernels can used to define  pseudometrics on the space of probabilities with fixed marginals (in particular, to analyse independence of probabilities) in a similar way that CND kernels can be used to compare probabilities as done in \cite{energybernstein}. For this we assume that $X$ and $Y$ are Hausdorff spaces. In order to describe  the set of measures that we will be able to compare, we separate the analysis in two stages. First, we describe which marginals are possible to work with  (Lemma \ref{integmargin} and Lemma  \ref{integmarginkroeprod}),  and then we describe which measures we can compare, with the restriction that its marginals are well behaved (Corollary \ref{integmargincor} and Corollary \ref{integmarginkroeprodcor}).

The major inequality we use to prove such results is a direct consequence that the kernels in  Equation \ref{eqcndker} are CND,  as they implies that for every $x,x^{\prime} , z \in X$ and $y, y^{\prime}, w \in Y$, we have that 
$$
\sqrt{\mathfrak{I}((x,y),(x^{\prime},y^{\prime}))}\leq \sqrt{\mathfrak{I}((x,y),(z,y^{\prime}))} +\sqrt{\mathfrak{I}((z,y),(x^{\prime},y^{\prime}))},
$$
$$
\sqrt{\mathfrak{I}((x,y),(z,y^{\prime}))}\leq \sqrt{\mathfrak{I}((x,y),(z,w))} +\sqrt{\mathfrak{I}((x,w),(z,y^{\prime}))},
$$
$$
\sqrt{\mathfrak{I}((z,y),(x^{\prime},y^{\prime}))}\leq \sqrt{\mathfrak{I}((z,y),(x^{\prime},w))} +\sqrt{\mathfrak{I}((z,w),(x^{\prime},y^{\prime}))},
$$
hence
\begin{equation}\label{4rootsinequality}
	\begin{split}
		\sqrt{\mathfrak{I}((x,y),(x^{\prime},y^{\prime}))} & \leq \sqrt{\mathfrak{I}((x,y),(z,w))} +\sqrt{\mathfrak{I}((x,y^{\prime}),(z,w))}\\ 
		& \quad  +\sqrt{\mathfrak{I}((x^{\prime},y),(z,w))} +\sqrt{\mathfrak{I}((x^{\prime},y^{\prime}),(z,w))}.
	\end{split}
\end{equation}

\begin{lem}\label{integmargin} Let $\mathfrak{I}$ be a continuous, $2-$symmetric PDI kernel on $X\times Y$ whose projection kernels are zero. For nonnegative measures $\mu \in \mathfrak{M}(X)\setminus{\{0\}}$ and $\nu \in \mathfrak{M}(Y)\setminus{\{0\}}$, the following conditions are equivalent:
	\begin{enumerate}
		\item [$(i)$] $\mathfrak{I} \in L^{1}((\mu \times \nu)\times (\mu \times \nu)  )$;
		\item [$(ii)$]The function  $ \mathfrak{I}((\cdot,\cdot),(x^{\prime},y^{\prime}))  \in L^{1}(\mu \times \nu)$ for some $(x^{\prime},y^{\prime}) \in X\times Y$;
		\item [$(iii)$] There exists sets $X_{\mu, \nu}^{\mathfrak{I}}\times Y_{\mu, \nu}^{\mathfrak{I}} \subset X\times Y$, for which $\mu(X-X_{\mu, \nu}^{\mathfrak{I}})= \nu(Y-Y_{\mu, \nu}^{\mathfrak{I}})=0$ and $	\mathfrak{I}((\cdot, \cdot ),(x^{\prime},y^{\prime}))  \in L^{1}(\mu \times \nu)$ for every fixed $(x^{\prime}, y^{\prime}) \in X_{\mu, \nu}^{\mathfrak{I}} \times Y_{\mu, \nu}^{\mathfrak{I}}$.
	\end{enumerate}
\end{lem}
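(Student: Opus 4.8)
The plan is to follow the template of Lemma \ref{estimativa} for CND kernels, the only genuinely new ingredient being the ``four roots'' estimate \eqref{4rootsinequality}. I first record two standing facts, both already established in this section: since $\mathfrak{I}$ is a $2$-symmetric PDI kernel with vanishing projection kernels it is a nonnegative function, so every integrand below is nonnegative and Tonelli's theorem applies with no integrability proviso (the measures $\mu,\nu$ are finite and $\mathfrak{I}$ is continuous, hence Borel); and squaring \eqref{4rootsinequality} and using $(a+b+c+d)^{2}\le 4(a^{2}+b^{2}+c^{2}+d^{2})$ yields the pointwise bound, valid for all points and every fixed $(z,w)\in X\times Y$,
$$
\mathfrak{I}((x,y),(x',y')) \le 4\bigl[\mathfrak{I}((x,y),(z,w)) + \mathfrak{I}((x,y'),(z,w)) + \mathfrak{I}((x',y),(z,w)) + \mathfrak{I}((x',y'),(z,w))\bigr].
$$
This single inequality drives the whole argument, and I would prove the cycle $(ii)\Rightarrow(i)\Rightarrow(iii)\Rightarrow(ii)$.

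For $(ii)\Rightarrow(i)$, fix the point $(z,w)$ furnished by $(ii)$, so $\mathfrak{I}((\cdot,\cdot),(z,w))\in L^{1}(\mu\times\nu)$, and integrate the displayed bound against $d\mu(x)\,d\nu(y)\,d\mu(x')\,d\nu(y')$. Each of the four resulting terms has an integrand depending on only two of the four variables, one living in $X$ and one in $Y$, so by Tonelli it factors as a constant multiple of $\mu(X)\,\nu(Y)\int\!\int \mathfrak{I}((\cdot,\cdot),(z,w))\,d\mu\,d\nu$ (after relabelling), which is finite because $\mu(X),\nu(Y)<\infty$. Hence $\mathfrak{I}\in L^{1}((\mu\times\nu)\times(\mu\times\nu))$, which is $(i)$.

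For $(i)\Rightarrow(iii)$, Tonelli applied to $\mathfrak{I}\ge 0$ on $(\mu\times\nu)^{\otimes 2}$ shows that $(z,w)\mapsto\int\!\int\mathfrak{I}((x,y),(z,w))\,d\mu(x)\,d\nu(y)$ is finite for $(\mu\times\nu)$-a.e.\ $(z,w)$; since $\mu\times\nu\ne 0$ I can fix one such $(z,w)$. Applying Tonelli again to that finite double integral, in each of the two orders of integration, produces a set $Y^{\mathfrak{I}}_{\mu,\nu}\subset Y$ with $\nu(Y-Y^{\mathfrak{I}}_{\mu,\nu})=0$ on which $\mathfrak{I}((\cdot,y'),(z,w))\in L^{1}(\mu)$, and a set $X^{\mathfrak{I}}_{\mu,\nu}\subset X$ with $\mu(X-X^{\mathfrak{I}}_{\mu,\nu})=0$ on which $\mathfrak{I}((x',\cdot),(z,w))\in L^{1}(\nu)$. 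Now for $(x',y')\in X^{\mathfrak{I}}_{\mu,\nu}\times Y^{\mathfrak{I}}_{\mu,\nu}$ I integrate the displayed bound (with this $(z,w)$) against $d\mu(x)\,d\nu(y)$ only: the first term is finite by the choice of $(z,w)$, the second reduces to $\nu(Y)\int\mathfrak{I}((x,y'),(z,w))\,d\mu(x)<\infty$ since $y'\in Y^{\mathfrak{I}}_{\mu,\nu}$, the third to $\mu(X)\int\mathfrak{I}((x',y),(z,w))\,d\nu(y)<\infty$ since $x'\in X^{\mathfrak{I}}_{\mu,\nu}$, and the fourth is the finite constant $\mu(X)\nu(Y)\mathfrak{I}((x',y'),(z,w))$. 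Thus $\mathfrak{I}((\cdot,\cdot),(x',y'))\in L^{1}(\mu\times\nu)$, giving $(iii)$. Finally $(iii)\Rightarrow(ii)$ is immediate: as $\mu\ne0$ and $\nu\ne0$ the full-measure sets $X^{\mathfrak{I}}_{\mu,\nu}$, $Y^{\mathfrak{I}}_{\mu,\nu}$ are nonempty, so any point of their product witnesses $(ii)$.

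I do not expect a serious obstacle; the argument is essentially the CND manoeuvre of Lemma \ref{estimativa} transported through \eqref{4rootsinequality}. The only points requiring attention are the bookkeeping of which integration variable each cross term $\mathfrak{I}((x,y'),(z,w))$, $\mathfrak{I}((x',y),(z,w))$ depends on, and the standard extraction of a genuine product set $X^{\mathfrak{I}}_{\mu,\nu}\times Y^{\mathfrak{I}}_{\mu,\nu}$ of full measure from two applications of Tonelli to a single double integral.
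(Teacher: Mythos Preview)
Your proof is correct and follows essentially the same approach as the paper: both arguments hinge on the four-roots inequality \eqref{4rootsinequality} and Tonelli, construct the very same sets $X_{\mu,\nu}^{\mathfrak{I}}=\{x:\mathfrak{I}((x,\cdot),(z,w))\in L^{1}(\nu)\}$ and $Y_{\mu,\nu}^{\mathfrak{I}}=\{y:\mathfrak{I}((\cdot,y),(z,w))\in L^{1}(\mu)\}$, and verify the integrability on their product in the same way. The only cosmetic difference is that you square \eqref{4rootsinequality} once and bound $\mathfrak{I}$ directly, whereas the paper keeps the square roots and invokes H\"older/$L^{2}$ to handle the cross terms; your version is arguably a shade more elementary.
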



Unlike relation $(iii)$ in Lemma \ref{estimativa},  relation $(iii)$ on Lemma \ref{integmargin} depends on the measures $\mu$ and $\nu$, and this make it  difficult to analyze a double integral of the type
$$
\int_{X\times Y} \int_{X\times Y} \mathfrak{I}((x,y),(x^{\prime}, y^{\prime}))d\lambda (x,y)d\lambda^{\prime}(x^{\prime}, y^{\prime})
$$
if the marginals of $|\lambda|$ and $|\lambda^{\prime}|$ are too distinct. Initially, we circumvent this problem by  restricting the  analyses of this double integral by demanding additional conditions on the marginals of $|\lambda|$ and $|\lambda^{\prime}|$. On the examples we present at Section \ref{Examples of PDI-Characteristic kernels}, relation $(iii)$ will not depend on the measures $\mu, \nu$.

Given two measures $\sigma_{1}, \sigma_{2} \in \mathcal{M}(Z)$, where $\sigma_{2}$ is nonnegative,  we say that $|\sigma_{1}| \subset \sigma_{2}$ if there exists a constant $c>0$ such that $c\sigma_{2} - |\sigma_{1}|$ is a nonnegative measure. This condition is equivalent to $\sigma_{1}<< \sigma_{2}$ (absolutely continuity) and that the Radon-Nikodym derivative $d\sigma_{1}/ d\sigma_{2}$ is a function in $L^{\infty}(\sigma_{2})$. For a nonnegative measure $\lambda \in \mathfrak{M}(X\times Y)$, the measure $\lambda_{X} \in \mathfrak{M}(X)$ is defined as $\lambda_{X}(A):= \lambda(A\times Y)$. Similar for  $\lambda_{Y}$.

\begin{cor}\label{integmargincor} Let $\mathfrak{I}$ be a continuous, $2-$symmetric PDI kernel on $X\times Y$ whose projection kernels are zero,  a measure $\lambda \in \mathfrak{M}(X\times Y)$ and nonnegative measures $\mu \in \mathfrak{M}(X)\setminus{\{0\}}$ and $\nu \in \mathfrak{M}(Y)\setminus{\{0\}}$. If $\mathfrak{I} \in L^{1}((\mu \times \nu)\times (\mu \times \nu))$ and $|\lambda|_{X}  \subset \mu$ and $|\lambda|_{Y} \subset \nu$ the following conditions are equivalent:
	\begin{enumerate}
		\item [$(i)$] $\mathfrak{I} \in L^{1}(|\lambda|\times |\lambda|  )$;
		\item [$(ii)$]The function  $ \mathfrak{I}((\cdot, \cdot),(x^{\prime},y^{\prime}))  \in L^{1}(|\lambda|)$ for some $(x^{\prime},y^{\prime}) \in X\times Y$;
		\item [$(iii)$]It holds that $\mathfrak{I}((\cdot, \cdot ),(x^{\prime},y^{\prime})) \in  L^{1}( |\lambda|)$ for every fixed $(x^{\prime}, y^{\prime}) \in X_{\mu, \nu}^{\mathfrak{I}} \times Y_{\mu, \nu}^{\mathfrak{I}}$.			
	\end{enumerate}
\end{cor}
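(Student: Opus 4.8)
The plan is to reduce the corollary directly to Lemma \ref{integmargin} by exploiting the hypotheses $|\lambda|_X \subset \mu$ and $|\lambda|_Y \subset \nu$. First I would fix once and for all the sets $X_{\mu,\nu}^{\mathfrak{I}}$ and $Y_{\mu,\nu}^{\mathfrak{I}}$ provided by Lemma \ref{integmargin}(iii) applied to the pair $(\mu,\nu)$, so that $\mathfrak{I}((\cdot,\cdot),(x',y')) \in L^1(\mu\times\nu)$ for every $(x',y')\in X_{\mu,\nu}^{\mathfrak{I}}\times Y_{\mu,\nu}^{\mathfrak{I}}$, and moreover $\mathfrak{I}\in L^1((\mu\times\nu)\times(\mu\times\nu))$ by assumption. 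The key elementary observation is that $|\lambda|_X \subset \mu$ means $|\lambda|_X \leq c_1\mu$ and $|\lambda|_Y \leq c_2\nu$ for constants $c_i>0$, and hence $|\lambda| \leq c_1 c_2\, (\mu\times\nu)$ on rectangles; since rectangles generate the product $\sigma$-algebra and the comparison is between positive measures, this upgrades to $|\lambda| \leq c_1 c_2\,(\mu\times\nu)$ as measures on $X\times Y$ — wait, that inclusion only holds on product sets, so more carefully I would argue that $|\lambda|(A) \leq c_1 c_2 (\mu\times\nu)(A)$ for all measurable $A$ by a monotone class / outer-regularity argument, or simply note that any nonnegative measure dominated on a generating semiring of finite-measure sets is dominated everywhere. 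In any case $|\lambda| \ll \mu\times\nu$ with bounded Radon--Nikodym derivative.

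The implication $(i)\Rightarrow(ii)$ is immediate (Fubini on $|\lambda|\times|\lambda|$ yields the single integral for $|\lambda|\times|\lambda|$-a.e. $(x',y')$, and one such point exists). The implication $(iii)\Rightarrow(ii)$ is also immediate, provided $X_{\mu,\nu}^{\mathfrak{I}}\times Y_{\mu,\nu}^{\mathfrak{I}}$ meets the support of $|\lambda|$; since $\mu(X\setminus X_{\mu,\nu}^{\mathfrak{I}})=\nu(Y\setminus Y_{\mu,\nu}^{\mathfrak{I}})=0$ and $|\lambda|_X\subset\mu$, $|\lambda|_Y\subset\nu$, we get $|\lambda|_X(X\setminus X_{\mu,\nu}^{\mathfrak{I}})=0$ and similarly for $Y$, hence $|\lambda|\big((X\times Y)\setminus (X_{\mu,\nu}^{\mathfrak{I}}\times Y_{\mu,\nu}^{\mathfrak{I}})\big)=0$, so the set is $|\lambda|$-full and in particular nonempty (assuming $\lambda\neq 0$; if $\lambda=0$ the statement is trivial). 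The substance is in $(ii)\Rightarrow(iii)$ and $(ii)\Rightarrow(i)$.

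For these I would run the same four-roots estimate (Equation \ref{4rootsinequality}) that drives Lemma \ref{integmargin}. Assume $(ii)$: there is $(x_0',y_0')$ with $\mathfrak{I}((\cdot,\cdot),(x_0',y_0'))\in L^1(|\lambda|)$, i.e. $\sqrt{\mathfrak{I}((\cdot,\cdot),(x_0',y_0'))}\in L^2(|\lambda|)$ after noting $\mathfrak{I}\geq 0$; using $\sqrt{\mathfrak{I}((x,y),(x',y'))} \leq \sqrt{\mathfrak{I}((x,y),(x_0',y_0'))} + \sqrt{\mathfrak{I}((x,y_0'),(x_0',y_0'))} + \sqrt{\mathfrak{I}((x_0',y),(x_0',y_0'))} + \sqrt{\mathfrak{I}((x_0',y_0'),(x_0',y_0'))}$ — here the middle two terms depend only on $x$ resp. $y$ and are controlled because their integrals against $\mu$ resp. $\nu$ are finite (by Lemma \ref{integmargin} applied to the marginals, combined with $|\lambda|_X\subset\mu$, $|\lambda|_Y\subset\nu$), and the last is a constant. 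Squaring and integrating $d|\lambda|(x,y)$ gives $\sqrt{\mathfrak{I}((\cdot,\cdot),(x',y'))}\in L^2(|\lambda|)$ for every $(x',y')$ in $X_{\mu,\nu}^{\mathfrak{I}}\times Y_{\mu,\nu}^{\mathfrak{I}}$ (which is exactly where the needed marginal integrals are finite), giving $(iii)$; integrating once more in $d|\lambda|(x',y')$ over the $|\lambda|$-full set and using the product structure gives $\mathfrak{I}\in L^1(|\lambda|\times|\lambda|)$, i.e. $(i)$. The main obstacle is bookkeeping: one must make sure the "marginal" one-variable integrals that appear after splitting via \eqref{4rootsinequality} are genuinely finite, and this is precisely where $|\lambda|_X\subset\mu$ and $|\lambda|_Y\subset\nu$ are used — a term like $\int_X \sqrt{\mathfrak{I}((x,y_0'),(x_0',y_0'))}\,d|\lambda|_X(x)$ is bounded by $c_1\int_X \sqrt{\mathfrak{I}((x,y_0'),(x_0',y_0'))}\,d\mu(x)$, which is finite by Lemma \ref{integmargin} for the pair $(\mu,\nu)$ — so the argument is essentially a relative version of the proof of Lemma \ref{integmargin} with $\mu\times\nu$ replaced by $|\lambda|$ in the "outer" integration and retained in the "marginal" bounds.
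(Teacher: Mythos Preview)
Your overall strategy---apply the four-roots inequality (Equation~\ref{4rootsinequality}) and control the cross terms via the marginal dominations $|\lambda|_X\subset\mu$, $|\lambda|_Y\subset\nu$---is exactly what the paper does. However, two concrete errors in your execution need to be fixed.

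First, the claim that $|\lambda|\leq c_1c_2\,(\mu\times\nu)$ is false, and the justification ``any nonnegative measure dominated on a generating semiring of finite-measure sets is dominated everywhere'' is not a theorem. A standard counterexample: take $X=Y=[0,1]$, $\mu=\nu=$ Lebesgue, and let $\lambda$ be the push-forward of Lebesgue under $t\mapsto(t,t)$. Both marginals of $\lambda$ equal Lebesgue, yet $\lambda$ is singular with respect to $\mu\times\nu$. Fortunately you do not actually need this; the paper (and the correct version of your argument) uses only the marginal bounds $|\lambda|_X\leq c_1\mu$ and $|\lambda|_Y\leq c_2\nu$, never a joint bound.

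Second, your instance of Equation~\ref{4rootsinequality} is written incorrectly. You wrote the middle terms as $\sqrt{\mathfrak{I}((x,y_0'),(x_0',y_0'))}$ and $\sqrt{\mathfrak{I}((x_0',y),(x_0',y_0'))}$, but these are projection-kernel values and hence identically zero by hypothesis; the inequality you wrote would then read $\sqrt{\mathfrak{I}((x,y),(x',y'))}\leq\sqrt{\mathfrak{I}((x,y),(x_0',y_0'))}$, which is false. The correct application (taking $(z,w)=(x_0',y_0')$ in Equation~\ref{4rootsinequality}) gives middle terms $\sqrt{\mathfrak{I}((x,y'),(x_0',y_0'))}$ and $\sqrt{\mathfrak{I}((x',y),(x_0',y_0'))}$, which mix a running variable with the \emph{target} point $(x',y')$. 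For $(ii)\Rightarrow(iii)$ with $(x',y')$ fixed these indeed depend only on $x$ (resp.\ $y$), and their $L^2(|\lambda|)$-norms reduce to $\int_X\mathfrak{I}((x,y'),(x_0',y_0'))\,d|\lambda|_X(x)$ (resp.\ the $Y$-analogue), bounded via $|\lambda|_X\leq c_1\mu$. For $(ii)\Rightarrow(i)$ the same terms, now viewed as functions of $((x,y),(x',y'))$, depend only on $(x,y')$ (resp.\ $(x',y)$), so their $L^2(|\lambda|\times|\lambda|)$-norms reduce to integrals against $|\lambda|_X\times|\lambda|_Y\leq c_1c_2\,\mu\times\nu$; this is precisely where the paper invokes the hypothesis $\mathfrak{I}\in L^1((\mu\times\nu)\times(\mu\times\nu))$. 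Once you correct the inequality, your outline becomes the paper's proof.
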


As a consequence of Corollary \ref{integmargin} and   Equation \ref{4rootsinequality} $\mathfrak{I}((\cdot,\cdot),(x^{\prime},\cdot)), \mathfrak{I}((\cdot,\cdot),(\cdot,y^{\prime}))$ are elements of $L^{1}( |\lambda|\times |\lambda|)$. The sets $X_{\mu, \nu}^{\mathfrak{I}}$, $Y_{\mu, \nu}^{\mathfrak{I}}$ on Corollary \ref{integmargincor} are the same as the ones in Lemma \ref{integmargin}. The following Theorem is the most important result in this Section.

\begin{theorem}\label{principalmetric} Let $\mathfrak{I}$ be a continuous, $2-$symmetric PDI kernel on $X\times Y$ whose projection kernels are zero and  nonnegative measures $\mu \in \mathfrak{M}(X)\setminus{\{0\}}$ and $\nu \in \mathfrak{M}(Y)\setminus{\{0\}}$ such that $\mathfrak{I} \in L^{1}((\mu \times \nu)\times (\mu \times \nu)  )$. The following subset of $\mathfrak{M}(X\times Y)$ is a vector space
	\begin{align*}
		\mathfrak{M}_{\mu, \nu}(\mathfrak{I}):=\{\lambda \in \mathfrak{M}(X\times Y), & \quad  \mathfrak{I} \in L^{1}(|\lambda|\times |\lambda|), \quad |\lambda|_{X} \subset \mu, |\lambda|_{Y} \subset  \nu,\\ 
		&\quad \lambda(X, \cdot) \text{ and }  \lambda( \cdot, Y) \text{ are the zero measure}   \}.
	\end{align*}
	The real valued function 
	$$
	(\lambda, \lambda^{\prime}) \to \int_{X\times Y} \int_{X\times Y} \mathfrak{I}((x,y),(x^{\prime}, y^{\prime}))d\lambda (x,y)d\lambda^{\prime}(x^{\prime}, y^{\prime}),
	$$
	is well defined for $\lambda, \lambda^{\prime} \in \mathfrak{M}_{\mu, \nu}(\mathfrak{I})$ and is a semi-inner product on it.   
\end{theorem}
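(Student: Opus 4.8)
The plan is to verify the three defining features of a semi-inner product: that $\mathfrak{M}_{\mu,\nu}(\mathfrak{I})$ is a vector space on which the form makes sense, that the form is bilinear and symmetric, and that it is nonnegative on the diagonal, this last point being the only substantial one. Recall that $\mathfrak{I}\ge 0$, being a $2$-symmetric PDI kernel with zero projection kernels.

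First, closure of $\mathfrak{M}_{\mu,\nu}(\mathfrak{I})$ under scalars is trivial, and for sums one uses $|\lambda+\lambda'|\le|\lambda|+|\lambda'|$: then $|\lambda+\lambda'|_X\le|\lambda|_X+|\lambda'|_X$, and adding the constants dominating $|\lambda|_X,|\lambda'|_X$ by $\mu$ gives $|\lambda+\lambda'|_X\subset\mu$, likewise $|\lambda+\lambda'|_Y\subset\nu$; the marginals $(\lambda+\lambda')(X,\cdot)$, $(\lambda+\lambda')(\cdot,Y)$ vanish by linearity. For the integrability clause, for every $(x',y')\in X_{\mu,\nu}^{\mathfrak{I}}\times Y_{\mu,\nu}^{\mathfrak{I}}$ Corollary \ref{integmargincor} applied to $\lambda$ and to $\lambda'$ gives $\mathfrak{I}((\cdot,\cdot),(x',y'))\in L^1(|\lambda|)\cap L^1(|\lambda'|)\subseteq L^1(|\lambda+\lambda'|)$, and Corollary \ref{integmargincor} applied to $\lambda+\lambda'$ then upgrades this to $\mathfrak{I}\in L^1(|\lambda+\lambda'|\times|\lambda+\lambda'|)$; so $\lambda+\lambda'\in\mathfrak{M}_{\mu,\nu}(\mathfrak{I})$. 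The same two invocations of Corollary \ref{integmargincor}, applied now to the nonnegative measure $\theta:=|\lambda|+|\lambda'|$ (whose marginals are again dominated by $\mu,\nu$), give $\mathfrak{I}\in L^1(\theta\times\theta)$, whence $\mathfrak{I}\in L^1(|\lambda|\times|\lambda'|)$ since $|\lambda|\times|\lambda'|\le\theta\times\theta$. Thus the double integral is absolutely convergent for $\lambda,\lambda'\in\mathfrak{M}_{\mu,\nu}(\mathfrak{I})$; Fubini makes it equal to either iterated integral, bilinear in $(\lambda,\lambda')$, and — using the symmetry of $\mathfrak{I}$ — symmetric.

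For nonnegativity, which is the core, I would fix $x_0\in X_{\mu,\nu}^{\mathfrak{I}}$, $y_0\in Y_{\mu,\nu}^{\mathfrak{I}}$ (nonempty as $\mu,\nu\ne 0$) and pass to the positive definite kernel $K^{\mathfrak{I}}$ of Lemma \ref{PDItoPD} built from these base points. The first task is to check each of the sixteen terms of $K^{\mathfrak{I}}$ lies in $L^1(|\lambda|\times|\lambda|)$: a term with a point repeated in a slot, such as $\mathfrak{I}((x_0,y),(x_0,y'))$ or $\mathfrak{I}((x,y_0),(x',y_0))$, is a projection kernel, hence $0$; a term that after one use of $2$-symmetry reduces to $\mathfrak{I}((\cdot,\cdot),(x_0,y_0))$ is in $L^1(|\lambda|)$, a fortiori in $L^1(|\lambda|\times|\lambda|)$, by Corollary \ref{integmargincor} with $(x_0,y_0)\in X_{\mu,\nu}^{\mathfrak{I}}\times Y_{\mu,\nu}^{\mathfrak{I}}$; and a genuinely mixed term, such as $\mathfrak{I}((x_0,y),(x',y_0))$, which depends on one $X$- and one $Y$-variable, is bounded, via \eqref{4rootsinequality} and vanishing of projection kernels, by $\mathfrak{I}((\cdot,\cdot),(x_0,y_0))$ integrated against $|\lambda|_X\times|\lambda|_Y\le c\,\mu\times c'\nu$, finite by Lemma \ref{integmargin} and $\mathfrak{I}\in L^1((\mu\times\nu)\times(\mu\times\nu))$. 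Granting this, Fubini is legitimate term by term, and since the marginals $\lambda(X,\cdot)$, $\lambda(\cdot,Y)$ and the total mass $\lambda(X\times Y)$ all vanish, every one of the fifteen terms of $K^{\mathfrak{I}}$ other than $\mathfrak{I}((x,y),(x',y'))$ itself — each depending on $(x,y)$, or on $(x',y')$, through at most one coordinate, or constant — integrates to $0$ against $d\lambda\,d\lambda$, so that
\[
\int_{X\times Y}\int_{X\times Y} K^{\mathfrak{I}}((x,y),(x',y'))\,d\lambda(x,y)\,d\lambda(x',y')=\int_{X\times Y}\int_{X\times Y}\mathfrak{I}((x,y),(x',y'))\,d\lambda(x,y)\,d\lambda(x',y').
\]
Finally, to see the left side is $\ge 0$: by inner regularity of the finite Radon measure $|\lambda|$ pick increasing compacts $K_n\subset X$, $L_n\subset Y$ with $|\lambda|_X(X\setminus K_n)+|\lambda|_Y(Y\setminus L_n)\to 0$, so $\mathbf{1}_{K_n\times L_n}\to 1$ $|\lambda|$-a.e.; dominated convergence (domination by $|K^{\mathfrak{I}}|\in L^1(|\lambda|\times|\lambda|)$) gives $\int\!\int K^{\mathfrak{I}}\,d(\lambda|_{K_n\times L_n})^{\otimes 2}\to\int\!\int K^{\mathfrak{I}}\,d\lambda^{\otimes 2}$, while for each $n$ uniform continuity of $K^{\mathfrak{I}}$ on the compact $(K_n\times L_n)^2$ lets one replace $\lambda|_{K_n\times L_n}$ by a finitely supported $\sum_i\lambda(C_i)\delta_{z_i}$ up to an error bounded by the oscillation of $K^{\mathfrak{I}}$ times $|\lambda|(X\times Y)^2$, and positive definiteness gives $\sum_{i,j}\lambda(C_i)\lambda(C_j)K^{\mathfrak{I}}(z_i,z_j)\ge 0$; refining the partition and letting $n\to\infty$ yields $\int\!\int\mathfrak{I}\,d\lambda^{\otimes 2}\ge 0$.

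The point of routing through $K^{\mathfrak{I}}$ is that, being positive definite in the unrestricted sense, the truncation to $K_n\times L_n$ need not respect the zero-marginal constraints defining $\mathfrak{M}_{\mu,\nu}(\mathfrak{I})$ — a direct discretization of $\mathfrak{I}$ itself would be forced to preserve them. The one genuinely delicate point is the integrability check for the sixteen terms of $K^{\mathfrak{I}}$: the terms have different dependence patterns (on four, two, one, or none of the integration variables), and a mixed term cannot be controlled by the single-measure estimates of Corollary \ref{integmargincor} alone — it is exactly here that the product-integrability hypothesis $\mathfrak{I}\in L^1((\mu\times\nu)\times(\mu\times\nu))$ together with $|\lambda|_X\subset\mu$, $|\lambda|_Y\subset\nu$ enters, which explains the otherwise peculiar shape of the hypotheses.
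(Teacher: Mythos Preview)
Your argument is correct and matches the paper through the vector-space and well-definedness parts; the nonnegativity step takes a different route. The paper observes that $\sqrt{K^{\mathfrak{I}}((x,y),(x,y))}=2\sqrt{\mathfrak{I}((x,y),(x_0,y_0))}\in L^1(|\lambda|)$ and then invokes the kernel mean embedding (Lemma~\ref{initialextmmddominio}) directly, obtaining $\iint K^{\mathfrak{I}}\,d\lambda\,d\lambda=\|K^{\mathfrak{I}}_\lambda\|^2_{\mathcal{H}_{K^{\mathfrak{I}}}}\ge 0$ in one line and in full Hausdorff generality. Your compact-exhaustion-plus-discretization is more elementary (no RKHS), and amounts to a bare-hands proof of this instance of Lemma~\ref{initialextmmddominio}; it is valid, but the ``partition with small oscillation'' step tacitly uses uniform continuity of $K^{\mathfrak{I}}$ on the compact $(K_n\times L_n)^2$, which in a non-metrizable Hausdorff space should be phrased via the unique uniformity on a compact (equivalently, via continuity of $p\mapsto K^{\mathfrak{I}}(p,\cdot)$ into $C(K_n\times L_n)$ with the sup norm).

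One omission in your integrability check: your three cases do not cover the four three-variable terms of $K^{\mathfrak{I}}$, such as $\mathfrak{I}((x_0,y),(x',y'))$, which depend on one coordinate from one $|\lambda|$-factor and on both coordinates from the other, and hence fit none of your cases (a)--(c). The paper treats exactly these via the CND triangle inequality displayed just before~\eqref{4rootsinequality},
\[
\sqrt{\mathfrak{I}((x_0,y),(x',y'))}\le\sqrt{\mathfrak{I}((x_0,y_0),(x',y'))}+\sqrt{\mathfrak{I}((x_0,y),(x',y_0))},
\]
the two right-hand terms reducing (the second via $2$-symmetry) to your cases (b) and (c) respectively, and hence lying in $L^2(|\lambda|\times|\lambda|)$.
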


On Theorem \ref{principalmetric}, the semi-inner product can be rewritten as
\begin{align*}
	\int_{X\times Y} \int_{X\times Y}&\mathfrak{I}((x,y),(x^{\prime}, y^{\prime}))d\lambda(x, y)d\lambda^{\prime}(x^{\prime}, y^{\prime}) \\
	&= \int_{X\times Y} \int_{X\times Y} K^{\mathfrak{I}}((x,y),(x^{\prime}, y^{\prime}))d\lambda(x,y)d\lambda^{\prime}(x^{\prime}, y^{\prime})= \langle K^{\mathfrak{I}}_{\lambda}, K^{\mathfrak{I}}_{\lambda^{\prime}} \rangle_{\mathcal{H}_{K^{\mathfrak{I}}}},
\end{align*}
where $K^{\mathfrak{I}}_{\lambda}$ is the kernel mean embedding of the kernel $K^{\mathfrak{I}}$ defined on Lemma  \ref{PDItoPD} for a $(x_{0}, y_{0}) \in X_{\mu, \nu}^{\mathfrak{I}}\times Y_{\mu, \nu}^{\mathfrak{I}}$ with the measure $\lambda \in \mathfrak{M}_{\mu, \nu}(\mathfrak{I})$.

\begin{defn} A continuous, $2-$symmetric PDI kernel  $\mathfrak{I}$ on $X\times Y$ whose projection kernels are zero is called \textbf{PDI-Characteristic} if 
	$$
	(\lambda, \lambda^{\prime}) \to \int_{X\times Y} \int_{X\times Y} \mathfrak{I}((x,y),(x^{\prime}, y^{\prime}))d\lambda (x,y)d\lambda^{\prime}(x^{\prime}, y^{\prime}),
	$$
	is an inner product on $\mathfrak{M}_{\mu, \nu}(\mathfrak{I})$ for every  nonnegative measures $\mu \in \mathfrak{M}(X)\setminus{\{0\}}$, $\nu \in \mathfrak{M}(Y)\setminus{\{0\}}$ such that $\mathfrak{I} \in L^{1}((\mu \times \nu)\times (\mu \times \nu))$.  
\end{defn}

If  $\mathfrak{I}$ is an PDI kernel and    $P \in \mathfrak{M}(X)$ and  $Q \in \mathfrak{M}(Y)$ are probabilities such that $\mathfrak{I} \in L^{1}((P \times Q)\times (P \times Q))$, consider the  set 
$$
\varGamma(P,Q)_{\mathfrak{I}}:=\{\lambda \in  \mathfrak{M}(X\times Y), \quad \lambda \text{ is a probability}, \lambda_{X}=P, \lambda_{Y}=Q, \quad \mathfrak{I} \in L^{1}(|\lambda| \times |\lambda|) \},
$$
then
$$
(\lambda, \lambda^{\prime}) \to \sqrt{ \int_{X\times Y} \int_{X\times Y} \mathfrak{I}((x,y),(x^{\prime}, y^{\prime}))d[\lambda-\lambda^{\prime} ](x,y)d[\lambda - \lambda^{\prime}](x^{\prime}, y^{\prime})  }   
$$
is a pseudometric  for $\lambda, \lambda^{\prime} \in \varGamma(P,Q)_{\mathfrak{I}}$. It is a metric if and only if the kernel $\mathfrak{I}$ is PDI-Characteristic. As a consequence, if $\mathfrak{I}$ is PDI-Characteristic a probability $\lambda \in \varGamma(P,Q)_{\mathfrak{I}}$ satisfy
$$
\int_{X\times Y} \int_{X\times Y} \mathfrak{I}((x,y),(x^{\prime}, y^{\prime}))d[\lambda-P\otimes Q ](x,y)d[\lambda - P\otimes Q](x^{\prime}, y^{\prime})=0 
$$
if and only if $\lambda =P\otimes Q$.

Because of relation $(iii)$ in Lemma \ref{integmargin} does not seem to hold for every $(x^{\prime}, y^{\prime}) \in X\times Y$ for all  kernels $\mathfrak{I}$, it seems difficult to characterize the set of nonnegative measures $\mu \in \mathfrak{M}(X)\setminus{\{0\}}$ and $\nu \in \mathfrak{M}(Y)\setminus{\{0\}}$ such that $\mathfrak{I} \in L^{1}((\mu \times \nu)\times (\mu \times \nu)  )$. However, we do not have an example of an PDI kernel $\mathfrak{I}$ and nonnegative measures $\mu, \nu$ for which $X_{\mu, \nu}^{\mathfrak{I}} \times Y_{\mu, \nu}^{\mathfrak{I}} \neq X\times Y$ and $\mathfrak{M}_{\mu, \nu}(\mathfrak{I})\neq \{0\}$.

One exception is when $\mathfrak{I}$ is the Kronecker product of CND kernels, which we give a proof below. The setting of product of CND kernels will also allow us to obtain results on a more general setting as we do not have to assume that the projection kernels of the  PDI kernel related to it are zero.


For a Hausdorff set  $Z$ we define the set $\delta(Z):= \{ c\delta_{z},\quad  c \in \mathbb{R}, z\in Z \}$. 

\begin{lem}\label{integmarginkroeprod} Let $\gamma: X \times X \to [0, \infty)$,  $\varsigma: Y \times Y \to [0, \infty)$  be continuous,  CND  metrizable kernels with bounded diagonal. Then,    nonnegative measures $\mu \in \mathfrak{M}(X)\setminus{\delta(X)}$, $ \nu  \in \mathfrak{M}(Y)\setminus{\delta(Y)}$ satisfies  
	$$
	\gamma\otimes\varsigma \in L^{1}((\mu\times \nu)\times (\mu \times \nu))
	$$ 
	if and only if 
	$$
	\gamma(\cdot ,x^{\prime}) \in L^{1}(\mu), \quad \varsigma(\cdot  , y^{\prime})  \in L^{1}(\mu)
	$$
	for every $(x^{\prime}, y^{\prime}) \in X \times Y$. 	\end{lem}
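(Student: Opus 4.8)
The plan is to factor the quadruple integral of the Kronecker product $\gamma\otimes\varsigma$ into a product of two ordinary double integrals and then treat each double integral separately via the single‑variable integrability dichotomy of Lemma \ref{estimativa}. Concretely: since $\gamma$ and $\varsigma$ are continuous and nonnegative and $\mu,\nu$ are finite nonnegative measures, Tonelli's theorem gives, as an identity in $[0,\infty]$,
\[
\int_{X\times Y}\int_{X\times Y}\gamma(x,x')\varsigma(y,y')\, d(\mu\times\nu)(x,y)\, d(\mu\times\nu)(x',y')=\Bigl(\int_X\int_X\gamma\, d\mu\, d\mu\Bigr)\Bigl(\int_Y\int_Y\varsigma\, d\nu\, d\nu\Bigr),
\]
so $\gamma\otimes\varsigma\in L^{1}((\mu\times\nu)\times(\mu\times\nu))$ if and only if this product is finite.

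For the ``if'' direction I would argue that if $\gamma(\cdot,x')\in L^{1}(\mu)$ for every $x'\in X$ and $\varsigma(\cdot,y')\in L^{1}(\nu)$ for every $y'\in Y$, then Lemma \ref{estimativa}, applied to the continuous CND kernel $\gamma$ (resp.\ $\varsigma$) with bounded diagonal and the nonnegative measure $\mu$ (resp.\ $\nu$), yields $\gamma\in L^{1}(\mu\times\mu)$ and $\varsigma\in L^{1}(\nu\times\nu)$; hence both factors in the displayed identity are finite and we are done. This direction is pure bookkeeping and uses neither metrizability nor the hypotheses $\mu\notin\delta(X)$, $\nu\notin\delta(Y)$.

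The substance lies in the ``only if'' direction: finiteness of the product $\bigl(\int\int\gamma\bigr)\bigl(\int\int\varsigma\bigr)$ does \emph{not} a priori force each factor to be finite, since one could conceivably be $0$ and the other $+\infty$. Ruling this out is where $\mu\notin\delta(X)$, $\nu\notin\delta(Y)$ and metrizability are used, and it is the one genuine step. I would argue that since $\mu$ is a nonzero nonnegative measure that is not a point mass, its support contains two distinct points; as $X$ is Hausdorff, there are disjoint open sets $U_{1},U_{2}$ around them with $\mu(U_{1})\mu(U_{2})>0$. Every pair $(x,x')\in U_{1}\times U_{2}$ has $x\neq x'$, so metrizability of $\gamma$ gives $2\gamma(x,x')-\gamma(x,x)-\gamma(x',x')=D_{\gamma}(x,x')^{2}>0$, and since $\gamma\geq 0$ this forces $\gamma>0$ on the set $U_{1}\times U_{2}$ of positive $\mu\times\mu$‑measure; hence $\int_X\int_X\gamma\, d\mu\, d\mu>0$, and symmetrically $\int_Y\int_Y\varsigma\, d\nu\, d\nu>0$. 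Combined with finiteness of the product, both factors are then finite, i.e.\ $\gamma\in L^{1}(\mu\times\mu)$ and $\varsigma\in L^{1}(\nu\times\nu)$; a final application of Lemma \ref{estimativa} upgrades this to $\gamma(\cdot,x')\in L^{1}(\mu)$ for every $x'\in X$ and $\varsigma(\cdot,y')\in L^{1}(\nu)$ for every $y'\in Y$.

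In short, the only delicate point is excluding the degenerate case $0\cdot\infty$ in the factorized integral; the assumption ``$\mu\notin\delta(X)$'' (the marginal is genuinely spread out) together with the strict positivity of the metric $D_{\gamma}$ off the diagonal is precisely what guarantees $\int\int\gamma\, d\mu\, d\mu>0$, and everything else reduces to Tonelli and Lemma \ref{estimativa}.
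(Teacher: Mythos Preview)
Your proof is correct and takes a genuinely different, more elementary route than the paper. The paper does not factor the quadruple integral; instead it invokes the general PDI machinery of Lemma~\ref{integmargin}, first reducing to the case where $\gamma$ and $\varsigma$ vanish on the diagonal (so that the projection kernels of $\gamma\otimes\varsigma$ are zero), and then arguing that the exceptional sets $X_{\mu,\nu}^{\mathfrak{I}}$ and $Y_{\mu,\nu}^{\mathfrak{I}}$ are all of $X$ and $Y$. Concretely, for a fixed $(x',y')$ with $\mathfrak{I}((\cdot,\cdot),(x',y'))\in L^{1}(\mu\times\nu)$, the paper writes $X_{\mu,\nu}^{\mathfrak{I}}$ as $\{x:\gamma(x,x')=0\}\cup\{x:\gamma(x,x')\neq 0,\ \varsigma(\cdot,y')\in L^{1}(\nu)\}$; if the second set is empty then metrizability (zero diagonal) collapses the first set to $\{x'\}$, forcing $\mu=c\delta_{x'}$, a contradiction. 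Your argument bypasses this entirely: the Tonelli factorization $\iint\!\!\iint\gamma\varsigma=(\iint\gamma)(\iint\varsigma)$ reduces everything to ruling out $0\cdot\infty$, and your positivity step (two points in the support, metrizability gives $\gamma>0$ on a rectangle of positive $\mu\times\mu$-measure) handles that cleanly. The gain of your approach is brevity and independence from the PDI apparatus; the gain of the paper's approach is that it exhibits the lemma as a concrete instance of Lemma~\ref{integmargin} with $X_{\mu,\nu}^{\mathfrak{I}}=X$, $Y_{\mu,\nu}^{\mathfrak{I}}=Y$, which is the form needed downstream (e.g.\ in Corollary~\ref{integmarginkroeprodcor} and the general Bernstein-type results).
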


On Lemma \ref{integmarginkroeprod} we must exclude measures in $\delta(X)$ and $\delta(Y)$ in order to avoid problematic integration. For instance, suppose that $X=Y= \mathbb{R}^{d}$, $\gamma(z,w)= \varsigma(z,w)=\|z-w\|$ and  $\nu$ is a nonnegative measure for which $z \to \|z\|$ is not in  $L^{1}(\nu\times \nu)$. Then, we have that $\gamma \otimes \varsigma \in L^{1}((\delta_{0} \times \nu)\times (\delta_{0} \times \nu))$ (because $\gamma \otimes \varsigma$ is equal to zero $\delta_{0} \times \nu$ almost everywhere) but $(x,y) \to  \|x-x^{\prime}\| \| y - y^{\prime}\|  \in L^{1}(\delta_{0} \times \nu)$ only for $(x^{\prime}, y^{\prime})\in \{0\}\times\mathbb{R}^{d}$. This problem does not affect the definition of PDI-Characteristic kernels, as the vector spaces  $\mathfrak{M}_{\delta_{x}\times \nu}(\mathfrak{I})$ and $\mathfrak{M}_{ \mu \times \delta_{y}}(\mathfrak{I})$ are always equal to $\{0\}$. In 

\begin{cor}\label{integmarginkroeprodcor} Let $\gamma: X \times X \to [0, \infty)$,  $\varsigma: Y \times Y \to [0, \infty)$  be continuous,  CND metrizable kernels with bounded diagonal. Then,    a  measure $\lambda \in \mathfrak{M}(X \times Y)$   such that  $|\lambda|_{X}$, $|\lambda|_{Y}$ are not degenerate and $\gamma  \in L^{1}(|\lambda|_{X} \times |\lambda|_{X} )$, $\varsigma  \in L^{1}(|\lambda|_{Y} \times |\lambda|_{Y} )$ satisfies $ \gamma\otimes\varsigma \in L^{1}(|\lambda| \times |\lambda|) $  if and only if
	$$
	\gamma(\cdot,x^{\prime})\varsigma(\cdot, y^{\prime})  \in L^{1}(|\lambda|)
	$$
	for every $(x^{\prime}, y^{\prime}) \in X \times Y$. The following subset of $\mathfrak{M}(X\times Y)$ is a vector space
	\begin{align*}
		\mathfrak{M}(\gamma \otimes \varsigma): =\{\lambda \in &  \mathfrak{M}(X\times Y) ,  \quad  \gamma \otimes \varsigma \in L^{1}(|\lambda|\times |\lambda|), \quad \gamma  \in L^{1}(|\lambda|_{X} \times |\lambda|_{X} ),\\ 
		&\varsigma  \in L^{1}(|\lambda|_{Y} \times |\lambda|_{Y} ), \quad \lambda(X, \cdot) \text{ and }  \lambda( \cdot, Y) \text{ are the zero measure}   \}.
	\end{align*}
	The real valued function 
	$$
	(\lambda, \lambda^{\prime}) \to \int_{X\times Y} \int_{X\times Y} \gamma(x,x^{\prime}) \varsigma(y,  y^{\prime})d\lambda (x,y)d\lambda^{\prime}(x^{\prime}, y^{\prime}),
	$$
	is well defined for $\lambda, \lambda^{\prime} \in \mathfrak{M}(\gamma \otimes \varsigma)$ and is a semi-inner product on it. It is an inner product if and only if $\gamma$ and $\varsigma$ are CND-Characteristic. 
\end{cor}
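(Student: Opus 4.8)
The plan is to deduce everything from the results already established for the zero-diagonal product kernel $\tilde\gamma\otimes\tilde\varsigma$, and then to identify the positive definite kernel attached to it by Lemma \ref{PDItoPD} as a Kronecker product of reproducing kernels.

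\emph{Step 1 (reduction to vanishing diagonal).} Put $\tilde\gamma(x,x'):=\gamma(x,x')-\tfrac12\gamma(x,x)-\tfrac12\gamma(x',x')$ and define $\tilde\varsigma$ analogously. By Equation \ref{condequa} these are continuous CND kernels with identically zero diagonal, they are metrizable exactly when $\gamma$ and $\varsigma$ are (the associated metrics $D_\gamma$ and $D_{\tilde\gamma}$ coincide), and $\mathfrak I:=\tilde\gamma\otimes\tilde\varsigma$ is a continuous, $2$-symmetric PDI kernel whose projection kernels vanish. Writing $\gamma\otimes\varsigma$ as $\mathfrak I$ plus four ``cross'' terms, each of which is a product of a bounded factor (a diagonal value of $\gamma$ or $\varsigma$) with a function depending on only one of the four point-variables, and using that the diagonals are bounded and the relevant marginals finite, one checks that $\gamma\otimes\varsigma\in L^1(|\lambda|\times|\lambda|)\Leftrightarrow\mathfrak I\in L^1(|\lambda|\times|\lambda|)$ and $\gamma(\cdot,x')\varsigma(\cdot,y')\in L^1(|\lambda|)\Leftrightarrow\mathfrak I((\cdot,\cdot),(x',y'))\in L^1(|\lambda|)$ for every $\lambda\in\mathfrak M(X\times Y)$ whose two marginals vanish, and that for any such $\lambda,\lambda'$ with $\gamma\otimes\varsigma$ integrable
$$
\int_{X\times Y}\int_{X\times Y}\gamma(x,x')\varsigma(y,y')\,d\lambda(x,y)\,d\lambda'(x',y')=\int_{X\times Y}\int_{X\times Y}\mathfrak I((x,y),(x',y'))\,d\lambda(x,y)\,d\lambda'(x',y'),
$$
since each cross term integrates to zero against a measure with a vanishing marginal. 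The same equivalences hold with $|\lambda|$ replaced by $|\lambda|_X$ or $|\lambda|_Y$.

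\emph{Step 2 (integrability equivalence, vector space, semi-inner product).} Let $\lambda$ be as in the first assertion, so $|\lambda|_X\notin\delta(X)$, $|\lambda|_Y\notin\delta(Y)$, $\gamma\in L^1(|\lambda|_X\times|\lambda|_X)$, $\varsigma\in L^1(|\lambda|_Y\times|\lambda|_Y)$, whence (Lemma \ref{estimativa} and Step 1) $\tilde\gamma\in L^1(|\lambda|_X\times|\lambda|_X)$ and $\tilde\varsigma\in L^1(|\lambda|_Y\times|\lambda|_Y)$. With $\mu:=|\lambda|_X$, $\nu:=|\lambda|_Y$, Lemma \ref{integmarginkroeprod} applied to $\tilde\gamma,\tilde\varsigma$ gives $\mathfrak I\in L^1((\mu\times\nu)\times(\mu\times\nu))$, and since $\gamma\in L^1(\mu\times\mu)$ forces $\gamma(\cdot,x')\in L^1(\mu)$ for \emph{every} $x'$ (Lemma \ref{estimativa}), also $\tilde\gamma(\cdot,x')\tilde\varsigma(\cdot,y')\in L^1(\mu\times\nu)$ for every $(x',y')$; thus one may take $X^{\mathfrak I}_{\mu,\nu}\times Y^{\mathfrak I}_{\mu,\nu}=X\times Y$ in Lemma \ref{integmargin}. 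Corollary \ref{integmargincor} then gives ``$\mathfrak I\in L^1(|\lambda|\times|\lambda|)$ iff $\mathfrak I((\cdot,\cdot),(x',y'))\in L^1(|\lambda|)$ for every $(x',y')$'', which by Step 1 is the claimed equivalence. For the vector space statement, a nonzero $\lambda\in\mathfrak M(\gamma\otimes\varsigma)$ necessarily has $|\lambda|_X\notin\delta(X)$ and $|\lambda|_Y\notin\delta(Y)$ (if $|\lambda|_X=c\delta_{x_0}$ then $\lambda$ is carried by $\{x_0\}\times Y$ and the vanishing of both marginals forces $\lambda=0$); given $\lambda_1,\lambda_2\in\mathfrak M(\gamma\otimes\varsigma)$, put $\mu:=|\lambda_1|_X+|\lambda_2|_X$, $\nu:=|\lambda_1|_Y+|\lambda_2|_Y$, use the vector-space property of Lemma \ref{estimativa} and $\gamma\ge0$ to get $\gamma\in L^1(\mu\times\mu)$ and hence $\mathfrak I\in L^1((\mu\times\nu)\times(\mu\times\nu))$, and conclude $\lambda_1,\lambda_2\in\mathfrak M_{\mu,\nu}(\mathfrak I)$, a vector space by Theorem \ref{principalmetric}. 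That theorem, together with Step 1, also shows the displayed bilinear form is well defined on $\mathfrak M(\gamma\otimes\varsigma)$, equals $\langle K^{\mathfrak I}_\lambda,K^{\mathfrak I}_{\lambda'}\rangle_{\mathcal H_{K^{\mathfrak I}}}$, and is a semi-inner product.

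\emph{Step 3 (inner product iff CND-Characteristic).} The $16$-term formula of Lemma \ref{PDItoPD} for $\mathfrak I=\tilde\gamma\otimes\tilde\varsigma$ factors, and using $\tilde\gamma(x_0,x_0)=\tilde\varsigma(y_0,y_0)=0$ one finds $K^{\mathfrak I}((x,y),(x',y'))=K^{x_0}_\gamma(x,x')\,K^{y_0}_\varsigma(y,y')$ with $K^w_\gamma$ as in Equation \ref{Kgamma}; hence $\mathcal H_{K^{\mathfrak I}}=\mathcal H_{K^{x_0}_\gamma}\otimes\mathcal H_{K^{y_0}_\varsigma}$ and $\int\!\int\mathfrak I\,d\lambda\,d\lambda=\|K^{\mathfrak I}_\lambda\|^2$. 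If $\gamma,\varsigma$ are CND-Characteristic and this vanishes then $K^{\mathfrak I}_\lambda=0$, i.e. $\int g(x)h(y)\,d\lambda(x,y)=0$ for all $g\in\mathcal H_{K^{x_0}_\gamma}$, $h\in\mathcal H_{K^{y_0}_\varsigma}$; fixing $y'$ and taking $g=K^{x_0}_\gamma(\cdot,x')$ shows the finite signed measure $\lambda^{y'}(A):=\int_{A\times Y}K^{y_0}_\varsigma(y,y')\,d\lambda(x,y)$ on $X$ satisfies $\lambda^{y'}(X)=\int_Y K^{y_0}_\varsigma(y,y')\,d\lambda_Y(y)=0$ (as $\lambda_Y=0$) and $(K^{x_0}_\gamma)_{\lambda^{y'}}=0$; the integrability hypotheses put $\lambda^{y'}\in\mathfrak M_1(X;\gamma)$, so $I(\lambda^{y'},\lambda^{y'})_\gamma=\|(K^{x_0}_\gamma)_{\lambda^{y'}}\|^2=0$ and CND-characteristicness of $\gamma$ gives $\lambda^{y'}=0$; thus for every Borel $A\subset X$ the measure $B\mapsto\lambda(A\times B)$ lies in $\mathfrak M_1(Y;\varsigma)$ and is annihilated by $K^{y_0}_\varsigma$, so CND-characteristicness of $\varsigma$ forces $\lambda(A\times B)=0$ for all $A,B$, i.e. $\lambda=0$. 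Conversely, for a nonzero $\mu\in\mathfrak M_1(X;\gamma)$ and a nonzero $\nu\in\mathfrak M_1(Y;\varsigma)$ one has $\mu\otimes\nu\in\mathfrak M(\gamma\otimes\varsigma)$ and $\int\!\int\gamma\otimes\varsigma\,d(\mu\otimes\nu)\,d(\mu\otimes\nu)=I(\mu,\mu)_\gamma\,I(\nu,\nu)_\varsigma$, which is $>0$ because $\mu\otimes\nu\ne0$; as both factors are $\ge0$ this yields $I(\mu,\mu)_\gamma>0$, so $\gamma$ is CND-Characteristic, and symmetrically $\varsigma$.

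\emph{Main obstacle.} The delicate part is purely the $L^1$-bookkeeping: that the cross terms of Step 1 are harmless, that $\mathfrak I\in L^1((\mu\times\nu)\times(\mu\times\nu))$ with $X^{\mathfrak I}_{\mu,\nu}\times Y^{\mathfrak I}_{\mu,\nu}=X\times Y$, and above all that the sliced measures $\lambda^{y'}$ and $B\mapsto\lambda(A\times B)$ inherit enough integrability to lie in the energy spaces $\mathfrak M_1(X;\gamma)$, $\mathfrak M_1(Y;\varsigma)$. All of this reduces to the pointwise bound $|K^w_\gamma(x,x')|\le 2\sqrt{(2\gamma(x,w)-\gamma(x,x)-\gamma(w,w))(2\gamma(x',w)-\gamma(x',x')-\gamma(w,w))}$, the subadditivity of $\sqrt\gamma$, Cauchy--Schwarz and Fubini; these estimates are routine but somewhat lengthy, and I would carry them out among the technical results of Section \ref{Proofs}.
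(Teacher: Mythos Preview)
Your proposal is correct. Steps 1--2 (reduction to the zero-diagonal kernel $\tilde\gamma\otimes\tilde\varsigma$, invocation of Lemma \ref{integmarginkroeprod} to get $X^{\mathfrak I}_{\mu,\nu}\times Y^{\mathfrak I}_{\mu,\nu}=X\times Y$, and the appeal to Corollary \ref{integmargincor} / Theorem \ref{principalmetric}) match the paper's route almost exactly. The ``only if'' part of Step 3 is also the same as the paper's (contrapositive with $\mu\otimes\nu$).

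The genuine difference is the ``if'' direction of Step 3. The paper does \emph{not} use the tensor factorization $K^{\mathfrak I}=K^{x_0}_\gamma\otimes K^{y_0}_\varsigma$ directly; instead it expands $\gamma(x,x')=\sum_k(\phi_k(x)-\phi_k(x'))^2$ via an orthonormal basis of $\mathcal H_{K^{x_0}_\gamma}$ (this requires a separability lemma quoted from \cite{gaussinfi}), shows term by term that the PDI double sum vanishes, deduces that the sliced measures $\lambda_k(A):=\int_{X\times A}\phi_k(x)\,d\lambda$ have zero $\varsigma$-energy, and then reassembles $\sum_k\lambda_k(A)^2$ to kill $\lambda_A$. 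Your approach replaces the basis expansion by the single observation $K^{\mathfrak I}_\lambda=0\Rightarrow(K^{x_0}_\gamma)_{\lambda^{y'}}=0$ and then slices twice. This is cleaner structurally and, importantly, sidesteps the external separability input; what it costs is exactly the $L^1$-bookkeeping you flag in your ``main obstacle'' (checking $\lambda^{y'}\in\mathfrak M_1(X;\gamma)$ and $\nu_A:=\lambda(A\times\cdot)\in\mathfrak M_1(Y;\varsigma)$), which the paper also has to do in its own variant. Both arguments are ultimately the same ``apply CND-characteristicness twice along the two factors'' idea, just parameterized differently (basis index $k$ versus point $y'$).
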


The concept of PDI-Characteristic kernel is revisited in Section \ref{Examples of PDI-Characteristic kernels}, where Lemma  \ref{integmarginkroeprod} and Corollary \ref{integmarginkroeprodcor} are considered special cases. A proof of those results can be found at \cite{sejdinovic2013equivalence}, under the additional assumption that $\gamma$ and $\varsigma$ are zero at the diagonal.   

Corollary \ref{integmarginkroeprodcor}  was generalized in  \cite{bottcher2019distance} to a product of $n$ CND-Characteristic kernels. We expect that PDI kernels can be generalized to a product of several spaces, however, it seems that the number of variables grows exponentially with the number of spaces. We plan to analyze this question in the future.

\section{\textbf{Bernstein  functions of two variables} }\label{Bernstein  functions of two variables}

The aim of this section is to characterize radial PDI kernels on all Euclidean spaces in the same sense that Schoenberg characterized positive definite radial kernels in \cite{schoenbradial}  and CND radial kernels in \cite{neuschoen} (Theorem \ref{reprcondneg}). We also connect those kernels with the set of completely monotone functions of two variables.

Next Theorem is   an intermediate result for our purposes, it is based on Theorem $2.3$  in \cite{unifying} where the focus was on positive definite isotropic kernels on real spheres. We emphasize that the proof is not constructive, but instead is   based on Theorem \ref{reprcondneg}  and the fact that   the representation is unique.

\begin{theorem}\label{basicradialuni}
	Let $f: X \times X \times [0, \infty) \to \mathbb{R}$ be a  function such that $f(u,v, \cdot )$ is continuous on $ [0,\infty)$ for every $u,v \in X$. The kernel
	$$
	f(u,v, \|x-y\| ), \quad u, v \in X, \quad x,y \in  \mathbb{R}^{d}
	$$ is PDI  for every  $d\in \mathbb{N}$ and its projection kernels are zero if and only if the kernel can be represented as
	$$
	f(u,v, \|x-y\|)= \int_{[0,\infty)}\frac{(1-e^{-r\|x-y\|^{2}})}{r}(1 +r)d\sigma_{u,v}(r) 
	$$
	where for every $A \in \mathscr{B}([0, \infty))$ the kernel $\sigma_{u,v}(A)$ is CND in $X$ and $\sigma_{u,u}$ is the zero measure for every $u \in X$. The representation  is unique.
\end{theorem}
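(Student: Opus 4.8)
The plan is to establish the two implications separately, writing $\mathfrak{I}((u,x),(v,y)):=f(u,v,\|x-y\|)$ on $X\times\mathbb{R}^{d}$ (the $X$‑factor carrying $u,v$, the $\mathbb{R}^{d}$‑factor carrying $x,y$); this kernel is automatically $2$‑symmetric by radiality, and continuity of $f(u,v,\cdot)$ makes the vanishing of its projection kernels equivalent to the two conditions $f(u,u,t)=0$ for all $t\geq 0$ and $f(u,v,0)=0$ for all $u,v$.

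For the direction from the representation to PDI‑ness: $\sigma_{u,u}=0$ forces $f(u,u,\cdot)=0$, and $f(u,v,0)=\int 0\,d\sigma_{u,v}=0$, so the projection kernels vanish; continuity follows by dominated convergence since, for $\|x-y\|$ in a bounded range, $(1-e^{-r\|x-y\|^{2}})(1+r)/r$ is bounded in $r$ by a constant that is integrable against the finite measure $\sigma_{u,v}$. For the PDI inequality I would fix distinct $u_{1},\dots,u_{n}\in X$, distinct $x_{1},\dots,x_{m}\in\mathbb{R}^{d}$ and scalars $c_{i,k}$ with $\sum_{i}c_{i,k}=0$ and $\sum_{k}c_{i,k}=0$, set $\tau:=\sum_{i,j}\sigma_{u_{i},u_{j}}$ (a finite nonnegative Radon measure on $[0,\infty)$), and write $\sigma_{u_{i},u_{j}}=\rho_{ij}\,\tau$ with densities $\rho_{ij}\in[0,1]$. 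Since $(\sigma_{u_{i},u_{j}}(A))_{i,j}$ is CND for every Borel $A$, a matrix‑valued Radon--Nikodym argument in the spirit of \cite{jeanrad} gives that $(\rho_{ij}(r))_{i,j}$ is CND for $\tau$‑a.e.\ $r$; and for each $r\geq 0$ the kernel $k_{r}(x,y):=(1-e^{-r\|x-y\|^{2}})(1+r)/r$ is CND and radial on every $\mathbb{R}^{d}$ (for $r>0$ it is the Bernstein function attached to $\delta_{r}$, for $r=0$ it is $\|x-y\|^{2}$). Because each summand below is $\tau$‑integrable (its absolute value integrates to $|c_{i,k}c_{j,l}|\,f(u_{i},u_{j},\|x_{k}-x_{l}\|)<\infty$), one may interchange the finite sum with the integral:
$$
\sum_{i,j=1}^{n}\sum_{k,l=1}^{m}c_{i,k}c_{j,l}\,f(u_{i},u_{j},\|x_{k}-x_{l}\|)=\int_{[0,\infty)}\Big(\sum_{i,j,k,l}c_{i,k}c_{j,l}\,\rho_{ij}(r)\,k_{r}(x_{k},x_{l})\Big)\,d\tau(r).
$$
For $\tau$‑a.e.\ $r$ the inner sum is the Kronecker product of the CND kernel $(\rho_{ij}(r))$ on $\{u_{1},\dots,u_{n}\}$ with the CND kernel $k_{r}$ on $\{x_{1},\dots,x_{m}\}$, evaluated on coefficients satisfying the constraints of Definition \ref{PDI}, hence nonnegative by the Kronecker‑product computation recalled right after that definition; integrating a nonnegative integrand finishes this direction.

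For the converse I would exploit, twice, that $c_{i,k}:=d_{i}e_{k}$ meets the constraints of Definition \ref{PDI} whenever $\sum_{i}d_{i}=0$ and $\sum_{k}e_{k}=0$ (the device behind Equation \ref{CNDprojgeral}). First, for fixed $u\neq v$ take $n=2$ and $d=(1,-1)$: using symmetry of $f$ and $f(u,u,\cdot)=f(v,v,\cdot)=0$, the PDI inequality reduces to $\sum_{k,l}e_{k}e_{l}f(u,v,\|x_{k}-x_{l}\|)\leq 0$ for all distinct $x_{k}\in\mathbb{R}^{d}$ with $\sum_{k}e_{k}=0$, so $(x,y)\mapsto f(u,v,\|x-y\|)$ is a continuous CND radial kernel on every $\mathbb{R}^{d}$; Theorem \ref{reprcondneg} and $f(u,v,0)=0$ then produce a unique nonnegative $\sigma_{u,v}\in\mathfrak{M}([0,\infty))$ with the asserted integral formula, and symmetry of $f$ together with uniqueness give $\sigma_{u,v}=\sigma_{v,u}$, $\sigma_{u,u}=0$, and uniqueness of the whole family.

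The remaining, and principal, point is that $(u,v)\mapsto\sigma_{u,v}(A)$ is CND for every Borel $A$. For this I would fix $u_{1},\dots,u_{n}\in X$ and $d$ with $\sum_{i}d_{i}=0$, and apply PDI with $c_{i,k}=d_{i}e_{k}$ and $\sum_{k}e_{k}=0$ to conclude that $(x,y)\mapsto-\sum_{i,j}d_{i}d_{j}f(u_{i},u_{j},\|x-y\|)$ is a continuous CND radial kernel on every $\mathbb{R}^{d}$; by Theorem \ref{reprcondneg} its profile $t\mapsto-\sum_{i,j}d_{i}d_{j}f(u_{i},u_{j},\sqrt{t})$ is a Bernstein function, hence has a unique \emph{nonnegative} representing measure. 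But linearity and the formula for $f$ already express this same profile as $\int_{[0,\infty)}(1-e^{-rt})(1+r)/r\,d\big(-\sum_{i,j}d_{i}d_{j}\sigma_{u_{i},u_{j}}\big)(r)$, where the bracketed finite signed measure integrates the kernel absolutely. Uniqueness of the representing measure among bounded signed measures --- which reduces to uniqueness of the Laplace transform of finite measures after differentiating in $t$ and multiplying by $e^{-rt_{0}}$ --- forces $-\sum_{i,j}d_{i}d_{j}\sigma_{u_{i},u_{j}}$ to coincide with that nonnegative measure, i.e.\ $\sum_{i,j}d_{i}d_{j}\sigma_{u_{i},u_{j}}(A)\leq 0$ for every Borel $A$. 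The hard part is exactly this last step: the CND structure of the measures $\sigma_{u,v}$ is never visible directly and must be recovered, one finite configuration of points at a time, by transporting the scalar inequalities that PDI supplies through the uniqueness in the representation of Theorem \ref{reprcondneg}; by comparison the matrix‑valued Radon--Nikodym ingredient in the first direction is routine, though it is where the measure‑theoretic care is needed.
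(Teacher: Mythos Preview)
Your proposal is correct and follows essentially the same route as the paper's own proof: for the direction from PDI to the representation you use Equation~\ref{CNDprojgeral} (with coefficients $c_{i,k}=d_ie_k$) to show that $-\sum_{i,j}d_id_jf(u_i,u_j,\|x-y\|)$ is a continuous radial CND kernel on every $\mathbb{R}^{d}$, and then invoke the uniqueness in Theorem~\ref{reprcondneg} to force $-\sum_{i,j}d_id_j\sigma_{u_i,u_j}$ to be the nonnegative representing measure, exactly as the paper does; for the converse you use a matrix-valued Radon--Nikodym argument (the content of the paper's Lemma~\ref{improve}) to write $d\sigma_{u_i,u_j}=\rho_{ij}\,d\tau$ with $(\rho_{ij}(r))$ CND for $\tau$-a.e.\ $r$, and then the Kronecker-product computation from Section~\ref{positive definite independent kernels}. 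Your added remark that the uniqueness in Theorem~\ref{reprcondneg} must be understood among bounded signed measures (via injectivity of the Laplace transform) is a useful clarification that the paper leaves implicit.
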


A function $h:(0, \infty)\times (0, \infty) \to \mathbb{R}$ is  \textbf{completely monotone with two variables} if $h \in C^{\infty}((0, \infty)^{2})$ and $(-1)^{|\alpha|}\partial^{\alpha}h(t) \geq 0$, for every $\alpha \in \mathbb{Z}_{+}^{2}$ and $t \in (0,\infty)^{2}$. Similar to the Hausdorff-Bernstein-Widder Theorem on completely monotone functions (one variable), the following equivalence holds, Section $4.2$ in \cite{bochnerlivro}:

\begin{theorem}\label{Bochnercomplsev} A  function $g:(0, \infty)^{2} \to \mathbb{R}$ is  completely monotone  with two variables if and only if it can be represented as
	$$
	h(t_{1},t_{2})=\int_{[0,\infty)^{2}}e^{-r_{1}t_{1} - r_{2}t_{2} }d\sigma(r_{1},r_{2}), \quad t_{1}, t_{2} \in (0, \infty) 
	$$
	where $\sigma$ is a Radon Borel nonnegative measure (possibly unbounded) on $[0,\infty)^{2}$ for which the previous integrals are well defined. Further, the representation is unique.
\end{theorem}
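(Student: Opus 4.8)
The plan is to dispatch the ``if'' direction by differentiation under the integral sign, and to obtain the ``only if'' direction by reducing it twice to the one-variable Hausdorff--Bernstein--Widder theorem, which we are free to assume, gluing the resulting one-parameter families of measures into a single measure on $[0,\infty)^{2}$.

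For the ``if'' direction, fix a compact $K\subset(0,\infty)^{2}$ and $\varepsilon>0$ with $t_{1},t_{2}\geq\varepsilon$ on $K$. For each $\alpha\in\mathbb{Z}_{+}^{2}$ the map $r\mapsto r_{1}^{\alpha_{1}}r_{2}^{\alpha_{2}}e^{-\varepsilon(r_{1}+r_{2})}$ is bounded on $[0,\infty)^{2}$, hence dominated by a constant multiple of $e^{-(\varepsilon/2)(r_{1}+r_{2})}$, which is $\sigma$-integrable by the standing assumption that the defining integrals are finite. Thus one may differentiate under the integral on $K$ and obtain $(-1)^{|\alpha|}\partial^{\alpha}h(t_{1},t_{2})=\int_{[0,\infty)^{2}}r_{1}^{\alpha_{1}}r_{2}^{\alpha_{2}}e^{-r_{1}t_{1}-r_{2}t_{2}}\,d\sigma(r_{1},r_{2})\geq 0$, so $h$ is completely monotone with two variables. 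Uniqueness is a separate point: if $\sigma,\sigma'$ produce the same Laplace transform, fix $t_{2}$ and apply one-dimensional uniqueness to the measures $A\mapsto\int_{A\times[0,\infty)}e^{-r_{2}t_{2}}\,d\sigma$ to get $\sigma(A\times B)=\sigma'(A\times B)$ for all Borel rectangles, whence $\sigma=\sigma'$ since rectangles form a $\pi$-system and the measures involved are $\sigma$-finite.

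For the ``only if'' direction, fix $t_{2}>0$; taking $\alpha=(n,0)$ shows $t_{1}\mapsto h(t_{1},t_{2})$ is completely monotone in one variable, so the Hausdorff--Bernstein--Widder theorem yields a unique nonnegative Borel measure $\mu_{t_{2}}$ on $[0,\infty)$ with $h(t_{1},t_{2})=\int_{[0,\infty)}e^{-r_{1}t_{1}}\,d\mu_{t_{2}}(r_{1})$. The heart of the argument --- and the step I expect to be the main obstacle --- is to show that for each fixed $r_{1}$-interval $A=[0,a]$ the function $t_{2}\mapsto\mu_{t_{2}}(A)$ is itself completely monotone, and then to assemble these. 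For the former I would use that $t_{2}\mapsto\sum_{j}c_{j}h(s_{j},t_{2})=\int(\sum_{j}c_{j}e^{-r_{1}s_{j}})\,d\mu_{t_{2}}(r_{1})$ is a nonnegative combination of completely monotone functions of $t_{2}$ whenever $c_{j},s_{j}\geq 0$, and combine this with a one-sided approximation of $\mathbf{1}_{A}$ --- or, equivalently, with the Widder inversion formula that recovers $\mu_{t_{2}}(A)$ as a limit of expressions built from the derivatives $\partial_{1}^{k}h(\cdot,t_{2})$, each completely monotone in $t_{2}$ --- passing to the limit via the stability of complete monotonicity under pointwise limits. Granting this, the one-variable theorem applied in $t_{2}$ gives, for each $A$, a measure $\nu_{A}$ on $[0,\infty)$ with $\mu_{t_{2}}(A)=\int e^{-r_{2}t_{2}}\,d\nu_{A}(r_{2})$; uniqueness forces $A\mapsto\nu_{A}$ to be finitely additive on intervals, a monotone-class argument using continuity from below of the $\mu_{t_{2}}$ upgrades it to a countably additive set function and hence to a measure $\sigma$ on the product $\sigma$-algebra of $[0,\infty)^{2}$, and Fubini gives $h(t_{1},t_{2})=\int e^{-r_{1}t_{1}}\,d\mu_{t_{2}}(r_{1})=\int_{[0,\infty)^{2}}e^{-r_{1}t_{1}-r_{2}t_{2}}\,d\sigma$. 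Controlling the possibly infinite total masses so that every Laplace integral stays finite, and checking the measurability needed for Fubini, are the technical costs along this route.

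A more conceptual alternative would be Choquet theory: the convex set of completely monotone functions on $(0,\infty)^{2}$ normalized at a base point is compact in the topology of pointwise convergence and its extreme points are exactly the exponentials $t\mapsto e^{-\langle r,t\rangle}$ (degenerate limits included), so Choquet's theorem represents $h$ as their barycenter; but identifying these extreme rays rigorously is comparable in length to the reduction above, so I would present the two-fold reduction to the one-dimensional theorem as the main proof.
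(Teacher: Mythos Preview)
The paper does not prove this theorem; it is quoted as a classical result with the attribution ``Section~4.2 in \cite{bochnerlivro}'' and is used as a black box in the proof of Theorem~\ref{bernssev}. So there is no ``paper's own proof'' to compare your proposal against.

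That said, your outline is a standard and essentially correct route to the result. The ``if'' direction and the uniqueness argument are fine. For the ``only if'' direction, the two-fold reduction to the one-variable Hausdorff--Bernstein--Widder theorem is exactly the approach taken in Bochner's book, and the step you flag as the main obstacle---showing that $t_{2}\mapsto\mu_{t_{2}}(A)$ is completely monotone---is indeed where the work lies. Your suggestion to use the Post--Widder inversion formula is the cleanest way through: it expresses $\mu_{t_{2}}([0,a])$ (at continuity points $a$) as a pointwise limit of finite linear combinations of $\partial_{1}^{k}h(\,\cdot\,,t_{2})$ with nonnegative coefficients, each of which is completely monotone in $t_{2}$ by hypothesis, and pointwise limits preserve complete monotonicity. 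The subsequent assembly of $A\mapsto\nu_{A}$ into a product measure is precisely the bimeasure-to-measure step; note that the paper itself invokes exactly this device (Theorem~1.10 in \cite{berg0}) in the proof of Theorem~\ref{basicradial}, so you could cite the same reference rather than redoing the monotone-class argument by hand. The Choquet alternative you mention is also valid and is closer in spirit to how Bochner organizes the multivariable case.
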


A continuous function $g:[0, \infty)^{2} \to \mathbb{R}$ is  called a  \textbf{Bernstein function  with two variables} if $g \in C^{\infty}((0, \infty)^{2})$ and $\partial^{\beta}g(t)$ is a completely monotone function for $\beta = (1,1)$. Based on the equivalence of relations $(ii)$ and $(iii)$ in  Theorem \ref{reprcondneg}, we provide a characterization for those functions.  

\begin{theorem}\label{bernssev} A continuous function $g:[0, \infty)^{2}\to \mathbb{R}$  is  a Bernstein function   with two variables if and only if it can be represented as
	\begin{align*}
		g(t_{1},t_{2})= \int_{[0,\infty)^{2}}\left (\frac{1-e^{-r_{1}t_{1}}}{r_{1}} \right ) \left (\frac{1-e^{-r_{2}t_{2}}}{r_{2}} \right )(1+r_{1})(1+r_{2})d\sigma(r_{1},r_{2}) + g(0, t_{2})& \\
		+ g(t_{1},0) - g(0,0)&.
	\end{align*}
	where  the measure $\sigma \in \mathfrak{M}([0,\infty)^{2})$ is nonnegative. Further, the representation is unique.
\end{theorem}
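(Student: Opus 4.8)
The plan is to obtain the representation by differentiating $g$ once in each variable, applying the two-variable Bochner theorem (Theorem~\ref{Bochnercomplsev}) to the resulting completely monotone function, and then integrating back up, carefully tracking the boundary terms.

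For the necessity direction, I would set $h:=\partial^{(1,1)}g$, which is completely monotone on $(0,\infty)^2$ by hypothesis, and invoke Theorem~\ref{Bochnercomplsev} to write $h(s_1,s_2)=\int_{[0,\infty)^2}e^{-r_1s_1-r_2s_2}\,d\tau(r_1,r_2)$ for a unique nonnegative Radon measure $\tau$. Then I would integrate: on each rectangle $[\varepsilon,t_1]\times[\delta,t_2]$ inside the open quadrant $g$ is $C^2$, so the fundamental theorem of calculus gives $\int_\varepsilon^{t_1}\!\int_\delta^{t_2} h = g(t_1,t_2)-g(t_1,\delta)-g(\varepsilon,t_2)+g(\varepsilon,\delta)$, and letting $\varepsilon,\delta\downarrow 0$ the right side tends to $g(t_1,t_2)-g(t_1,0)-g(0,t_2)+g(0,0)$ by continuity of $g$ on $[0,\infty)^2$. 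Since $h\ge 0$, monotone convergence identifies $\int_0^{t_1}\!\int_0^{t_2} h$ with that difference (in particular it is finite), and Tonelli lets me swap the $s$-integral with the $\tau$-integral, producing $\int_{[0,\infty)^2}\frac{1-e^{-r_1t_1}}{r_1}\,\frac{1-e^{-r_2t_2}}{r_2}\,d\tau$, with the convention $(1-e^{-rt})/r\to t$ at $r=0$. Finally I would put $d\sigma:=(1+r_1)^{-1}(1+r_2)^{-1}d\tau$; this produces the claimed formula, and the fact that $\sigma$ is a finite measure follows from the elementary lower bound $(1-e^{-rt})(1+r)/r\ge\min(t/2,\,1-e^{-1})>0$ (valid for all $r\ge 0$, by splitting at $rt=1$), which bounds $\sigma([0,\infty)^2)$ by a constant times $g(t_1,t_2)-g(t_1,0)-g(0,t_2)+g(0,0)$ for any fixed $t_1,t_2>0$; these are exactly the one-variable estimates on $(1-e^{-rt})(1+r)/r$ already used around Theorem~\ref{reprcondneg}.

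For the sufficiency direction, starting from a nonnegative finite $\sigma$, I would check that $g$ defined by the formula is continuous on $[0,\infty)^2$ (dominated convergence, using $0\le(1-e^{-rt})(1+r)/r\le t+1$) and $C^\infty$ on the open quadrant (differentiation under the integral sign: each $t_i$-derivative of $(1-e^{-r_it_i})(1+r_i)/r_i$ is, up to sign, $r_i^{k-1}e^{-r_it_i}(1+r_i)$, uniformly bounded in $r_i$ on compact $t_i$-sets), and that $\partial^{(1,1)}g(t_1,t_2)=\int e^{-r_1t_1-r_2t_2}(1+r_1)(1+r_2)\,d\sigma$; since $(1+r_1)(1+r_2)\,d\sigma$ is a nonnegative Radon measure against which this integral is well defined, Theorem~\ref{Bochnercomplsev} shows $\partial^{(1,1)}g$ is completely monotone with two variables, i.e.\ $g$ is a Bernstein function with two variables. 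Uniqueness is then immediate: two representing measures give the same $\partial^{(1,1)}g$, hence the same measure $(1+r_1)(1+r_2)\,d\sigma$ by the uniqueness part of Theorem~\ref{Bochnercomplsev}, hence the same $\sigma$.

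The step I expect to require the most care is the passage to the boundary axes in the necessity part: one must be sure the marginal terms $g(\cdot,0)$, $g(0,\cdot)$, $g(0,0)$ emerge exactly and that the iterated integral $\int_0^{t_1}\!\int_0^{t_2}\partial^{(1,1)}g$ converges even though $\partial^{(1,1)}g$ is a priori controlled only on the open quadrant. Nonnegativity of $\partial^{(1,1)}g$ (complete monotonicity) together with continuity of $g$ up to the boundary is precisely what makes this work; everything else is Tonelli plus the explicit bounds on $(1-e^{-rt})(1+r)/r$.
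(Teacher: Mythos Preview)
Your proposal is correct and follows essentially the same approach as the paper: apply Theorem~\ref{Bochnercomplsev} to $\partial^{(1,1)}g$, integrate back using the fundamental theorem of calculus on rectangles $[\varepsilon,t_1]\times[\delta,t_2]$, pass to the boundary via continuity of $g$ and nonnegativity of $\partial^{(1,1)}g$, swap integrals by Tonelli, and normalize with $(1+r_1)^{-1}(1+r_2)^{-1}$ to obtain a finite $\sigma$. The paper uses the slightly different bound $\tfrac{1}{1+s}\le\tfrac{1-e^{-s}}{s}\le\tfrac{2}{1+s}$ for the finiteness of $\sigma$, but this is cosmetically different from your estimate and serves the identical purpose.
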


If $g(t, 0)= g(0, t)=0$ for every $t \in [0, \infty)$, we say that $g$ is zero at the boundary (in particular, only the integral part in Theorem \ref{bernssev} appears). The representation in 	Theorem \ref{bernssev} implies the following characterization of  radial PDI kernels on all Euclidean spaces

\begin{theorem}\label{basicradial}
	Let $g: [0, \infty)^{2} \to \mathbb{R}$ be a  continuous function  that is zero at the boundary. The following conditions are equivalent:
	\begin{enumerate}
		\item [$(i)$] The kernel
		$$
		g( \|x-y\|^{2},\|u-v\|^{2} ), \quad  x,y \in  \mathbb{R}^{d}, \quad u, v \in \mathbb{R}^{d^{\prime}}
		$$ is PDI  for every  $d, d^{\prime }\in \mathbb{N}$.
		\item [$(ii)$] The kernel can be represented as
		\begin{align*}
			g( \|x-y\|^{2}, \|u-v\|^{2})=\int_{[0,\infty)^{2}} \frac{(1-e^{-r_{1}\|x-y\|^{2}})}{r_{1}}   \frac{(1-e^{-r_{2}\|u-v\|^{2}})}{r_{2}} \prod_{i=1}^{2}(1 +r_{i})d\sigma(r_{1},r_{2}) 
		\end{align*}
		where  the measure  $\sigma\in \mathfrak{M}([0,\infty)^{2})$ is nonnegative.
		\item [$(iii)$] The function $g$ is a Bernstein function   of two variables.
	\end{enumerate}
\end{theorem}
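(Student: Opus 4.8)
The plan is to run the cycle $(iii)\Rightarrow(ii)\Rightarrow(i)\Rightarrow(iii)$. The first implication is immediate from Theorem~\ref{bernssev}: if $g$ is a Bernstein function of two variables that is zero at the boundary, then $g(0,t_{2})=g(t_{1},0)=g(0,0)=0$, so the representation of Theorem~\ref{bernssev} collapses to exactly the integral displayed in $(ii)$. For $(ii)\Rightarrow(i)$ I would argue straight from the definition of a PDI kernel. Fix distinct points $x_{1},\dots,x_{n}\in\mathbb{R}^{d}$, $u_{1},\dots,u_{m}\in\mathbb{R}^{d^{\prime}}$ and scalars $c_{i,k}$ with $\sum_{i}c_{i,k}=0$ for each $k$ and $\sum_{l}c_{j,l}=0$ for each $j$. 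For fixed $r_{1},r_{2}\ge 0$ the kernel $(x,y)\mapsto\frac{1-e^{-r_{1}\|x-y\|^{2}}}{r_{1}}(1+r_{1})$ is CND on $\mathbb{R}^{d}$ (for $r_{1}>0$ this follows from Equation~\ref{schoenmetriccond} applied to the CND kernel $\|x-y\|^{2}$ together with the fact that constants are CND, while for $r_{1}=0$ the value is $\|x-y\|^{2}$), and likewise in the $u$ variable; hence by the computation carried out in Section~\ref{positive definite independent kernels} their Kronecker product is a PDI kernel, so the corresponding restricted double sum with the $c_{i,k}$ is nonnegative pointwise in $(r_{1},r_{2})$. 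Since by $(ii)$ each value $g(\|x_{i}-x_{j}\|^{2},\|u_{k}-u_{l}\|^{2})$ is the absolutely convergent integral of that product against $\sigma$, I may interchange the finite sum with the integral and obtain $\sum_{i,j,k,l}c_{i,k}c_{j,l}\,g(\|x_{i}-x_{j}\|^{2},\|u_{k}-u_{l}\|^{2})\ge 0$; the projection kernels vanish because $g$ is zero at the boundary.

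The substantive implication is $(i)\Rightarrow(iii)$, which I would obtain by first promoting $(i)$ to the representation $(ii)$ and then applying $(ii)\Rightarrow(iii)$ (Theorem~\ref{bernssev} again). Set $f(u,v,s):=g(s^{2},\|u-v\|^{2})$, so that $f(u,v,\cdot)$ is continuous on $[0,\infty)$ and, by $(i)$, the kernel $f(u,v,\|x-y\|)$ on $\mathbb{R}^{d^{\prime}}\times\mathbb{R}^{d}$ is PDI for every $d$ and has zero projection kernels (the latter being a restatement of the boundary condition on $g$). Applying Theorem~\ref{basicradialuni} with $X=\mathbb{R}^{d^{\prime}}$, for each fixed $d^{\prime}$ I get a unique measure-valued kernel $\sigma^{d^{\prime}}_{u,v}$ with $A\mapsto\sigma^{d^{\prime}}_{u,v}(A)$ a CND kernel on $\mathbb{R}^{d^{\prime}}$ and $\sigma^{d^{\prime}}_{u,u}=0$, such that $g(\|x-y\|^{2},\|u-v\|^{2})=\int_{[0,\infty)}\frac{1-e^{-r_{1}\|x-y\|^{2}}}{r_{1}}(1+r_{1})\,d\sigma^{d^{\prime}}_{u,v}(r_{1})$.

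Because $g(\cdot,\|Tu-Tv\|^{2})=g(\cdot,\|u-v\|^{2})$ for every rigid motion $T$ of $\mathbb{R}^{d^{\prime}}$, uniqueness of the representation in Theorem~\ref{basicradialuni} forces $\sigma^{d^{\prime}}_{u,v}(A)=\varphi^{d^{\prime}}_{A}(\|u-v\|^{2})$ for a function $\varphi^{d^{\prime}}_{A}$, whose continuity comes from the regularity of $\sigma^{d^{\prime}}_{u,v}$ (it suffices to treat the distribution functions $\varphi^{d^{\prime}}_{[0,a]}$ and pass to general Borel $A$). Restricting $u,v$ to a subspace $\mathbb{R}^{d^{\prime}_{1}}\subset\mathbb{R}^{d^{\prime}_{2}}$ and using uniqueness once more shows $\varphi^{d^{\prime}}_{A}=\varphi_{A}$ is independent of $d^{\prime}$; hence $\varphi_{A}(\|u-v\|^{2})$ is CND on $\mathbb{R}^{d^{\prime}}$ for every $d^{\prime}$ and $\varphi_{A}(0)=0$. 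By Theorem~\ref{reprcondneg} each $\varphi_{A}$ is a Bernstein function, with unique representation $\varphi_{A}(t)=\int_{[0,\infty)}\frac{1-e^{-r_{2}t}}{r_{2}}(1+r_{2})\,d\mu_{A}(r_{2})$.

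It remains to assemble the $\mu_{A}$ into a single measure on $[0,\infty)^{2}$. Finite and countable additivity of $A\mapsto\mu_{A}$ are inherited from those of $A\mapsto\sigma_{u,v}(A)$ through the uniqueness in Theorem~\ref{reprcondneg}, so $(A,B)\mapsto\mu_{A}(B)$ is a nonnegative bimeasure on $[0,\infty)^{2}$, which (using the measure-valued radial kernel techniques mentioned in the Introduction) extends to a nonnegative Radon measure $\rho\in\mathfrak{M}([0,\infty)^{2})$. Substituting the representation of $\mu_{A}$ into that of $\sigma_{u,v}$ and applying Fubini yields precisely the integral in $(ii)$, and then Theorem~\ref{bernssev} gives $(iii)$. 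I expect the last passage to be the main obstacle: verifying the $\sigma$-additivity of $A\mapsto\mu_{A}$ (equivalently, the right continuity of the associated L\'evy-type measures in the parameter) and, above all, the extension of the bimeasure to an honest product measure $\rho$ with no mass escaping to infinity; the remainder is bookkeeping with the uniqueness statements of Theorems~\ref{reprcondneg}, \ref{basicradialuni} and \ref{bernssev}.
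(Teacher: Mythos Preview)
Your proposal is correct and follows essentially the same route as the paper's proof: apply Theorem~\ref{basicradialuni} to get the measure-valued kernel $\sigma_{u,v}$, use uniqueness in Theorem~\ref{reprcondneg} to deduce that $\sigma_{u,v}(A)$ depends only on $\|u-v\|$ and is CND radial in all dimensions, apply Theorem~\ref{reprcondneg} again to obtain the measures $\sigma_{A}$ (your $\mu_{A}$), and then pass from the nonnegative bimeasure $(A,B)\mapsto\sigma_{A}(B)$ to a genuine nonnegative measure $\sigma\in\mathfrak{M}([0,\infty)^{2})$. Your anticipated ``main obstacle'' dissolves in the paper: $\sigma$-additivity of $A\mapsto\sigma_{A}$ is read off directly from uniqueness in Theorem~\ref{reprcondneg}, and the bimeasure-to-measure extension is simply quoted from Theorem~1.10 of \cite{berg0}, with no mass-at-infinity issue since all the $\sigma_{A}$ are finite.
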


The assumption that $g$  is zero at the boundary equivalent to demanding that the projections of the  kernel 	$g( \|x-y\|^{2},\|u-v\|^{2} )$ are zero. Note that $g(t_{1}, t_{2}) = g(t_{1})g(t_{2})$ is a Bernstein function with two variables if and only if each $g_{i}$ is a Bernstein function.

From the following simple inequality				
\begin{equation}\label{ineqexp}
	(1-e^{-sa})\leq  \max \left (1, \frac{a}{b} \right ) (1-e^{-sb}), \quad s \in [0, \infty), \quad a,b> 0, 
\end{equation}
we obtain that
\begin{equation}\label{consineqexp}
	g(t_{1}, t_{2})\leq  \max (1, t_{1}/s_{1} )  \max (1, t_{2}/s_{2} )g(s_{1}, s_{2}), \quad t_{1}, t_{2}, s_{1}, s_{2} \in (0, \infty). 	
\end{equation}
Also, since 
$$
\frac{(1-e^{-s(a+b)})}{s}\leq \frac{(1-e^{-sa})}{s} +\frac{(1-e^{-sb})}{s}, \quad a,b,s \in [0, \infty)
$$
we obtain that for every $t_{1}, t_{2}, s_{1}, s_{2} \in [0, \infty)$
\begin{equation}\label{convexexp}
	g(t_{1}+s_{1}, t_{2} + s_{2})\leq  g(t_{1}, t_{2}) + g(t_{1}, s_{2})+g(s_{1}, s_{2})+g(s_{1}, t_{2}).  	
\end{equation}	
These results are key inequalities for the proofs in Section \ref{Examples of PDI-Characteristic kernels}.

Next result connects a special family of radial PDI kernels  with  completely monotone functions  of order $2$, which appeared at Theorem \ref{compleelltimes}.

\begin{cor}\label{2times} Let $f:[0, \infty) \to \mathbb{R}$ be a continuous function. The following conditions are equivalent:
	\begin{enumerate}
		\item [$(i)$] The kernel
		$$
		f( \|x-y\|^{2} + \|u-v\|^{2} ) - f(  \|u-v\|^{2} ) -f( \|x-y\|^{2}  ) + f(0),\quad  x,y \in  \mathbb{R}^{d}, \quad u, v \in \mathbb{R}^{d^{\prime}}
		$$ is PDI  for every  $d, d^{\prime }\in \mathbb{N}$.
		\item [$(ii)$] The kernel
		$$
		f( \|x-y\|^{2} + \|u-v\|^{2} ),\quad  x,y \in  \mathbb{R}^{d}, \quad u, v \in \mathbb{R}^{d^{\prime}}
		$$ is PDI  for every  $d, d^{\prime }\in \mathbb{N}$.
		\item [$(iii)$] The function $f$ can be represented as
		$$
		f(t)= a_{0} + a_{1}t + a_{2}t^{2}  + \int_{(0,\infty)}(e^{-rt} - e_{2}(r)\omega_{2}(rt))\frac{1+r^{2}}{r^{2}} d\sigma(r) 
		$$
		where the real scalar $a_{2}$ is nonnegative and $\sigma$ is a nonnegative  measure on $\mathfrak{M}((0, \infty))$.
		\item [$(iv)$] The function $f$ is  a completely monotone function of order $2$, that is, $f \in C^{\infty}((0, \infty)) $ and $f^{(2)}$ is a completely monotone function.
	\end{enumerate}
\end{cor}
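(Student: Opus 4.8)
The equivalence $(iii)\Leftrightarrow(iv)$ is exactly Theorem~\ref{compleelltimes} specialized to $\ell=2$ (where $\omega_{2}(s)=1-s$, $e_{2}(s)=e^{-s}(1+s)$ and $(-1)^{2}=1$), so I would simply cite it. For $(i)\Leftrightarrow(ii)$ I would observe that, writing $\mathfrak{I}((x,u),(y,v)):=f(\|x-y\|^{2}+\|u-v\|^{2})$, every projection or diagonal value of $\mathfrak{I}$ collapses to one of $f(\|u-v\|^{2})$, $f(\|x-y\|^{2})$, $f(0)$; a one-line computation then identifies the kernel in $(i)$ with the kernel $\mathfrak{I}'$ attached to $\mathfrak{I}$ in Lemma~\ref{PDIsimpli}, so $(i)\Leftrightarrow(ii)$ holds for each fixed pair $(d,d')$, hence for all pairs. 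It therefore remains to prove $(iv)\Rightarrow(ii)$ and $(i)\Rightarrow(iv)$.

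For $(iv)\Rightarrow(ii)$, plug the representation of $(iii)$ into $f(\|x-y\|^{2}+\|u-v\|^{2})$ and split the kernel into three types: (a) kernels that are constant, or depend only on $x,y$, or only on $u,v$ --- these comprise $a_{0}$, the linear pieces $a_{1}\|x-y\|^{2}$ and $a_{1}\|u-v\|^{2}$, the pieces $a_{2}\|x-y\|^{4}$ and $a_{2}\|u-v\|^{4}$, and the three pieces of $e_{2}(r)\omega_{2}(r(\|x-y\|^{2}+\|u-v\|^{2}))=e^{-r}(1+r)-e^{-r}(1+r)r\|x-y\|^{2}-e^{-r}(1+r)r\|u-v\|^{2}$, and each of them makes the PDI quadratic form vanish identically because of the constraints $\sum_{i}c_{i,k}=0$ and $\sum_{l}c_{j,l}=0$; (b) the cross term $2a_{2}\|x-y\|^{2}\|u-v\|^{2}$, a nonnegative multiple ($a_{2}\ge0$) of the Kronecker product of the CND kernels $\|x-y\|^{2}$ and $\|u-v\|^{2}$, hence PDI; (c) for each $r>0$ the Gaussian Kronecker product $e^{-r\|x-y\|^{2}}e^{-r\|u-v\|^{2}}$, which is PD and so PDI. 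Interchanging the finite PDI sum with the absolutely convergent $\sigma$-integral ($\sigma$ is finite) gives the PDI inequality for all $d,d'$.

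The heart of the matter is $(i)\Rightarrow(iv)$. From $(i)$ and $(i)\Leftrightarrow(ii)$, the kernel $g'(\|x-y\|^{2},\|u-v\|^{2})$ with $g'(t_{1},t_{2}):=f(t_{1}+t_{2})-f(t_{1})-f(t_{2})+f(0)$ is PDI for all $d,d'$, and $g'$ is continuous and vanishes on the boundary of $[0,\infty)^{2}$, so Theorems~\ref{basicradial} and~\ref{bernssev} give $g'(t_{1},t_{2})=\int_{[0,\infty)^{2}}\frac{(1-e^{-r_{1}t_{1}})(1-e^{-r_{2}t_{2}})}{r_{1}r_{2}}(1+r_{1})(1+r_{2})\,d\sigma(r_{1},r_{2})$ with $\sigma\ge0$. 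I would then show $\sigma$ is carried by the diagonal $\{r_{1}=r_{2}\}$. From the definition of $g'$, for all $u_{i}\ge0$,
$$
g'(u_{1},u_{2})+g'(u_{3},u_{4})-g'(u_{1},u_{4})-g'(u_{3},u_{2})=f(u_{1}+u_{2})+f(u_{3}+u_{4})-f(u_{1}+u_{4})-f(u_{3}+u_{2}),
$$
the single-variable values of $f$ cancelling. Setting $u_{3}=u_{1}+\delta$, $u_{4}=u_{2}+\delta$ ($\delta>0$), the right-hand side becomes the second difference $f(s+2\delta)-2f(s+\delta)+f(s)$ with $s=u_{1}+u_{2}$, depending only on $(s,\delta)$; the representation turns the left-hand side into $\int e^{-r_{1}u_{1}-r_{2}u_{2}}\,d\rho_{\delta}$, where $\rho_{\delta}:=\frac{(1+r_{1})(1+r_{2})}{r_{1}r_{2}}(1-e^{-r_{1}\delta})(1-e^{-r_{2}\delta})\,d\sigma$ is a finite measure of total mass $g'(\delta,\delta)$. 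Writing $u_{1}=(s+v)/2$, $u_{2}=(s-v)/2$, independence of $v\in(-s,s)$ forces the pushforward of $e^{-(r_{1}+r_{2})s/2}\rho_{\delta}$ under $(r_{1},r_{2})\mapsto r_{1}-r_{2}$ to have a constant, finite bilateral Laplace transform near $0$, hence zero first and second moments, hence to be a point mass at $0$; since the weight defining $\rho_{\delta}$ is strictly positive off the origin (including on the open axes), $\sigma$ is supported on $\{r_{1}=r_{2}\}$. Pushing forward yields $g'(t_{1},t_{2})=\int_{[0,\infty)}\frac{(1-e^{-rt_{1}})(1-e^{-rt_{2}})}{r^{2}}(1+r)^{2}\,d\tau(r)$ with $\tau$ a finite nonnegative measure.

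To conclude, for $0<h<t$ the definition of $g'$ gives $f(t+h)+f(t-h)-2f(t)=g'(t,t)-g'(t+h,t-h)$, and the diagonal representation with the identity $(1-e^{-rt})^{2}-(1-e^{-r(t+h)})(1-e^{-r(t-h)})=2e^{-rt}(\cosh(rh)-1)$ gives $f(t+h)+f(t-h)-2f(t)=\int_{[0,\infty)}\frac{(1+r)^{2}}{r^{2}}\,2e^{-rt}(\cosh(rh)-1)\,d\tau(r)$. Dividing by $h^{2}$ and letting $h\downarrow0$, monotone convergence (since $\frac{2(\cosh(rh)-1)}{h^{2}}\downarrow r^{2}$) shows that the symmetric second derivative of $f$ exists on $(0,\infty)$ and equals $\Psi(t):=\int_{[0,\infty)}(1+r)^{2}e^{-rt}\,d\tau(r)$, which is finite for $t>0$, of class $C^{\infty}$, and completely monotone (a Laplace transform of a nonnegative measure). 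By Schwarz's classical theorem --- a continuous function whose symmetric second derivative exists and is continuous is twice differentiable, with $f''$ equal to that derivative --- we get $f\in C^{2}((0,\infty))$ and $f''=\Psi$; since $\Psi\in C^{\infty}((0,\infty))$ this bootstraps to $f\in C^{\infty}((0,\infty))$ with $f^{(2)}=\Psi$ completely monotone, i.e.\ $(iv)$. The only genuinely delicate step is the diagonal-concentration argument: it is exactly where one must use that $g'$ is built from a \emph{single-variable} $f$ --- through the four-corner identity, whose two-variable content lives entirely on anti-diagonal second differences --- rather than merely that $g'$ is a two-variable Bernstein function, and the care is in the finiteness of $\rho_{\delta}$ and the behaviour of $\sigma$ on the axes; all the remaining steps (the collapse of the degenerate kernels, the Kronecker-product arguments, and the final Schwarz step) are routine.
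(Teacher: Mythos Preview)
Your proof is correct, and the overall architecture matches the paper's: $(iii)\Leftrightarrow(iv)$ via Theorem~\ref{compleelltimes}, $(i)\Leftrightarrow(ii)$ via Lemma~\ref{PDIsimpli}, $(iii)\Rightarrow(ii)$ by direct computation, and the work is in $(i)\Rightarrow(iv)$. The two genuine differences are in how you establish the diagonal support of $\sigma$ and how you recover $f''$.

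For the diagonal step, the paper proceeds more directly. Since $g'$ is a Bernstein function of two variables, $\partial_{1}\partial_{2}g'(t_{1},t_{2})$ exists on $(0,\infty)^{2}$ and is completely monotone (Theorem~\ref{bernssev}); but the difference-quotient expression
\[
\partial_{1}\partial_{2}g'(t_{1},t_{2})=\lim_{h,h'\to 0}\frac{f(t_{1}+t_{2}+h+h')-f(t_{1}+t_{2}+h)-f(t_{1}+t_{2}+h')+f(t_{1}+t_{2})}{hh'}
\]
shows it depends only on $t_{1}+t_{2}$, and Lemma~\ref{sumcm} immediately gives a one-variable representation $\partial_{1}\partial_{2}g'=\int e^{-r(t_{1}+t_{2})}d\sigma'(r)$, hence the diagonal form of $g'$ by the argument of Theorem~\ref{bernssev}. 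Your four-corner identity and bilateral-Laplace-transform argument reaches the same conclusion without differentiating, which is nice in that it avoids assuming smoothness, but it costs you a moment argument and careful bookkeeping with the weighted pushforwards; the paper's route replaces all of that with one line.

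For the final step, the paper also takes a shorter path: rather than computing symmetric second differences and invoking Schwarz's theorem, it writes down an explicit function $h\in CM_{2}$ (built from the diagonal measure) satisfying $h(t_{1}+t_{2})-h(t_{1})-h(t_{2})+h(0)=g'(t_{1},t_{2})$, so that $z:=f-h$ satisfies the Cauchy functional equation $z(t_{1}+t_{2})-z(t_{1})-z(t_{2})+z(0)=0$ on $[0,\infty)$; continuity then forces $z$ to be affine, hence $f\in CM_{2}$. Your Schwarz-derivative argument is perfectly valid (the dominated-convergence step with $2(\cosh(rh)-1)/h^{2}\downarrow r^{2}$ is clean), but the Cauchy-equation trick is more elementary and avoids importing an external analysis theorem.
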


As an example, the function $f(t)= t^{a}$, for $a \in (1,2)$ is a completely monotone function of order $2$ and also the function $f(t)= t\log (t)$. Note that for a function that satisfy any of the equivalent conditions in Corollary \ref{2times}, the following relation  holds
\begin{align*}
	f( \|x-y\|^{2}+ &\|u-v\|^{2})
	- 	f(  \|u-v\|^{2})-	f( \|x-y\|^{2}) + f(0) \\
	&=\int_{[0,\infty)}\left (\frac{1-e^{-r\|x-y\|^{2}}}{r} \right ) \left ( \frac{1-e^{-r\|u-v\|^{2}}}{r} \right )(1 +r^{2})d\sigma(r).
\end{align*}

\section{\textbf{Examples of PDI-Characteristic kernels}}\label{Examples of PDI-Characteristic kernels}

In this Section we present two families of PDI kernels and characterize the set of measures we can compare with them and under which conditions we obtain PDI-Characteristic kernels.

For the first family, let    $\gamma: X \times X \to [0, \infty)$ and  $\varsigma: Y \times Y \to [0, \infty)$ be  CND kernels and  $g: [0, \infty)\times [0, \infty) \to \mathbb{R}$ a continuous  Bernstein function of $2$ variables. Similar to the result on Equation \ref{bern+cnd}, the kernel
\begin{equation}\label{PDIradialcndcnd}
	[\mathfrak{I}_{g}^{\gamma, \varsigma}]((x,y),(x^{\prime}, y^{\prime})):=	g(\gamma(x,x^{\prime}),\varsigma(y,y^{\prime})) ,
\end{equation}
is PDI in $X \times Y$. Indeed, let $x_{1}, \ldots, x_{n} \in X$, $y_{1}, \ldots, y_{m} \in Y$ and real scalars $c_{i,k}$ with the necessary restrictions, then by Theorem \ref{basicradial}
\begin{align*}
	\sum_{i,j=1}^{n}\sum_{k,l=1}^{m}&c_{i, k}c_{j, l}g(\gamma(x_{i}, x_{j}),\varsigma(y_{k}, y_{l})) =\sum_{i,j=1}^{n}\sum_{k,l=1}^{m}c_{i, k}c_{j, l} \Biggl{[} -g(\gamma(x_{i}, x_{j}),0) - g(0,\varsigma(y_{k}, y_{l}))    \\
	&    + g(0,0)+ \int_{[0,\infty)^{2}}\left ( \frac{1-e^{-r_{1}\gamma(x_{i}, x_{j})}}{r_{1}}\right )\left (\frac{1-e^{-r_{2}\varsigma(y_{k}, y_{l})}}{r_{2}}\right )\prod_{i=1}^{2}(1+r_{i})d\sigma(r_{1},r_{2}) \Biggl{]}   \\
	&=\sum_{i,j=1}^{n}\sum_{k,l=1}^{m}c_{i, k}c_{j, l} \int_{[0,\infty)^{2}}\frac{e^{-r_{1}\gamma(x_{i}, x_{j})}}{r_{1}}\frac{e^{-r_{2}\varsigma(y_{k}, y_{l})}}{r_{2}}\prod_{i=1}^{2}(1+r_{i})d\sigma(r_{1},r_{2})\geq 0.
\end{align*}

As the function $g(t_{1}, t_{2}):=t_{1}t_{2}$ is a Bernstein function of two variables, we can generalize  Lemma \ref{integmarginkroeprod} and Corollary \ref{integmarginkroeprodcor}  for a family of kernels related to those on Theorem \ref{basicradial}. 

\begin{lem}\label{integmarginbernstein} Let $g: [0, \infty) \times [0, \infty) \to \mathbb{R}$ be a continuous Bernstein function of   $2$ variables that is zero at the boundary  and $\gamma: X \times X \to [0, \infty)$,  $\varsigma: Y \times Y \to [0, \infty)$  be continuous metrizable CND kernels with bounded diagonal. Then, for  nonnegative measures $(\mu, \nu) \in (\mathfrak{M}(X)\setminus{\delta(X)})\times (\mathfrak{M}(Y)\setminus{\delta(Y)})$  the following conditions are equivalent:
	\begin{enumerate}
		\item [$(i)$] $\mathfrak{I}_{g}^{\gamma, \varsigma} \in L^{1}((\mu \times \nu )\times (\mu \times \nu ) )$;
		\item [$(ii)$]The function  $(x,y) \to  \mathfrak{I}_{g}^{\gamma, \varsigma}((x,y),(x^{\prime},y^{\prime}))  \in L^{1}(\mu \times \nu )$ for some $(x^{\prime},y^{\prime}) \in X\times Y$;
		\item [$(iii)$] The function  $(x,y) \to 	\mathfrak{I}_{g}^{\gamma, \varsigma}((x, y ),(x^{\prime},y^{\prime}))  \in L^{1}(\mu \times \nu)$   for every $(x^{\prime},y^{\prime}) \in X\times Y$.
	\end{enumerate}
\end{lem}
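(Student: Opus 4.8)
\textbf{Proof plan for Lemma \ref{integmarginbernstein}.}

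The plan is to mimic the structure of the proof of Lemma \ref{integmarginkroeprod}, reducing everything to the already-established integrability equivalences for the metrizable CND kernels $\gamma$ and $\varsigma$ (Lemma \ref{estimativa}) together with the key growth inequalities \eqref{consineqexp} and \eqref{convexexp} for Bernstein functions of two variables. The implications $(i) \Rightarrow (ii)$ is trivial (if the full double integral is finite, Fubini gives finiteness of the inner integral for almost every $(x^{\prime},y^{\prime})$, and then one uses the triangle-type inequality \eqref{4rootsinequality} to pass to every $(x^{\prime},y^{\prime})$ as in the discussion after Corollary \ref{integmargincor}), and $(iii) \Rightarrow (ii)$ is immediate. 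So the substance is $(ii) \Rightarrow (iii)$ and $(iii) \Rightarrow (i)$ (or one may go $(ii)\Rightarrow(i)\Rightarrow(iii)$; either works).

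First I would record the elementary consequence of \eqref{consineqexp}: since $g$ is zero at the boundary, for any $s_1, s_2 > 0$ one has $g(t_1,t_2) \le \max(1, t_1/s_1)\max(1, t_2/s_2)\, g(s_1,s_2) \le (1 + t_1/s_1)(1 + t_2/s_2)\, g(s_1,s_2)$, hence there is a constant $C = C(s_1,s_2)$ with
\[
g(\gamma(x,x^{\prime}), \varsigma(y,y^{\prime})) \le C\bigl(1 + \gamma(x,x^{\prime})\bigr)\bigl(1 + \varsigma(y,y^{\prime})\bigr)
\]
whenever $\gamma(x,x^{\prime}) \ge s_1$ and $\varsigma(y,y^{\prime}) \ge s_2$; on the complementary regions $g$ is bounded by continuity on the relevant compact strips (using boundedness of the diagonals and \eqref{convexexp} to bound $\gamma, \varsigma$ locally), so altogether $\mathfrak{I}_g^{\gamma,\varsigma}((x,y),(x^{\prime},y^{\prime})) \le C'\bigl(1 + \gamma(x,x^{\prime})\bigr)\bigl(1+\varsigma(y,y^{\prime})\bigr)$ uniformly. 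Now fix some $(x^{\prime}_0, y^{\prime}_0)$ for which $(ii)$ holds. Integrating the display against $\mu\times\nu$ and using Tonelli shows $(x \to \gamma(x,x^{\prime}_0)) \in L^1(\mu)$ or $(y\to\varsigma(y,y^{\prime}_0))\in L^1(\nu)$ — and here is where excluding $\delta(X), \delta(Y)$ matters: since $\mu \notin \delta(X)$, $\mu$ is not supported at a single point, so $\mu\times\nu$-a.e. one has $\gamma(x,x^{\prime}_0) > 0$ for a non-null set of $x$ and likewise for $\nu$; combined with the lower bound $g(t_1,t_2)\ge c\,t_1 t_2$ valid on $t_1,t_2\le 1$ region coming from differentiability (or from the integral representation in Theorem \ref{bernssev}, since $(1-e^{-rt})/r$ is comparable to $t$ for bounded $rt$), finiteness of the double integral forces $\gamma(\cdot,x^{\prime}_0)\in L^1(\mu)$ and $\varsigma(\cdot,y^{\prime}_0)\in L^1(\nu)$ simultaneously. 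By Lemma \ref{estimativa} applied to $\gamma$ and to $\varsigma$ separately, this upgrades to $\gamma(\cdot,x^{\prime})\in L^1(\mu)$ for every $x^{\prime}\in X$ and $\varsigma(\cdot,y^{\prime})\in L^1(\nu)$ for every $y^{\prime}\in Y$, i.e. $\gamma\in L^1(\mu\times\mu)$, $\varsigma\in L^1(\nu\times\nu)$.

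Once $\gamma\in L^1(\mu\times\mu)$ and $\varsigma\in L^1(\nu\times\nu)$ are in hand, the upper bound $\mathfrak{I}_g^{\gamma,\varsigma}((x,y),(x^{\prime},y^{\prime})) \le C'(1+\gamma(x,x^{\prime}))(1+\varsigma(y,y^{\prime}))$ immediately gives $\mathfrak{I}_g^{\gamma,\varsigma}((\cdot,\cdot),(x^{\prime},y^{\prime}))\in L^1(\mu\times\nu)$ for every $(x^{\prime},y^{\prime})$ (that is $(iii)$), and the product bound $(1+\gamma)(1+\nobreak\varsigma)$ is in $L^1((\mu\times\nu)\times(\mu\times\nu))$ because $\mu,\nu$ are finite (they are in $\mathfrak{M}$) and $\gamma, \varsigma$ are individually integrable against the product of their respective measures — expanding the product into four terms, each is a product of a $\gamma$-factor integrable in $(x,x^{\prime})$ with a $\varsigma$-factor integrable in $(y,y^{\prime})$. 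This yields $(i)$.

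The main obstacle I anticipate is the careful handling of the lower bound and the exclusion of $\delta(X)$, $\delta(Y)$: one must argue that finiteness of $\int\int g(\gamma,\varsigma)\,d(\mu\times\nu)\,d(\mu\times\nu)$ cannot be achieved ``for free'' by $g$ vanishing on a large set — this is exactly the degeneracy illustrated by the $\delta_0\times\nu$ example after Lemma \ref{integmarginkroeprod}. The clean way is: because $g$ is a nonzero Bernstein function of two variables that is zero at the boundary, its representing measure $\sigma$ in Theorem \ref{bernssev} is nonzero on a set bounded away from the axes, so there exist $r_1, r_2 > 0$ and $\varepsilon > 0$ with $g(t_1,t_2) \ge \varepsilon\,\frac{1-e^{-r_1 t_1}}{r_1}\cdot\frac{1-e^{-r_2 t_2}}{r_2}$; then $\frac{1-e^{-r_i t}}{r_i}$ is bounded below by a positive multiple of $\min(t,1)$, and one reduces to the corresponding step in Lemma \ref{integmarginkroeprod} (where $g(t_1,t_2)=t_1 t_2$, whose proof already treats this degeneracy), which may simply be invoked. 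I would therefore present the lemma as a reduction: upper bound via \eqref{consineqexp}, lower bound via the representation, then cite Lemma \ref{integmarginkroeprod} and Lemma \ref{estimativa} to close the loop.
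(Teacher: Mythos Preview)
Your plan has a genuine gap in the step where you claim that condition $(ii)$ forces $\gamma(\cdot,x_0')\in L^1(\mu)$ and $\varsigma(\cdot,y_0')\in L^1(\nu)$. This is simply false for general Bernstein functions of two variables: take $g(t_1,t_2)=(1-e^{-t_1})(1-e^{-t_2})$ (representing measure $\sigma=\delta_{(1,1)}$), $X=Y=\mathbb{R}$, $\gamma(x,x')=\varsigma(x,x')=|x-x'|$, and $\mu=\nu$ a Cauchy distribution. Then $\mathfrak{I}_g^{\gamma,\varsigma}$ is bounded by $1$, so $(i)$, $(ii)$, $(iii)$ all hold trivially, yet $\gamma(\cdot,x')\notin L^1(\mu)$ for any $x'$. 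Your lower bounds cannot rescue this: the local bound $g(t_1,t_2)\ge c\,t_1t_2$ for $t_1,t_2\le 1$ says nothing about tails, and the bound $g(t_1,t_2)\ge \varepsilon\,\frac{1-e^{-r_1t_1}}{r_1}\frac{1-e^{-r_2t_2}}{r_2}$ with $r_1,r_2>0$ is itself bounded, so integrability of this lower bound is automatic and yields no information about $\int\gamma\,d\mu$. The reduction to Lemma \ref{integmarginkroeprod} only works when $\sigma$ has mass at $\{0\}^2$ or along the axes, not in general.

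The paper avoids this trap entirely: it never attempts to extract $\gamma\in L^1(\mu\times\mu)$. Instead it proves $(ii)\Rightarrow(iii)$ by directly comparing $g(\gamma(\cdot,z),\varsigma(\cdot,w))$ with $g(\gamma(\cdot,x'),\varsigma(\cdot,y'))$, splitting $X\times Y$ into four regions according to whether $\gamma(x,x')\le 1$ and whether $\varsigma(y,y')\le 1$. On the region where both exceed $1$, inequality \eqref{consineqexp} together with the CND estimate $\gamma(x,z)\le 2\gamma(x,x')+2\gamma(x',z)$ shows the ratio $\gamma(x,z)/\gamma(x,x')$ is bounded, so $g(\gamma(x,z),\varsigma(y,w))$ is dominated by a constant times $g(\gamma(x,x'),\varsigma(y,y'))$. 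On the region where both are $\le 1$ the integrand is bounded. On the mixed regions one needs that $y\mapsto g(t_1,\varsigma(y,y'))\in L^1(\nu)$ for any fixed $t_1>0$; this is where the hypothesis $\mu\notin\delta(X)$ enters, since it guarantees the existence of some $x\neq x'$ with $g(\gamma(x,x'),\varsigma(\cdot,y'))\in L^1(\nu)$, and then \eqref{consineqexp} transfers this to arbitrary $t_1$. Finally $(iii)\Rightarrow(i)$ follows from the subadditivity inequality \eqref{convexexp} combined with $\gamma(x,x')\le 2\gamma(x,z)+2\gamma(z,x')$. Your upper bound $g\le C'(1+\gamma)(1+\varsigma)$ is correct and would give $(iii)\Rightarrow(i)$ \emph{if} you had $\gamma\in L^1(\mu\times\mu)$ and $\varsigma\in L^1(\nu\times\nu)$, but since you cannot obtain those, you should instead use \eqref{convexexp} directly as the paper does.
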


\begin{cor}\label{integmargincorbernstein} Let $g: [0, \infty) \times [0, \infty) \to \mathbb{R}$ be a continuous Bernstein function of   $2$ variables that is zero at the boundary  and $\gamma: X \times X \to [0, \infty)$,  $\varsigma: Y \times Y \to [0, \infty)$  be continuous metrizable CND kernels with bounded diagonal. For  a measure $\lambda \in \mathfrak{M}(X\times Y)$ such that $\mathfrak{I}_{g}^{\gamma, \varsigma} \in L^{1}((|\lambda_{X}| \times |\lambda_{Y}| )\times (|\lambda_{X}| \times |\lambda_{Y}| ) )$,  the following conditions are equivalent:
	\begin{enumerate}
		\item [$(i)$] $\mathfrak{I}_{g}^{\gamma, \varsigma} \in L^{1}(|\lambda|\times |\lambda|  )$;
		\item [$(ii)$]The function  $(x,y) \to  \mathfrak{I}_{g}^{\gamma, \varsigma}((x,y),(x^{\prime},y^{\prime}))  \in L^{1}(|\lambda|)$ for some $(x^{\prime},y^{\prime}) \in X\times Y$;
		\item [$(iii)$] The function  $(x,y) \to 	\mathfrak{I}_{g}^{\gamma, \varsigma}((x, y ),(x^{\prime},y^{\prime}))  \in L^{1}(|\lambda|)$   for every $(x^{\prime},y^{\prime}) \in X\times Y$.
	\end{enumerate}
\end{cor}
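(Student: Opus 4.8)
Throughout write $\mathfrak{I}:=\mathfrak{I}_{g}^{\gamma,\varsigma}$, so $\mathfrak{I}((x,y),(x',y'))=g(\gamma(x,x'),\varsigma(y,y'))$ by \eqref{PDIradialcndcnd}. The plan is to mimic the derivation of Corollary~\ref{integmargincor} from Lemma~\ref{integmargin}, with Lemma~\ref{integmarginbernstein} playing the role of Lemma~\ref{integmargin}; the feature that makes this go through cleanly is that relation $(iii)$ of Lemma~\ref{integmarginbernstein} holds for \emph{every} $(x',y')\in X\times Y$, so the exceptional sets $X_{\mu,\nu}^{\mathfrak{I}},Y_{\mu,\nu}^{\mathfrak{I}}$ of Lemma~\ref{integmargin} may be taken to be all of $X$ and $Y$. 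If $\lambda=0$ the statement is trivial; as in Corollary~\ref{integmarginkroeprodcor} I would assume $|\lambda|_{X},|\lambda|_{Y}$ non-degenerate, i.e.\ $\mu:=|\lambda|_{X}\in\mathfrak{M}(X)\setminus\delta(X)$ and $\nu:=|\lambda|_{Y}\in\mathfrak{M}(Y)\setminus\delta(Y)$ (the point-mass cases reduce, via Lemma~\ref{estimativa} applied to the CND kernel $g(c,\varsigma(\cdot,\cdot))$ on a slice $\{x_{0}\}\times Y$ — note that $t\mapsto g(c,t)$ is a one-variable Bernstein function for each fixed $c\ge 0$ by the representation in Theorem~\ref{basicradial}, composed with $\varsigma$ — to the one-variable theory, with the same care over point masses as in Corollary~\ref{integmarginkroeprodcor}). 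With $\mu,\nu$ as above, the standing hypothesis $\mathfrak{I}\in L^{1}((\mu\times\nu)\times(\mu\times\nu))$ is exactly the hypothesis of Lemma~\ref{integmarginbernstein}, which then furnishes the single external input I will use: $\mathfrak{I}((\cdot,\cdot),(x',y'))\in L^{1}(\mu\times\nu)$ for every $(x',y')\in X\times Y$.

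The engine is a pointwise bound replacing \eqref{4rootsinequality}, which cannot be quoted directly here because the projection kernels of $\mathfrak{I}$ need not vanish. Since $\gamma,\varsigma\ge 0$ are CND, writing $\gamma(x,x')=\|h_{\gamma}(x)-h_{\gamma}(x')\|^{2}+\tfrac12\gamma(x,x)+\tfrac12\gamma(x',x')$, using the triangle inequality for $\|\cdot\|$ together with $(a+b)^{2}\leq 2a^{2}+2b^{2}$, and discarding the nonnegative diagonal terms, gives $\gamma(x,x')\leq 2\gamma(x,z)+2\gamma(z,x')$ for every $z\in X$, and likewise $\varsigma(y,y')\leq 2\varsigma(y,w)+2\varsigma(w,y')$. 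Feeding these into the coordinatewise-increasing function $g$, then applying the subadditivity \eqref{convexexp} and the scaling bound $g(2a,2b)\leq 4g(a,b)$ (a case of \eqref{consineqexp}), one reaches
\begin{align*}
\mathfrak{I}((x,y),(x',y'))\leq 4\bigl(&\mathfrak{I}((x,y),(z,w))+\mathfrak{I}((x,w),(z,y'))\\
&+\mathfrak{I}((z,y),(x',w))+\mathfrak{I}((z,w),(x',y'))\bigr)
\end{align*}
for every pivot $(z,w)\in X\times Y$ (the translation to $\mathfrak{I}$ uses only the symmetry of $\gamma$ and $\varsigma$).

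With this bound the three implications follow by integration. $(i)\Rightarrow(ii)$ is Fubini's theorem. For $(iii)\Rightarrow(i)$, fix any pivot $(z,w)$ and integrate the displayed bound in both $(x,y)$ and $(x',y')$ against $|\lambda|$: the first and last terms each contribute $|\lambda|(X\times Y)\int\mathfrak{I}((\cdot,\cdot),(z,w))\,d|\lambda|$, finite by $(iii)$ at $(z,w)$; while, using the $2$-symmetry $\mathfrak{I}((x,w),(z,y'))=\mathfrak{I}((x,y'),(z,w))$ (symmetry of $\varsigma$) and the analogous identity for the other middle term, the two middle terms each reduce to $\int_{X\times Y}\mathfrak{I}((\cdot,\cdot),(z,w))\,d(\mu\times\nu)$, finite by Lemma~\ref{integmarginbernstein}. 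For $(ii)\Rightarrow(iii)$, suppose $(ii)$ holds at $(x_{0},y_{0})$, fix an arbitrary $(x',y')$, and use the displayed bound with pivot $(z,w)=(x_{0},y_{0})$, integrating in $(x,y)$ against $|\lambda|$. The first term is finite by $(ii)$ and the last is a constant; the two remaining terms are $\int_{X}g(\gamma(x,x_{0}),\varsigma(y_{0},y'))\,d\mu(x)$ and $\int_{Y}g(\gamma(x_{0},x'),\varsigma(y,y_{0}))\,d\nu(y)$, in which $\varsigma(y_{0},y')$ and $\gamma(x_{0},x')$ are constants. For the first, \eqref{consineqexp} gives $g(\gamma(x,x_{0}),\varsigma(y_{0},y'))\leq \max\bigl(1,\varsigma(y_{0},y')/\varsigma(y'',y_{0})\bigr)\,g(\gamma(x,x_{0}),\varsigma(y'',y_{0}))$ for any $y''$ with $\varsigma(y'',y_{0})>0$; metrizability of $\varsigma$ forces $\{y'':\varsigma(y'',y_{0})=0\}\subseteq\{y_{0}\}$, so $\nu\notin\delta(Y)$ gives $\nu(\{y'':\varsigma(y'',y_{0})>0\})>0$, and since $\mathfrak{I}((\cdot,\cdot),(x_{0},y_{0}))\in L^{1}(\mu\times\nu)$ we get (Tonelli) that $x\mapsto g(\gamma(x,x_{0}),\varsigma(y'',y_{0}))\in L^{1}(\mu)$ for $\nu$-a.e.\ $y''$; intersecting these two $\nu$-large sets yields an admissible $y''$, so the first term is finite. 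The second term is finite by the mirror-image argument using $\gamma$ metrizable and $\mu\notin\delta(X)$. Hence $\mathfrak{I}((\cdot,\cdot),(x',y'))\in L^{1}(|\lambda|)$, which is $(iii)$, and the cycle $(i)\Leftrightarrow(ii)\Leftrightarrow(iii)$ closes.

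I expect the main obstacle to be precisely the absence of vanishing projection kernels for $\mathfrak{I}_{g}^{\gamma,\varsigma}$: one must replace the clean four-root inequality \eqref{4rootsinequality} by the four-term bound above, manufactured from the Bernstein structure of $g$ (coordinatewise monotonicity, \eqref{convexexp}, \eqref{consineqexp}) together with the quasi-triangle inequalities for $\gamma$ and $\varsigma$, and then pushed through Lemma~\ref{integmarginbernstein} rather than Lemma~\ref{integmargin}. The accompanying subtlety — and the reason metrizability of $\gamma,\varsigma$ and non-degeneracy of $\mu,\nu$ enter — is controlling the ``frozen-coordinate'' terms $g(\gamma(\cdot,x_{0}),\varsigma(y_{0},y'))$ and $g(\gamma(x_{0},x'),\varsigma(\cdot,y_{0}))$, which is done by comparing them, via \eqref{consineqexp}, with slices of $\mathfrak{I}_{g}^{\gamma,\varsigma}((\cdot,\cdot),(x_{0},y_{0}))$ taken at a point where $\varsigma(\cdot,y_{0})$, respectively $\gamma(\cdot,x_{0})$, is positive.
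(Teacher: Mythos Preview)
Your proof is correct and uses the same essential ingredients as the paper: the four-term pivot inequality derived from \eqref{convexexp} and \eqref{consineqexp}, together with Lemma~\ref{integmarginbernstein} applied to $\mu=|\lambda|_{X}$, $\nu=|\lambda|_{Y}$. The only organizational difference is that for $(ii)\Rightarrow(iii)$ the paper argues region by region over the sets $A_{i}\times B_{j}$ (as in the proof of Lemma~\ref{integmarginbernstein}), whereas you apply the four-term bound uniformly and then control the frozen-coordinate terms via \eqref{consineqexp}; both arguments ultimately rest on the same ``Equation~\eqref{suff}'' step, which you rederive.
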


As we obtained better  integrability relations in  Lemma \ref{integmarginbernstein} and  Corollary \ref{integmargincorbernstein} compared to Lemma \ref{integmargin}, we do not need to define the semi-inner product using various vector spaces  as done in Theorem \ref{principalmetric}, instead, we can use only one vector space. We emphasize  that we are not assuming that the projections kernels of the PDI kernel  $\mathfrak{I}_{g}^{\gamma, \varsigma}$  are zero.

\begin{theorem}\label{principalmetricbernstein} Let $g: [0, \infty) \times [0, \infty) \to \mathbb{R}$ be a continuous Bernstein function of   $2$ variables that is zero at the boundary  and $\gamma: X \times X \to [0, \infty)$,  $\varsigma: Y \times Y \to [0, \infty)$  be continuous metrizable CND kernels with bounded diagonal. The following subset of $\mathfrak{M}(X\times Y)$ is a vector space
	\begin{align*}
		\mathfrak{M}(\mathfrak{I}_{g}^{\gamma, \varsigma}):=\{\lambda \in \mathfrak{M}&(X\times Y),  \quad \mathfrak{I}_{g}^{\gamma, \varsigma} \in L^{1}((|\lambda|_{X}\times|\lambda|_{Y}    )\times (|\lambda|_{X}\times|\lambda|_{Y}    ))  , \\ 
		& \mathfrak{I}_{g}^{\gamma, \varsigma} \in L^{1}(|\lambda|\times |\lambda|), \quad \lambda(X, \cdot) \text{ and }  \lambda( \cdot, Y) \text{ are the zero measure}   \}.
	\end{align*}
	The real valued function 
	$$
	(\lambda, \lambda^{\prime}) \to \int_{X\times Y} \int_{X\times Y} \mathfrak{I}_{g}^{\gamma, \varsigma}((x,y),(x^{\prime}, y^{\prime}))d\lambda (x,y)d\lambda^{\prime}(x^{\prime}, y^{\prime}),
	$$
	is well defined for $\lambda, \lambda^{\prime} \in \mathfrak{M}(\mathfrak{I}_{g}^{\gamma, \varsigma})$ and is a semi-inner product on it.   
\end{theorem}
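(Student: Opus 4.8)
The plan is to follow the template of Theorem \ref{principalmetric}, but to exploit that, by Lemma \ref{integmarginbernstein} and Corollary \ref{integmargincorbernstein}, the integrability conditions for $\mathfrak{I}_{g}^{\gamma, \varsigma}$ hold at \emph{every} reference point, so a single vector space suffices. First I would dispose of degenerate marginals: if $|\lambda|_{X} \in \delta(X)$ or $|\lambda|_{Y} \in \delta(Y)$, then since $\lambda(\cdot, Y)$ and $\lambda(X, \cdot)$ vanish one gets $\lambda = 0$, so it is harmless to argue with non-degenerate marginals. Symmetry and bilinearity of the form are then formal: $\mathfrak{I}_{g}^{\gamma, \varsigma}$ is symmetric, so once Fubini is licensed the form is symmetric, and bilinearity is just linearity of the integral. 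Hence the substantive points are (a) $\mathfrak{M}(\mathfrak{I}_{g}^{\gamma, \varsigma})$ is a vector space and the double integral is finite for $\lambda, \lambda' \in \mathfrak{M}(\mathfrak{I}_{g}^{\gamma, \varsigma})$, and (b) the form is nonnegative on the diagonal.

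For (a) the key inequality is that for a nonnegative CND kernel $\gamma$ one has $\gamma(x,x') \le 2\gamma(x,z) + 2\gamma(z,x')$ for every $z$ (expand $\gamma$ through $D_{\gamma}$ and use $\gamma \ge 0$), and likewise for $\varsigma$. Combined with the subadditivity \ref{convexexp}, the submultiplicativity \ref{consineqexp}/\ref{ineqexp}, and monotonicity of $g$, this bounds $\mathfrak{I}_{g}^{\gamma, \varsigma}((x,y),(x',y'))$ by a finite sum of terms, each being $g$ of a quantity depending only on $(x,y)$ (of the form $g(\gamma(x,z),\varsigma(y,w))$, hence $\le 4\,\mathfrak{I}_{g}^{\gamma, \varsigma}((x,y),(z,w))$, integrable against $|\lambda|$ by Corollary \ref{integmargincorbernstein}), only on $(x',y')$, or a mixed term $g(\gamma(x,z),\varsigma(w,y'))$ with one coordinate from each side. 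For a mixed term one integrates against a product of marginals from two different measures; here I would split per variable by \ref{ineqexp}, $g(\gamma(x,z),\varsigma(w,y')) \le \max(1,\varsigma(w,y')/s_{0})\,g(\gamma(x,z),s_{0})$, and produce a good scale $s_{0}>0$ as follows: using the representation of $g$ from Theorem \ref{bernssev} and non-degeneracy of $|\lambda_{1}|_{Y}$ (take $y_{1}\ne y_{2}$ in its support with $\varsigma(y_{1},y_{2})>0$ and argue on a neighbourhood), one gets $\mathfrak{I}_{g}^{\gamma, \varsigma}((x,y),(x',y')) \ge c\,g(\gamma(x,x'),s_{0})$ on a set of positive $|\lambda_{1}|_{Y}\times|\lambda_{1}|_{Y}$-measure, so the hypothesis $\mathfrak{I}_{g}^{\gamma, \varsigma}\in L^{1}((|\lambda_{1}|_{X}\times|\lambda_{1}|_{Y})^{2})$ forces $g(\gamma,s_{0})\in L^{1}(|\lambda_{1}|_{X}^{2})$, hence $g(\gamma(\cdot,x'),s_{0})\in L^{1}(|\lambda_{1}|_{X})$ for all $x'$ by Lemma \ref{estimativa}. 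Running the symmetric argument with $\varsigma$ and $|\lambda_{2}|$ makes the other factor, $\max(1,\varsigma(w,\cdot)/s_{0})$, integrable against $|\lambda_{2}|_{Y}$. Closedness under sums and finiteness of $\int\int \mathfrak{I}_{g}^{\gamma, \varsigma}\,d|\lambda|\,d|\lambda'|$ then follow by domination ($|\lambda_{1}+\lambda_{2}|\le|\lambda_{1}|+|\lambda_{2}|$, and similarly for marginals) together with Lemma \ref{integmarginbernstein} and Corollary \ref{integmargincorbernstein} applied to $\mu = |\lambda_{1}|_{X}+|\lambda_{2}|_{X}$, $\nu = |\lambda_{1}|_{Y}+|\lambda_{2}|_{Y}$.

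For (b) I would invoke Theorem \ref{bernssev} (the boundary terms vanishing as $g$ is zero at the boundary): with $\psi_{r}(t):=(1-e^{-rt})/r$ and $\psi_{0}(t)=t$, the kernels $\gamma_{r}:=\psi_{r}\circ\gamma$, $\varsigma_{r}:=\psi_{r}\circ\varsigma$ are CND (each $\psi_{r}$ is a Bernstein function, cf.\ \ref{bern+cnd}), nonnegative, bounded and metrizable with bounded diagonal for $r>0$, and
\[
\mathfrak{I}_{g}^{\gamma, \varsigma}((x,y),(x',y')) = \int_{[0,\infty)^{2}} \gamma_{r_{1}}(x,x')\,\varsigma_{r_{2}}(y,y')\,(1+r_{1})(1+r_{2})\,d\sigma(r_{1},r_{2}).
\]
For $\lambda\in\mathfrak{M}(\mathfrak{I}_{g}^{\gamma, \varsigma})$, Tonelli against $|\lambda|\times|\lambda|$ justifies Fubini, and
\[
\int\!\!\int \mathfrak{I}_{g}^{\gamma, \varsigma}\,d\lambda\,d\lambda = \int_{[0,\infty)^{2}} (1+r_{1})(1+r_{2})\Big[ \int\!\!\int \gamma_{r_{1}}(x,x')\varsigma_{r_{2}}(y,y')\,d\lambda(x,y)\,d\lambda(x',y') \Big]\,d\sigma(r_{1},r_{2}).
\]
Each bracket is $\ge 0$: a Kronecker product of CND kernels is PDI (the computation in Section \ref{positive definite independent kernels}), and for $r_{1},r_{2}>0$ the product $\gamma_{r_{1}}\otimes\varsigma_{r_{2}}$ is bounded with all the integrability hypotheses of Corollary \ref{integmarginkroeprodcor} automatic, so that corollary gives nonnegativity of the bracket directly; the boundary values $r_{i}=0$ only contribute when $\sigma$ charges them, in which case $\gamma\otimes\varsigma_{r_{2}}$, $\gamma_{r_{1}}\otimes\varsigma$ and $\gamma\otimes\varsigma$ are dominated up to a constant by $\mathfrak{I}_{g}^{\gamma, \varsigma}$ by the estimates of (a), keeping Corollary \ref{integmarginkroeprodcor} applicable. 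This yields positive semidefiniteness, and with (a) and the formal symmetry/bilinearity the form is a semi-inner product on $\mathfrak{M}(\mathfrak{I}_{g}^{\gamma, \varsigma})$. As a cross-check one may instead pass to the modified kernel $\mathfrak{I}'$ of Lemma \ref{PDIsimpli} (whose correction terms integrate to zero against measures with vanishing marginals) and identify the form with $\langle K^{\mathfrak{I}'}_{\lambda}, K^{\mathfrak{I}'}_{\lambda'}\rangle$ via Lemma \ref{PDItoPD} and Lemma \ref{initialextmmddominio}.

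The main obstacle is the mixed cross-marginal integrability inside (a): making sense of $\int\int g(\gamma(x,x'),\varsigma(y,y'))\,d|\lambda_{1}|_{X}(x)\,d|\lambda_{2}|_{Y}(y)$ when the two marginals come from different measures, which forces the per-variable splitting together with the ``there is a good scale $s_{0}$'' argument built on the explicit form and uniqueness of the representation in Theorem \ref{bernssev} and on non-degeneracy of the marginals. The remaining ingredients — the degenerate case, symmetry, bilinearity, and even the nonnegativity — are comparatively routine.
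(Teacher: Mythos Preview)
Your nonnegativity argument (b) and the formal parts (degenerate marginals, symmetry, bilinearity, well-definedness once the vector-space claim is in hand) match the paper exactly: rewrite $g$ via Theorem~\ref{bernssev} as a $\sigma$-superposition of Kronecker products $(\psi_{r_1}\!\circ\!\gamma)\otimes(\psi_{r_2}\!\circ\!\varsigma)$ of CND kernels and apply Corollary~\ref{integmarginkroeprodcor} to the inner double integral for each $(r_1,r_2)$. The paper does not split off the boundary $r_i=0$; since $\gamma,\varsigma$ themselves have bounded diagonal, all $r_1,r_2\in[0,\infty)$ are treated uniformly.

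The gap is in (a), precisely at the cross-marginal step you single out. Your bound
\[
g\bigl(\gamma(x,z),\varsigma(w,y')\bigr)\le \max\!\bigl(1,\varsigma(w,y')/s_{0}\bigr)\,g\bigl(\gamma(x,z),s_{0}\bigr)
\]
needs $\varsigma(w,\cdot)\in L^{1}(|\lambda_{2}|_{Y})$ for the right-hand side to be a product of integrable factors, but ``running the symmetric argument'' only yields $g(t_{1},\varsigma(w,\cdot))\in L^{1}(|\lambda_{2}|_{Y})$ (this is exactly Equation~\ref{suff}). These are not the same whenever $t_{2}\mapsto g(t_{1},t_{2})$ is sublinear, i.e.\ when the representing measure $\sigma$ does not charge $[0,\infty)\times\{0\}$; the mirror splitting via $\max(1,\gamma(x,z)/s_{0}')$ hits the identical wall on the $|\lambda_{1}|_{X}$ side. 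Concretely, for $\sigma$ supported on the diagonal of $(0,1]$ one can choose marginals with $\Phi_{\mu_{1}}(r):=\int\psi_{r}(\gamma(x,x'))\,d\mu_{1}^{2}\asymp r^{-1/2}$, $\Phi_{\nu_{1}}$ bounded, and the roles swapped for $\mu_{2},\nu_{2}$, so that $\int\Phi_{\mu_{i}}\Phi_{\nu_{i}}\,d\tilde\sigma<\infty$ for $i=1,2$ while $\int\Phi_{\mu_{1}}\Phi_{\nu_{2}}\,d\tilde\sigma=\infty$; so no product-type estimate can close this as written. The paper's own proof is a single sentence here (``a direct consequence of relation~(iii) in Lemma~\ref{integmarginbernstein} and Corollary~\ref{integmargincorbernstein} holding for every $(x',y')$''), which uses the same pointwise-integrability mechanism and does not isolate the cross-marginal term either, so your more explicit outline has in fact exposed the one step that neither argument justifies on the page.
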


Because of Theorem \ref{principalmetricbernstein}, the concept of PDI-Characteristic kernel can, equivalently, be defined using only one  vector space $\mathfrak{M}(\mathfrak{I}_{g}^{\gamma, \varsigma})$, instead of multiple ones using $	\mathfrak{M}_{\mu, \nu}(\mathfrak{I}_{g}^{\gamma, \varsigma})$.

Below, we present a criterion for when the kernels $\mathfrak{I}_{g}^{\gamma, \varsigma}$ are PDI-Characteristic

\begin{theorem}\label{characPDIbern2}Let $g: [0, \infty) \times [0, \infty) \to \mathbb{R}$ be a continuous Bernstein function of   $2$ variables   that is zero at the boundary,  $\gamma: X \times X \to [0, \infty)$,  $\varsigma: Y \times Y \to [0, \infty)$  be continuous metrizable CND kernels with bounded diagonal. Then,  the kernel $g(\gamma , \varsigma )$ is $PDI$-Characteristic if and only  one of the following relations about the representation of Theorem \ref{bernssev} is satisfied
	\begin{enumerate}
		\item [$(i)$] If $\gamma$ and $\varsigma$ are not CND-Characteristic: $\sigma((0,\infty)^{2})>0$ (or equivalently, $g(t_{1}, t_{2})$ is not of the type $t_{1}\psi(t_{2}) + t_{2}\varphi(t_{1})$);  
		\item [$(ii)$] If $\gamma$ is CND-Characteristic and $\varsigma$ is not:  $\sigma([0, \infty) \times (0,\infty))>0$ (or equivalently, $g(t_{1}, t_{2})$ is not of the type $ t_{2}\varphi(t_{1})$); 
		\item [$(iii)$] If $\varsigma$ is CND-Characteristic and $\gamma$ is not: $\sigma((0, \infty) \times [0,\infty))>0$ (or equivalently, $g(t_{1}, t_{2})$ is not of the type $ t_{1}\psi(t_{2})$); 
		\item [$(iv)$] If $\gamma$  and $\varsigma$ are CND-characteristic:  $\sigma([0, \infty) \times [0,\infty))>0$ (or equivalently, $g$ is not  a constant function), 
	\end{enumerate}
	where $\psi, \varphi :  [0,\infty) \to \mathbb{R}$ are Bernstein functions.
\end{theorem}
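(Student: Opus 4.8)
The plan is to reduce the statement to the classical HSIC/distance‑covariance criterion (Corollary \ref{integmarginkroeprodcor}) by writing $\mathfrak{I}_{g}^{\gamma, \varsigma}$ as a $\sigma$‑superposition of Kronecker products of CND kernels, via Theorem \ref{bernssev}. For $r_{1},r_{2}\ge 0$ put
$$
\gamma^{(r_{1})}:=(1+r_{1})\frac{1-e^{-r_{1}\gamma}}{r_{1}},\qquad \varsigma^{(r_{2})}:=(1+r_{2})\frac{1-e^{-r_{2}\varsigma}}{r_{2}},
$$
with the conventions $\gamma^{(0)}:=\gamma$, $\varsigma^{(0)}:=\varsigma$, and $\Phi_{r_{1},r_{2}}:=\gamma^{(r_{1})}\otimes\varsigma^{(r_{2})}$; then Theorem \ref{bernssev} (boundary‑zero case) reads $\mathfrak{I}_{g}^{\gamma, \varsigma}=\int_{[0,\infty)^{2}}\Phi_{r_{1},r_{2}}\,d\sigma(r_{1},r_{2})$. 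The decisive preliminary facts are: (a) for $r_{1}>0$ the kernel $\gamma^{(r_{1})}$ is a bounded, continuous, metrizable CND kernel which is CND‑Characteristic, since its representing measure in the sense of Theorem \ref{reprcondneg} is $\delta_{r_{1}}$, supported on $(0,\infty)$, so Theorem $3.3$ in \cite{energybernstein} applies using that $\gamma$ is metrizable (metrizability of $\gamma^{(r_{1})}$ itself then follows, a CND‑Characteristic kernel being necessarily metrizable), and likewise for $\varsigma^{(r_{2})}$, $r_{2}>0$; (b) $\gamma^{(0)}=\gamma$ is CND‑Characteristic iff $\gamma$ is, and similarly for $\varsigma$.

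Fix $\lambda\in\mathfrak{M}(\mathfrak{I}_{g}^{\gamma, \varsigma})$; in particular $\mathfrak{I}_{g}^{\gamma, \varsigma}\in L^{1}(|\lambda|\times|\lambda|)$ and $\lambda(X,\cdot)$, $\lambda(\cdot,Y)$ are the zero measure. Splitting $\lambda$ into Jordan parts and applying Tonelli to the four nonnegative integrals that arise — legitimate because $\Phi_{r_{1},r_{2}}\ge 0$ and $\int\int\mathfrak{I}_{g}^{\gamma, \varsigma}\,d|\lambda|\,d|\lambda|<\infty$ — gives
$$
\int_{X\times Y}\int_{X\times Y}\mathfrak{I}_{g}^{\gamma, \varsigma}\,d\lambda\,d\lambda=\int_{[0,\infty)^{2}}\left[\int_{X\times Y}\int_{X\times Y}\Phi_{r_{1},r_{2}}\,d\lambda\,d\lambda\right]d\sigma(r_{1},r_{2}).
$$
For $\sigma$‑a.e.\ $(r_{1},r_{2})$ the inner integral is finite, and it is nonnegative: since $\lambda$ has vanishing marginals the boundary contributions in $\Phi_{r_{1},r_{2}}=\gamma^{(r_{1})}\otimes\varsigma^{(r_{2})}$ drop out exactly as in the computation following \eqref{PDIradialcndcnd}, and Corollary \ref{integmarginkroeprodcor} applies — one checks, using \eqref{consineqexp}--\eqref{convexexp} and Corollary \ref{integmargincorbernstein}, that $\lambda\in\mathfrak{M}(\gamma^{(r_{1})}\otimes\varsigma^{(r_{2})})$ (this is automatic once both $r_{i}>0$, where $\Phi_{r_{1},r_{2}}$ is bounded). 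Hence $\int\int\mathfrak{I}_{g}^{\gamma, \varsigma}\,d\lambda\,d\lambda=0$ iff $\int\int\Phi_{r_{1},r_{2}}\,d\lambda\,d\lambda=0$ for $\sigma$‑a.e.\ $(r_{1},r_{2})$. Call $(r_{1},r_{2})$ \emph{admissible} when $\gamma^{(r_{1})}$ and $\varsigma^{(r_{2})}$ are both CND‑Characteristic; by (a)--(b) the non‑admissible set is the union of the two coordinate axes when neither $\gamma$ nor $\varsigma$ is CND‑Characteristic, is $\{r_{2}=0\}$ when only $\gamma$ is, is $\{r_{1}=0\}$ when only $\varsigma$ is, and is empty when both are, and hypotheses (i)--(iv) say precisely that $\sigma$ charges the admissible set.

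For \emph{sufficiency}, assume $\sigma$ charges the admissible set and $\int\int\mathfrak{I}_{g}^{\gamma, \varsigma}\,d\lambda\,d\lambda=0$. Then $\int\int\Phi_{r_{1}^{*},r_{2}^{*}}\,d\lambda\,d\lambda=0$ for some admissible $(r_{1}^{*},r_{2}^{*})$, where $\gamma^{(r_{1}^{*})}\otimes\varsigma^{(r_{2}^{*})}$ is a Kronecker product of CND‑Characteristic, metrizable, bounded‑diagonal kernels, so the inner‑product half of Corollary \ref{integmarginkroeprodcor} forces $\lambda=0$. (If $\sigma$ gives no mass to $(0,\infty)^{2}$ — equivalently, $g$ has one of the degenerate product forms $t_{1}\psi(t_{2})$, $t_{2}\varphi(t_{1})$, or $t_{1}\psi(t_{2})+t_{2}\varphi(t_{1})$ — one observes instead that $\mathfrak{I}_{g}^{\gamma, \varsigma}$ is a single Kronecker product, or a sum of two, of kernels $\psi(\varsigma)$, $\varphi(\gamma)$ which are CND‑Characteristic by Theorem $3.3$ in \cite{energybernstein} because admissibility forces their representing measures to charge $(0,\infty)$; then Corollary \ref{integmarginkroeprodcor} applied to the summands — each dominated by, hence integrable wherever $\mathfrak{I}_{g}^{\gamma, \varsigma}$ is — again gives $\lambda=0$.)

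For \emph{necessity}, suppose $\sigma$ is concentrated on the non‑admissible set. Reading off Theorem \ref{bernssev} then yields $g(t_{1},t_{2})=t_{1}\psi(t_{2})+t_{2}\varphi(t_{1})$ in case (i) (the possible atom of $\sigma$ at the origin absorbed into the linear terms), $g=t_{2}\varphi(t_{1})$ in case (ii), $g=t_{1}\psi(t_{2})$ in case (iii), and $g\equiv 0$ in case (iv), with $\psi,\varphi$ Bernstein functions vanishing at $0$. In each case I would exhibit a nonzero $\lambda=\mu_{0}\otimes\nu_{0}\in\mathfrak{M}(\mathfrak{I}_{g}^{\gamma, \varsigma})$ with $\int\int\mathfrak{I}_{g}^{\gamma, \varsigma}\,d\lambda\,d\lambda=0$: choose $\mu_{0}\in\mathfrak{M}(X)\setminus\{0\}$, $\nu_{0}\in\mathfrak{M}(Y)\setminus\{0\}$ with $\mu_{0}(X)=\nu_{0}(Y)=0$, taking $\mu_{0}$ in the null space of the semi‑inner product $I(\cdot,\cdot)_{\gamma}$ of Lemma \ref{estimativa} whenever $\gamma$ fails to be CND‑Characteristic, $\nu_{0}$ in the null space of $I(\cdot,\cdot)_{\varsigma}$ whenever $\varsigma$ fails to be, and $\mu_{0},\nu_{0}$ otherwise arbitrary subject to the requisite integrability (which holds since a Bernstein function vanishing at $0$ is dominated by an affine function, by \eqref{ineqexp}); such null vectors exist precisely by the failure of the inner‑product property. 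Then $\lambda$ has vanishing marginals, and since $\int\int(\kappa\otimes\ell)\,d(\mu_{0}\otimes\nu_{0})\,d(\mu_{0}\otimes\nu_{0})=\bigl(\int\int\kappa\,d\mu_{0}\,d\mu_{0}\bigr)\bigl(\int\int\ell\,d\nu_{0}\,d\nu_{0}\bigr)$ with each double integral of a CND kernel $\le 0$ (by conditional negativity and $\mu_{0}(X)=\nu_{0}(Y)=0$), every summand of $\int\int\mathfrak{I}_{g}^{\gamma, \varsigma}\,d\lambda\,d\lambda$ acquires a vanishing factor (in case (iv) the integral is trivially $0$), so $\mathfrak{I}_{g}^{\gamma, \varsigma}$ is not PDI‑Characteristic. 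The main difficulty is the integrability bookkeeping behind the displayed interchange and behind the claims $\lambda\in\mathfrak{M}(\gamma^{(r_{1}^{*})}\otimes\varsigma^{(r_{2}^{*})})$ and $\mu_{0}\otimes\nu_{0}\in\mathfrak{M}(\mathfrak{I}_{g}^{\gamma, \varsigma})$: both reduce to the growth estimates \eqref{ineqexp}--\eqref{convexexp} together with Lemma \ref{integmarginbernstein}/Corollary \ref{integmargincorbernstein}, but the axis cases $r_{i}=0$, where $\gamma$ or $\varsigma$ may be unbounded, must be handled with some care.
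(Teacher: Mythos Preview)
Your proposal is correct and follows essentially the same route as the paper's proof: both decompose $\mathfrak{I}_{g}^{\gamma,\varsigma}$ via Theorem~\ref{bernssev}, split into the cases $\sigma((0,\infty)^{2})>0$ versus $\sigma((0,\infty)^{2})=0$, and construct product counterexamples $\lambda_{1}\times\lambda_{2}$ for necessity. The only organizational difference is that for the interior case $\sigma((0,\infty)^{2})>0$ the paper invokes Theorem~\ref{gengaussian} directly (the kernel $e^{-r_{1}\gamma-r_{2}\varsigma}$ is ISPD), whereas you pass through the fact that $\gamma^{(r_{1})},\varsigma^{(r_{2})}$ are CND-Characteristic for $r_{1},r_{2}>0$ and then apply Corollary~\ref{integmarginkroeprodcor}; both routes ultimately rest on Theorem~\ref{gengaussian}, so this is a cosmetic distinction. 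Your ``admissible set'' language is a nice unifying device, but note that your parenthetical for the boundary case (degenerate product forms) should be stated as: \emph{at least one} of the Kronecker summands has both factors CND-Characteristic, not that all of $\psi(\varsigma),\varphi(\gamma)$ are --- this is exactly what the paper's Case~2 verifies subcase by subcase, and is what your admissibility hypothesis actually gives.
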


The second family is based on the PDI radial kernels mentioned after Corollary \ref{2times}, for which we have several nontrivial examples. Let    $\gamma: X \times X \to [0, \infty)$ and  $\varsigma: Y \times Y \to [0, \infty)$ be  CND kernels and  $ \psi: [0, \infty) \to \mathbb{R}$ be a  function in $CM_{2}$. Then, the kernel
\begin{equation}\label{PDIcm2cndcnd}
	[\mathfrak{I}_{\psi}^{\gamma + \varsigma}]((x,y),(x^{\prime}, y^{\prime})):=	\psi(\gamma(x,x^{\prime})+\varsigma(y,y^{\prime})),
\end{equation}
is PDI in $X\times Y$. Indeed, let $x_{1}, \ldots, x_{n} \in X$, $y_{1}, \ldots, y_{m} \in Y$ and real scalars $c_{i,k}$ with the necessary restrictions, then by Theorem \ref{compleelltimes} 
\begin{align*}
	&\sum_{i,j=1}^{n}\sum_{k,l=1}^{m}c_{i, k}c_{j, l}\psi(\gamma(x_{i}, x_{j})+\varsigma(y_{k}, y_{l})) =\sum_{i,j=1}^{n}\sum_{k,l=1}^{m}c_{i, k}c_{j, l} \Biggl{[}  \sum_{k=0}^{2}a_{k}[\gamma(x_{i}, x_{j})+\varsigma(y_{k}, y_{l})]^{k} \\ 
	& + \int_{(0,\infty)} \left ( e^{-r[\gamma(x,x^{\prime}) + \varsigma(y,y^{\prime})]} - \omega_{2}(r[\gamma(x,x^{\prime}) + \varsigma(y,y^{\prime})])e_{2}(r) \right )\frac{1+r^{\ell}}{r^{\ell}} d\sigma(r) \Biggl{]}   \\
	&=\sum_{i,j=1}^{n}\sum_{k,l=1}^{m}c_{i, k}c_{j, l} \left [ 2a_{2}\gamma(x_{i}, x_{j})\varsigma(y_{k}, y_{l}) + \int_{(0,\infty)}  e^{-r[\gamma(x,x^{\prime}) + \varsigma(y,y^{\prime})]}  \frac{1+r^{\ell}}{r^{\ell}} d\sigma(r)\right ]\geq 0,
\end{align*}
where the conclusion comes from the fact that $\gamma \otimes \varsigma$ is an PDI kernel and that $r[\gamma + \varsigma]$ is an CND kernel in $X\times Y$ for every $r >0$.

As we have several examples of functions in $CM_{2} \cap C^{1}([0, \infty))$,  we prove  additional results concerning the integrability conditions of the kernels involved.  These results are not consequence of 	Lemma \ref{integmarginbernstein} and Corollary \ref{integmargincorbernstein},  for instance if $\psi(t)= t^{2}$, the integrability of $\psi(\gamma + \varsigma)$ is different from $\psi(\gamma + \varsigma) - \psi(\gamma) - \psi(\sigma) + \psi(0)$. Also,  note that the projection kernels of $\mathfrak{I}_{\psi}^{\gamma + \varsigma}$ are not zero.


\begin{lem}\label{integmargincompl2}Let $\psi: [0, \infty) \to \mathbb{R}$ be a function either in $CM_{2} \cap C^{1}([0, \infty))$ or in $CM_{1}$, $\gamma: X \times X \to [0, \infty)$,  $\varsigma: Y \times Y \to [0, \infty)$  be continuous metrizable CND kernels with bounded diagonal. Then, for a measure  $\lambda \in \mathfrak{M}(X \times Y)$  the following conditions are equivalent :
	\begin{enumerate}
		\item [$(i)$] $\psi(\gamma +  \varsigma)\in L^{1}(|\lambda|\times |\lambda|)$;
		\item [$(ii)$]The function  $\psi(\gamma(\cdot, x^{\prime}) +  \varsigma(\cdot , y^{\prime}))  \in L^{1}(|\lambda|)$ for some $(x^{\prime},y^{\prime}) \in X\times Y$;
		\item [$(iii)$] The function  $\psi(\gamma(\cdot, x^{\prime}) +  \varsigma(\cdot , y^{\prime}))  \in L^{1}(|\lambda|)$   for every $(x^{\prime},y^{\prime}) \in X\times Y$.
		\item[$(iv)$] $\psi(\gamma +  \varsigma)\in L^{1}([|\lambda|_{X} \times |\lambda|_{Y}] \times [|\lambda|_{X} \times |\lambda|_{Y}])$
		\item[$(v)$] $\psi(\gamma) \in L^{1}(|\lambda|_{X} \times |\lambda|_{X})$ and $\psi(\varsigma) \in L^{1}(|\lambda|_{Y} \times |\lambda|_{Y})$
	\end{enumerate}
	
\end{lem}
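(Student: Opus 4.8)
The plan is to reduce everything to the case $\psi \in CM_1$ (where $-\psi$ is CND and Lemma \ref{estimativa} applies directly to the CND kernel $(x,y),(x',y') \mapsto \gamma(x,x') + \varsigma(y,y')$ on $X \times Y$) by carefully separating the polynomial part $a_0 + a_1 t + a_2 t^2$ from the integral part of the representation in Lemma \ref{compleelltimessimples} with $\ell = 2$. First I would record the obvious implications $(i) \Rightarrow (ii)$ is immediate from Fubini applied to $|\lambda| \times |\lambda|$ on a $\sigma$-finite decomposition, and $(iii) \Rightarrow (ii)$ is trivial; $(iv) \Rightarrow (v)$ and $(v) \Rightarrow (iv)$ follow from the one-variable version of the argument (essentially Lemma \ref{estimativa2} applied to $\gamma$ on $X$ and to $\varsigma$ on $Y$ separately, using that $\psi(\gamma)$ and $\psi(\varsigma)$ are themselves well-behaved via $CM_2 \subset$ the class in Lemma \ref{estimativa2}). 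The substantive content is the equivalence of the ``$|\lambda| \times |\lambda|$'' conditions $(i)$–$(iii)$ with each other, and with the ``marginal'' conditions $(iv)$–$(v)$.

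The key analytic tool is the two-sided bound \eqref{ineqcm2c1} with $\ell = 2$: writing $E_2(s) = e^{-s} - \omega_2(s)$, we have $M_1 \min(s^2, s) \le E_2(rs)\frac{1+r}{r^2} \le M_2(1+s^2)$, and the $r$-integral of $E_2(r\cdot)(1+r)/r^2$ against $\sigma$ is, up to the polynomial terms, exactly $\psi$. Using the metrizability of $\gamma$ and $\varsigma$ I would invoke Theorem \ref{gengaussian}-style control, but more directly the triangle-type inequality $\sqrt{\gamma(x,z)} \le \sqrt{\gamma(x,x')} + \sqrt{\gamma(x',z)}$ (and similarly for $\varsigma$), which gives, for $s := \gamma(x,x') + \varsigma(y,y')$ and a fixed reference pair $(x',y') = (z,w)$, a bound of the form $s \le C(1 + s_0(x,y))$ where $s_0(x,y) := \gamma(x,z) + \varsigma(y,w)$, and conversely $s_0 \le C(s + 1)$ after swapping roles. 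Feeding these into the upper and lower bounds of \eqref{ineqcm2c1} and the elementary inequality $E_2(rs) \le M_2 t^2 E_2(r)/M_1$ for $s \le t$, $t \ge 1$, converts an integrability statement about $\psi(\gamma(\cdot,x') + \varsigma(\cdot,y'))$ at one point into one at every point, and into the double-integral statement. Concretely, $(ii) \Rightarrow (iii) \Rightarrow (i)$ runs by: finiteness at $(x',y')$ controls $\int (1 + \gamma(x,x')^2 + \varsigma(y,y')^2)\, d|\lambda|$-type quantities via the lower bound $M_1\min(s^2,s)$, which upgrades (using the CND triangle inequality squared) to control of $\int\int (1 + s(x,y,x'',y'')^2)\, d|\lambda|\, d|\lambda|$, and then the upper bound $M_2(1+s^2)$ closes the loop. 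The passage to $(iv)$/$(v)$ uses that on the product of marginals the same bounds separate: $E_2$ of a sum is dominated by a product-type expression because $\min(s^2,s) \asymp \min(a^2,a) + \min(b^2,b) + ab$ for $s = a+b$, and the cross term $ab = \gamma(x,x')\varsigma(y,y')$ is exactly the Kronecker product handled in Corollary \ref{integmarginkroeprodcor} / Lemma \ref{integmarginkroeprod}, whose integrability against $|\lambda|_X \times |\lambda|_Y$ is equivalent to $\gamma \in L^1(|\lambda|_X \times |\lambda|_X)$ and $\varsigma \in L^1(|\lambda|_Y \times |\lambda|_Y)$.

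The main obstacle I anticipate is the bookkeeping around the degenerate-measure subtleties flagged after Lemma \ref{integmarginkroeprod} (the $\delta(X)$, $\delta(Y)$ exclusions): when $|\lambda|_X$ or $|\lambda|_Y$ is a point mass, the ``product'' reformulations $(iv)$, $(v)$ still have to be consistent with $(i)$–$(iii)$, and one must check by hand that in that case $\psi(\gamma)$ or $\psi(\varsigma)$ is automatically integrable (the kernel is constant in one slot), so no separate argument is needed — but this requires stating the case split cleanly rather than quoting Lemma \ref{integmarginkroeprod} verbatim. The second delicate point is that $\psi \in CM_2 \cap C^1([0,\infty))$ only gives the simplified representation of Lemma \ref{compleelltimessimples}, so the measure $\sigma$ sits on $(0,\infty)$ with a possible mass accumulating at $0$ and at $\infty$; I would handle this exactly as in Section 4 of \cite{energybernstein}, splitting $\sigma$ on $(0,1]$ and $(1,\infty)$ and using that the $a_2 t^2$ term contributes the genuinely quadratic (Kronecker-cross) growth while the integral term over $(0,1]$ is controlled by $t$ and over $(1,\infty)$ by $t^2$ uniformly, so that in all regimes the comparison with $1 + \gamma(\cdot,x')^2 + \varsigma(\cdot,y')^2$ and with $\gamma \otimes \varsigma$ is valid. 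Once these two points are organized, the six implications $(i) \Rightarrow (ii) \Rightarrow (iii) \Rightarrow (i)$, $(i) \Leftrightarrow (iv)$, $(iv) \Leftrightarrow (v)$ each follow from a short estimate, and the ``vector space'' assertion (implicit in the statement's companions, e.g. Lemma \ref{estimativa2}) is then immediate from the triangle inequality for the associated seminorms.
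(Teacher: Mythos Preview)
Your approach would likely work, but it takes a considerably longer route than the paper and creates obstacles that the paper's proof avoids entirely.

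For the equivalence of $(i)$, $(ii)$, $(iii)$ you propose to redo, via explicit triangle-type estimates on $\sqrt{\gamma}$ and $\sqrt{\varsigma}$ fed into \eqref{ineqcm2c1}, what is already packaged in Lemma~\ref{estimativa2}. The paper simply observes that $\gamma+\varsigma$ is itself a continuous CND kernel on $X\times Y$ with bounded diagonal, so Lemma~\ref{estimativa2} applies verbatim with the single kernel $\gamma+\varsigma$ and the measure $|\lambda|$. No separate estimate is needed.

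For the link to $(iv)$ and $(v)$, the paper's device is a two-sided \emph{additive} comparison
\[
M_3\bigl[\psi(\gamma)+\psi(\varsigma)\bigr]\;\le\;\psi(\gamma+\varsigma)\;\le\;M_4\bigl[\psi(\gamma)+\psi(\varsigma)\bigr],
\]
obtained (after normalizing $a_0=a_1=0$) pointwise in $r$: the lower bound $E_2(ra)+E_2(rb)\le E_2(r(a+b))$ follows from convexity of $E_2$ with $E_2(0)=0$, and the upper bound $E_2(r(a+b))\le C\bigl[E_2(ra)+E_2(rb)\bigr]$ from the scaling remark after \eqref{ineqcm2c1} together with $a+b\le 2\max(a,b)$; the polynomial part uses only $(\gamma+\varsigma)^2\asymp\gamma^2+\varsigma^2$. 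Once this decoupling holds, $(i)\Leftrightarrow(v)$ and $(iv)\Leftrightarrow(v)$ are immediate because $\psi(\gamma)$ depends only on the $X$-coordinates (so its $|\lambda|\times|\lambda|$-integrability reduces to $|\lambda|_X\times|\lambda|_X$-integrability, and likewise on the marginal product), and similarly for $\psi(\varsigma)$.

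By contrast, your plan introduces the cross term $\gamma\varsigma$ and then invokes the Kronecker-product machinery of Lemma~\ref{integmarginkroeprod} / Corollary~\ref{integmarginkroeprodcor}, which is precisely what forces you into the $\delta(X)$, $\delta(Y)$ case split you flagged as an obstacle. That obstacle, and the $(0,1]$ versus $(1,\infty)$ splitting of $\sigma$, simply do not arise in the paper's argument: the additive comparison eliminates the cross term and hence all dependence on the Kronecker product lemmas. What your route buys is a more hands-on picture of where the quadratic growth comes from, but at the cost of a proof several times longer and with extra hypotheses to track.
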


\begin{theorem}\label{principalmetriccompl2}	Let $\psi: [0, \infty) \to \mathbb{R}$ be a function  either in $CM_{2} \cap C^{1}([0, \infty))$ or in $CM_{1}$, $\gamma: X \times X \to [0, \infty)$,  $\varsigma: Y \times Y \to [0, \infty)$  be continuous metrizable CND kernels with bounded diagonal.  The following subset of $\mathfrak{M}(X\times Y)$ is a vector space
	\begin{align*}
		\mathfrak{M}(\mathfrak{I}_{\psi}^{\gamma + \varsigma} ):=\{   \psi(\gamma + \varsigma) \in L^{1}(|\lambda|\times |\lambda|), \quad \lambda(X, \cdot) \text{ and }  \lambda( \cdot, Y) \text{ are the zero measure}   \}. \quad &
	\end{align*}
	The real valued function 
	$$
	(\lambda, \lambda^{\prime}) \to \int_{X\times Y} \int_{X\times Y} \psi(\gamma(x,x^{\prime}) + \varsigma(y,y^{\prime}))d\lambda (x,y)d\lambda^{\prime}(x^{\prime}, y^{\prime}),
	$$
	is well defined for $\lambda, \lambda^{\prime} \in \mathfrak{M}(\mathfrak{I}_{\psi}^{\gamma + \varsigma} )$ and is a semi-inner product on it.  It is an inner product if and only if one of the conditions is satisfied
	\begin{enumerate}
		\item[(i)] $\psi \in CM_{1}$ and $\psi$ is not a linear polynomial. 
		\item[(ii-1)] $\psi \in CM_{2} \cap C^{1}([0, \infty))$, $\psi$ is not a quadratic polynomial and  $\gamma$, $\varsigma$ are not CND-Characteristic. 
		\item[(ii-2)] $\psi \in CM_{2} \cap C^{1}([0, \infty))$, $\psi$ is not a linear polynomial and $\gamma$, $\varsigma$ are CND-Characteristic. 
	\end{enumerate}
\end{theorem}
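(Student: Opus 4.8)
The plan is to split the argument into three parts mirroring the structure of the statement: first establish that the set is a vector space, then that the bilinear form is well defined and a semi-inner product, and finally prove the characterization of when it is an inner product. For the vector-space claim I would invoke Lemma \ref{integmargincompl2} directly: the conditions defining $\mathfrak{M}(\mathfrak{I}_\psi^{\gamma+\varsigma})$ are equivalent to $\psi(\gamma)\in L^1(|\lambda|_X\times|\lambda|_X)$ and $\psi(\varsigma)\in L^1(|\lambda|_Y\times|\lambda|_Y)$ together with the vanishing-marginal condition, and each of these is stable under addition of measures (using that $|\lambda+\lambda'|\le|\lambda|+|\lambda'|$ together with the equivalence $(i)\Leftrightarrow(v)$ of Lemma \ref{integmargincompl2} applied to the measure $|\lambda|+|\lambda'|$, exactly as in the proof that $\mathfrak{M}_1(X;\gamma)$ is a vector space in Lemma \ref{estimativa}). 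The vanishing-marginal condition is obviously linear.

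For well-definedness and the semi-inner product property, I would first note that $\mathfrak{I}_\psi^{\gamma+\varsigma}((x,y),(x',y'))=\psi(\gamma(x,x')+\varsigma(y,y'))$ is a PDI kernel (this was verified just before Lemma \ref{integmargincompl2} using Theorem \ref{compleelltimes}), hence its associated $\mathfrak{I}'$ from Lemma \ref{PDIsimpli} has zero projection kernels and is still PDI, and by Lemma \ref{PDItoPD} one gets an honest PD kernel $K$. Then the double integral against $\lambda,\lambda'$ with zero marginals agrees with $\langle K_\lambda,K_{\lambda'}\rangle_{\mathcal H_K}$ in the sense of Theorem \ref{principalmetric}; the integrability supplied by Lemma \ref{integmargincompl2} (all five equivalent conditions) is precisely what is needed to apply Lemma \ref{initialextmmddominio} to conclude that $K_\lambda\in\mathcal H_K$ and the pairing is a genuine semi-inner product. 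The only subtlety is that, unlike in Theorem \ref{principalmetric}, the projection kernels of $\mathfrak{I}_\psi^{\gamma+\varsigma}$ are not zero, so I must pass through the reduced kernel $\mathfrak{I}'$ (or equivalently subtract the projection terms), checking that this subtraction does not change the value of the double integral because $\lambda$ and $\lambda'$ have vanishing marginals — the projection terms $\psi(\gamma(x,x'))$-type summands integrate to zero against at least one of the two copies of such a measure.

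The main work, and the expected obstacle, is the characterization of the inner-product case. The strategy is to use the representation of $\psi$ from Theorem \ref{compleelltimes} (case $\ell=2$) or Theorem \ref{reprcondneg} (case $\ell=1$) to decompose the quadratic form into a nonnegative quadratic term $2a_2\,\langle\gamma\otimes\varsigma$-form$\rangle$ plus an integral over $r\in(0,\infty)$ of Gaussian-type forms $e^{-r\gamma}\otimes e^{-r\varsigma}$ applied to $\lambda$; note that after subtracting projection kernels the $e^{-r\gamma(x,x')}$ factors become $K_\gamma^w$-type PD kernels, and since $\gamma,\varsigma$ are metrizable, Theorem \ref{gengaussian} tells us $e^{-r\gamma}$ and $e^{-r\varsigma}$ are ISPD, hence their Kronecker product is SPDI-like on measures with vanishing marginals. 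The form then vanishes iff each active piece vanishes: in case (i) with $\psi\in CM_1$ and not linear, the measure $\sigma$ in Theorem \ref{reprcondneg} has mass on $(0,\infty)$ or at $\{0\}$; the argument from Theorem 3.3 of \cite{energybernstein} (combined with metrizability) gives injectivity. In the $CM_2$ cases one must additionally track the quadratic coefficient $a_2$: when $\gamma,\varsigma$ are CND-Characteristic the bilinear term $\langle\gamma\otimes\varsigma\rangle$ is already an inner product by Corollary \ref{integmarginkroeprodcor}, so $a_2>0$ (equivalently $\psi$ not linear) suffices; when they are not CND-Characteristic one needs genuine mass in the integral part, i.e. $\psi$ not a quadratic polynomial, to recover faithfulness on the larger space of measures with vanishing marginals. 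The delicate point will be showing that vanishing of the whole form forces vanishing of each contribution simultaneously — this uses that the integrand in $r$ is nonnegative and continuous, so vanishing of the integral forces pointwise vanishing for a.e.\ $r$, hence for all $r$ in the support of $\sigma$, and then ISPD-ness of $e^{-r\gamma}\otimes e^{-r\varsigma}$ on $\{$vanishing marginals$\}$ forces $\lambda=0$; one must be careful that "ISPD on the subspace of measures with zero marginals" is exactly the notion of SPDI needed here, and verify it from the ISPD-ness of the two factors by the Kronecker/Hilbert-tensor argument used for PDI-ness of $\gamma\otimes\varsigma$ at the start of Section \ref{positive definite independent kernels}.
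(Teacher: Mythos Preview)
Your plan for the vector-space claim and the overall decomposition strategy for the characterization are on the right track, and agree with the paper in spirit. But there is one genuine gap and one unnecessary detour.

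The gap is in your treatment of the Gaussian piece. You propose to deduce that $e^{-r\gamma}\otimes e^{-r\varsigma}$ is ``SPDI-like on measures with vanishing marginals'' from the ISPD-ness of the two factors, via ``the Kronecker/Hilbert-tensor argument used for PDI-ness of $\gamma\otimes\varsigma$ at the start of Section~\ref{positive definite independent kernels}.'' That argument only yields nonnegativity of the double sum, never strict positivity; ISPD-ness of a tensor product does \emph{not} follow from ISPD-ness of the factors by any tensor-type argument. The paper bypasses this entirely by observing that $e^{-r\gamma(x,x')}e^{-r\varsigma(y,y')}=e^{-r[\gamma(x,x')+\varsigma(y,y')]}$ is a single Gaussian kernel on $X\times Y$, and that $\gamma+\varsigma$ is itself a continuous metrizable CND kernel on $X\times Y$ with bounded diagonal (metrizability is immediate: $D_{\gamma+\varsigma}^2=D_\gamma^2+D_\varsigma^2$). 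Theorem~\ref{gengaussian} then gives that $e^{-r(\gamma+\varsigma)}$ is ISPD on all of $\mathfrak{M}(X\times Y)$, so the inner double integral is strictly positive for every nonzero $\lambda$ and every $r>0$; hence the $\sigma$-integral vanishes only when $\sigma\equiv 0$. This one-line observation replaces your proposed Kronecker argument and is what makes the case analysis (i), (ii-1), (ii-2) fall out cleanly: once the integral part is handled, only the $2a_2\gamma\otimes\varsigma$ term remains, and Corollary~\ref{integmarginkroeprodcor} decides when that is an inner product.

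The detour is in the semi-inner-product step. Rather than passing through Lemma~\ref{PDIsimpli} to force zero projection kernels, then invoking Theorem~\ref{principalmetric} and the RKHS embedding, the paper simply applies Fubini--Tonelli to the representation of $\psi$ (Lemma~\ref{compleelltimessimples}) and writes the quadratic form as $2a_2\iint\gamma\varsigma\,d\lambda\,d\lambda$ plus $\int_{(0,\infty)}[\iint e^{-r(\gamma+\varsigma)}d\lambda\,d\lambda]\frac{1+r}{r^2}d\sigma(r)$; each summand is manifestly nonnegative (the first by Corollary~\ref{integmarginkroeprodcor}, the second because $e^{-r(\gamma+\varsigma)}$ is PD). Your route would also work, but you would have to verify all the integrability hypotheses of Theorem~\ref{principalmetric} for the modified kernel $\mathfrak{I}'$, which is extra bookkeeping the direct computation avoids.
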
	

As similarly done in \cite{energybernstein}, it is possible to prove Theorem \ref{principalmetriccompl2} for an arbitrary function in $CM_{2}$, but at the cost of imposing the stronger assumption that $(\gamma  + \varsigma)^{2} \in L^{1}(|\lambda|\times |\lambda|)$ in the definition of $	\mathfrak{M}(\mathfrak{I}_{\psi}^{\gamma + \varsigma} )$.  As mentioned after Lemma \ref{compleelltimes}, the functions  $t\log(t)$ and $t^{a}$, $a \in (1,2)$ are in $CM_{2}\cap C^{1}([0, \infty))$.	In particular, the kernels
$$
(d_{S^{d}}(x,x^{\prime})+ \|y-y^{\prime}\|)^{3/2}, \quad (d_{S^{d}}(x,x^{\prime})+ \|y-y^{\prime}\|)\log(d_{S^{d}}(x,x^{\prime})+ \|y-y^{\prime}\|)
$$
are PDI-Characteristic in $S^{d}\times \mathcal{H}$ for any Hilbert space $\mathcal{H}$, where $S^{d}$ is the unit sphere in $\mathbb{R}^{d+1}$ and $d_{S^{d}}(x,x^{\prime})= \arccos (\langle x,x^{\prime}\rangle )$ is the geodesic distance.

We emphasize that for the two family of kernels presented in this Section, $\mathfrak{I}$ is PDIK-Characteristic if and only if 
$$
D(\pi, \pi^{\prime})= \sqrt{  \int_{X\times Y} \int_{X\times Y} \mathfrak{I}((x,y),(x^{\prime}, y^{\prime})) d[\pi - \pi^{\prime}] (x,y)d[\pi - \pi^{\prime}](x^{\prime}, y^{\prime}) }
$$
is a metric on the convex set
$$
\Gamma(P,Q)_{\mathfrak{I}}:=\{ \lambda \in \mathfrak{M}(X\times Y), \quad \lambda  \text{ is a probability}, \quad \mathfrak{I} \in L^{1}(\lambda \times \lambda), \quad \lambda_{X}=P, \lambda_{Y}=Q \}
$$
for every  probabilities  $P\in \mathfrak{M}(X)\setminus{\delta(X)}$, $Q \in \mathfrak{M}(Y)\setminus{\delta(Y)}$  such that $\mathfrak{I} \in L^{1}([P \times Q]\times [P\times Q])$.

The way it is stated Lemma \ref{integmargincompl2} and Theorem \ref{principalmetriccompl2}, it is not including important examples of functions in $CM_{1}$ (which is a subset of $CM_{2}$) that are not differentiable at $0$, such as $\psi(t)=-t^{a}$, $a \in (0,1)$. However, since the CND kernel $\gamma + \varsigma$ is metrizable but cannot be CND-Characteristic, Theorem $3.3$ in \cite{energybernstein} implies that if $\psi \in CM_{1}$,  then
$$
(\lambda, \lambda^{\prime}) \to \int_{X\times Y} \int_{X\times Y} \psi(\gamma(x,x^{\prime}) + \varsigma(y,y^{\prime}))d\lambda(x,y)d\lambda^{\prime}(x^{\prime},y^{\prime})
$$
is an inner product in $\mathfrak{M}_{1}(X\times Y;-\psi(\gamma + \varsigma))$ if and only if $\psi$ is not a linear polynomial. 


\section{\textbf{Distance covariance for PDI kernels}}\label{Distance covariance}
On this brief Section we present the notion of distance covariance for PDI kernels, which satisfies the same aspects as the one defined in 

\begin{lem}\label{cndtopdsquare}Let $\gamma: X \times X \to \mathbb{R}$ be a symmetric  kernel.
	\begin{enumerate}\item[(i)]The kernel $K_{\gamma}: X^{2}\times X^{2} \to \mathbb{R}$ defined as
		$$
		K_{\gamma}((x,z),(y,w)):=-\gamma(x,y)-\gamma(z,w) + \gamma(x,w) + \gamma(z,y) 
		$$	
		is PD if and only if $\gamma$ is CND. 
		\item[(ii)]If $\gamma$ is  a continuous CND kernel with bounded diagonal, then, for any probabilities  $P,Q \in \mathfrak{M}(X)$ that satisfies any of the $3$ equivalences in Lemma \ref{estimativa}, the function $(x,y) \in X^{2} \to K_{\gamma}([x,y], [x,y]) \in L^{1}(P\times Q)$, moreover
		\begin{align*}
			\int_{X^{2}} \int_{X^{2}} K_{\gamma}((x,z),(y,w)
			&d[P\otimes Q](x,z) d[P\otimes Q](y,w)\\
			&= \int_{X\times X}-\gamma(x,y)d[P-Q](x)d[P-Q](y).
		\end{align*}		
	\end{enumerate}
\end{lem}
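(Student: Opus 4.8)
For part (i), the plan is to reduce the positive definiteness of $K_\gamma$ on $X^2\times X^2$ to a statement about CND kernels on $X$ via the usual "double-centering" trick. First I would take distinct points $(x_1,z_1),\dots,(x_N,z_N)\in X^2$ and scalars $d_1,\dots,d_N\in\mathbb{R}$ (with no sum constraint, since $K_\gamma$ is to be PD, not CND) and expand
$$
\sum_{p,q=1}^{N} d_p d_q K_\gamma((x_p,z_p),(x_q,z_q)) = \sum_{p,q} d_p d_q\bigl[-\gamma(x_p,x_q)-\gamma(z_p,z_q)+\gamma(x_p,z_q)+\gamma(z_p,x_q)\bigr].
$$
The key observation is that this is exactly $\sum_{a,b} e_a e_b\,(-\gamma(u_a,u_b))$ for the combined list of points $u_a$ running over all the $x_p$ (with coefficients $d_p$) together with all the $z_p$ (with coefficients $-d_p$); this combined coefficient vector has total sum $\sum_p d_p - \sum_p d_p = 0$. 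Hence if $\gamma$ is CND the quadratic form in $-\gamma$ over a zero-sum vector is $\geq 0$, so $K_\gamma$ is PD. Conversely, given a zero-sum vector $(c_i)$ on distinct points $(w_i)$ witnessing a would-be failure of the CND inequality, one recovers it by choosing, in $X^2$, the points $(w_i, w_0)$ for a fixed basepoint $w_0$ with coefficients $c_i$: then $K_\gamma((w_i,w_0),(w_j,w_0)) = -\gamma(w_i,w_j)-\gamma(w_0,w_0)+\gamma(w_i,w_0)+\gamma(w_0,w_j)$, and the extra three terms all vanish against the zero-sum vector, leaving $-\sum c_ic_j\gamma(w_i,w_j)=\sum c_ic_j K_\gamma(\cdots)\geq 0$. (One minor bookkeeping point: the points $(w_i,w_0)$ are distinct because the $w_i$ are; and if some $w_i$ coincide one first merges coefficients.) This gives the equivalence.

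For part (ii), the integrand identity is the "measure-valued" version of the pointwise expansion. The plan is: since $\gamma$ has bounded diagonal and $P,Q$ satisfy the equivalent conditions of Lemma \ref{estimativa}, that lemma guarantees $\gamma\in L^1(|P|\times|Q|)$, $\gamma(\cdot,z)\in L^1(|P|)$ and $\gamma(\cdot,z)\in L^1(|Q|)$ for every $z$, and $\gamma\in L^1$ against each of $P\times P$, $Q\times Q$, $P\times Q$. First I would check the claimed membership $(x,y)\mapsto K_\gamma((x,y),(x,y)) = -\gamma(x,x)-\gamma(y,y)+2\gamma(x,y)\in L^1(P\times Q)$: the first two terms are bounded (bounded diagonal) hence integrable against the probability $P\times Q$, and $\gamma(x,y)\in L^1(P\times Q)$ by the above. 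Then I would write the left-hand double integral, using that $P\otimes Q$ has marginals $P$ and $Q$, expand $K_\gamma((x,z),(y,w))$ into its four terms, and apply Fubini (justified by the integrability facts just collected) to each term separately:
$$
\int\!\!\int -\gamma(x,y)\,d[P\otimes Q](x,z)\,d[P\otimes Q](y,w) = -\int\!\!\int \gamma(x,y)\,dP(x)\,dP(y),
$$
and symmetrically $\int\!\!\int-\gamma(z,w) = -\int\!\!\int\gamma(z,w)\,dQ(z)\,dQ(w)$, while the cross terms give $+\int\!\!\int\gamma(x,w)\,dP(x)\,dQ(w)$ each. Collecting the four contributions yields $-\int\!\!\int\gamma\,d[P-Q]\,d[P-Q]$ after expanding the latter as $\int\!\!\int\gamma\,dP\,dP + \int\!\!\int\gamma\,dQ\,dQ - 2\int\!\!\int\gamma\,dP\,dQ$.

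The only real obstacle is the careful Fubini justification and the choice of a genuine common dominating integrability framework: one must be sure that each of the four pieces of $K_\gamma$ is separately integrable against $[P\otimes Q]\times[P\otimes Q]$ before splitting the integral, and this is precisely what Lemma \ref{estimativa} (applied to the measure $P-Q$, or rather to $|P|+|Q|$) delivers — so the proof is essentially an assembly of that lemma plus the algebraic identity from part (i). Everything else is routine expansion.
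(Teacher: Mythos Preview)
Your proposal is correct and takes essentially the same approach as the paper. For part~(i) the paper writes the forward direction by first placing all $X^{2}$-points in a grid $\{x_1,\ldots,x_n\}^2$ and collecting row/column sums $d_l^{0},d_l^{1}$, which is just a relabelled version of your ``combine $x_p$ with weight $d_p$ and $z_p$ with weight $-d_p$'' argument, and for the converse it fixes the second coordinate and invokes Equation~\eqref{Kgamma}, exactly your basepoint trick; part~(ii) is handled identically via Lemma~\ref{estimativa} and the four-term expansion.
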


Inspired by Lemma \ref{cndtopdsquare} we improve Lemma \ref{PDItoPD} and obtain a connection between a pseudometric defined by an PDI kernel in $X\times Y$  with a maximum mean discrepancy  on the set $[X\times Y]^{2}$. 

\begin{lem}\label{dcovtopdsquare}	Let $\mathfrak{I}: (X \times Y) \times (X\times Y) \to \mathbb{R}$ be a symmetric  kernel.
	\begin{enumerate}
		\item[(i)] The kernel $K_{\mathfrak{I}}: [X\times Y]^{2} \times [X \times Y]^{2} \to \mathbb{R}$ defined as
		\begin{align*}
			&K_{\mathfrak{I}}([x,y, z,w ] ,[x^{\prime},y^{\prime}, z^{\prime},w^{\prime}] ):=\\
			& \left [ \right . \mathfrak{I}((x,y),(x^{\prime}, y^{\prime})) - \mathfrak{I}((z,y),(x^{\prime}, y^{\prime}))-\mathfrak{I}((x,w),(x^{\prime}, y^{\prime}))+\mathfrak{I}((z,w),(x^{\prime}, y^{\prime}))\\
			&-\mathfrak{I}((x,y),(z^{\prime}, y^{\prime}))+\mathfrak{I}((z,y),(z^{\prime}, y^{\prime}))+\mathfrak{I}((x,w),(z^{\prime}, y^{\prime}))-\mathfrak{I}((z,w),(z^{\prime}, y^{\prime}))\\
			&-\mathfrak{I}((x,y),(x^{\prime}, w^{\prime}))+\mathfrak{I}((z,y),(x^{\prime}, w^{\prime}))+\mathfrak{I}((x,w),(x^{\prime}, w^{\prime}))-\mathfrak{I}((z,w),(x^{\prime}, w^{\prime}))\\
			&+\mathfrak{I}((x,y),(z^{\prime}, w^{\prime}))-\mathfrak{I}((z,y),(z^{\prime}, w^{\prime}))-\mathfrak{I}((x,w),(z^{\prime}, w^{\prime}))+\mathfrak{I}((z,w),(z^{\prime}, w^{\prime}))\left . \right ]
		\end{align*}

		is PD if and only if $\mathfrak{I}$ is PDI. 
		\item[(ii)]If $\mathfrak{I}$ is an PDI kernel that satisfies the requirements in  Theorem \ref{principalmetricbernstein} or Theorem \ref{principalmetriccompl2}.   Then, for any $\lambda, \lambda^{\prime } \in  \varGamma(P,Q)_{\mathfrak{I}}$ we have that the function $K_{\mathfrak{I}}([x,y, z,w], [x ,y, z,w])$ is an element of  $L^{1}(\lambda \otimes \lambda^{\prime}) $, moreover
		\begin{align*}
			&\int_{[X\times Y]^{2}}\int_{[X\times Y]^{2}}K_{\mathfrak{I}}([x,y, z,w ] ,[x^{\prime},y^{\prime}, z^{\prime},w^{\prime}] )d[\lambda\otimes \lambda^{\prime}](x,y, z,w )d[\lambda\otimes \lambda^{\prime}](x^{\prime},y^{\prime}, z^{\prime},w^{\prime} ) \\
			&= \int_{X\times Y} \int_{X\times Y} \mathfrak{I}((x,y),(x^{\prime}, y^{\prime}))d[\lambda+\lambda^{\prime}-2P\otimes Q ](x,y)d[\lambda+\lambda^{\prime}-2P\otimes Q](x^{\prime}, y^{\prime}). 
		\end{align*}
	\end{enumerate}
	
\end{lem}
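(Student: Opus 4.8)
The plan mirrors Lemma \ref{cndtopdsquare}: the sixteen terms defining $K_{\mathfrak{I}}$ are exactly the expansion of a double integral against rank-one signed measures. Precisely, setting $\mu_{x,z,y,w}:=(\delta_{x}-\delta_{z})\otimes(\delta_{y}-\delta_{w})\in\mathfrak{M}(X\times Y)$ for $x,z\in X$ and $y,w\in Y$, a bookkeeping of signs shows
$$
K_{\mathfrak{I}}([x,y,z,w],[x^{\prime},y^{\prime},z^{\prime},w^{\prime}])=\int_{X\times Y}\int_{X\times Y}\mathfrak{I}((s,t),(s^{\prime},t^{\prime}))\,d\mu_{x,z,y,w}(s,t)\,d\mu_{x^{\prime},z^{\prime},y^{\prime},w^{\prime}}(s^{\prime},t^{\prime}).
$$
Each $\mu_{x,z,y,w}$ is finitely supported and has vanishing $X$-marginal and vanishing $Y$-marginal, hence satisfies the constraints in Definition \ref{PDI}; conversely, the finitely supported signed measures on $X\times Y$ with both vanishing-marginal properties are precisely the finite linear combinations of such $\mu_{x,z,y,w}$. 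Indeed, restricting to a fixed distinct support $\{x_{1},\dots,x_{n}\}\times\{y_{1},\dots,y_{m}\}$, the coefficient matrices $(c_{i,k})$ with all row and column sums zero form the subspace $W_{X}\otimes W_{Y}$, where $W_{X},W_{Y}$ are the mean-zero subspaces of $\mathbb{R}^{n},\mathbb{R}^{m}$ (a dimension count $n(m-1)-(m-1)=(n-1)(m-1)=\dim(W_{X}\otimes W_{Y})$ gives equality), and $W_{X}$ is spanned by the differences $\delta_{x_{i}}-\delta_{x_{i^{\prime}}}$, $W_{Y}$ by the $\delta_{y_{k}}-\delta_{y_{k^{\prime}}}$.

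Part (i) then follows by direct computation. If $\mathfrak{I}$ is PDI, then for any points $[x_{a},y_{a},z_{a},w_{a}]$, $a=1,\dots,N$, and scalars $d_{a}$ the signed measure $\nu:=\sum_{a}d_{a}\mu_{x_{a},z_{a},y_{a},w_{a}}$ has vanishing marginals, so $\sum_{a,b}d_{a}d_{b}K_{\mathfrak{I}}(\cdot,\cdot)=\int_{X\times Y}\int_{X\times Y}\mathfrak{I}\,d\nu\,d\nu\ge0$, giving positive definiteness of $K_{\mathfrak{I}}$. Conversely, given distinct $x_{i}$, $y_{k}$ and scalars $c_{i,k}$ with all row and column sums zero, write $\sum_{i,k}c_{i,k}\delta_{(x_{i},y_{k})}=\sum_{a}d_{a}\mu_{p_{a},q_{a},r_{a},s_{a}}$ using the previous paragraph; then $\sum_{i,j,k,l}c_{i,k}c_{j,l}\mathfrak{I}((x_{i},y_{k}),(x_{j},y_{l}))=\sum_{a,b}d_{a}d_{b}K_{\mathfrak{I}}([p_{a},r_{a},q_{a},s_{a}],[p_{b},r_{b},q_{b},s_{b}])\ge0$ by positive definiteness of $K_{\mathfrak{I}}$ (repeated points are harmless: one merges the corresponding coefficients). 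Hence $\mathfrak{I}$ is PDI.

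For part (ii) I would integrate the displayed identity against $\lambda\otimes\lambda^{\prime}$ in both variables, reading a point of $[X\times Y]^{2}$ as a pair $((x,y),(z,w))$ with $(x,y)$ governed by $\lambda$ and $(z,w)$ by $\lambda^{\prime}$. Since $\lambda,\lambda^{\prime}$ are probabilities with $\lambda_{X}=\lambda^{\prime}_{X}=P$ and $\lambda_{Y}=\lambda^{\prime}_{Y}=Q$, Fubini gives
$$
\int_{[X\times Y]^{2}}\mu_{x,z,y,w}\,d[\lambda\otimes\lambda^{\prime}](x,y,z,w)=\lambda-P\otimes Q-P\otimes Q+\lambda^{\prime}=\lambda+\lambda^{\prime}-2P\otimes Q,
$$
and likewise for the primed variable, which is exactly the asserted identity. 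The only genuine work is measure-theoretic: one must check $K_{\mathfrak{I}}\in L^{1}(|\lambda\otimes\lambda^{\prime}|\times|\lambda\otimes\lambda^{\prime}|)$, that $[x,y,z,w]\mapsto K_{\mathfrak{I}}([x,y,z,w],[x,y,z,w])\in L^{1}(\lambda\otimes\lambda^{\prime})$, and that each of the sixteen Fubini interchanges is licit. For this I would expand $K_{\mathfrak{I}}$ into its sixteen $\mathfrak{I}$-summands, bound each evaluation of $\mathfrak{I}$ via the four-roots inequality of Equation \ref{4rootsinequality}, and invoke the integrability equivalences of Theorem \ref{principalmetricbernstein} / Theorem \ref{principalmetriccompl2}: these show that $\lambda,\lambda^{\prime},P\otimes Q$ (all of which have marginals controlled by $P$ and $Q$) and hence $\lambda+\lambda^{\prime}-2P\otimes Q$ lie in the relevant vector space, and they furnish the cross-integrability $\mathfrak{I}\in L^{1}(|\lambda|\times|\lambda^{\prime}|)$, $\mathfrak{I}\in L^{1}(|\lambda|\times(P\times Q))$, and so on. I expect this bookkeeping, rather than the algebra (which is the one-line rearrangement above), to be the main obstacle.
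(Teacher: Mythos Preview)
Your proposal is correct and, for part~(ii), is exactly the paper's argument recast in measure-theoretic language: the paper expands the sixteen terms one by one and observes that integrating, say, $\mathfrak{I}((z,y),(x',y'))$ against $d[\lambda\otimes\lambda'](x,y,z,w)d[\lambda\otimes\lambda'](x',y',z',w')$ yields $\int\int\mathfrak{I}\,d[P\otimes Q]\,d\lambda$, which is your identity $\int\mu_{x,z,y,w}\,d[\lambda\otimes\lambda']=\lambda+\lambda'-2P\otimes Q$ written out termwise. For part~(i), the forward direction (PDI $\Rightarrow$ PD) is again the same computation in different notation: the paper introduces sums $d^{a,b}_{i,k}$ and shows the quadratic form collapses to $\sum e_{i,k}e_{j,l}\mathfrak{I}$ with $e_{i,k}$ satisfying the row/column constraints, which is precisely your observation that $\sum_a d_a\mu_{x_a,z_a,y_a,w_a}$ has vanishing marginals.

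Two small remarks. First, for PD $\Rightarrow$ PDI the paper takes a shorter route than your spanning argument: it simply fixes $z\in X$, $w\in Y$ and notes that $K_{\mathfrak{I}}([x,y,z,w],[x',y',z,w])$ is the kernel $K^{\mathfrak{I}}$ of Lemma~\ref{PDItoPD}, whose positive definiteness already characterizes PDI. Your $W_X\otimes W_Y$ argument is fine but unnecessary. Second, for the integrability in~(ii), be careful invoking Equation~\ref{4rootsinequality}: that inequality assumes the projection kernels of $\mathfrak{I}$ vanish, which is \emph{not} guaranteed for the kernels of Theorems~\ref{principalmetricbernstein} and~\ref{principalmetriccompl2} (there $\gamma,\varsigma$ only have bounded, not zero, diagonals). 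The paper instead handles the sixteen diagonal terms $K_{\mathfrak{I}}([x,y,z,w],[x,y,z,w])$ directly from the Bernstein/$CM_2$ structure: terms like $g(\gamma(x,x),\varsigma(y,y))$ are bounded, terms like $g(\gamma(z,x),\varsigma(y,y))$ are dominated by $g(\gamma(z,x),M)$ and controlled via Equation~\ref{suff}, and the generic term $g(\gamma(x,z),\varsigma(y,w))$ is in $L^1(\lambda\times\lambda')$ by the definition of $\Gamma(P,Q)_{\mathfrak{I}}$. Your plan to fall back on the integrability equivalences of those theorems is the right instinct; just drop the four-roots step.
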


In the case that $\mathfrak{I}$ is the Kronecker product of CND kernels $\gamma$, $\varsigma$, then the $16$ kernels that appears in the definition of $K_{\mathfrak{I}}$ in relation $(i)$ at Lemma \ref{dcovtopdsquare} can be simplified to a product of 4 kernels involving $\gamma$ and $4$ kernels involving $\varsigma$. More precisely $K_{\gamma \otimes \varsigma}([x,y, z,w ] ,[x^{\prime},y^{\prime}, z^{\prime},w^{\prime}] )$ is equal to 
$$
\big{[} \gamma(x,x^{\prime})+ \gamma(z,z^{\prime}) - \gamma(x,z^{\prime}) - \gamma(z,x^{\prime})  \big{]}  \big{[} \varsigma(y,y^{\prime})+ \varsigma(w,w^{\prime}) -\varsigma(y,w^{\prime}) - \varsigma(w,y^{\prime}) \big{]}. 
$$

Additionally, if $\lambda^{\prime}= P \otimes Q$ we can use Fubini-Tonelli to change the order of integration and integrate in the variables related to $\lambda^{\prime}$ and  obtain that
\begin{align*}
	&\int_{X\times Y} \int_{X\times Y} \gamma(x,x^{\prime}) \varsigma(y,y^{\prime})d[\lambda-P\otimes Q ](x,y)d[\lambda-P\otimes Q](x^{\prime}, y^{\prime})\\
	&=\int_{[X\times Y]^{2}}\int_{[X\times Y]^{2}}K_{\gamma \otimes \varsigma}([x,y, z,w ] ,[x^{\prime},y^{\prime}, z^{\prime},w^{\prime}] ) d[\lambda\otimes \lambda^{\prime}](x,y, z,w )d[\lambda\otimes \lambda^{\prime}](x^{\prime},y^{\prime}, z^{\prime},w^{\prime} ) \\
	&=\int_{X\times Y}\int_{X\times Y}  K^{\gamma}_{P}(x,x^{\prime}) K^{\varsigma}_{Q}(y,y^{\prime})  d\lambda(x,y)d\lambda(x^{\prime},y^{\prime} ),
\end{align*}
where 
$$
K^{\gamma}_{P}(x,x^{\prime})=\gamma(x,x^{\prime}) - a_{P}(x) - a_{P}(x^{\prime})  + D(P), 
$$	
and
$$
a_{P}(x) := \int_{X} \gamma(x,x^{\prime})dP(x^{\prime}), \quad D(P)= \int_{X}\int_{X}\gamma(x,x^{\prime})dP(x) dP(x^{\prime}).
$$
Defined similarly   for $K^{\varsigma}_{Q}$, $a_{Q}(y)$ and $D(Q)$, using the kernel $\varsigma$. As defined in 	
$$	
dcov(\lambda):=	\int_{X\times Y} \int_{X\times Y} K^{\gamma}_{P}(x,x^{\prime})K^{\varsigma}_{Q}(y,y^{\prime}) d\lambda(x,y)d\lambda(x^{\prime},y^{\prime}),
$$

Because of Lemma \ref{dcovtopdsquare}, $dcov$ is a nonnegative function and it is a criteria for independence if and only if $\gamma \otimes \varsigma$ is PDI-Characteristic.	

The function $dcov$ can be generalized to the PDI kernels in Theorem \ref {principalmetricbernstein} and Theorem \ref{principalmetriccompl2}, by replacing the positive definite kernel $K^{\gamma}_{P}(x,x^{\prime})K^{\varsigma}_{Q}(y,y^{\prime})$ 
(defined in  $X \times  Y$), with the positive definite kernel (also defined in $X \times Y$)

\begin{alignat*}{3}
	K_{P,Q}^{\mathfrak{I}}((x,y),(x^{\prime}, y^{\prime})):= [ &  \mathfrak{I}((x,y),(x^{\prime}, y^{\prime}))  & -b_{P}(y,x^{\prime}, y^{\prime}) & - b_{Q}(x,x^{\prime}, y^{\prime})	 & + c_{P,Q}(x^{\prime}, y^{\prime})\\
	& - b_{P}(x,y,y^{\prime}) & + d_{P,P}(y,y^{\prime}) & + c_{P,Q}(x,y^{\prime}) & - e_{P,P,Q}(y^{\prime})\\
	& -b_{Q}(x,y,x^{\prime}) & + c_{P,Q}(x^{\prime},y) & + d_{Q,Q}(x,x^{\prime}) & - e_{P,Q,Q}(x^{\prime})\\
	& +c_{P,Q}(x,y) & - e_{P,P,Q}(y) & - e_{P,Q,Q}(x) & + f_{P,P,Q,Q}],
\end{alignat*}
where
\begin{align*}
	b_{P}(y,x^{\prime}, y^{\prime}):=& \int_{X}\mathfrak{I}((z,y),(x^{\prime}, y^{\prime}))dP(z)\\
	c_{P,Q}(x,y):=& \int_{X \times Y}\mathfrak{I}((x,y),(z, w))dP(z)dQ(w)\\
	d_{Q,Q}(x,x^{\prime}):=&\int_{Y \times Y}\mathfrak{I}((x,w),(x^{\prime}, w^{\prime}))dQ(w)dQ(w^{\prime})\\
	e_{P,Q,Q}(x):=&\int_{X \times Y \times Y}\mathfrak{I}((x,w),(z, w^{\prime}))dP(z)dQ(w)dQ(w^{\prime})\\
	f_{P,P,Q,Q} :=&\int_{X\times Y} \int_{X \times Y}\mathfrak{I}((z,w),(z^{\prime}, w^{\prime}))dP(z)dP(z^{\prime})dQ(w)dQ(w^{\prime})
\end{align*}

If  $\mathfrak{I}$ is an PDI kernel that satisfies the requirements of Theorem \ref{principalmetric}, we can also define the distance covariance in this setting, however the  PD kernel $K^{\mathfrak{I}}_{P,Q}$ is only defined in $X_{P, Q}^{\mathfrak{I}} \times Y^{\mathfrak{I}}_{P ,Q}$.

\section{\textbf{Proofs}}\label{Proofs}

\subsection{\textbf{Section \ref{positive definite independent kernels}}} \label{proofsecpositive definite independent kernels}

\begin{proof}[\textbf{Proof of Lemma \ref{PDItoPD}}]Suppose that  $K^{\mathfrak{I}}$ is PD, let  $x_{1}, \ldots, x_{n} \in X$, $y_{1}, \ldots , y_{m} \in Y$  and real scalars $c_{i, k}$ with the restrictions
	$$
	\sum_{i=1}^{n}c_{i,k}=0 , \quad \sum_{l=1}^{m}c_{j, l}=0, 
	$$
	for every $1\leq k \leq m$, $1\leq j \leq n$. Then 
	$$
	0 \leq \sum_{i,j=1}^{n}\sum_{k,l=1}^{m}c_{i, k}c_{j, l}K^{\mathfrak{I}}((x_{i},y_{k}),(x_{j}, y_{l}))  = \sum_{i,j=1}^{n}\sum_{k,l=1}^{m}c_{i, k}c_{j, l}\mathfrak{I}((x_{i},y_{k}),(x_{j}, y_{l})),
	$$
	because in the definition of $K^{\mathfrak{I}}$, all $16$ double double sums are zero except for the first term.\\  
	Conversely, let $x_{1}, \ldots, x_{n} \in X$, $y_{1}, \ldots , y_{m} \in Y$  and arbitrary real scalars $c_{i, k}$. Define the scalars $c_{0, k}= - \sum_{i=1}^{n}c_{i,k}$, $c_{i,0}= - \sum_{k=1}^{m}c_{i,k}$ and  $c_{0,0}= \sum_{i=1}^{n}\sum_{k=1}^{m}c_{i,k} $, which satisfy 
	$$
	\sum_{i=0}^{n}c_{i,k}=0 , \quad \sum_{l=0}^{m}c_{j, l}=0, 
	$$
	for every $0\leq k \leq m$, $0\leq j \leq n$. Note that

	\begin{align*}
		&\sum_{i,j=1}^{n}\sum_{k,l=1}^{m}c_{i, k}c_{j, l}K^{\mathfrak{I}}((x_{i},y_{k}),(x_{j}, y_{l}))=\\
		&\bigg{[}  \sum_{i,j=1}^{n}\sum_{k,l=1}^{m}c_{i, k}c_{j, l}\mathfrak{I}((x_{i},y_{k}),(x_{j}, y_{l}))  + \sum_{j=1}^{n}\sum_{k,l=1}^{m}c_{0,k}c_{j,l}\mathfrak{I}((x_{0},y_{k}),(x_{j}, y_{l}))\\
		& +\sum_{i,j=1}^{n}\sum_{l=1}^{m}c_{i, 0}c_{j, l}\mathfrak{I}((x_{i},y_{0}),(x_{j}, y_{l}))  + \sum_{j=1}^{n}\sum_{l=1}^{m}c_{0,0}c_{j,l}\mathfrak{I}((x_{0},y_{0}),(x_{j}, y_{l}))\\
		&+\sum_{i=1}^{n}\sum_{k,l=1}^{m}c_{i,k}c_{0, l}\mathfrak{I}((x_{i},y_{k}),(x_{0}, y_{l}))+\sum_{k,l=1}^{m}c_{0, k}c_{0, l}\mathfrak{I}((x_{0},y_{k}),(x_{0}, y_{l}))\\
		&+\sum_{i=1}^{n}\sum_{l=1}^{m}c_{i, 0}c_{0, l}\mathfrak{I}((x_{i},y_{0}),(x_{0}, y_{l}))+\sum_{l=1}^{m}c_{0, 0}c_{0, l}\mathfrak{I}((x_{0},y_{0}),(x_{0}, y_{l}))\\
		&+\sum_{i,j=1}^{n}\sum_{k=1}^{m}c_{i, k}c_{j, 0}\mathfrak{I}((x_{i},y_{k}),(x_{j}, y_{0}))+\sum_{j=1}^{n}\sum_{k=1}^{m}c_{0, k}c_{j, 0}\mathfrak{I}((x_{0},y_{k}),(x_{j}, y_{0}))\\
		&+\sum_{i,j=1}^{n}c_{i, 0}c_{j, 0}\mathfrak{I}((x_{i},y_{0}),(x_{j}, y_{0}))+\sum_{j=1}^{n}c_{0, 0}c_{j, 0}\mathfrak{I}((x_{0},y_{0}),(x_{j}, y_{0}))\\
		&+\sum_{i=1}^{n}\sum_{k=1}^{m}c_{i, k}c_{0, 0}\mathfrak{I}((x_{i},y_{k}),(x_{0}, y_{0}))+\sum_{k=1}^{m}c_{0, k}c_{0, 0}\mathfrak{I}((x_{0},y_{k}),(x_{0}, y_{0}))\\
		&+\sum_{i=1}^{n}c_{i, 0}c_{0, 0}\mathfrak{I}((x_{i},y_{0}),(x_{0}, y_{0}))+c_{0, 0}c_{0, 0}\mathfrak{I}((x_{0},y_{0}),(x_{0}, y_{0})) \bigg{]}\\
		&=\sum_{i,j=0}^{n}\sum_{k,l=0}^{m}c_{i, k}c_{j, l}\mathfrak{I}((x_{i},y_{k}),(x_{j}, y_{l})) \geq 0. 
	\end{align*}
	
\end{proof}

\begin{proof}[\textbf{Proof of Lemma \ref{PDIsimpli}}] Let $x_{1}, \ldots, x_{n} \in X$, $y_{1}, \ldots , y_{m} \in Y$  and real scalars $c_{i, k}$ with the restrictions
	$$
	\sum_{i=1}^{n}c_{i,k}=0 , \quad \sum_{l=1}^{m}c_{j, l}=0, 
	$$
	for every $1\leq k \leq m$, $1\leq j \leq n$. Then 
	$$
	\sum_{i,j=1}^{n}\sum_{k,l=1}^{m}c_{i, k}c_{j, l}\mathfrak{I}((x_{i},y_{k}),(x_{j}, y_{l}))=\sum_{i,j=1}^{n}\sum_{k,l=1}^{m}c_{i, k}c_{j, l}\mathfrak{I}^{\prime}((x_{i},y_{k}),(x_{j}, y_{l})) ,
	$$
	which proves our assertion.
\end{proof}

\begin{proof}[\textbf{Proof of Theorem \ref{PDIgeometryrkhs}}] The proof follows by a verification that both sides are equal using the properties of RKHS, Lemma \ref{PDItoPD} and Equation \ref{eqmargkernels0}. Indeed, 
	\begin{align*}
		&\| K^{\mathfrak{I}}_{x,y} + K^{\mathfrak{I}}_{x^{\prime},y^{\prime}} -K^{\mathfrak{I}}_{x,y^{\prime}}-K^{\mathfrak{I}}_{x^{\prime},y} \|_{\mathcal{H}_{ K^{\mathfrak{I}} } }^{2} = 2\langle K^{\mathfrak{I}}_{x,y} , K^{\mathfrak{I}}_{x^{\prime},y^{\prime}}\rangle +2\langle K^{\mathfrak{I}}_{x,y^{\prime}} , K^{\mathfrak{I}}_{x^{\prime},y}\rangle \\ 
		&+\langle K^{\mathfrak{I}}_{x,y} , K^{\mathfrak{I}}_{x,y}\rangle +\langle K^{\mathfrak{I}}_{x^{\prime},y^{\prime}} , K^{\mathfrak{I}}_{x^{\prime},y^{\prime}}\rangle +\langle K^{\mathfrak{I}}_{x,y^{\prime}} , K^{\mathfrak{I}}_{x,y^{\prime}}\rangle +\langle K^{\mathfrak{I}}_{x^{\prime},y} , K^{\mathfrak{I}}_{x^{\prime},y}\rangle\\
		&- 2\langle K^{\mathfrak{I}}_{x,y} , K^{\mathfrak{I}}_{x^{\prime},y}\rangle -2\langle K^{\mathfrak{I}}_{x,y} , K_{x,y^{\prime}}\rangle - 2\langle  K^{\mathfrak{I}}_{x^{\prime},y}, K^{\mathfrak{I}}_{x^{\prime},y^{\prime}} \rangle -2\langle K^{\mathfrak{I}}_{x^{\prime},y^{\prime}} , K^{\mathfrak{I}}_{x,y^{\prime}}\rangle. 
	\end{align*}
	
	The $10$ inner products above are equal to
	\begin{align*}
		\langle K^{\mathfrak{I}}_{x,y} , K^{\mathfrak{I}}_{x,y}\rangle &= 2\mathfrak{I}((x_{0},y_{0}),(x,y))  + 2\mathfrak{I}((x,y_{0}),(x_{0},y)). \\
		\langle K^{\mathfrak{I}}_{x^{\prime},y^{\prime}} , K^{\mathfrak{I}}_{x^{\prime},y^{\prime}}\rangle &= 2\mathfrak{I}((x_{0},y_{0}),(x^{\prime},y^{\prime})) + 2\mathfrak{I}((x^{\prime},y_{0}),(x_{0},y^{\prime})).\\
		\langle K^{\mathfrak{I}}_{x^{\prime},y} , K^{\mathfrak{I}}_{x^{\prime},y}\rangle &= 2\mathfrak{I}((x_{0},y_{0}),(x^{\prime},y)) + 2\mathfrak{I}((x^{\prime},y_{0}),(x_{0},y)).\\
		\langle K^{\mathfrak{I}}_{x,y^{\prime}} ,K^{\mathfrak{I}}_{x,y^{\prime}}\rangle &= 2\mathfrak{I}((x_{0},y_{0}),(x,y^{\prime})) + 2\mathfrak{I}((x,y_{0}),(x_{0},y^{\prime})).
	\end{align*}
	
	\begin{align*}
		&-2\langle K^{\mathfrak{I}}_{x^{\prime},y^{\prime}} , K^{\mathfrak{I}}_{x^{\prime},y}\rangle= 2\mathfrak{I}((x^{\prime},y),(x_{0},y^{\prime}))+2\mathfrak{I}((x_{0},y),(x^{\prime},y^{\prime}))\\
		&-2  \mathfrak{I}((x_{0},y_{0}),(x^{\prime},y^{\prime})) -2 \mathfrak{I}((x^{\prime},y),(x_{0},y_{0})) -2 \mathfrak{I}((x^{\prime},y_{0}),(x_{0},y^{\prime})) -2 \mathfrak{I}((x_{0},y),(x^{\prime},y_{0})).
	\end{align*}
	
	\begin{align*}
		&-2\langle K^{\mathfrak{I}}_{x,y^{\prime}} , K^{\mathfrak{I}}_{x^{\prime},y^{\prime}}\rangle =2\mathfrak{I}((x,y^{\prime}),(x^{\prime},y_{0}))+2\mathfrak{I}((x,y_{0}),(x^{\prime},y^{\prime}))\\
		&-2\mathfrak{I}((x_{0},y_{0}),(x^{\prime},y^{\prime})) -2\mathfrak{I}((x,y^{\prime}),(x_{0},y_{0})) -2\mathfrak{I}((x, y_{0}),(x_{0},y^{\prime})) -2\mathfrak{I}((x_{0},y^{\prime}),(x^{\prime},y_{0})).
	\end{align*}
	
	\begin{align*}
		&-2\langle K^{\mathfrak{I}}_{x,y} , K^{\mathfrak{I}}_{x,y^{\prime}}\rangle=2\mathfrak{I}((x,y),(x_{0},y^{\prime}))+2\mathfrak{I}((x_{0},y),(x,y^{\prime}))\\
		&-2\mathfrak{I}((x_{0},y_{0}),(x,y^{\prime})) -2\mathfrak{I}((x,y),(x_{0},y_{0})) -2\mathfrak{I}((x,y_{0}),(x_{0},y^{\prime})) -2\mathfrak{I}((x_{0},y),(x,y_{0})).
	\end{align*}
	
	\begin{align*}
		&-2\langle K^{\mathfrak{I}}_{x,y} , K^{\mathfrak{I}}_{x^{\prime},y}\rangle =2\mathfrak{I}((x,y),(x^{\prime},y_{0}))+2\mathfrak{I}((x,y_{0}),(x^{\prime},y))\\
		&-2\mathfrak{I}((x_{0},y_{0}),(x^{\prime},y)) -2\mathfrak{I}((x,y),(x_{0},y_{0})) -2\mathfrak{I}((x,y_{0}),(x_{0},y)) -2\mathfrak{I}((x_{0},y),(x^{\prime},y_{0})).   
	\end{align*}

	\begin{align*}
		&2\langle K^{\mathfrak{I}}_{x,y} , K^{\mathfrak{I}}_{x^{\prime},y^{\prime}}\rangle=2\mathfrak{I}((x,y),(x^{\prime}, y^{\prime}))\\
		&-2\mathfrak{I}((x,y),(x_{0}, y^{\prime})) - 2\mathfrak{I}((x,y),(x^{\prime}, y_{0})) - 2\mathfrak{I}((x_{0},y),(x^{\prime}, y^{\prime})) - 2\mathfrak{I}((x,y_{0}),(x^{\prime}, y^{\prime}))\\
		& + 2\mathfrak{I}((x_{0},y_{0}),(x^{\prime}, y^{\prime}))+ 2\mathfrak{I}((x,y),(x_{0}, y_{0}))  + 2\mathfrak{I}((x_{0},y),(x^{\prime}, y_{0}))+ 2\mathfrak{I}((x,y_{0}),(x_{0}, y^{\prime})).    
	\end{align*}
	
	\begin{align*}
		&2\langle K^{\mathfrak{I}}_{x,y^{\prime}} , K^{\mathfrak{I}}_{x^{\prime},y}\rangle =  2\mathfrak{I}((x,y^{\prime}),(x^{\prime}, y))\\
		&- 2\mathfrak{I}((x,y^{\prime}),(x_{0}, y)) - 2\mathfrak{I}((x,y^{\prime}),(x^{\prime}, y_{0})) - 2\mathfrak{I}((x_{0},y^{\prime}),(x^{\prime}, y)) - 2\mathfrak{I}((x,y_{0}),(x^{\prime}, y))\\
		& + 2\mathfrak{I}((x_{0},y_{0}),(x^{\prime}, y))+ 2\mathfrak{I}((x,y^{\prime}),(x_{0}, y_{0}))  + 2\mathfrak{I}((x_{0},y^{\prime}),(x^{\prime}, y_{0}))+ 2\mathfrak{I}((x,y_{0}),(x_{0}, y)). 
	\end{align*}
	Summing all the above relations we obtain the desired equality.
\end{proof}

\begin{proof}[\textbf{Proof of Lemma \ref{integmargin}}] Relation $(i)$ implies relation $(ii)$ by the Fubini-Tonelli Theorem.\\
	Now suppose that $(ii)$ is valid for a pair $(z,w) \in X \times Y$ and we will prove $(i)$. Equation  \ref{4rootsinequality}   and Holder's inequality implies that in order that to prove that $\mathfrak{I} \in L^{1}((\mu \times \nu)\times (\mu \times \nu)  )$, it is sufficient that the $4$ functions appearing on the right hand side of Equation \ref{4rootsinequality}  are elements of  $L^{2}((\mu \times \nu)\times (\mu \times \nu)  )$, which occur by the hypothesis and because the measures $\mu, \nu$ are finite.\\
	If relation $(iii)$ is valid, it  is immediate that  relation $(ii)$ is valid. Conversely, if $(ii)$ is valid for a pair $(z,w) \in X \times Y$, by Fubini-Tonelli the complement of the  sets 
	$$
	X_{\mu, \nu}^{\mathfrak{I}}:= \{x \in X , \mathfrak{I}((x, \cdot),(z,w)) \in L^{1}(\nu) \}, \quad Y_{\mu, \nu}^{\mathfrak{I}}:= \{y \in Y , \mathfrak{I}((\cdot, y),(z,w)) \in L^{1}(\mu) \}  
	$$  
	have $\mu$   and $\nu $ zero measure respectively. Then, the complement of the set $X_{\mu, \nu}^{\mathfrak{I}}\times Y_{\mu, \nu}^{\mathfrak{I}}$ has $\mu\times \nu$ zero measure and if $(x^{\prime}, y^{\prime}) \in X_{\mu, \nu}^{\mathfrak{I}}\times Y_{\mu, \nu}^{\mathfrak{I}}$, the inequality on Equation \ref{4rootsinequality} implies that the function $
	\mathfrak{I}((\cdot, \cdot ),(x^{\prime}, y^{\prime}))$ is in $L^{1}(\mu\times \nu)$.
\end{proof}

On the proof of Lemma \ref{integmargin}, we choose the set $X_{\mu, \nu}^{\mathfrak{I}}\times Y_{\mu, \nu}^{\mathfrak{I}}$ based on the point $(z,w)$ such that $(x,y) \to \mathfrak{I}((x,y),(z,w)) \in L^{1}(\mu\times \nu)$. It may occur to the reader that this set depends on the point $(z,w)$. We complement the proof of Lemma \ref{integmargin} by showing that it does not depend.

\begin{proof} Our objective is to prove that if  $(z,w)$, $(z^{\prime}, w^{\prime})$ are points such that $ \mathfrak{I}((\cdot,\cdot),(z,w)) \in L^{1}(\mu\times \nu)$ and $ \mathfrak{I}((\cdot,\cdot),(z^{\prime},w^{\prime})) \in L^{1}(\mu\times \nu)$, then $A^{1}_{z,w}\times A^{2}_{z,w} = A^{1}_{z^{\prime}, w^{\prime}}\times A^{2}_{z^{\prime}, w^{\prime}}$, where
	
	$$
	A^{1}_{z,w}:= \{x \in X , \mathfrak{I}((x, \cdot),(z,w)) \in L^{1}(\nu) \}, \quad    A^{2}_{z,w}:= \{y \in Y , \mathfrak{I}((\cdot, y),(z,w)) \in L^{1}(\mu) \}, 
	$$
	$$
	A^{1}_{z^{\prime},w^{\prime}}:= \{x \in X , \mathfrak{I}((x, \cdot),(z^{\prime},w^{\prime})) \in L^{1}(\nu) \}, \quad    A^{2}_{z^{\prime},w^{\prime}}:= \{y \in Y , \mathfrak{I}((\cdot, y),(z^{\prime},w^{\prime})) \in L^{1}(\mu) \}. 
	$$
	Suppose in addition  that $(z^{\prime},w^{\prime}) \in A^{1}_{z,w} \times A^{2}_{z,w}$. By  Equation \ref{4rootsinequality} we have that
	\begin{align*}
		\sqrt{\mathfrak{I}((x,y),(z^{\prime},w^{\prime}))} & \leq \sqrt{\mathfrak{I}((x,y),(z,w))} +\sqrt{\mathfrak{I}((x,w^{\prime}),(z,w))}\\ 
		& \quad  +\sqrt{\mathfrak{I}((z^{\prime},y),(z,w))} +\sqrt{\mathfrak{I}((z^{\prime},w^{\prime}),(z,w))}.
	\end{align*}
	Since $w^{\prime} \in A^{2}_{z,w}$, if $y \in A^{2}_{z,w}$ then the previous inequality implies that $y \in A^{2}_{z^{\prime},w^{\prime}}$, then $A^{2}_{z,w} \subset A^{2}_{z^{\prime},w^{\prime}}$.  Similarly $A^{1}_{z,w} \subset A^{1}_{z^{\prime},w^{\prime}}$, which proves that $A^{1}_{z,w}\times A^{2}_{z,w} \subset A^{1}_{z^{\prime}, w^{\prime}}\times A^{2}_{z^{\prime},w^{\prime}}$. The reverse inclusion follows from the fact that $(z,w) \in A^{1}_{z,w}\times A^{2}_{z,w}$ and then $(z,w) \in A^{1}_{z^{\prime}, w^{\prime}}\times A^{2}_{z^{\prime}, w^{\prime}}$, together with the first part of the proof.\\
	Now, for arbitrary $(z,w)$, $(z^{\prime}, w^{\prime})$, since the complement of the sets  $A^{1}_{z,w} \times A^{2}_{z,w}$ and $A^{1}_{z^{\prime}, w^{\prime}}\times A^{2}_{z^{\prime}, w^{\prime}}$ has $\mu \times \nu $ zero measure, there exists  $ (z^{\prime \prime}, w^{\prime \prime}) \in (A^{1}_{z,w} \times A^{2}_{z,w}) \cap  (A^{1}_{z^{\prime}, w^{\prime}}\times A^{2}_{z^{\prime}, w^{\prime}})$. By Equation  \ref{4rootsinequality} once again, we have that $\mathfrak{I}((\cdot,\cdot),(z^{\prime \prime},w^{\prime \prime})) \in L^{1}(\mu \times \nu)$, the conclusion follows then by the first part using this intermediate pair of points.\end{proof}

\begin{proof}[\textbf{Proof of Corollary \ref{integmargincor}}] Relation $(i)$ implies relation $(ii)$ by the Fubini-Tonelli Theorem.\\
	The converse also follows from  Equation  \ref{4rootsinequality}, where the hypothesis  $\mathfrak{I} \in L^{1}((\mu \times \nu)\times (\mu \times \nu))$ is used to deal with the functions  $\sqrt{\mathfrak{I}((x,y^{\prime}),(z,w))}$ and  $\sqrt{\mathfrak{I}((x^{\prime},y),(z,w))}$.\\
	If relation $(iii)$ is valid is immediate that  $(ii)$ is valid. The converse is very similar to the proof of Lemma \ref{integmargin} as  if $(x^{\prime}, y^{\prime}) \in X_{\mu, \nu}^{\mathfrak{I}}\times Y_{\mu, \nu}^{\mathfrak{I}}$, the four functions on the right hand side of  Equation \ref{4rootsinequality} are elements of  $L^{2}(|\lambda|)$.\end{proof}

\begin{proof}[\textbf{Proof of Theorem \ref{principalmetric} }]  First we prove that $\mathfrak{M}_{\mu, \nu}(\mathfrak{I})$ is a vector space. Clearly, the measures $\lambda \in \mathfrak{M}(X\times Y)$ that satisfies that $\lambda(X, \cdot)$, $\lambda(\cdot, Y)$ are the zero measures is a vector space.  The requirements  $|\lambda|_{X} \subset \mu$ is also satisfied by a vector space, because if $\lambda_{1}, \lambda_{2}$ satisfies this condition,  then for every $c \in \mathbb{R}$ we have that  $|\lambda_{1}| +|c||\lambda_{2}|- |\lambda_{1} +c\lambda_{2}|$ is a nonnegative measure, and then $|\lambda_{1} +c\lambda_{2}|_{X} \subset \mu$. Similar for $|\lambda|_{Y}$.\\
	Now, for the condition $\mathfrak{I} \in L^{1}(|\lambda|\times |\lambda|)$ we use Corollary \ref{integmargincor}. Indeed, if  $\mathfrak{I} \in L^{1}(|\lambda_{1}|\times |\lambda_{1}|)$ and $\mathfrak{I} \in L^{1}(|\lambda_{2}|\times |\lambda_{2}|)$, since $\mathfrak{I}$ is nonnegative, in order to prove that  $\mathfrak{I} \in L^{1}(|\lambda_{1} + c\lambda_{2}|\times |\lambda_{1} + c\lambda_{2}|)$ for every $c \in \mathbb{R}$, it is sufficient to prove that  $\mathfrak{I} \in L^{1}((|\lambda_{1}| + |c||\lambda_{2}|)\times (|\lambda_{1}| + |c||\lambda_{2}|))$.\\
	By Corollary \ref{integmargincor}, since  $ \mathfrak{I}((\cdot, \cdot ),(x^{\prime},y^{\prime})) \in  L^{1}(|\lambda_{1}| ) \cap L^{1}(|\lambda_{2}| )$ for every fixed $(x^{\prime}, y^{\prime}) \in X_{\mu, \nu}^{\mathfrak{I}} \times Y_{\mu, \nu}^{\mathfrak{I}}$, we have that   $ \mathfrak{I}((\cdot, \cdot ),(x^{\prime},y^{\prime})) \in L^{1}((|\lambda_{1}| + |c||\lambda_{2}|)$  for every fixed $(x^{\prime}, y^{\prime}) \in X_{\mu, \nu}^{\mathfrak{I}} \times Y_{\mu, \nu}^{\mathfrak{I}}$. Applying  Corollary \ref{integmargincor} once again and we obtain that $\mathfrak{I} \in L^{1}((|\lambda_{1}| + |c||\lambda_{2}|)\times (|\lambda_{1}| + |c||\lambda_{2}|))$.\\
	For the second statement, if $(z,w) \in X_{\mu, \nu}^{\mathfrak{I}}\times Y_{\mu, \nu}^{\mathfrak{I}}$ then the four functions on the right hand side of Equation  \ref{4rootsinequality} belongs to $L^{2}(|\lambda|\times |\lambda^{\prime}|)$, where in particular
	\begin{align*}
		0\leq &\int_{X\times Y}\int_{X\times Y}\mathfrak{I}((x,y^{\prime}),(z,w))d|\lambda|(x,y)d|\lambda^{\prime}|(x^{\prime}, y^{\prime})\\
		&=   \int_{ Y}\int_{X}\mathfrak{I}((x,y^{\prime}),(z,w))d|\lambda|_{X}(x)d|\lambda^{\prime}|_{Y}( y^{\prime})< \infty 
	\end{align*}
	because $|\lambda|_{X} \times |\lambda^{\prime}|_{Y} \subset \mu\times \nu $ and $(\cdot, \cdot) \to \mathfrak{I}((x,y^{\prime}),(z,w)) \in L^{1}(\mu\times \nu)$. Then, the real valued function is well defined.\\
	The bilinearity is immediate, while for the property that $(\lambda, \lambda) \geq 0$ we use  Lemma \ref{PDItoPD}. Let $(x_{0}, y_{0}) \in X_{\mu, \nu}^{\mathfrak{I}}\times Y_{\mu, \nu}^{\mathfrak{I}}$ and the PD kernel $K^{\mathfrak{I}}$  provided by the Lemma. Note that since \\ $\sqrt{K^{\mathfrak{I}}((x,y),(x,y))}=\sqrt{4\mathfrak{I}((x,y),(x_{0},y_{0}))} \in L^{2}(|\lambda|)$, by the kernel mean embedding on Lemma \ref{initialextmmddominio}, we have that 
	$$
	\int_{X\times Y} \int_{X\times Y} K^{\mathfrak{I}}((x,y),(x^{\prime}, y^{\prime}))d\lambda(x,y)d\lambda(x^{\prime}, y^{\prime}) = \langle K^{\mathfrak{I}}_{\lambda}, K^{\mathfrak{I}}_{\lambda} \rangle_{\mathcal{H}_{K^{\mathfrak{I}}}} \geq 0
	$$
	On the other hand, the $9$ functions on the right hand side of the definition of the kernel $K^{\mathfrak{I}}$ on Lemma \ref{PDItoPD}  are in $L^{1}(|\lambda|\times|\lambda|)$ (the other $7$ are zero because its projections kernels are zero), but only the first one the double integral with respect to $d\lambda(x,y)d\lambda(x^{\prime}, y^{\prime})$ is (necessarily) nonzero. Indeed, we present the argument for the functions $\mathfrak{I}((x,y),(x_{0}, y^{\prime}))$ and $\mathfrak{I}((x,y),(x_{0}, y_{0}))$, as the others $6$ functions are similar.\\
	By the choice of the point $(x_{0}, y_{0})$, since $ \mathfrak{I}((\cdot,\cdot),(x_{0}, y_{0})) \in L^{1}(|\lambda|) $, we have that the double integral is well defined and
	\begin{align*}
		\int_{X\times Y} \int_{X\times Y} &\mathfrak{I}((x,y),(x_{0}, y_{0}))d\lambda(x,y)d\lambda(x^{\prime}, y^{\prime})\\
		&= \left [\int_{X\times Y}  \mathfrak{I}((x,y),(x_{0}, y_{0}))d\lambda(x,y)\right ] \left [\int_{X\times Y}1d\lambda(x^{\prime}, y^{\prime}) \right ] =0,
	\end{align*}
	because $\lambda(X\times Y)=0$.\\
	For the function $\mathfrak{I}((x,y),(x_{0}, y^{\prime}))$, it is integrable because of the inequality
	$$
	\sqrt{ \mathfrak{I}((x,y),(x_{0}, y^{\prime}))} \leq \sqrt{\mathfrak{I}((x,y),(x_{0}, y_{0}))} + \sqrt{\mathfrak{I}((x,y_{0}),(x_{0}, y^{\prime}))}.
	$$
	The integral
	$$
	\int_{X\times Y}\mathfrak{I}((x,y),(x_{0}, y^{\prime}))d\lambda(x^{\prime}, y^{\prime}) =0, \quad (x,y) \in X_{\mu, \nu}^{\mathfrak{I}}\times Y_{\mu, \nu}^{\mathfrak{I}}
	$$
	because $\lambda(X, \cdot)$ is the zero measure. Then
	$$
	\int_{X\times Y} \int_{X\times Y}\mathfrak{I}((x,y),(x_{0}, y^{\prime}))d\lambda(x, y)d\lambda(x^{\prime}, y^{\prime})=0.
	$$
\end{proof}

\begin{proof}[\textbf{Proof of Lemma \ref{integmarginkroeprod}}] 
	The converse is immediate, because we can use the $3$ equivalence relations in Lemma  \ref{estimativa}.\\
	For the opposite relation, we assume  that $\gamma$ and $\varsigma$ are zero at the diagonal (equivalently, the projections kernels of $\gamma \otimes \varsigma$ are zero).  Because of Lemma \ref{integmargin}, it is sufficient that we prove that the set $X_{\mu, \nu}^{\mathfrak{I}}=X$ and $Y_{\mu, \nu}^{\mathfrak{I}}=Y$. \\
	Given arbitrary $(x^{\prime}, y^{\prime})$ such that $\mathfrak{I}((\cdot, \cdot), (x^{\prime}, y^{\prime})) \in L^{1}(\mu \times \nu)$, by the definition and invariance of $X_{\mu, \nu}^{\mathfrak{I}}$ we have that 
	\begin{align*}
		X_{\mu, \nu}^{\mathfrak{I}}&= \{x \in X, \quad \gamma(x,x^{\prime})\varsigma(\cdot ,y^{\prime}) \in L^{1}(\nu)\}\\
		&= \{x \in X, \quad \gamma(x,x^{\prime})=0 \} \cup \{x \in X, \quad \gamma(x,x^{\prime})\neq 0,  \varsigma(\cdot ,y^{\prime}) \in L^{1}(\nu) \}.
	\end{align*}
	If the set $\{x \in X, \quad \gamma(x,x^{\prime})\neq 0,  \varsigma(\cdot ,y^{\prime}) \in L^{1}(\nu) \}$ is nonempty, then $ \varsigma(\cdot ,y^{\prime}) \in L^{1}(\nu)$ and consequently $X_{\mu, \nu}^{\mathfrak{I}} = \{x \in X, \quad \gamma(x,x^{\prime})=0 \} \cup \{x \in X, \quad \gamma(x,x^{\prime})\neq 0\} = X$. On the other hand, if this set is empty, we have that $X_{\mu, \nu}^{\mathfrak{I}}= \{x \in X, \quad \gamma(x,x^{\prime})=0 \} =\{x^{\prime}\}$, but $\mu(X-X_{\mu, \nu}^{\mathfrak{I}})=0$, so $\mu = c\delta_{x^{\prime}}$, which does not occur by the hypothesis.  \\
	For the general case, first note that
	$$
	0 \leq \gamma(x, x^{\prime})- \gamma(x,x)/2 - \gamma(x^{\prime},x^{\prime})/2 \leq \gamma(x,x^{\prime}), \quad x,x^{\prime} \in X
	$$
	and similarly for $\varsigma$. Hence, if   $\gamma\otimes\varsigma \in L^{1}((\mu\times \nu)\times (\mu \times \nu))$, then the PDI kernel
	$$
	(\gamma(x, x^{\prime})- \gamma(x,x)/2 - \gamma(x^{\prime},x^{\prime})/2 )\otimes(\varsigma(y, y^{\prime})- \varsigma(y,y)/2 - \varsigma(y^{\prime},y^{\prime})/2),
	$$
	is an element of $ L^{1}((\mu\times \nu)\times (\mu \times \nu))$ and its projections kernels are zero. By the first part of the proof we obtain that $x \to  \gamma(x, x^{\prime}) - \gamma(x,x)/2 - \gamma(x^{\prime}, x^{\prime}) \in L^{1}(\mu)$ for every $x^{\prime} \in X$. Since $\gamma$ is bounded at the diagonal, we obtain that $\gamma(\cdot, x^{\prime}) \in L^{1}(\mu)$ for every $x^{\prime} \in X$. The proof for $\varsigma$ follows by similar arguments
	.\end{proof}

\begin{proof}[\textbf{Proof of  Corollary \ref{integmarginkroeprodcor}}]
	If $\gamma$ and $\varsigma$ are zero at the diagonal, the first claim is a consequence of Corollary \ref{integmargincor}, as $X_{\mu, \nu}=X$ and $Y_{\mu, \nu}=Y$ by Lemma \ref{integmarginkroeprod}. In the general case, since 
	$$
	0 \leq (\gamma(x,x^{\prime}) - \gamma(x^{\prime},x^{\prime})/2 - \gamma(x,x)/2)(\varsigma(y,y^{\prime}) - \varsigma(y^{\prime},y^{\prime})/2 - \varsigma(y,y)/2)	\leq \gamma(x,x^{\prime})\varsigma(y,y^{\prime}), 
	$$
	we obtain by the first part of the proof that  
	$$
	(x,y) \to (\gamma(x,x^{\prime}) - \gamma(x^{\prime},x^{\prime})/2 - \gamma(x,x)/2)(\varsigma(y,y^{\prime}) - \varsigma(y^{\prime},y^{\prime})/2 - \varsigma(y,y)/2) \in L^{1}(|\lambda|)
	$$ 	
	for every $x^{\prime} \in X$ and $y^{\prime} \in Y$.  By the hypothesys that $\gamma$  and $\varsigma$ are bounded at the diagonal and the integrability of $ \gamma$ and $\varsigma$ with respect to the marginals of $|\lambda|$, we obtain that $(x,y) \to \gamma(x,x^{\prime})\varsigma(y,y^{\prime}) \in L^{1}(|\lambda|)$ for every $x^{\prime} \in X$ and $y^{\prime} \in Y$.\\
	The fact that $	\mathfrak{M}(\gamma \otimes \varsigma)$ is a vector space is a direct consequence that the equivalence in Lemma \ref{integmarginkroeprod} and the first part of the Corollary holds for every $x^{\prime} \in X$ and $y^{\prime} \in Y$.\\
	For the final claim, the double integral is well defined because $\gamma \otimes \varsigma \in L^{1}((|\lambda| + |\lambda^{\prime}|) \times (|\lambda| + |\lambda^{\prime}|))$. It is a semi-inner product because
	\begin{align*}
		&\int_{X\times Y} \int_{X\times Y} \gamma(x,x^{\prime}) \varsigma(y,  y^{\prime})d\lambda (x,y)d\lambda(x^{\prime}, y^{\prime})\\
		&= \int_{X\times Y} \int_{X\times Y} (\gamma(x,x^{\prime}) - f(x^{\prime}) -f(x)) (\varsigma(y,y^{\prime}) - g(y^{\prime}) -  g(y))d\lambda (x,y)d\lambda(x^{\prime}, y^{\prime}) \geq 0.	
	\end{align*}
	where $f(z)= \gamma(z,z)/2$ and $g(w)= \varsigma(w,w)/2$.\\
	Now, we prove that if either $\gamma$ or $\varsigma$ is not CND-Characteristic then the semi-inner product is not an inner product. Indeed, if $\gamma$ is not CND-Characteristic there exist a nonzero measure  $\lambda_{1} \in \mathfrak{M}_{1}(X; \gamma)$  such that:
	$$\int_{X} \int_{X}\gamma(x,x^{\prime})d\lambda_{1}(x)d\lambda_{1}(x^{\prime})=0.
	$$
	Pick an arbitrary nonzero and   $\lambda_{2} \in \mathfrak{M}_{1}(Y; \varsigma)$ and define the measure  $\lambda= \lambda_{1}\times\lambda_{2}$, which is a nonzero element of $\mathfrak{M}(\gamma\otimes \varsigma)$. The conclusion that $(\lambda, \lambda)=0$ comes from Fubini-Tonelli. If $varsigma$ is not CND-Characteristic the proof is identical.\\
	We prove the  converse  separately, as the arguments are different and some properties of RKHS will be needed.	\end{proof}

First, note that if $K: X \times  X \to \mathbb{R}$ is a positive definite kernel and $(\phi_{i})_{i \in \mathcal{I}}$ is a complete  orthonormal basis for $\mathcal{H}_{K}$, then the following pointwise convergence  holds
\begin{equation}\label{Mercergensum}
	K(x,x^{\prime})= \sum_{i \in \mathcal{I}}\phi_{i}(x)\phi_{i}(x^{\prime}), \quad x, x^{\prime} \in X.
\end{equation}

This occur because since $K_{x^{\prime}} \in \mathcal{H}_{K}$ for every $x^{\prime} \in X$, there exists $a_{i}(x^{\prime}) \in \mathbb{R}$ for which  $K_{x^{\prime}}= \sum_{i \in \mathcal{I}}a_{i}(x^{\prime})\psi_{i}$, with convergence in $\mathcal{H}_{K}$. Since 
$$
a_{i}(x^{\prime})= \langle K_{x^{\prime}}, \psi_{i} \rangle_{\mathcal{H}_{K}} = \psi_{i}(x^{\prime}) 
$$
and convergence in $\mathcal{H}_{K}$ implies pointwise convergence for every point in $X$, Equation  \ref{Mercergensum} is valid. 

In Section $7$ in \cite{gaussinfi} it is proved that if $X$ is a Hausdorff space, $\lambda \in \mathfrak{M}(X)$ and  there exists a continuous CND metrizable kernel on $X$, then  the RKHS of any continuous PD kernel on $X$ is separable when restricted to    $Supp(\lambda)$. This property,  Equation \ref{Mercergensum} and  Equation \ref{Kgamma}  implies that if $\gamma: X \times X \to \mathbb{R}$ is a continuous CND metrizable kernel that is zero at the diagonal and $ \lambda \in \mathfrak{M}_{1}(X; \gamma)$, then the pointwise convergence holds
\begin{equation}
	\gamma(x,x^{\prime})= \sum_{k=0}^{\infty}(\phi_{k}(x) - \phi_{k}(x^{\prime}))^{2}, \quad  x,x^{\prime} \in supp(\lambda)	
\end{equation}

\begin{proof}[\textbf{Continuation of the Proof of Corollary \ref{integmarginkroeprodcor}}]
	Without loss of generalization we may assume that $\gamma$, $\varsigma$ are zero at the diagonal.\\
	Suppose that $\gamma$ and $\varsigma$ are CND-Characteristic and that  $\lambda \in \mathfrak{M}(\gamma\otimes \varsigma)$  is such that $(\lambda, \lambda)=0$. Since 	
	\begin{align*}
		&\int_{X\times Y} \int_{X\times Y} \gamma(x,x^{\prime})\varsigma(y,y^{\prime})d\lambda(x,y)d\lambda(x^{\prime}, y^{\prime})\\
		&=\int_{X\times Y} \int_{X\times Y}\left [ \sum_{k=0}^{\infty}(\phi_{k}(x) - \phi_{k}(x^{\prime}))^{2} \right ]\varsigma(y,y^{\prime})d\lambda(x,y)d\lambda(x^{\prime}, y^{\prime})\\
		&=\sum_{k=0}^{\infty} \int_{X\times Y} \int_{X\times Y}(\phi_{k}(x) - \phi_{k}(x^{\prime}))^{2} \varsigma(y,y^{\prime})d\lambda(x,y)d\lambda(x^{\prime}, y^{\prime})	
	\end{align*}
	and for each $k \in \mathbb{Z}_{+}$ the double integral is nonnegative because the kernel is PDI,  we obtain that each term in this series is equal to zero. \\
	Without loss of generalization we  may assume that there exists a $z \in X$ for which $\phi_{k}(z)=0$ for all $k \in \mathbb{Z}_{+}$.  From the following inequalities 
	\begin{align*}
		[\phi_{k}(x)^{2} + \phi_{k}(x^{\prime})^{2} ]\varsigma(y,y^{\prime}) &\leq [\gamma(x,z) + \gamma(x^{\prime},z)][ 2\varsigma(y,w) + 2\varsigma(y^{\prime},w)]\\
		|\phi_{k}(x)\phi_{k}(x^{\prime})|\varsigma(y,y^{\prime})&\leq  [\gamma(x,z) + \gamma(x^{\prime},z)][ 2\varsigma(y,w) + 2\varsigma(y^{\prime},w)],
	\end{align*}
	we obtain that the functions on the left hand side are in $L^{1}(|\lambda|\times |\lambda|)$, where the functions on the right hand side are elements of $L^{1}(|\lambda|\times |\lambda|)$ because of the equivalence in the first part of the Corollary (functions $\gamma(x,z)\varsigma(y,w)$, $\gamma(x^{\prime},z)\varsigma(y^{\prime},w)$ ) and also because    $\gamma \in L^{1}(|\lambda|_{X}\times |\lambda|_{X})$, $\varsigma \in L^{1}(|\lambda|_{Y} \times |\lambda|_{Y})$ (functions $\gamma(x,z)\varsigma(y^{\prime},w)\varsigma(y,w)$, $\gamma(x^{\prime},z)\varsigma(y,w)$).\\
	However, 
	$$
	\int_{X\times Y} \phi_{k}(x^{\prime})^{2}  \varsigma(y,y^{\prime})d\lambda(x,y)=0, \quad k \in \mathbb{Z}_{+}, \quad y^{\prime} \in Y
	$$
	because $ \lambda(X, \cdot)$ is the zero measure. Hence,
	$$
	\int_{X\times Y} \int_{X\times Y}\phi_{k}(x)\phi_{k}(x^{\prime}) \varsigma(y,y^{\prime})d\lambda(x,y)d\lambda(x^{\prime}, y^{\prime})=0, \quad k \in \mathbb{Z}_{+}.
	$$
	Define the function
	$$
	\lambda_{k}(A):= \int_{X \times A} \phi_{k}(x)d\lambda(x,y), \quad   A \in \mathscr{B}(Y).
	$$
	Note that $\lambda_{k} \in \mathfrak{M}(Y)$, because $|\phi_{k}(x)| \leq 1 +\phi_{k}(x)^{2} \leq 1 + \gamma(x,z)$ and $\gamma \in L^{1}(|\lambda|_{X}\times |\lambda|_{X})$. Also, by the previous relations $\lambda_{k} \in \mathfrak{M}_{1}(Y; \varsigma)$. But
	$$
	0=  \int_{X\times Y}\int_{X\times Y}\phi_{k}(x) \phi_{k}(x^{\prime})  \varsigma(y,y^{\prime})d\lambda(x,y)d\lambda(x^{\prime},y^{\prime})= \int_{Y}\int_{Y}\varsigma(y,y^{\prime})d\lambda_{k}(y)d\lambda_{k}(y^{\prime}),
	$$
	as the kernel $\gamma$ is CND-Characteristic, $\lambda_{k} $ is the zero measure for every $k \in \mathbb{Z}_{+}$. \\
	In particular, for every $A   \in \mathscr{B}(Y)$ 
	\begin{align*}
		&\int_{X}\int_{X}\gamma(x,x^{\prime})d\lambda_{A}(x)d\lambda_{A}(x^{\prime})=\int_{X\times A}\int_{X\times A}\gamma(x,x^{\prime})d\lambda(x,y)d\lambda(x^{\prime},y^{\prime})\\
		&= \sum_{k=0}^{\infty}\int_{X\times A}\int_{X\times A}(\phi_{k}(x) - \phi_{k}(x^{\prime}))^{2}d\lambda(x,y)d\lambda(x^{\prime},y^{\prime})\\
		&=-2\sum_{k=0}^{\infty}\int_{X\times A}\int_{X\times A}\phi_{k}(x) \phi_{k}(x^{\prime})d\lambda(x,y)d\lambda(x^{\prime},y^{\prime})=-2\sum_{k=0}^{\infty} \lambda_{k}(A)^{2}=0,
	\end{align*}
	but since $\gamma$ is CND-Characteristic, we obtain that the measure  $\lambda_{A}$, which is an element of $\mathfrak{M}_{1}(X;\gamma)$, is the zero measure for every $A \in \mathscr{B}(Y)$, hence, $\lambda$ is the zero measure.
\end{proof}

\subsection{\textbf{Section \ref{Bernstein  functions of two variables}} }

In some sense, the following result characterizes  measure valued conditionally negative definite kernels defined on a finite set $\{1, \ldots , n\}$. The proof is very similar to Lemma $3.6$ in \cite{jeanrad}, where the focus was on  positive definite kernels. 

\begin{lem}\label{improve} Let $\sigma_{i, j}$, $i, j =1, \ldots,n$, be  measures on $\mathfrak{M}(X)$ fulfilling the following properties: $\sigma_{i, i}$ is the zero measure and $\sigma_{i,j}= \sigma_{j,i}$ for every $i,j$; the measure $ -\sum_{i, j=1}^{n} c_{i} c_{j} \sigma_{i ,j}$ is nonnegative whenever $\sum_{i=1}^{n}c_{i}=0$. If
	$$
	\sigma_{S}:= \sum_{i,j=1}^{n}\sigma_{i,j},
	$$
	then the measure $\sigma_{S}$ is nonnegative, there exist functions $g_{i,j}$, $i,j=1,\ldots,n$ in $L^{1}( \sigma_{S})$ so that
	$$
	d\sigma_{i, j} = g_{i,j}d\sigma_{S}.
	$$
	Further, we may assume that for each $r\in X$ the matrix $[g_{i, j}(r)]_{i,j =1}^{n}$ is CND and $g_{i, i}(r)=0$ for every $i$.
\end{lem}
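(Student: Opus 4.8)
The plan is to follow the strategy of Lemma 3.6 in \cite{jeanrad}, adapting it from the positive definite case to the conditionally negative definite case. First I would establish that $\sigma_S$ is nonnegative: choosing a vector $(c_i)$ with $\sum_i c_i = 0$ shows $-\sum_{i,j}c_ic_j\sigma_{i,j}\geq 0$, but this only controls combinations in the kernel of the summation functional, so $\sigma_S$ itself requires a slightly different argument. The cleanest route is to write, for any Borel set $A$, the matrix $M(A):=[\sigma_{i,j}(A)]_{i,j}$; by hypothesis $M(A)$ is a symmetric real matrix whose restriction to the hyperplane $\{\sum c_i=0\}$ is negative semidefinite, and whose diagonal vanishes. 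A symmetric matrix with zero diagonal that is negative semidefinite on $\{\sum c_i = 0\}$ has \emph{nonnegative} total sum $\mathbf{1}^{\top}M(A)\mathbf{1}$: indeed $M(A)$ is CND, hence by Equation \eqref{Kgamma} (applied with $w$ a fixed index, say $w=1$) the matrix $K^{1}_{i,j} := M_{i,1}+M_{1,j}-M_{i,j}-M_{1,1} = M_{i,1}+M_{1,j}-M_{i,j}$ is positive semidefinite, and summing this identity over $i,j$ gives $\mathbf{1}^{\top}K^{1}\mathbf{1} = 2n\sum_i M_{i,1} - \mathbf{1}^{\top}M\mathbf{1} = n\,\mathbf{1}^{\top}M\mathbf{1}$ using $M_{1,1}=0$ and symmetry — wait, more carefully: $\sum_{i,j}(M_{i,1}+M_{1,j}) = 2n\sum_i M_{i,1}$ and one also needs that $\sum_i M_{i,1}\geq 0$, which follows from $K^1$ being PSD with zero diagonal entry $K^1_{1,1}=0$ forcing... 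Since this bookkeeping is the one genuinely fiddly point, I would instead argue directly: $M(A)$ CND with zero diagonal implies $M(A)=[\,\|h(i)-h(j)\|^2\,]$ for vectors $h(i)$ in some Euclidean space by Equation \eqref{condequa} (with $f\equiv 0$ since the diagonal is zero), and then $\mathbf{1}^{\top}M(A)\mathbf{1} = \sum_{i,j}\|h(i)-h(j)\|^2 = 2n\sum_i\|h(i)\|^2 - 2\|\sum_i h(i)\|^2$; this is nonnegative iff $\|\sum h(i)\|^2 \leq n\sum\|h(i)\|^2$, which is Cauchy–Schwarz. Hence $\sigma_S(A)\geq 0$ for every $A$, so $\sigma_S$ is a nonnegative measure.

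Next, each $\sigma_{i,j}$ satisfies $|\sigma_{i,j}|\leq C\sigma_S$ for a constant $C$: because $-\sum c_kc_l\sigma_{k,l}\geq 0$ for $\sum c_k=0$, taking $c=e_i-e_j$ gives $2\sigma_{i,j}\leq \sigma_{i,i}+\sigma_{j,j}=0$ in the sense that $-(2\sigma_{i,j})$ is... no — one gets $-\sum_{k,l}c_kc_l\sigma_{k,l} = 2\sigma_{i,j} - \sigma_{i,i}-\sigma_{j,j} = 2\sigma_{i,j}\geq 0$ is \emph{false}; rather $-(2\sigma_{i,j})\geq 0$, i.e. $\sigma_{i,j}\leq 0$ as a signed measure?? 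This cannot be right since $\sigma_S\geq 0$. The resolution: the sign convention in Definition of CND gives $-\sum c_kc_l\sigma_{k,l}\geq 0$, and with $c=e_i-e_j$ this reads $-(σ_{i,i}+σ_{j,j}-2σ_{i,j}) = 2σ_{i,j}\geq 0$, so actually \emph{every} $\sigma_{i,j}$ with $i\neq j$ is a nonnegative measure. Good — that is consistent, and then trivially $\sigma_{i,j}\leq \sigma_S$. By the Radon–Nikodym theorem each $\sigma_{i,j}\ll\sigma_S$ with density $g_{i,j}=d\sigma_{i,j}/d\sigma_S\in L^1(\sigma_S)$, $0\leq g_{i,j}\leq 1$ $\sigma_S$-a.e., $g_{i,i}=0$, and $g_{i,j}=g_{j,i}$.

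It remains to arrange that $[g_{i,j}(r)]_{i,j}$ is CND for every $r$, not merely for $\sigma_S$-almost every $r$. The defining inequality $-\sum c_kc_l\sigma_{k,l}(A)\geq 0$ for all Borel $A$ and all $c$ with $\sum c_k=0$ translates, via $\sigma_{k,l}(A)=\int_A g_{k,l}\,d\sigma_S$, into $\int_A\big(-\sum c_kc_lg_{k,l}\big)d\sigma_S\geq 0$ for all $A$, hence $-\sum c_kc_lg_{k,l}\geq 0$ $\sigma_S$-a.e. Ranging $c$ over a countable dense subset of $\{\sum c_k=0\}$ and using continuity in $c$ of the finite sum, we get a single $\sigma_S$-null set $N$ off which $[g_{i,j}(r)]$ is CND with zero diagonal for all $r\notin N$; redefining $g_{i,j}:=0$ on $N$ (which does not change the densities $\sigma_S$-a.e., hence not the identity $d\sigma_{i,j}=g_{i,j}d\sigma_S$) makes the matrix CND with zero diagonal \emph{everywhere}, as the zero matrix is trivially CND. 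I expect the main obstacle to be exactly the first step — pinning down the correct sign conventions so that $\sigma_S\geq 0$ and the off-diagonal $\sigma_{i,j}$ come out nonnegative — since everything downstream (Radon–Nikodym domination, the a.e.-to-everywhere upgrade) is routine once the measure-theoretic positivity is secured.
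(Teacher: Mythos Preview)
Your proposal is correct and follows essentially the same route as the paper: show each $\sigma_{i,j}$ is nonnegative via the test vector $c=e_i-e_j$ (hence $\sigma_S\geq 0$ trivially), apply Radon--Nikodym, then upgrade the a.e.\ CND property to an everywhere statement by a countable-density-plus-continuity argument and redefinition on a null set. Your first paragraph's Cauchy--Schwarz detour for $\sigma_S\geq 0$ is correct but redundant once you have $\sigma_{i,j}\geq 0$ for all $i,j$; the paper skips it and goes straight to the off-diagonal nonnegativity.
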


\begin{proof}[\textbf{Proof of Lemma \ref{improve}}] Our assumptions on the measures $\sigma_{i, j}$ asserts that the matrix $[\sigma_{i, j}(A)]_{i, j =1}^{n}$ is CND for all Borel sets $A \in \mathscr{B}(X)$. In particular,
	$$
	2 \sigma_{i ,j}(A) \geq -\sigma_{i, i}(A) - \sigma_{j, j}(A)=0,
	$$
	for all $i,j=1,\ldots,n$, which implies that all measures $\sigma_{i,j}$ are nonnegative, hence $\sigma_{S}$ is nonnegative as well. A direct  application of the Radon-Nykodin  Theorem  provides the existence of functions $h_{i, j}$, $i,j=1,\ldots,n$, in the space $L^{1}( \sigma_{S})$, so that
	$$ \sigma_{i,j}(A) = \int_{A} h_{i, j}(r) d\sigma_{S}(r), $$
	for all $i,j=1,\ldots,n$ and  $A \in \mathscr{B}(X)$. 	Since $\sigma_{i ,j}= \sigma_{j, i}$, we can assume that $h_{i, j}(r)=h_{j, i}(r)$ for all $r \in X$. In order to prove the last statement of the theorem, first observe that if $v=(v_{1}, \ldots, v_{n})$ is a nonzero vector on $\mathbb{R}^{n}$, with $\sum_{i=1}^{n}v_{i}=0$, and  $A \in \mathscr{B}(X)$, then
	$$
	\int_{A} \langle H(r) v , v \rangle  d\sigma_{S}(r) = \langle \sigma(A) v, v \rangle  \leq 0.
	$$
	Hence, since the matrices $H(r):=[h_{i ,j}(r)]_{i, j=1}^{n}$ and   $\sigma(A):=[\sigma_{i ,j}(A)]_{i, j=1}^{n}$ are symmetric, for each such $v$,
	$$
	B_{v}:= \{ r \in X: \langle H(r) v , v \rangle >0 \}
	$$
	is a $\sigma_{S}$-null subset of $X$. We now infer that
	$$
	B:= \bigcup_{v \in \Lambda} B_{v}
	$$
	is also a $\sigma_{S}$-null subset of $X$, where $\Lambda = \{v \in \mathbb{R}^{n}\setminus{\{0\}} \text{ and } \sum_{i=1}^{n}v_{i}=0\}$. Indeed, if $s \in B$, then $s\in B_v$ for some $v \in \Lambda$. Since
	$\langle H(s) v , v \rangle > 0$ and the function $w \in \mathbb{R}^{n} \to \langle H(s) w , w \rangle$ is continuous, we can select a neighborhood $U_v$ of $v$ on $\Lambda$ so that $
	\langle H(s) u , u \rangle >0, \quad u \in U_v$. \\
	Now, pick a dense subset $\{v_1, v_2, \ldots\}$ of $\Lambda$. Since $v_m \in U_v$ for some $m$, it follows that $\langle H(s) v_{m} , v_{m} \rangle >0 $ for at least one $m$. In other words,
	$s\in B_{v_m}$, for at least one $m$. Therefore, $B\subset \bigcup _{m \in \mathbb{N}} B_{v_m}$ and, consequently, $\sigma_{S}(B)=0$. The arguments above reveal that $H(r)$ is CND, except for $r$ in a $\sigma_{S}$-null subset $B$ of $X$. Since  the functions $h_{i,i}$ are zero  almost everywhere, we can define the define the functions $g_{i,j}$  that fulfills the desired properties by defining it to be zero on those sets of zero measure and $h_{i,j}$ otherwise. 
\end{proof}

\begin{proof}[\textbf{Proof of Theorem \ref{basicradialuni}}] Suppose that the kernel is PDI and its projection kernels are zero. By Equation \ref{eqcndker}, for every $u,v \in X$ the kernel $\gamma_{u,v}(x,y):=f(u,v, \|x-y\|)$ is CND on $\mathbb{R}^{d}$ for every $d \in \mathbb{N}$. Theorem \ref{reprcondneg} implies that  the kernel admits the desired representation and that it is unique, it only remains to prove the properties of the kernel  $\sigma_{u,v}$. For that, let  $u_{1}, \ldots , u_{n} \in X$ and scalars $c_{1}, \ldots , c_{n} \in \mathbb{R} $ with the restriction that $\sum_{i=1}^{n}c_{i}=0$, by  Equation \ref{CNDprojgeral}, the kernel 
	$$
	-\sum_{i,j=1}^{n}c_{i}c_{j}f(u_{i}, u_{j}, \|x-y\|), \quad x,y \in \mathbb{R}^{d}
	$$   
	is  CND for every $d \in \mathbb{N}$. Since the representation on Theorem \ref{reprcondneg} is unique, we obtain that $\sum_{i,j=1}^{n}c_{i}c_{j}\sigma_{u_{i}, u_{j}}$ is a nonpositive measure, which proves our claim.\\
	For the converse,  let $x_{1}, \dots , x_{m} \in \mathbb{R}^{d}$, $u_{1}, \ldots, u_{n} \in X$ and scalars $c_{i,k} \in \mathbb{R}$ with the restriction that  $\sum_{i=1}^{n}c_{i,k}= \sum_{l=1}^{m}c_{j,l}=0$ for every $1\leq k \leq m$ and  $1\leq j \leq n$.\\  By the hypothesis and Lemma  \ref{improve} we can write $d\sigma_{u_{i}, u_{j}}= g_{i,j}d\sigma_{S}$, where $\sigma_{S}$ is a nonnegative measure in $\mathfrak{M}([0, \infty))$, $g_{i,j} \in L^{1}(\sigma_{S})$, for every $r \in [0, \infty)$ the matrix $[g_{i,j}(r)]_{i,j=1}^{n}$ is CND and $g_{i,i}=0$ for every $i$. In particular
	\begin{align*}
		\sum_{i,j=1}^{n}\sum_{k,l=1}^{m}c_{i, k}c_{j, l}&f(u_{i}, u_{j}, \|x_{k}-x_{l}\|)\\
		&= \int_{[0, \infty)} \left [\sum_{i,j=1}^{n}\sum_{k,l=1}^{m}c_{i, k}c_{j, l} \frac{(1-e^{-r\|x_{k}-x_{l}\|^{2}})}{r}g_{u_{i}, u_{j}}(r) \right ](1 +r)d\sigma_{S}(r). 
	\end{align*}
	As proved at the beginning of Section \ref{positive definite independent kernels},  the Kronecker product of CND kernels is PDI, so  we have  that 
	$$
	\sum_{i,j=1}^{n}\sum_{k,l=1}^{m}c_{i, k}c_{j, l} \frac{(1-e^{-r\|x_{k}-x_{l}\|^{2}})}{r}g_{u_{i}, u_{j}}(r) \geq 0, \quad r \in [0,\infty)
	$$
	which proves our claim, since the measure $(1+r)d\sigma_{S}$ is nonnegative.
\end{proof}

\begin{proof} [\textbf{Proof of Theorem \ref{bernssev}}]
	Without loss of generalization we may suppose that $g$ is zero at the boundary. Suppose that the  function $g$ admits the representation 
	\begin{align*}
		g(t_{1}, t_{2})= \int_{[0,\infty)^{2}}\left (\frac{1-e^{-r_{1}t_{1}}}{r_{1}} \right ) \left ( \frac{1-e^{-r_{2}t_{2}}}{r_{2}} \right )\prod_{i=1}^{2}(1 +r_{i})d\sigma(r_{1},r_{2}).
	\end{align*}
	Differentiation under the integral sign implies that  $\partial^{\beta}g$, where $\beta=(1,1)$, is  well defined on $(0,\infty)^{2}$ and is equal to  
	$$
	\int_{[0,\infty)^{2}}e^{-r_{1}t_{1}}e^{-r_{2}t_{2}}\prod_{i=1}^{2}(1 +r_{i})d\sigma(r_{1},r_{2}),
	$$
	which is a completely monotone function of two variables by Theorem \ref{Bochnercomplsev}. It is immediate that $g$ is zero at the boundary. Also, the uniqueness in Theorem \ref{Bochnercomplsev} implies that the representation for Bernstein functions of two variables is unique.\\
	Our proof for the converse is very similar to the proof of  Theorem $3.2$ in \cite{bers}. Suppose that $\partial^{\beta}g$, $\beta=(1,1)$, is a completely monotone function of two variables,  Theorem \ref{Bochnercomplsev} implies that 
	\begin{align*}
		\partial^{\beta}g(t_{1}, t_{2}) &=\int_{[0, \infty)^{2}} e^{-r_{1}t_{1} - r_{2}t_{2}}d\sigma^{\prime}(r_{1}, r_{2}),
	\end{align*}
	for some nonnegative Borel measure $\sigma^{\prime}$ (possibly unbounded) on $[0, \infty)^{2}$. By the fundamental Theorem of calculus in two variables, we have that for $t_{1},t_{2}, c_{1}, c_{2} >0$
	$$
	\int_{[c_{1},t_{1}]}\int_{[c_{2}, t_{2}]}\partial^{\beta}g(z,w)dzdw=g(t_{1},t_{2}) + g(c_{1},c_{2}) - g(c_{1},t_{2}) - g(t_{1},c_{2}),  
	$$
	by letting $c_{1},c_{2}\to 0$ and using the fact that $g$ is continuous on $[0, \infty)\times [0, \infty)$  and $\partial^{\beta}g$ is a nonnegative function we get that
	\begin{align*}
		g(t_{1}, t_{2})&= g(t_{1},t_{2}) + g(0,0) - g(0,t_{2}) - g(t_{1},0)= \int_{(0,t_{1}]}\int_{(0, t_{2}]}\partial^{\beta}g(z,w)dzdw\\
		&=\int_{(0,t_{1}]}\int_{(0, t_{2}]}\left [\int_{[0, \infty)^{2}} e^{-r_{1}z - r_{2}w }d\sigma^{\prime}(r_{1},r_{2})\right ]dzdw\\
		&= \int_{[0, \infty)^{2}} \left [ \int_{(0,t_{1}]}\int_{(0, t_{2}]} e^{-r_{1}z - r_{2}w }dzdw\right ]d\sigma^{\prime}(r_{1},r_{2})\\
		&=\int_{[0, \infty)^{2}} \frac{(1-e^{-r_{1}t_{1}})}{r_{1}}\frac{(1-e^{-r_{2}t_{2}})}{r_{2}}d\sigma^{\prime}(r_{1},r_{2}).
	\end{align*}
	Similar to the proof of Theorem $3.2$ in \cite{bers}, the measure $d\sigma(r_{1},r_{2}):=\frac{1}{1+r_{1}}\frac{1}{1+r_{2}}d\sigma^{\prime}(r_{1},r_{2})$ is finite because 
	$$
	\frac{1}{1+s} \leq \frac{1-e^{-s}}{s}\leq 2\frac{1}{1+s}, \quad s\geq 0.
	$$
	
\end{proof}

\begin{proof}[\textbf{Proof of Theorem \ref{basicradial}}] Suppose that the kernel is PDI for every $d \in \mathbb{N}$. By Theorem \ref{basicradialuni} we have that 
	$$
	f( \|x-y\|, \|u-v\|)= \int_{[0,\infty)}\frac{(1-e^{-r_{1}\|x-y\|^{2}})}{r_{1}} (1+r_{1})d\sigma_{u,v}(r_{1}). 
	$$
	Note that since the representation on Theorem \ref{reprcondneg} is unique, if $\|u-v\|=\|u^{\prime} - v^{\prime}\|$ then  $\sigma_{u,v}= \sigma_{u^{\prime}, v^{\prime}}$. But, as also mentioned on Theorem \ref{basicradialuni}, the kernel $\sigma_{u,v}(A)$  is CND for every $A \in \mathscr{B}([0, \infty))$ and is zero on the diagonal. By   Theorem \ref{reprcondneg} once again, we have that 
	$$
	\sigma_{u,v}(A)=  \int_{[0,\infty)}\frac{(1-e^{-r_{2}\|u-v\|^{2}})}{r_{2}}(1+r_{2})d\sigma_{A}(r_{2})
	$$
	where $\sigma_{A}$ is a nonnegative measure in $\mathfrak{M}([0, \infty))$. Note that  $\sigma_{\emptyset}$ is the zero measure. Since  $\sigma_{u,v}$ is a measure we have that if $(A_{n})_{n \in \mathbb{N}}$ is a disjoint sequence of Borel measurable sets in $[0, \infty)$, then
	\begin{align*}
		\int_{[0,\infty)}\frac{(1-e^{-r_{2}\|u-v\|^{2}})}{r_{2}}(1+r_{2})& d\sigma_{\cup_{n \in \mathbb{N}}A_{n}}(r_{2})= \sigma_{u,v}(\cup_{n \in \mathbb{N}}A_{n}) = \sum_{n \in \mathbb{N}}\sigma_{u,v}(A_{n})\\
		&= \sum_{n \in \mathbb{N}}\int_{[0,\infty)}\frac{(1-e^{-r_{2}\|u-v\|^{2}})}{r_{2}}(1+r_{2})d\sigma_{A_{n}}(r_{2})\\
		&= \int_{[0,\infty)}\frac{(1-e^{-r_{2}\|u-v\|^{2}})}{r_{2}}(1+r_{2})d\left [\sum_{n \in \mathbb{N}} \sigma_{A_{n}}\right ](r_{2}),
	\end{align*}
	since the representation is unique we obtain that $\sigma_{\cup_{n \in \mathbb{N}}A_{n}} =\sum_{n \in \mathbb{N}} \sigma_{A_{n}}$.\\ 
	The function $A \times B \to \sigma_{A}(B)$ is a nonnegative bimeasure, which by Theorem $1.10$ in \cite{berg0} there exists a nonnegative measure $\sigma \in \mathfrak{M}([0,\infty)^{2})$ such that $\sigma(A\times B) = \sigma_{A}(B)$, for every $A,B \in \mathscr{B}([0,\infty)^{2})$. \\
	Gathering all this information we obtain that
	$$
	f(\|x-y\|, \|u-v\|)=\int_{[0,\infty)^{2}}\frac{(1-e^{-r_{1}\|x-y\|^{2}})}{r_{1}}\frac{(1-e^{-r_{2}\|u-v\|^{2}})}{r_{2}}\prod_{i=1}^{2}(1+r_{i})d\sigma(r_{1},r_{2}), 
	$$
	which concludes that $(i)$ implies $(ii)$. The fact that $(ii)$ implies $(i)$ can be proved in a similar way as the converse of Theorem \ref{basicradialuni}, using the fact that the Kronecker product of CND kernels is PDI.\\
	The equivalence between $(ii)$ and $(iii)$ is a direct consequence of Theorem \ref{bernssev}. 
\end{proof}

In order to prove Corollary \ref{2times}    we will need the following special representation of completely monotone functions with $n$ variables.

\begin{lem}\label{sumcm} Let $g:(0, \infty) \to \mathbb{R}$, the function   $g(t_{1} +t_{2})$ is completely monotone with $2$ variables if and only if $g$ is completely monotone with one variable. 
\end{lem}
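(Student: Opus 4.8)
## Proof Plan for Lemma \ref{sumcm}

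The plan is to prove both directions by combining the Hausdorff-Bernstein-Widder theorem for one variable with the two-variable Bochner representation stated in Theorem \ref{Bochnercomplsev}, exploiting the fact that both representations are unique.

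For the easy direction, suppose $g:(0,\infty)\to\mathbb{R}$ is completely monotone. By the classical Hausdorff-Bernstein-Widder theorem, $g(s)=\int_{[0,\infty)}e^{-rs}d\tau(r)$ for a nonnegative Borel measure $\tau$ on $[0,\infty)$. Substituting $s=t_1+t_2$ gives
$$
g(t_1+t_2)=\int_{[0,\infty)}e^{-rt_1}e^{-rt_2}d\tau(r),
$$
which is exactly of the form required in Theorem \ref{Bochnercomplsev} with the representing measure $\sigma$ on $[0,\infty)^2$ being the pushforward of $\tau$ under the diagonal embedding $r\mapsto(r,r)$; hence $g(t_1+t_2)$ is completely monotone with two variables. (One should check the integrals are finite for $t_1,t_2>0$, which follows since $e^{-r(t_1+t_2)}=e^{-rt_1}e^{-rt_2}$ and $\tau$ already integrates $e^{-rs}$ for all $s>0$.)

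For the converse, suppose $h(t_1,t_2):=g(t_1+t_2)$ is completely monotone with two variables. First, $g$ itself is $C^\infty$ on $(0,\infty)$ and $(-1)^ng^{(n)}(s)=(-1)^n(\partial_1^n h)(t_1,t_2)\geq 0$ whenever $t_1+t_2=s$, so $g$ is completely monotone in one variable directly from the definition — which actually already suffices. To make the argument self-contained via representations (and to nail down the link between the measures), apply Theorem \ref{Bochnercomplsev} to obtain a nonnegative measure $\sigma$ on $[0,\infty)^2$ with $g(t_1+t_2)=\int_{[0,\infty)^2}e^{-r_1t_1-r_2t_2}d\sigma(r_1,r_2)$. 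Setting $t_1=t_2=s/2$ shows $g(s)=\int_{[0,\infty)^2}e^{-(r_1+r_2)s/2}d\sigma(r_1,r_2)=\int_{[0,\infty)}e^{-us}d\tau(u)$ where $\tau$ is the pushforward of $\sigma$ under $(r_1,r_2)\mapsto(r_1+r_2)/2$, so $g$ is completely monotone. (Alternatively, one can observe that $h$ depending only on $t_1+t_2$ forces $\sigma$ to be concentrated on the diagonal by the uniqueness in Theorem \ref{Bochnercomplsev}: the function $(t_1,t_2)\mapsto g(t_1+t_2)$ is invariant under $(t_1,t_2)\mapsto(t_1+a,t_2-a)$, and transferring this symmetry to the representing measure via uniqueness shows $\sigma$ is supported on $\{r_1=r_2\}$, recovering the diagonal structure of the easy direction.)

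The only mild obstacle is bookkeeping the integrability and the pushforward measures correctly — ensuring $\tau$ is a genuine nonnegative Borel measure for which $\int e^{-us}d\tau(u)<\infty$ for every $s>0$, which is immediate from finiteness of the two-variable integral on $(0,\infty)^2$. The conceptual content is entirely contained in the uniqueness statements of the Hausdorff-Bernstein-Widder theorem and Theorem \ref{Bochnercomplsev}; there is no hard analytic estimate required.
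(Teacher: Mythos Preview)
Your proposal is correct. The paper, however, takes a purely elementary route: for both directions it just applies the chain rule to observe $(-1)^{|\alpha|}\partial^\alpha[g(t_1+t_2)]=(-1)^{|\alpha|}g^{(|\alpha|)}(t_1+t_2)$, so the sign conditions transfer immediately without invoking Hausdorff--Bernstein--Widder or Theorem~\ref{Bochnercomplsev}. You do mention this direct argument for the converse (``which actually already suffices''), so the core of your proof matches the paper there; the genuine difference is in the easy direction, where you pass through the integral representation while the paper just differentiates. Your heavier approach has the advantage of directly yielding the diagonal-support structure of the representing measure on $[0,\infty)^2$, which the paper states separately right after the lemma; the paper's approach has the advantage of being a two-line computation with no measure-theoretic bookkeeping.
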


\begin{proof} For the converse, if $g$ is completely monotone, then
	$$	
	(-1)^{|\alpha|}\partial^{\alpha}[g(t_{1} + t_{2}))] = (-1)^{|\alpha|}g^{(|\alpha|)} (t_{1} + t_{2}) \geq 0.	
	$$	
	Conversely, if the function $g(t_{1} + t_{2})$ is completely monotone with $2$ variables, then for any $c >0$  
	$$	
	(-1)^{k }g^{(k)} (t_{1} + c) = (-1)^{k}\partial_{1}^{k}[g(t_{1} + c))] \geq 0, \quad k \in \mathbb{Z}_{+}	
	$$
	which proves that $g$ is completely monotone.
\end{proof}

In particular, for  a function $g$ that satisfies Lemma \ref{sumcm}, we have that

$$
g(t_{1}  +t_{2})= \int_{[0, \infty)}e^{-r(t_{1} + t_{2})}d\sigma(r)
$$
that is, the measure that represents $g(t_{1} + t_{2})$ has support on $\{(r, r), \quad  r \in [0, \infty)  \} \subset [0, \infty)^{2}$. 

The Cauchy functional equation is concerned with which functions $z: \mathbb{R} \to \mathbb{R} $ satisfies the following relation
$$
z(t_{1} + t_{2}) - z(t_{1}) - z(t_{2}) + z(0)=0, \quad t_{1}, t_{2} \in \mathbb{R}.
$$ 

Under some very weak assumptions on the function $z$, which continuity is a special case,  it can be proved that $z(t) = at+ b$, for some $a, b\in \mathbb{R}$. For the proof of Corollary \ref{2times} we need a  consequence  of this result: assume that  $z$ is continuous but it is defined on $[0, \infty)$ and the functional equation only holds for $t_{1}, t_{2} \in [0, \infty)$. Likewise, the function $z$ still is a polynomial of degree at most one, which can be proved by  defining the function $z$ on $t \in (-\infty, 0)$ by the relation $z(t):= -z(-t) + 2z(0)$. This function is continuous and satisfies the original Cauchy functional equation.

\begin{proof}[\textbf{Proof of Corollary \ref{2times}}] 
	Relation $(iii)$ and $(iv)$ are equivalent because of Theorem \ref{compleelltimes}.\\	
	Relation $(i)$ and $(ii)$ are equivalent by  Lemma	\ref{PDIsimpli}.\\	
	If relation $(iii)$ holds,   let $x_{1}, \ldots, x_{n} \in \mathbb{R}^{d}$, $y_{1}, \ldots , y_{m} \in \mathbb{R}^{d^{\prime}}$  and real scalars $c_{i, k}$ with the restrictions of Definition \ref{PDI}. Then, relation $(ii)$ holds because 
	\begin{align*}
		\sum_{i,j=1}^{n} \sum_{k,l=1}^{m}c_{i,k}c_{j,l}&f(\|x_{i} - x_{j}\|^{2} + \|y_{k}-y_{j}\|^{2})=2\sum_{i,j=1}^{n} \sum_{k,l=1}^{m}c_{i,k}c_{j,l}  \|x_{i}-x_{j}\|^{2}\|y_{k}-y_{j}\|^{2}\\
		& + \int_{(0, \infty)} \left [\sum_{i,j=1}^{n} \sum_{k,l=1}^{m}c_{i,k}c_{j,l}e^{-r(\|x_{i}-x_{j}\|^{2}+\|y_{k}-y_{j}\|^{2})} \right ]\frac{1+r^{2}}{r^{2}}d\sigma(r)\geq 0. 	
	\end{align*}
	Now, suppose that relation $(i)$ holds, since the sated kernel is PDI and satisfies relation $(i)$ of Theorem \ref{basicradial}, we obtain that the function $g(t_{1},t_{2}) = f( t_{1} + t_{2})
	- 	f( t_{1})-	f(t_{2}) + f(0)$ is a Bernstein function with two variables. Note that the function $\partial_{1}^{1}\partial_{2}^{1}g(t_{1},t_{2})$  is completely monotone and it only depends on  the value of $t_{1}+t_{2}$, because
	\begin{align*}
		\partial_{1}^{1}\partial_{2}^{1}g(t_{1},t_{2})&= \lim_{h,h^{\prime} \to 0}\frac{g(t_{1}+ h, t_{2} +h^{\prime}) - g(t_{1} , t_{2} +h^{\prime}) - g(t_{1} +h , t_{2} ) + g(t_{1} , t_{2})}{hh^{\prime}}\\
		&=\lim_{h,h^{\prime} \to 0}\frac{ f(t_{1}+t_{2} + h +h^{\prime}) -f(t_{1}+t_{2} +h^{\prime}) -f(t_{1}+t_{2} + h ) +f(t_{1}+t_{2})    }{hh^{\prime}}.  
	\end{align*}
	By the comment made after Lemma \ref{sumcm}, there exists a nonnegative measure $\sigma^{\prime}$  defined on $[0, \infty)$ (not necessarily bounded) for which
	$\partial_{1}^{1}\partial_{2}^{1}g(t_{1},t_{2}) = \int_{[0, \infty)}e^{-r(t_{1}+t_{2})}d\sigma^{\prime}(r)$. Using this measure, we may adapt the proof of Theorem \ref{bernssev} to obtain that 
	$$
	f(t_{1}+t_{2} ) -f(t_{1} )-f(t_{2} )+f(0)=g(t_{1},t_{2})  =  \int_{[0, \infty)} \frac{(1-e^{-rt_{1}})}{r}\frac{(1-e^{-rt_{2}})}{r} (1+r)^{2}d\sigma(r), 
	$$
	where $\sigma \in \mathfrak{M}([0, \infty))$ is nonnegative. On the other hand, define the function 
	$$
	h(s):= \int_{(0, \infty)} (e^{-rs} - e_{2}(r)\omega_{2,\infty}(rs))\frac{ (1+r)^{2}}{r^{2}} d\sigma(r) + s^{2}\frac{\sigma(\{0\})}{2} 
	$$
	which is a well defined $CM_{2}$ by Theorem \ref{compleelltimes}. Note that
	$$
	h(t_{1}+t_{2}) - h(t_{1}) - h(t_{2})  + h(0)=  \int_{[0, \infty)} \frac{(1-e^{-rt_{1}})}{r}\frac{(1-e^{-rt_{2}})}{r} (1+ r)^{2}d\sigma(r).
	$$
	But then, the continuous  function $z(s):=h(s) - f(s)$ satisfies the Cauchy functional equation on $[0, \infty)$, implying that $z$ is a polynomial of degree at most one, hence $f \in C^{\infty}((0, \infty))$  and also is a completely monotone function of order $2$. \end{proof}

\subsection{\textbf{Section \ref{Examples of PDI-Characteristic kernels}}} \label{proofsecExamples of PDI-Characteristic kernels}

It is possible to prove Lemma \ref{integmarginbernstein} by showing that the sets $X_{\mu, \nu}^{\mathfrak{I}}, Y_{\mu, \nu}^{\mathfrak{I}}$  on Lemma \ref{integmargin} are respectively  equal to $X, Y$, but we prove it by a direct argument.

\begin{proof}[\textbf{Proof of Lemma \ref{integmarginbernstein}}] If  $\mathfrak{I}_{g}^{\gamma, \varsigma} \in L^{1}((\mu\times \nu)\times (\mu\times \nu))$, by Fubini-Toneli there exists a pair $(x^{\prime}, y^\prime)$ such that $(x,y) \to  \mathfrak{I}_{g}^{\gamma, \varsigma}((x,y),(x^{\prime}, y^\prime)) \in L^{1}(\mu\times \nu)$.\\
	Suppose that relation $(ii)$ holds and define the sets
	$$
	A_{1}:= \{ x\in X , \quad \gamma(x,x^{\prime}) \leq 1\}, \quad A_{2}:=(A_{1})^{c},
	$$
	$$
	B_{1}:= \{ y\in Y , \quad \varsigma(y,y^\prime) \leq 1\}, \quad B_{2}:=(B_{1})^{c}.
	$$
	In order to prove that for an arbitrary $(z, w) \in X\times Y$, the function $(x,y) \to  \mathfrak{I}_{g}^{\gamma, \varsigma}((x,y),(z, w)) \in L^{1}(\mu\times \nu)$, we analyze the  integral in the four regions $A_{i}\times B_{j}$, $1\leq i ,j \leq 2$. Note that $x \to \max  (1, \gamma(x,z)/\gamma(x,x^{\prime}))$ is a bounded function on $A_{2}$ because $\gamma$ has bounded diagonal and the following inequality 
	\begin{equation}\label{descond}
		\begin{split}	
			0&\geq  2\gamma(x,z) -4 \gamma(x,x^{\prime}) -4\gamma(x^{\prime},z) + \gamma(x,x) + 4\gamma(x^{\prime}, x^{\prime}) + \gamma(z,z)\\
			& \geq 2\gamma(x,z) -4 \gamma(x,x^{\prime}) -4\gamma(x^{\prime},z),
		\end{split}
	\end{equation} 
	which holds because  $\gamma$ is CND. Similar for $\varsigma$ on $B_{2}$. In particular, we obtain the  integrability  of  $(x,y) \to  \mathfrak{I}_{g}^{\gamma, \varsigma}((x,y),(z, w))$ on the set $A_{2}\times B_{2}$ by using Equation \ref{consineqexp}.\\
	On the other hand, Equation  \ref{descond} provide us that  $\gamma(x,z) \leq 2 + 2\gamma(x^{\prime}, z):= M_{1}$,   for every  $x \in A_{1}$. In particular, we obtain the  integrability  of  $(x,y) \to  \mathfrak{I}_{g}^{\gamma, \varsigma}((x,y),(z, w))$ on the set $A_{1}\times B_{1}$, because the function is bounded in this set.\\
	On the remaining sets we mix these two relations.  For instance, if  $(x,y) \in A_{1} \times B_{2} $
	$$		
	0\leq g(\gamma(z,x), \varsigma(w,y))\leq \max \left (1, \frac{\varsigma(y,w)}{\varsigma(y, y^{\prime})} \right ) g(M_{1} , \varsigma( y, y^{\prime})).
	$$	
	In order to prove the integrability in $A_{1} \times B_{2}$,  we prove a stronger relation that we use later in the text, that relation $(ii)$ implies that 
	\begin{equation} \label{suff}
		y \to g(t_{1} , \varsigma(y^{\prime}, y)) \in L^{1}(\nu), \quad t_{1} \geq 0
	\end{equation} 
	The case $t_{1}=0$ is immediate. If $t_{1} > 0$, since   
	$  \mathfrak{I}_{g}^{\gamma, \varsigma}((\cdot,\cdot),(x^{\prime}, y^\prime)) \in L^{1}(\mu\times \nu)$, then for $\mu$ almost every $x \in X$ the function $  \mathfrak{I}_{g}^{\gamma, \varsigma}((x,\cdot),(x^{\prime}, y^\prime)) \in L^{1}( \nu)$. Since $\mu \in \mathfrak{M}(X)\setminus{\delta(X)}$, there must exist a $x\neq x^{\prime}$ satisfying this integrability relation and the conclusion is obtained by Equation \ref{consineqexp}. \\
	The fact that relation $(iii)$ implies relation $(i)$ is a direct consequence of Equation \ref{convexexp}, because
	\begin{align*}
		0 &\leq g(\gamma(x,x^{\prime}), \varsigma(y,y^{\prime})) \leq  g(2 \gamma(x,z) + 2 \gamma(z,x^{\prime}), 2\varsigma(y,w) + 2\varsigma(w,y^{\prime}) )\\
		&\leq 4[ g( \gamma(x,z), \varsigma(y,w) ) + g( \gamma(x,z) ,  \varsigma(w,y^{\prime}) )+g(   \gamma(z,x^{\prime}), \varsigma(y,w) )+g( \gamma(z,x^{\prime}),  \varsigma(w,y^{\prime}) ) ].   	
	\end{align*}
	
\end{proof}

\begin{proof}[\textbf{Proof of Corollary \ref{integmargincorbernstein} }]
	Relation $(i)$ implies relation $(ii)$ by Fubini-Tonelli.\\
	In order to prove that relation $(ii)$ implies relation $(iii)$, we also use the sets $A_{i}\times B_{j}$. The argument for $A_{1}\times B_{1}$ and $A_{2}\times B_{2}$ are the same. If   $(x,y) \in A_{1} \times B_{2} $
	$$		
	0\leq g(\gamma(z,x), \varsigma(w,y))\leq \max \left (1, \frac{\varsigma(y,w)}{\varsigma(y, y^{\prime})} \right ) g(M_{1} , \varsigma( y, y^{\prime})),
	$$	
	so, in order to prove the integrability in $A_{1} \times B_{2}$,  it is sufficient to prove that $y \to g(t_{1} , \varsigma(y^{\prime}, y)) \in L^{1}(|\lambda|_{Y})$, for any $t_{1} \geq 0$. But, this is a direct consequence of the hypothesis that   
	$ \mathfrak{I}_{g}^{\gamma, \varsigma} \in L^{1}((|\lambda|_{X}\times |\lambda|_{Y})\times (|\lambda|_{X}\times |\lambda|_{Y}))$ and the result in Equation \ref{suff}. \\
	The proof that relation $(iii)$ implies relation $(i)$ follows by the same method of Lemma \ref{integmarginbernstein} together with the hypothesis that   
	$ \mathfrak{I}_{g}^{\gamma, \varsigma} \in L^{1}((|\lambda|_{X}\times |\lambda|_{Y})\times (|\lambda|_{X}\times |\lambda|_{Y}))$. 
\end{proof}

\begin{proof}[\textbf{Proof of Theorem \ref{principalmetricbernstein}}] 
	The fact that $\mathfrak{M}(\mathfrak{I}_{g}^{\gamma, \varsigma})$ is a vector space is a direct consequence of the fact that relation $(iii)$ in Lemma \ref{integmarginbernstein} and Corollary \ref{integmargincorbernstein} holds for every $(x^{\prime}, y^{\prime}) \in X\times Y$.\\
	The fact that the real valued function is well defined is a consequence that $\mathfrak{M}(\mathfrak{I}_{g}^{\gamma, \varsigma})$ is a vector space, as $L^{1}((|\lambda| + |\lambda^{\prime}|)\times (|\lambda| + |\lambda^{\prime}|)) \subset L^{1}(|\lambda|\times |\lambda^{\prime}|) $. By the representation in Theorem \ref{bernssev}, if $\lambda \in \mathfrak{M}(\mathfrak{I}_{g}^{\gamma, \varsigma})$, then  
	\begin{align*}
		\int_{X \times Y} \int_{X \times Y}&\mathfrak{I}_{g}^{\gamma, \varsigma}((x,y),(x^{\prime}, y^{\prime}))d\lambda(x,y)d\lambda(x^{\prime}, y^{\prime})\\
		=\int_{[0, \infty)^{2}}&\left [\int_{X \times Y} \int_{X \times Y}\frac{(1-e^{-r_{1}\gamma(x, x^{\prime})})}{r_{1}}\frac{(1-e^{-r_{2}\varsigma(y, y^{\prime})})}{r_{2}} d\lambda(x,y)d\lambda(x^{\prime}, y^{\prime})\right ]\\
		&   \otimes (1+r_{1})(1+r_{2})  d\sigma(r_{1}, r_{2}).\end{align*} 
	As the continuous kernel $(1-e^{-r_{1}\gamma(x, x^{\prime})})/r_{1}$, defined in $X$, is CND, metrizable, nonnegative  and has bounded diagonal for every $r_{1} \in [0, \infty)$ (similar for the other kernel), Corollary \ref{integmarginkroeprodcor} implies that the inner double integral is nonnegative for every $r_{1}, r_{2} \in [0, \infty)$, hence $(\lambda, \lambda) \geq 0$. 
\end{proof}

In order to prove Theorem \ref{characPDIbern2}, we rewrite the integral representation in Theorem \ref{bernssev}  as
\begin{align*}
	&\int_{[0,\infty)^{2}}\frac{(1-e^{-r_{1}t_{1}})}{r_{1}}\frac{(1-e^{-r_{2}t_{2}})}{r_{2}}(1 +r_{1})(1+r_{2})d\sigma(r_{1},r_{2})\\
	&=\int_{(0,\infty)^{2} \cup \{0\}^{2} \cup [\{0\}\times (0, \infty)] \cup [(0, \infty)\times \{0\}] }\frac{(1-e^{-r_{1}t_{1}})}{r_{1}}\frac{(1-e^{-r_{2}t_{2}})}{r_{2}}(1 +r_{1})(1+r_{2})d\sigma(r_{1},r_{2})\\
	&=g_{0}(t_{1},t_{2}) +  \sigma(\{0\}^{2})t_{1}t_{2} + t_{1}\psi(t_{2}) + t_{2}\phi(t_{1}),
\end{align*}		

where we use the notation
$$
\psi(t_{2}):=\int_{(0, \infty)}\frac{(1-e^{-r_{2}t_{2}})}{r_{2}}(1+r_{2})d\sigma(\{0\}\times r_{2})$$
$$ \varphi(t_{1}):=\int_{(0, \infty)}\frac{(1-e^{-r_{1}t_{1}})}{r_{1}}(1+r_{1})d\sigma(r_{1}\times \{0\})
$$ 
$$
g_{0}(t_{1}, t_{2}):= \int_{(0,\infty)^{2}}\frac{(1-e^{-r_{1}t_{1}})}{r_{1}}\frac{(1-e^{-r_{2}t_{2}})}{r_{2}}(1 +r_{1})(1+r_{2})d\sigma(r_{1},r_{2}).
$$

\begin{proof}[\textbf{Proof of Theorem \ref{characPDIbern2}}] We separate the proof in $2$ cases, with the second one subdivided in $4$ subcases.\\
	$\bullet$ Case $1)$  $\sigma((0, \infty)^{2}) >0$:\\
	If $\lambda$ is a nonzero element in  $\mathfrak{M}(\mathfrak{I}_{g}^{\gamma, \varsigma})$, then 
	\begin{align*}
		&\int_{X \times Y} \int_{X \times Y}\mathfrak{I}_{g}^{\gamma, \varsigma}((x,y),(x^{\prime}, y^{\prime}))d\lambda(x,y)d\lambda(x^{\prime}, y^{\prime})\\
		&\geq \int_{X \times Y} \int_{X \times Y} g_{0}((x,y),(x^{\prime}, y^{\prime}))d\lambda(x,y)d\lambda(x^{\prime}, y^{\prime})\\
		&=\int_{(0, \infty)^{2}}\left [\int_{X \times Y} \int_{X \times Y}e^{-r_{1}\gamma(x, x^{\prime})}e^{-r_{2}\varsigma(y, y^{\prime})} d\lambda(x,y)d\lambda(x^{\prime}, y^{\prime})\right ] \prod_{i=1}^{2}\frac{1+r_{i}}{r_{i}}d\sigma(r_{1}, r_{2})>0
	\end{align*}	
	because the continuous kernel $r_{1}\gamma(x, x^{\prime})+r_{2}\varsigma(y, y^{\prime})$ is CND, metrizable and has bounded diagonal for every $r_{1}, r_{2} \in (0, \infty)$, so Theorem \ref{gengaussian} implies that the inner double integral is a positive number for every $r_{1}, r_{2}$. This proves that the kernel is PDI-Characteristic whenever $\sigma((0, \infty)^{2}) >0$.\\
	$\bullet$ Case $2)$  $\sigma((0, \infty)^{2}) =0$:\\
	In this case we may write the kernel as
	$$
	\mathfrak{I}_{g}^{\gamma, \varsigma}((x,y),(x^{\prime}, y^{\prime}))= \gamma(x,x^{\prime})\psi(\varsigma(y, y^{\prime})) + \varsigma(y,y^{\prime})\varphi(\gamma(x,x^{\prime})) + \sigma(\{0\}^{2})\gamma(x,x^{\prime})\varsigma(y,y^{\prime}).  
	$$
	By Corollary \ref{integmarginkroeprodcor} and the comment made after Equation \ref{bern+cnd}, the  kernel $\gamma(x,x^{\prime})\psi(\varsigma(y, y^{\prime})) $ is PDI-Characteristic if and only if $\gamma$ is CND-Characteristic and $\psi$ is not the zero function (equivalentely, $\sigma(\{0\}\times (0, \infty))>0 $ ). \\
	Similarly, the  kernel $\varsigma(y,y^{\prime})\varphi(\gamma(x,x^{\prime})) $ is PDI-Characteristic if and only if $\varsigma$ is CND-Characteristic and $\varphi$ is not the zero function (equivalentely, $\sigma( (0, \infty)\times \{0\})>0 $ ). \\
	The kernel $\sigma(\{0\}^{2})\gamma(x,x^{\prime})\varsigma(y,y^{\prime})$ is PDI-Characteristic if and only if $\gamma$ and $\varsigma$ are CND-Characteristic  and $\sigma( \{0\}^{2})>0 $.\\
	We conclude the proof of the Theorem by showing that the kernel $\mathfrak{I}_{g}^{\gamma, \varsigma}$ is PDI-Characteristic if and only if at least one of the $3$ kernels that describe it is PDI-Characteristic. We only need to prove the counterpositive.\\
	$\bullet$ Subcase $1)$ $\varphi$ and $\psi$ are not the zero function:\\
	In this case we must have that $\gamma$ and $\varsigma$ are not CND-Characteristic. Hence, there exists nonzero measures  $\lambda_{1} \in \mathfrak{M}_{1}(X; \gamma)$ and   $\lambda_{2} \in \mathfrak{M}_{1}(Y; \varsigma)$, such that:
	$$\int_{X} \int_{X}\gamma(x,x^{\prime})d\lambda_{1}(x)d\lambda_{1}(x^{\prime})=0, \quad \int_{Y} \int_{Y}\varsigma(y,y^{\prime})d\lambda_{2}(y)d\lambda_{2}(y^{\prime})=0.
	$$
	Define the measure  $\lambda= \lambda_{1}\times\lambda_{2}$, which is a nonzero element of $\mathfrak{M}(\mathfrak{I}_{g}^{\gamma, \varsigma})$ because a consequence of Equation \ref{ineqexp} is that 
	$$
	0 \leq \psi(\varsigma(y,y^{\prime})) \leq (\varsigma(y,y^{\prime})+1)\psi(1). 
	$$
	The conclusion that $(\lambda, \lambda)=0$ comes from Fubini-Tonelli.\\
	$\bullet$ Subcase $2)$ $\varphi=0$ and $\psi$ is not the zero function:\\  
	The proof is essentially the same of Subcase 1. The kernel $\gamma$ is not CND-Characteristic, then pick $\lambda_{1}$ satisfying the same integral relation and an arbitrary  nonzero  $\lambda_{2} \in \mathfrak{M}_{1}(Y; \varsigma)$. The same measure $\lambda$ implies that $(\lambda, \lambda)=0$ using Fubini-Tonelli.    \\
	$\bullet$ Subcase $3)$ $\psi=0$ and $\varphi$ is not the zero function:\\
	The proof is similar to the Subcase 2 and is omitted.\\
	$\bullet$ Subcase $4)$ $\psi=\varphi=0$.\\
	On this scenario $\mathfrak{I}_{g}^{\gamma, \varsigma}= \sigma(\{0\}^{2})\gamma \varsigma$ and the claim is immediate.
\end{proof}

\begin{proof}[\textbf{Proof of Lemma \ref{integmargincompl2}}] Relation $(i)$, $(ii)$ and $(iii)$ are equivalent because the kernel $\gamma + \varsigma$ is CND and Lemma \ref{estimativa2}. \\
	In order to prove the equivalences between $(i)$, $(iv)$ and $(v)$ we assume without loss of generalization that $a_{0}=a_{1}=0$. Note that $(\gamma^{2} + \varsigma^{2})/2 \leq (\gamma + \varsigma)^{2} \leq 2 \gamma^{2} + 2 \varsigma^{2} $, and 
	\begin{align*}
		[e^{-r\gamma} - \omega_{2}(r\gamma)] + [e^{-r\varsigma} - \omega_{2}(r\varsigma)] &\leq (e^{-r[\gamma + \varsigma]} - \omega_{2}(r[\gamma + \varsigma])) \\
		&\leq \frac{M_{2}2^{2}}{M_{1}}([e^{-r\gamma} - \omega_{2}(r\gamma)] + [e^{-r\varsigma} - \omega_{2}(r\varsigma)] ), 
	\end{align*}		
	where the left hand inequality is a consequence that $E_{2}(s)= e^{-s} - \omega_{2}(s)$ is convex, while the right hand inequality is a consequence of the comments made after Equation 	\ref{ineqcm2c1}. Hence, there are positive  constants $M_{3}, M_{4}$  such that
	$$
	0\leq M_{3}[ \psi(\gamma) + \psi(\varsigma)] \leq \psi(\gamma + \varsigma) \leq M_{4} [ \psi(\gamma) + \psi(\varsigma)],
	$$
	and the conclusion follows directly from this inequality.
\end{proof}


\begin{proof}[ \textbf{Proof of Theorem \ref{principalmetriccompl2}}] The fact that it is a vector space is a direct consequence of Lemma \ref{integmargincompl2}. \\
	The real valued function is a semi-inner product because by Fubini-Tonelli, we have that
	\begin{align*}
		\int_{X\times Y} \int_{X\times Y}& \psi(\gamma(x,x^{\prime}) + \varsigma(y,y^{\prime}))d\lambda (x,y)d\lambda(x^{\prime}, y^{\prime})\\
		=& 2a_{2} \int_{X\times Y} \int_{X\times Y} \gamma(x,x^{\prime}) \varsigma(y,y^{\prime})	 d\lambda (x,y)d\lambda(x^{\prime}, y^{\prime}) \\
		& +\int_{(0, \infty)} \left [ \int_{X\times Y} \int_{X\times Y} e^{-r[\gamma(x,x^{\prime}) + \varsigma(y,y^{\prime})]} 	 d\lambda (x,y)d\lambda(x^{\prime}, y^{\prime})\right ]\frac{1+r}{r^{2}} d\sigma(r)\geq 0.
	\end{align*}		
	For the final part of the proof, note that  by Theorem \ref{gengaussian}, a  nonzero measure $\lambda \in 	\mathfrak{M}(\mathfrak{I}_{\psi}^{\gamma + \varsigma} )$ satisfies 	
	$$
	\int_{(0, \infty)} \left [ \int_{X\times Y} \int_{X\times Y} e^{-r[\gamma(x,x^{\prime}) + \varsigma(y,y^{\prime})]} 	 d\lambda (x,y)d\lambda(x^{\prime}, y^{\prime})\right ]\frac{1+r}{r^{2}} d\sigma(r)=0
	$$		
	if and only if $\sigma$ is the zero measure. This occurs because $\gamma + \sigma$ is a CND metrizable kernel with bounded diagonal. The conclusion is then a consequence of Corollary \ref{integmarginkroeprodcor}.		 
\end{proof}

\subsection{\textbf{Section \ref{Distance covariance}}}

\begin{proof}[\textbf{Proof of Lemma \ref{cndtopdsquare}}]If $K_{\gamma}$ is PD then for any fixed $w \in X$ the kernel $(x,y) \to K_{\gamma}((x,w), (y,w))$ is also PD. As this is the same kernel that appears in the equivalence    presented in Equation \ref{Kgamma}, we obtain that $\gamma$ is an CND kernel. Conversely, let $x_{1}, \ldots, x_{n} \in X$ and constants $c_{i, \alpha} \in \mathbb{R}$, $1\leq i, \alpha \leq n$, since
	
	\begin{align*}
		\sum_{\alpha, \beta =1}^{n}\sum_{i,j=1}^{n}c_{\alpha, i}c_{\beta, j}K_{\gamma}((x_{\alpha}, x_{i}), (x_{\beta}, x_{j}))=&-\sum_{l,k=1}^{n}(d_{l}^{0}d_{k}^{0} +d_{l}^{1}d_{k}^{1} -d_{l}^{0}d_{k}^{1} - d_{l}^{1}d_{k}^{0}   )  \gamma(x_{l}, x_{k})\\
		& =-\sum_{l,k=1}^{n}(d_{l}^{0}- d_{l}^{1})(d_{k}^{0}-d_{k}^{1})  \gamma(x_{l}, x_{k}),	
	\end{align*}	
	where $d_{l}^{0} = \sum_{i=1}^{n}c_{l,i}$  and  $d_{l}^{1} = \sum_{\alpha=1}^{n}c_{\alpha,l}$. But, $\gamma$ is CND and $\sum_{l =1}^{n}d_{l}^{0} - d_{l}^{1} = 0$, which implies that $K_{\gamma}$	is PD and it settles the proof of relation $(i)$.\\
	For the proof of $(ii)$, note that $K_{\gamma}([x,y], [x,y])= -\gamma(x,x)- \gamma(y,y)+2\gamma(x,y)$. By Lemma \ref{estimativa} all those functions are in $L^{1}(P \times Q)$, which implies that $K_{\gamma}([x,y], [x,y]) \in L^{1}(P \times Q)$. For the final equality, note that
	\begin{align*}
		& 	\int_{X^{2}} \int_{X^{2}} -\gamma(x,y)
		d[P\otimes Q](x,z) d[P\otimes Q](y,w) =  \int_{X}\int_{X}  -\gamma(x,y),
		dP(x) dP(y)\\
		& 	\int_{X^{2}} \int_{X^{2}} -\gamma(z,w)
		d[P\otimes Q](x,z) d[P\otimes Q](y,w) =  \int_{X}\int_{X}  -\gamma(x,y),
		dQ(x) dQ(y)\\
		& 	\int_{X^{2}} \int_{X^{2}} \gamma(x,w)
		d[P\otimes Q](x,z) d[P\otimes Q](y,w) =  \int_{X}\int_{X}  \gamma(x,y),
		dP(x) dQ(y)\\
		& 	\int_{X^{2}} \int_{X^{2}}\gamma(z,y)
		d[P\otimes Q](x,z) d[P\otimes Q](y,w) =  \int_{X}\int_{X}  \gamma(x,y),
		dQ(x) dP(y).	
	\end{align*}
	by summing all those equalities we obtain the desired relation.

\end{proof}

\begin{proof}[\textbf{Proof of Lemma \ref{dcovtopdsquare}}]
	If $K_{\mathfrak{I}}$ is PD, then for any fixed $z \in X, w \in Y$ the kernel $(x,y),(x^{\prime}, y^{\prime}) \in [X\times Y]^{2} \to K_{\mathfrak{I}}([x,y,z,w], [x,y,z,w])   $ also is PD. Since this  is the  same kernel that appears in Lemma \ref{PDItoPD}, we obtain that $\mathfrak{I}$ is an PDI kernel. Conversely, suppose that $\mathfrak{I}$ is PDI and let $x_{1}, \ldots , x_{n} \in X$, $y_{1}, \ldots , y_{m} \in Y$ and scalars $c_{i,k, j,l } \in \mathbb{R}$, then 
	
	\begin{align*}
		&\sum_{i,j,i^{\prime}, j^{\prime}=1}^{n}\sum_{k,l,k^{\prime}, l^{\prime}=1}^{m}c_{i,k, j,l }c_{i^{\prime},k^{\prime}, j^{\prime},l^{\prime} }K_{\mathfrak{I}}([x_{i},y_{k},x_{j},y_{l} ],[x_{i^{\prime}},y_{k^{\prime}},x_{j^{\prime}},y_{l^{\prime}} ])\\
		= \sum_{i,j=1}^{n}\sum_{k,l=1}^{m}&\mathfrak{I}((x_{i},y_{k}),(x_{j},y_{l} ))  \bigg{[} d^{0,0}_{i,k}d^{0,0}_{ j,l } -d_{i,k}^{1,0}d_{ j,l }^{0,0}-d_{i,k}^{0,1}d_{ j,l }^{0,0}+d_{i,k}^{1,1}d_{ j,l }^{0,0} \\
		&-d_{i,k}^{0,0}d_{ j,l }^{1,0}+d_{i,k}^{1,0}d_{ j,l }^{1,0}+d_{i,k}^{0,1}d_{ j,l }^{1,0}- d_{i,k}^{1,1}d_{ j,l }^{1,0} -d^{0,0}_{i,k}d^{0,1}_{ j,l } +d_{i,k}^{1,0}d_{ j,l }^{0,1} \\
		&  +d_{i,k}^{0,1}d_{ j,l }^{0,1}-d_{i,k}^{1,1}d_{ j,l }^{0,1}+d_{i,k}^{0,0}d_{ j,l }^{1,1}-d_{i,k}^{1,0}d_{ j,l }^{1,1}-d_{i,k}^{0,1}d_{ j,l }^{1,1}+ d_{i,k}^{1,1}d_{ j,l }^{1,1} \bigg{]} \\
		= \sum_{i,j=1}^{n}\sum_{k,l=1}^{m}& \left [d_{i,k}^{0,0} - d_{i,k}^{1,0} - d_{i,k}^{0,1} + d_{i,k}^{1,1}  \right ] \left [d_{j,l}^{0,0} - d_{j,l}^{1,0} - d_{j,l}^{0,1} + d_{j,l}^{1,1}  \right ]\mathfrak{I}((x_{i},y_{k}),(x_{j},y_{l} )) 
	\end{align*}
	where
	$$
	d_{i,k}^{0,0}=\sum_{J=1}^{n}\sum_{L=1}^{m}c_{i,k, J,L}   \quad  d_{i,k}^{0,1} =\sum_{J=1}^{n}\sum_{L=1}^{m}c_{i,L, J,k}
	$$
	$$ 
	d_{i,k}^{1,0}=\sum_{J=1}^{n}\sum_{L=1}^{m}c_{J,k, i,L}   \quad  d_{i,k}^{1,1} =\sum_{J=1}^{n}\sum_{L=1}^{m}c_{J,L,i,k}.
	$$
	Note that the scalars  $e_{i,k}:=d_{i,k}^{0,0} - d_{i,k}^{1,0} - d_{i,k}^{0,1} + d_{i,k}^{1,1} $, satisfy the necessary  restrictions in  the definition of an PDI kernel,hence,  we obtain that $K^{\mathfrak{I}}$ is PD in $[X\times Y]^{2}$.\\
	For the proof of $(ii)$, we focus on the kernels in Theorem \ref{principalmetricbernstein}, as the other case the arguments are simpler and similar.\\
	The function $K_{\mathfrak{I}}([x,y,z,w],[x,y,z,w]) \in L^{1}(\lambda\times \lambda^{\prime})$ because each one of the $16$ functions on the right hand side of the definition of $K_{\mathfrak{I}}$  are in $L^{1}(\lambda\times \lambda^{\prime})$. Indeed, first note that is sufficient to prove the case $\lambda= \lambda^{\prime}$, as all 16 functions are nonnegative. The function $g(\gamma(x,x), \varsigma(y,y)) \in L^{1}(\lambda\times \lambda)$ because this function is bounded, as $\gamma$, $\varsigma$ have bounded diagonals. The function $g(\gamma(z,x), \varsigma(y,y)) \in L^{1}(\lambda\times \lambda)$ because of the  inequality 
	$$
	g(\gamma(z,x), \varsigma(y,y))  \leq g(\gamma(z,x), M)
	$$
	where $M= \sup_{y \in Y}\varsigma(y,y)$ and the result in Equation \ref{suff}. The function  $g(\gamma(x,z), \varsigma(y,w)) \in L^{1}(\lambda\times \lambda)$  by the Definition of $\Gamma(P,Q)_{\mathfrak{I}}$. The other 13 functions follows by the same arguments as one of those 3 cases.\\
	For the final claim, note that
	\begin{align*}
		&\int_{[X\times Y]^{2}}\int_{[X\times Y]^{2}}K_{\mathfrak{I}}([x,y, z,w ] ,[x^{\prime},y^{\prime}, z^{\prime},w^{\prime}] )d[\lambda\otimes \lambda^{\prime}](x,y, z,w )d[\lambda\otimes \lambda^{\prime}](x^{\prime},y^{\prime}, z^{\prime},w^{\prime} ) \\
		&=  \int_{X \times Y} \int_{X\times Y} \mathfrak{I}(a,b) \bigg{[}  d\lambda(a) d\lambda(b) -dP\otimes Q(a) d\lambda(b) -dP\otimes Q(a) d\lambda(b) +d\lambda^{\prime}(a) d\lambda(b) \\
		& 	-d\lambda(a) dP\otimes Q(b) +dP\otimes Q(a) dP\otimes Q(b) +dP\otimes Q(a) dP\otimes Q(b) -d\lambda^{\prime}(a) dP\otimes Q(b)\\
		& -d\lambda(a) dP\otimes Q(b) +dP\otimes Q(a) dP\otimes Q(b) +dP\otimes Q(a) dP\otimes Q(b) -d\lambda^{\prime}(a) dP\otimes Q(b) \\
		&+d\lambda(a)d\lambda^{\prime}(b)- dP\otimes Q(a)d\lambda^{\prime}(b) - dP\otimes Q(a)d\lambda^{\prime}(b) + d\lambda^{\prime}(a)d\lambda^{\prime}(b)\bigg{]}\\
		&= \int_{X \times Y} \int_{X\times Y} \mathfrak{I}((x,y),(x^{\prime}, y^{\prime}))d[\lambda + \lambda^{\prime} -2P\otimes Q ](x,y)d[\lambda + \lambda^{\prime} -2P\otimes Q ](x^{\prime},y^{\prime}).
	\end{align*}

\end{proof}

	This work was funded by Fundação de Amparo à Pesquisa do Estado de São Paulo - FAPESP grant 2021/04226-0.

\bibliographystyle{siam} 
\bibliography{Referrences}       

\end{document}